\newtheorem{theorem}{Theorem}
\newtheorem{lemma}[theorem]{Lemma}
\newtheorem{corollary}[theorem]{Corollary}
\newtheorem{proposition}[theorem]{Proposition}
\newtheorem{conjecture}[theorem]{Conjecture}
\newtheorem{problem}[theorem]{Problem}
\theoremstyle{remark}
\newtheorem{remark}[theorem]{Remark}
\newtheorem{definition}[theorem]{Definition}
\newcommand{\FF}{\mathbf{F}}
\renewcommand{\AA}{\mathbf{A}}
\newcommand{\ZZ}{\mathbf{Z}}
\newcommand{\QQ}{\mathbf{Q}}
\newcommand{\PP}{\mathbf{P}}
\newcommand{\OO}{\mathcal{O}}
\newcommand{\DD}{\mathcal{D}}
\newcommand{\EE}{\mathcal{E}}
\newcommand{\HH}{\mathcal{H}}
\newcommand{\optionalvalue}{0}
\newcommand{\optional}[2][]{\ifthenelse{\optionalvalue=1}{#2}{#1}}
\DeclareMathOperator{\diag}{diag}
\DeclareMathOperator{\res}{R}
\DeclareMathOperator{\charac}{char}
\DeclareMathOperator{\Pic}{Pic}
\DeclareMathOperator{\id}{id}
\DeclareMathOperator{\Norm}{Norm}
\DeclareMathOperator{\AGL}{AGL}
\DeclareMathOperator{\Tr}{Tr}
\DeclareMathOperator{\Gal}{Gal}
\DeclareMathOperator{\mult}{mult}
\DeclareMathOperator{\Ind}{Ind}
\DeclareMathOperator{\Res}{Res}
\DeclareMathOperator{\Hom}{Hom}
\DeclareMathOperator{\Sym}{Sym}
\DeclareMathOperator{\Span}{span}
\DeclareMathOperator{\vol}{vol}
\title[Scrollar invariants, syzygies and representations of $S_{\lowercase{d}}$]{Scrollar invariants, syzygies and representations of the 
symmetric group}
\author{Wouter Castryck}
\author{Floris Vermeulen}
\author{Yongqiang Zhao}
\date{}
\begin{document}

\subjclass{14H30,13D02,20C30}

\begin{abstract}
We give an explicit minimal graded free resolution, in terms of representations of the symmetric group $S_d$, of a Galois-theoretic configuration of $d$ points in $\PP^{d-2}$ that was studied by Bhargava in the context of ring parametrizations. 
When applied to the geometric generic fiber of a simply branched degree $d$ cover of $\PP^1$ by a relatively canonically embedded curve $C$, our construction gives a new interpretation for the splitting types 
of the syzygy bundles appearing in its relative minimal resolution.
Concretely, our work implies that all these splitting types consist of scrollar invariants of
resolvent covers. This vastly generalizes a prior observation due to Casnati, namely that the first syzygy bundle of a degree $4$ cover splits according to the scrollar invariants of its cubic resolvent. 
Our work also shows that the splitting types of the syzygy bundles, together with the multi-set of scrollar invariants, belong to a much larger class of multi-sets of invariants that can be attached to $C \to \PP^1$: one for each irreducible representation of $S_d$, i.e., one for each partition of $d$. 
\end{abstract}

\maketitle

%% \tableofcontents %% Just for papers exceeding 50 pages.

\section{Introduction} \label{sec:intro}

\subsection{\nopunct} \label{ssec:introdefscrollar} This article, which is an extended version of~\cite{self}, is concerned with the ``scrollar invariants" of a curve $C$ with respect to a separable morphism $\varphi : C \to \PP^1$. Throughout, all curves are assumed to be smooth, projective and geometrically integral, unless otherwise stated.
We recall, e.g.\ from~\cite[\S1.2]{coppenskeemmartens}, that the scrollar invariants of $C$ with respect to $\varphi$ are the positive integers $e_1 \leq e_2 \leq \ldots \leq e_{d-1}$ for which
\begin{equation} \label{pushfwd}
 \varphi_\ast \mathcal{O}_C \cong \mathcal{O}_{\PP^1} \oplus \mathcal{O}_{\PP^1}(-e_1) \oplus  
 \ldots \oplus \mathcal{O}_{\PP^1}(-e_{d-1}),
\end{equation}
where $d$ denotes the degree of $\varphi$.
%which
%implies that
%\begin{equation*}
% \varphi_\ast \mathcal{O}_C(nD) \cong \mathcal{O}_{\PP^1}(n) \oplus \mathcal{O}_{\PP^1}(n-e_1 - 2) \oplus  
% \ldots \oplus \mathcal{O}_{\PP^1}(n-e_{d-1} - 2) 
%\end{equation*}
%for all $n \in \ZZ$, yielding the 
Some  prefer the equivalent characterization of $e_i$ as the minimal $n$ for which
$h^0(C,nD) - h^0(C,(n-1)D) > i$, 
with $D$ any geometric fiber of $\varphi$.
The scrollar invariants sum up to $g + d - 1$, with $g$  the genus of $C$, and they satisfy $e_{d-1} \leq (2g+2d-2)/d$; this upper bound will be referred to as the ``Maroni bound".  
As a side remark, let us point out that the scrollar invariants $e_i$ are the function-field analogues of $\log \lambda_i$, with $\lambda_1 \leq \lambda_2 \leq \ldots \leq \lambda_{d-1}$ the non-trivial successive minima of the Minkowski lattice attached to a degree $d$ number field~\cite[\S7]{hessRR}; thus, studying the scrollar invariants of $C$ with respect to $\varphi$ is closely related to studying the ``geometry" of the corresponding function field extension, in Minkowski's sense.

\subsection{\nopunct} \label{ssec:differentscrollar} We caution the reader for an ambiguity in the existing literature: several references, in fact including~\cite[\S1.2]{coppenskeemmartens}, define the scrollar invariants of $C$ with respect to $\varphi$ as the integers $e_1' \leq e_2' \leq \ldots \leq e_{d-1}'$ for which $\varphi_\ast\OO_C(K_C) \cong \mathcal{O}_{\PP^1}(-2) \oplus \mathcal{O}_{\PP^1}(e_1') \oplus \ldots \oplus \mathcal{O}_{\PP^1}(e_{d-1}')$, with $K_C$ some canonical divisor on $C$. The Riemann--Roch theorem implies that $e_i' = e_i - 2$ for all $i$.\footnote{See also Footnote~\ref{footnote:dt}.} Consequently, when interpreting our results for this alternative definition, the shift by $-2$ must be taken into account. 
%We have opted for the definition from~\ref{ssec:introdefscrollar} because it makes for cleaner statements.%; see also~\ref{ssec:differentschreyer}.
%; see e.g.~\cite[p.\,242]{coppenskeemmartens}.

\subsection{Contributions.} \label{ssec:informalmainresult} 
Consider a degree $d$ cover $\varphi : C \to \PP^1$ over a field $k$ with $\charac k = 0$ or $\charac k > d$, and assume for technical convenience
that $\varphi$ is simply branched, i.e., geometrically, all non-trivial ramification is of type $(2, 1^{d-2})$. This ensures that the Galois closure
\[ \overline{\varphi} : \overline{C} \to C \stackrel{\varphi}{\to} \PP^1  \]
has the full symmetric group $S_d$ as its Galois group over $\PP^1$~\cite[Lem.\,6.10]{fulton}.
By the normal basis theorem we can view $L = k(\overline{C})$ as the regular representation of $S_d$ over $k(t) = k(\PP^1)$, and its decomposition $L = \oplus_{\lambda \vdash d} W_\lambda$  into isotypic components induces a decomposition
\begin{equation} \label{eq:regdecompVB} 
 \overline{\varphi}_\ast \mathcal{O}_{\overline{C}} \cong \bigoplus_{\lambda \vdash d} \mathcal{W}_\lambda
\end{equation}
into vector bundles $\mathcal{W}_\lambda$ of rank $(\dim V_\lambda)^2$, where $V_\lambda$ denotes the irreducible representation (i.e., the Specht module) corresponding to the partition $\lambda$.
%; the notation $R_{ \{\id \}}$ is motivated in~\ref{ssec:introresolvent} below.
%As a first contribution, we show that the multi-set $\{ e_1, e_2, \ldots, e_{d-1} \}$ of scrollar invariants of $C$ with respect to $\varphi$ naturally belongs to a large class of multi-sets of numerical invariants 
%that one can attach to $\varphi$, namely one multi-set per partition $\lambda \vdash d$.
%In more detail: 
% the simple branching assumption implies that the Galois closure $L$ of the 
%degree $d$ field extension
%\begin{equation} \label{eq:FFextension} 
%$k(t) =  k(\PP^1) \subseteq k(C)$ 
%\end{equation}
%corresponding to $\varphi$
%has the full symmetric group $S_d$ as its Galois group over $k(t)$, see~\cite[Lem.\,6.10]{fulton},
%so by the normal basis theorem we can view $L$ as the regular representation of $S_d \cong \Gal(L/k(t))$. Thus it decomposes as
%\begin{equation} \label{eq:regdecomp} 
%  L = \bigoplus_\lambda W_\lambda, \qquad 
% W_\lambda \cong V_\lambda^{\dim V_\lambda}
%\end{equation}
% into irreducible representations $V_\lambda$ (e.g., Specht modules), with $\lambda$ running over all partitions of $d$. 
As will be explained in Section~\ref{sec:scrollar.invs},
we can further decompose $\mathcal{W}_\lambda$ as
 \begin{equation} \label{eq:multiscrollarVB} 
 \begin{array}{ccccccc} \mathcal{O}_{\PP^1}(-e_{\lambda,1}) & \oplus & \mathcal{O}_{\PP^1}(-e_{\lambda,2}) & \oplus & \ldots & \oplus & \mathcal{O}_{\PP^1}(-e_{\lambda,\dim V_\lambda}) \\
 \mathcal{O}_{\PP^1}(-e_{\lambda,1}) & \oplus & \mathcal{O}_{\PP^1}(-e_{\lambda,2}) & \oplus & \ldots & \oplus & \mathcal{O}_{\PP^1}(-e_{\lambda,\dim V_\lambda}) \\ 
                     \vdots & & \vdots & & \ddots & & \vdots \\ 
\mathcal{O}_{\PP^1}(-e_{\lambda,1}) & \oplus & \mathcal{O}_{\PP^1}(-e_{\lambda,2}) & \oplus & \ldots & \oplus & \mathcal{O}_{\PP^1}(-e_{\lambda,\dim V_\lambda}), \end{array}
 \end{equation}
where every column (i.e., every ``vertical slice") contains $\dim V_\lambda$ copies of the same entry.
Our main objects of study are the integers obtained by selecting a ``horizontal slice":

\begin{definition} \label{def_scrollar_lambda}
Under the above notation and assumptions, we call $\{ e_{\lambda,1}, \ldots,$ $e_{\lambda, \dim V_{\lambda}} \}$ the multi-set of ``scrollar invariants of $\lambda$ with respect to $\varphi$". 
\end{definition}

\noindent One sees that the $d! - 1$ scrollar invariants of $\overline{C}$ with respect to $\overline{\varphi}$ are obtained by taking the union, over all non-trivial partitions $\lambda \vdash d$,
of the multi-sets of scrollar invariants of $\lambda$ with respect to $\varphi$, where each multi-set is to be considered with multiplicity $\dim V_\lambda$.

% each of the isotypic components $W_\lambda$  comes naturally equipped with a multi-set of $\dim W_\lambda = (\dim V_\lambda)^2$ ``scrollar invariants"
% \begin{equation} \label{eq:multiscrollar} 
% \begin{array}{cccc} e_{\lambda,1} & e_{\lambda,2} & \ldots & e_{\lambda,\dim V_\lambda} \\
%                     e_{\lambda,1} & e_{\lambda,2} & \ldots & e_{\lambda,\dim V_\lambda} \\ 
%                     \vdots & \vdots & \ddots & \vdots \\ 
%                     e_{\lambda,1} & e_{\lambda, 2}&  \ldots & e_{\lambda, \dim V_\lambda}, \end{array}
% \end{equation}
% where every column (i.e., every ``vertical slice") contains $\dim V_{\lambda}$ copies of the same entry. 
% When taking the union of the blocks~\eqref{eq:multiscrollar} over all non-trivial partitions $\lambda \vdash d$, we find a multi-set of size $d! - 1$ which turns out to consist of the scrollar invariants of the degree $d!$ covering 
% corresponding to our Galois closure $k(t) \subseteq L$.

% We refer to the multi-set obtained by selecting a ``horizontal
%slice" of~\eqref{eq:multiscrollar}, i.e., a single row, as the multi-set of ``scrollar invariants of $\lambda$ with respect to $\varphi$". 

Definition~\ref{def_scrollar_lambda} generalizes the notion of scrollar invariants of $C$ with respect to $\varphi$. Indeed, as a consequence to Proposition~\ref{prop:scrollar.invs.of.hooks} below, we recover $\{ e_1, e_2, \ldots, e_{d-1} \}$ as the multi-set of scrollar invariants of the  partition $(d-1, 1)$ with respect to $\varphi$.
Some basic properties generalize as well: e.g., in~\ref{ssec:volanddual} we will prove a ``volume formula" for the sum of the scrollar invariants with respect to any partition $\lambda \vdash d$, thereby generalizing the identity $e_1 + e_2 + \ldots + e_{d-1} = g + d - 1$. We will also prove a duality statement relating the scrollar invariants with respect to $\lambda$ to those with respect to the dual partition $\lambda^*$ (i.e., the partition obtained by transposing its Young diagram).

\subsection{} \label{ssec:MS} We remark that these generalized scrollar invariants have appeared before, at least implicitly.
Indeed, they describe the splitting types of the underlying vector bundle $\mathcal{E}_\lambda$ of the parabolic bundle attached to $V_\lambda$ under the Mehta--Seshadri correspondence~\cite{MS}, where $V_\lambda$ is viewed as a representation of $\pi_1^{\text{geom}}(\PP^1 \setminus \text{branch locus of $\varphi$})$ via its natural map to $\Gal(C / \PP^1) \cong S_d$. Equivalently, one finds $\mathcal{E}_\lambda$ as the Deligne canonical extension to $\PP^1$ of the local system attached to this representation. We have $\mathcal{W}_\lambda \cong \mathcal{E}_\lambda^{\dim V_{\lambda}}$.
The reader is forwarded to the recent works by Landesman--Litt~\cite[\S2]{landesmanlitt2},~\cite[\S3]{LandesmanLitt} and the references therein for further details.

% USEFUL REFERENCE: https://mathoverflow.net/questions/17786/why-are-local-systems-and-representations-of-the-fundamental-group-equivalent

% ALSO USEFUL: section 1.3.4 of https://math.berkeley.edu/~dcorwin/files/etale.pdf and p137 of Tamas Szamuely "Galois groups and fundamental groups"

\subsection{} 
For certain partitions $\lambda \vdash d$, we managed to relate the corresponding multi-sets of scrollar invariants to known data. 
The easiest cases are the hooks, with Young diagrams
  \begin{equation*} %\label{eq:youngdiagramforsum_e}
    \begin{tikzpicture}[scale=0.3,baseline=(current  bounding  box.center)]
      \draw[thick] (0,0) rectangle (1,1);
      \draw[thick] (0,1) rectangle (1,2);
      \node at (0.5,3.33) {\small $\vdots$};
      \draw[thick] (0,4) rectangle (1,5);

      \draw[thick] (0,5) rectangle (1,6);
 %     \draw[thick] (1,5) rectangle (2,6);
      
      \draw[thick] (0,6) rectangle (1,7);
      \draw[thick] (1,6) rectangle (2,7);
      \draw[thick] (2,6) rectangle (3,7);
      \draw[thick] (3,6) rectangle (4,7);      
      \draw[thick] (6,6) rectangle (7,7);
      \draw[thick] (7,6) rectangle (8,7);
      \node at (5.1,6.5) {\small $\cdots$};
      \draw[thick,decorate,decoration={brace,amplitude=5pt}](0,7.5) -- (8,7.5);
      \draw[thick,decorate,decoration={brace,amplitude=5pt}](-0.5,0) -- (-0.5,6);    
      \node at (4,8.85) {\small $d - i$};
      \node at (-1.8,3) {\small $i$};
    \end{tikzpicture} 
    \vspace{0.2cm}
\end{equation*}
for $i = 0, 1, \ldots, d-1$. Concretely, in Section~\ref{sec:scrollar.invs} we will prove: 
\begin{proposition}\label{prop:scrollar.invs.of.hooks}
Consider a simply branched degree $d \geq 2$ cover $\varphi : C \to \PP^1$ over a field $k$ with $\charac k = 0$ or $\charac k > d$ with scrollar invariants $e_1, e_2, \ldots, e_{d-1}$, and let $i \in \{0, 1, \ldots, d - 1\}$. The multi-set of scrollar invariants of the partition $(d-i, 1^i)$ with respect to $\varphi$ is
\[ \left\{ \,
\left. \sum_{\ell\in S} e_\ell  \, \right| \, \text{$S$ is an $i$-element subset of $\{1, 2, \ldots, d - 1\}$} \, \right\}. 
\]
\end{proposition}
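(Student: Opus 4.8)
The plan is to identify the scrollar invariants of the hook partition $(d-i, 1^i)$ with respect to $\varphi$ by recognizing the vector bundle $\mathcal{E}_{(d-i,1^i)}$ in terms of exterior powers. The key representation-theoretic input is the classical branching-type fact that the Specht module $V_{(d-i,1^i)}$ for $S_d$ is the $i$-th exterior power $\wedge^i V_{\text{std}}$ of the standard $(d-1)$-dimensional reflection representation $V_{\text{std}} = V_{(d-1,1)}$. On the geometric side, the standard representation is realized exactly by the ``interesting part'' of the pushforward, namely via the splitting $\varphi_\ast \mathcal{O}_C \cong \mathcal{O}_{\PP^1} \oplus \bigl( \mathcal{O}_{\PP^1}(-e_1) \oplus \cdots \oplus \mathcal{O}_{\PP^1}(-e_{d-1}) \bigr)$ from~\eqref{pushfwd}, where the trivial summand corresponds to the trivial representation and the rank $d-1$ complement $\mathcal{V} := \bigoplus_{\ell=1}^{d-1} \mathcal{O}_{\PP^1}(-e_\ell)$ is exactly the canonical-extension bundle $\mathcal{E}_{(d-1,1)}$ attached to the standard representation. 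By Proposition's own assertion about hooks (or directly by the $(d-1,1)$ case), this identifies the scrollar invariants of $(d-1,1)$ with $\{e_1, \ldots, e_{d-1}\}$, which is the $i=1$ instance.

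The main step is then to promote this to all $i$ by taking exterior powers functorially. Because the Deligne canonical extension (equivalently, the Mehta--Seshadri parabolic bundle) is compatible with tensor operations on the underlying local system, and exterior powers are built from tensor products and the functorial identification $\wedge^i V_{\text{std}} \cong V_{(d-i,1^i)}$ of $S_d$-representations holds over $k$ since $\charac k = 0$ or $\charac k > d$, I would argue that
\[
  \mathcal{E}_{(d-i,1^i)} \;\cong\; \wedge^i \mathcal{E}_{(d-1,1)} \;=\; \wedge^i \mathcal{V}.
\]
Now $\mathcal{V} = \bigoplus_{\ell=1}^{d-1}\mathcal{O}_{\PP^1}(-e_\ell)$ is a direct sum of line bundles, so its $i$-th exterior power splits as
\[
  \wedge^i \mathcal{V} \;\cong\; \bigoplus_{\substack{S \subseteq \{1,\dots,d-1\}\\ |S| = i}} \mathcal{O}_{\PP^1}\!\Bigl(-\!\sum_{\ell \in S} e_\ell\Bigr),
\]
which reads off precisely the claimed multi-set $\bigl\{\sum_{\ell \in S} e_\ell \,:\, |S| = i\bigr\}$ as the splitting type. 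Since $\mathcal{W}_{(d-i,1^i)} \cong \mathcal{E}_{(d-i,1^i)}^{\dim V_{(d-i,1^i)}}$ and the scrollar invariants of $\lambda$ are defined as the horizontal-slice splitting type of $\mathcal{E}_\lambda$, this gives the proposition.

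The hard part will be making the identification $\mathcal{E}_{(d-i,1^i)} \cong \wedge^i \mathcal{E}_{(d-1,1)}$ fully rigorous, i.e.\ verifying that the canonical extension to $\PP^1$ genuinely commutes with the exterior-power operation rather than merely inducing an isomorphism on the open part $\PP^1 \setminus \text{branch locus}$. The potential subtlety is at the branch points, where one must check that the parabolic structure (the prescribed eigenvalues of local monodromy, which for a simple transposition act on $\wedge^i V_{\text{std}}$ in a controlled way) assembles correctly under $\wedge^i$, so that the two extensions have matching degrees and no discrepancy in the local weights. I would handle this either by appealing to the tensor-compatibility of Deligne's canonical extension together with the observation that exterior powers are direct summands of tensor powers (using $\charac k = 0$ or $>d$ to split them via idempotents), or, more concretely and self-containedly, by working directly with the étale-local description of $\varphi_\ast\mathcal{O}_C$ near a simple branch point and computing the action of the local monodromy transposition on $\wedge^i \mathcal{V}$, confirming that the weights produce exactly the degrees $-\sum_{\ell \in S} e_\ell$ with no correction term. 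Everything else is then the routine direct-sum decomposition of $\wedge^i$ of a split bundle.
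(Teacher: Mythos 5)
Your proposal is correct in substance but takes a genuinely different route from the paper. The paper's proof stays entirely within explicit function-field arithmetic: it takes a reduced trace-zero basis $\alpha_1,\ldots,\alpha_{d-1}$ of $V_1 = W_{(d-1,1)}\cap L^{S_{d-1}}$, forms the $i\times i$ minors of the matrix of conjugates $(\alpha_j^{(m)})_{j,m}$, shows via the Sylvester--Franke theorem that these $\binom{d-1}{i}^2$ minors give a $k(t)$-basis of $W_{(d-i,1^i)}$ consisting of elements integral over $k[t]$, repeats the construction with $t^{-e_j}\alpha_j$ to exhibit integrality over $k[t^{-1}]$ with exponents $\sum_{\ell\in S}e_\ell$, and concludes with the reduced-basis criterion of Lemma~\ref{lem:reducedbasis.guess}, whose sum hypothesis is supplied (implicitly) by the volume formula of Proposition~\ref{prop:genus.irrep}; that is where simple branching enters the paper's argument. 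Conceptually the two proofs share the same core, since the minors are just wedge products written out explicitly, realizing $V_{(d-i,1^i)}\cong \wedge^i V_{(d-1,1)}$ inside $L$; what differs is the mechanism for controlling the situation away from the \'etale locus. You work at the branch points with canonical-extension/parabolic data, while the paper works with reduced bases at $0$ and $\infty$ and pins down degrees by the volume formula. Your route, once complete, buys a proof that bypasses the volume formula entirely; the paper's route buys independence from the Mehta--Seshadri/Deligne identification of~\ref{ssec:MS}, which the paper only states with references and which your argument takes as its starting point (together with the $i=1$ case from~\ref{ssec:firstexamplespart}).

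The one point where you must be careful is the step you defer, and specifically the first of your two proposed ways of handling it. ``Tensor-compatibility of Deligne's canonical extension'' is false as a general principle, so splitting $\wedge^i$ off $\otimes^i$ by idempotents cannot carry the argument: already for a rank-one local system $L$ with local monodromy $-1$, the square of the canonical extension of $L$ differs from the canonical extension of $L^{\otimes 2}$ by a twist at the puncture, because the parabolic weights $\tfrac{1}{2}+\tfrac{1}{2}$ wrap past $1$. Only your second, concrete option is sound, and it works precisely because of simple branching: a transposition has a single eigenvalue $-1$ on $V_{(d-1,1)}$, so every eigenvector of its action on $\wedge^i V_{(d-1,1)}$ carries local exponent $0$ or $\tfrac{1}{2}$, and no sum of weights ever reaches $1$; hence $\wedge^i$ of the canonical extension and the canonical extension of $\wedge^i$ determine the same lattice at every branch point. (For ramification of type $(2,2,1^{d-4})$ this already fails for $i=2$, consistent with the fact that the paper's proof also needs the branching hypothesis.) With that local computation written out, your argument is complete.
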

\noindent For $i = 1$, corresponding to the standard representation $V_{(d-1,1)}$, we indeed recover the scrollar invariants of $C$ with respect to $\varphi$. For $i = 0$, corresponding to the trivial representation $V_{(d)}$, we find the unique scrollar invariant $0$. For $i = d-1$, corresponding to the sign representation $V_{(1^d)}$, we find the unique scrollar invariant $e_1 + e_2 + \ldots + e_{d-1} = g + d - 1$.

\subsection{} \label{ssec:introschreyer} The main result of this article is a concrete and surprising interpretation for 
 the multi-set of scrollar invariants of the partition $\lambda_{i+1} = (d-i-1, 2, 1^{i-1})$
   \begin{equation} \label{eq:youngdiagramforb}
    \begin{tikzpicture}[scale=0.3,baseline=(current  bounding  box.center)]
      \draw[thick] (0,0) rectangle (1,1);
      \draw[thick] (0,1) rectangle (1,2);
      \node at (0.5,3.33) {\small $\vdots$};
      \draw[thick] (0,4) rectangle (1,5);

      \draw[thick] (0,5) rectangle (1,6);
      \draw[thick] (1,5) rectangle (2,6);
      
      \draw[thick] (0,6) rectangle (1,7);
      \draw[thick] (1,6) rectangle (2,7);
      \draw[thick] (2,6) rectangle (3,7);
      \draw[thick] (3,6) rectangle (4,7);      
      \draw[thick] (6,6) rectangle (7,7);
      \draw[thick] (7,6) rectangle (8,7);
      \node at (5.1,6.5) {\small $\cdots$};
      \draw[thick,decorate,decoration={brace,amplitude=5pt}](0,7.5) -- (8,7.5);
      \draw[thick,decorate,decoration={brace,amplitude=5pt}](-0.5,0) -- (-0.5,5);    
      \node at (4,8.85) {\small $d - i - 1$};
      \node at (-2.65,2.5) {\small $i - 1$};
    \end{tikzpicture} 
    \vspace{0.2cm}
\end{equation}
for any $i = 1, 2, \ldots, d-3$:
 in Section~\ref{sec:schreyer.invs.scrollar} we show that this multi-set equals the splitting type 
\begin{equation} \label{eq:schreyerinvs} 
\left\{ \, b_j^{(i)} \, \left| \, j = 1, \ldots, \beta_i \, \right. \right\}  \qquad \text{with } \beta_i = \frac{d}{i+1}(d-2-i){d-2 \choose i-1}
\end{equation}
of 
the $i$th syzygy bundle in the relative canonical resolution of $C$ with respect to $\varphi$, 
as introduced by Casnati--Ekedahl~\cite{casnati_ekedahl}, who built on work of Schreyer~\cite{schreyer};
see~\ref{ssec:schreyer_inf} below for more details.
We will occasionally refer to the elements of this splitting type as ``Schreyer invariants of $C$ with respect to $\varphi$".
\begin{theorem} \label{thm:schreyerisscrollar}
Consider a simply branched degree $d \geq 4$ cover $\varphi : C \to \PP^1$ over a field $k$ with $\charac k = 0$ or $\charac k > d$,
and let $i \in \{ 1, \ldots, d - 3 \}$. The multi-set of scrollar invariants of the partition
$(d-i-1, 2, 1^{i-1})$ with respect to $\varphi$
is equal to the splitting type
of the $i$th syzygy bundle of the relative canonical resolution of $C$ with respect to $\varphi$.
\end{theorem}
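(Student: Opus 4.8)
The plan is to restrict the relative canonical resolution to a general fibre, to compute that fibre's resolution $S_d$-equivariantly, and then to spread the resulting identification back out over $\PP^1$. First I would restrict the relative canonical (Casnati--Ekedahl) resolution of $C$ inside its scroll $\pi\colon X \to \PP^1$ to the fibre over the generic point of $\PP^1$; after base change to a splitting field this becomes the minimal graded free resolution of the homogeneous coordinate ring $R$ of the $d$ geometric points of the fibre, which sit in $\PP(V_{(d-1,1)}) \cong \PP^{d-2}$ and are permuted by $S_d$. For a simply branched cover these $d$ points lie in linearly general position, so the resolution is the generic one: it is pure and arithmetically Gorenstein, its $i$th term is $F_i = S \otimes_k B_i$ with $B_i$ concentrated in a single degree ($B_0$ in degree $0$, $B_i$ in degree $i+1$ for $1 \le i \le d-3$, and $B_{d-2}$ in degree $d$), and $\mathcal{N}_i$ restricts to $F_i$ on this fibre.

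The representation-theoretic heart is to show that $B_i \cong V_{(d-i-1,2,1^{i-1})}$ as an $S_d$-module, with multiplicity one, for every $i \in \{1,\dots,d-3\}$. The base case is the decomposition $\Sym^2 V_{(d-1,1)} \cong V_{(d)} \oplus V_{(d-1,1)} \oplus V_{(d-2,2)}$: evaluation at the $d$ points has image $V_{(d)} \oplus V_{(d-1,1)}$, the degree-$2$ part of $R$, so the quadrics through the points are exactly $B_1 \cong V_{(d-2,2)}$. For general $i$ I would compute the equivariant numerator $N(t) = \sum_i (-1)^i [B_i]\,t^{\deg B_i} = H_R(t)\,H_S(t)^{-1}$ in the representation ring $R(S_d)[[t]]$, using that $H_S(t) = (1-t)\sum_{m\ge 0}[\Sym^m(k^d)]\,t^m$ and that each $\Sym^m$ of the permutation module $k^d$ is a sum of Young permutation modules $\Ind_{S_\mu}^{S_d}\mathbf 1$. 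Because the generator degrees $0,2,3,\dots,d-2,d$ are pairwise distinct, purity lets me read the classes $[B_i]$ directly off the coefficients of $N(t)$, and the computation returns precisely $[V_{(d-i-1,2,1^{i-1})}]$ in degree $i+1$ (together with the trivial module in degree $0$ and the sign module in degree $d$). As a check, the hook-length formula gives $\dim V_{(d-i-1,2,1^{i-1})} = \tfrac{d}{i+1}(d-2-i)\binom{d-2}{i-1} = \beta_i$, matching the rank in~\eqref{eq:schreyerinvs}, while the self-duality $(d-i-1,2,1^{i-1})^* = (i+1,2,1^{d-i-3})$ of these near-hooks matches the Gorenstein self-duality $\mathcal{N}_i \leftrightarrow \mathcal{N}_{d-2-i}$. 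To upgrade this into the promised explicit resolution one realises the differentials by the Koszul maps coming from $\wedgepow{i} V_{(d-1,1)} \cong V_{(d-i,1^i)}$ composed with a single quadratic multiplication.

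It remains to globalise. The construction is functorial in its input vector space, so carrying it out relatively over $\PP^1$---replacing $V_{(d-1,1)}$ by the rank $d-1$ scrollar bundle $\mathcal{E}_{(d-1,1)}$ of $\varphi$ (whose projectivisation is $X$), with the internal grading producing the twists $\OO_X(-(i+1))$---reproduces the relative canonical resolution by uniqueness of minimal resolutions. Under this spreading-out the multiplicity space $B_i$ becomes the multiplicity bundle $\pi_\ast\!\big(\mathcal{N}_i \otimes \OO_X(i+1)\big)$ on $\PP^1$, whose monodromy around the branch locus is the Specht module $V_{(d-i-1,2,1^{i-1})}$. The same functoriality sends the irreducible $V_\lambda$ to the bundle $\mathcal{E}_\lambda$ appearing in $\overline{\varphi}_\ast\OO_{\overline{C}} \cong \bigoplus_\lambda \mathcal{E}_\lambda^{\dim V_\lambda}$; matching the $V_{(d-i-1,2,1^{i-1})}$-isotypic components therefore identifies this multiplicity bundle with $\mathcal{E}_{(d-i-1,2,1^{i-1})}$ (equivalently, with the Deligne canonical extension of the corresponding local system, as in~\ref{ssec:MS}). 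Its splitting type is then by definition the multi-set of scrollar invariants of $(d-i-1,2,1^{i-1})$, which is the assertion of the theorem.

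I expect the main obstacle to be the equivariant computation of the middle paragraph: showing that every syzygy module is the single \emph{near-hook} $V_{(d-i-1,2,1^{i-1})}$---rather than merely matching dimensions---requires controlling $\sum_m[\Sym^m(k^d)]\,t^m$ and the total cancellation in $N(t)$ uniformly in $i$. A secondary delicate point is the globalisation: one must verify that the extension across the branch points furnished by the relative canonical resolution is exactly the canonical one, so that the two multi-sets are \emph{equal} and not merely equal up to a common shift. The cleanest way to secure this is to pull everything back to $\overline{C}$, where the $S_d$-action is manifest and the identification of the $i$th syzygy multiplicity bundle with the $\lambda$-isotypic bundle $\mathcal{E}_{(d-i-1,2,1^{i-1})}$ is forced.
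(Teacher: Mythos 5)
Your first two steps are fine: restricting the Casnati--Ekedahl resolution to the geometric generic fibre and identifying the $i$th syzygy space of $d$ general points, $S_d$-equivariantly, with the near-hook Specht module $V_{(d-i-1,2,1^{i-1})}$ is essentially Wilson's ``vertical slice'' picture cited in~\ref{ssec:veryintro}, and your purity/equivariant-Hilbert-numerator argument is a workable way to prove it. The genuine gap is the globalisation, and it is not the ``secondary delicate point'' you call it --- it is the entire content of the theorem. The complement of the branch locus in $\PP^1$ is affine, and after base change to $\bar{k}$ (which does not affect splitting types) every vector bundle on it is free; hence \emph{any} two bundles on $\PP^1$ of the same rank agree over that open set. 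So ``functoriality over the locus where the fibres are $d$ general points'' and ``matching isotypic components of the monodromy'' carry no information about the multi-sets $\{b_j^{(i)}\}$ and $\{e_{\lambda_{i+1},j}\}$: these are determined exclusively by how the syzygy bundle and the bundle $\mathcal{E}_{\lambda_{i+1}}$ of~\ref{ssec:MS} extend across the branch points. Your proposed remedy --- pull back to $\overline{C}$, where the identification ``is forced'' --- is not an argument: no mechanism is given by which $S_d$-equivariance upstairs pins down the integral structure of $\pi_\ast(\mathcal{N}_i\otimes\OO_X(i+1))$ at the branch points, and the relative canonical resolution carries no natural $S_d$-structure to begin with. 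Symptomatically, your outline never uses simple branching (general position of the generic fibre holds for any cover), yet the theorem is false without a ramification hypothesis: the one-sided bound $e_\ell^{(i)}\leq b_\ell^{(i)}$ holds in general, but for bad ramification the volume $\sum_\ell e_\ell^{(i)}$ drops strictly below $\sum_\ell b_\ell^{(i)}$, so scrollar and Schreyer invariants genuinely differ; this is why even the weaker Proposition~\ref{prop:schreyerrelaxed} needs good ramification. A correct proof must consume that hypothesis at the branch points, and yours has nowhere to do so.

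For contrast, the paper supplies exactly this missing local information by two devices. First, instead of Specht modules it uses ``horizontal slices'' $V_i = W_{\lambda_i}\cap L^{S_{\lambda_i}}$ with \emph{reduced} bases (Theorem~\ref{thm:Hess.reducedbasis}), and builds the resolution of Section~\ref{sec:minfreerep} with structure constants in $k$; since the maps $\psi_i$ then preserve integrality over both $k[t]$ and $k[t^{-1}]$, every syzygy they produce respects the integral structures at $0$ and $\infty$, and minimality of~\eqref{Schresolution} gives $e_\ell^{(i)}\leq b_\ell^{(i)}$. Second, equality is forced by the volume formula (Proposition~\ref{prop:genus.irrep}, via Lemma~\ref{lem:sum.schreyer.invs}), whose proof through discriminants of resolvent covers and the genus formula of Theorem~\ref{thm:genusresolvents} is precisely where simple branching enters. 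If you want to rescue your Mehta--Seshadri route, the statement you must prove is that the twisted syzygy bundles of~\eqref{Schresolution} are the \emph{canonical} extensions of the local systems attached to $V_{\lambda_{i+1}}$ via the monodromy of $\varphi$; that assertion is essentially equivalent to the theorem itself, and establishing it requires a local analysis at the simply branched points playing the same role as the paper's volume formula.
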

\noindent Symbolically: for $\lambda = \lambda_{i+1}$ the partition from~\eqref{eq:youngdiagramforb} we have
 \[ e_{\lambda, j} = b_j^{(i)} , \qquad j=1, 2, \ldots, \dim V_{(d-i-1,2,1^{i-1})} \]
 for a suitable ordering of the $e_{\lambda,j}$'s.
 As a sanity check, the reader is invited to verify that $\dim V_{(d-i-1,2,1^{i-1})}$ indeed equals $\beta_i$,
 using the hook length formula. In~\ref{ssec:31111} we will slightly relax the simple branching assumption to the condition that all non-trivial ramification is of the form $(2, 1^{d-2})$ or $(3, 1^{d-3})$. 
 %This yields Proposition~\ref{prop:schreyerrelaxed}.
  
As a consequence to Theorem~\ref{thm:schreyerisscrollar} we find that the splitting types of the syzygy bundles turn out to consist of scrollar invariants, namely of the Galois closure $\overline{\varphi} : \overline{C} \to \PP^1$. Thus, they too are ``geometric" in Minkowski's sense.
We know of one prior observation of this kind: for $d = 4$ it was noted that the scrollar invariants of the degree $3$ cover obtained from $\varphi$ through Recillas' trigonal construction are given by 
 $b_1$ and $b_2$, 
with $\{b_1,b_2\}$ the splitting type of the first syzygy bundle of $C$ with respect to $\varphi$. This observation seems due to Casnati~\cite[Def.\,6.3-6.4]{casnati}, although  we refer to Deopurkar--Patel~\cite[Prop.\,4.6]{deopurkar_patel} for a more explicit mention. As will become clear, our main result can be viewed as a vast generalization of this.

Beyond the families $(d-i, 1^{i})$ and $(d-i-1, 2, 1^{i-1})$, we 
did not succeed in finding partitions whose corresponding scrollar invariants 
can be related to known data and we in fact believe that they are all genuinely new.
The first such partitions appear in degree $d=6$, namely $ (2^3)$ and $ (3^2)$,
%   \begin{equation*} 
%   \begin{array}[t]{ccccc} 
%    \begin{tikzpicture}[scale=0.3,baseline=(current  bounding  box.center)]
%      \draw[thick] (0,0) rectangle (1,1);
%      \draw[thick] (0,1) rectangle (1,2);
%      \draw[thick] (1,0) rectangle (2,1);
%      \draw[thick] (1,1) rectangle (2,2);
%      \draw[thick] (2,0) rectangle (3,1);
%      \draw[thick] (2,1) rectangle (3,2);
%    \end{tikzpicture} \vspace{0.5cm}
%    & \hspace{1cm} & \begin{array}{c} \vspace{-0.3cm} \\ \text{and} \\  \end{array} & \hspace{1cm} &
%    \begin{tikzpicture}[scale=0.3,baseline=(current  bounding  box.center)]
%      \draw[thick] (0,0) rectangle (1,1);
%      \draw[thick] (0,1) rectangle (1,2);
%      \draw[thick] (0,2) rectangle (1,3);
%      \draw[thick] (1,0) rectangle (2,1);
%      \draw[thick] (1,1) rectangle (2,2);
%      \draw[thick] (1,2) rectangle (2,3);
%    \end{tikzpicture} \\ \end{array}
%    \vspace{-0.3cm}
%\end{equation*}
corresponding to invariants $a_1, a_2, \ldots, a_5$
and their duals $g+5 - a_1, g+5 - a_2, \ldots, g + 5 - a_5$, which 
seem unrelated to both the scrollar invariants and the Schreyer invariants. 
%of $C$ with respect to $\varphi$.

\subsection{} \label{ssec:introresolvent} 
For any given subgroup $H \subseteq S_d$, 
we can look at the subfield $L^H \subseteq L$ that is fixed by $H$.
 The corresponding degree $[S_d : H] = d! / \lvert H \rvert$ covering
\[ \res_H \varphi : \res_H C \to \PP^1 \]
is called the ``resolvent of $\varphi$ with respect to $H$". 
For $H = \{ \id \}$ we recover the Galois closure $\overline{\varphi} : \overline{C} \to \PP^1$: 
recall that its 
scrollar invariants are obtained by taking the union of the multi-sets of scrollar invariants with respect to $\lambda$, over all non-trivial partitions $\lambda$, where each multi-set is to be considered with multiplicity $\dim V_\lambda$.
For general $H$, this remains true but the multiplicities change: 
\begin{theorem} \label{thm:scrollar.invariants.resolvent}
Consider a simply branched degree $d \geq 2$ cover $\varphi : C \to \PP^1$ over a field $k$ with $\charac k = 0$ or $\charac k > d$, along with a subgroup $H \subseteq S_d$.
The scrollar invariants of $\res_H C$ with respect to $\res_H \varphi$
are found by taking the union, over all non-trivial partitions $\lambda \vdash d$, of the multi-sets
of scrollar invariants of $\lambda$ with respect to $\varphi$, where each multi-set is to be considered with multiplicity 
 \begin{equation*}   
  \mult(V_\lambda, \Ind^{S_d}_H \mathbf{1}).
\end{equation*}
Here $\Ind^{S_d}_H \mathbf{1}$ denotes the representation of $S_d$ that is induced by the trivial representation of $H$ (i.e., it is the permutation representation of $S_d / H$).
\end{theorem}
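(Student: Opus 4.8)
The plan is to identify $(\res_H\varphi)_\ast\OO_{\res_H C}$ with the subsheaf of $H$-invariants of $\overline{\varphi}_\ast\OO_{\overline{C}}$ and then to read off its splitting type from the isotypic decomposition \eqref{eq:regdecompVB}--\eqref{eq:multiscrollarVB}. First I would note that $\res_H C = \overline{C}/H$ and that the composite $\overline{C}\xrightarrow{\pi}\res_H C\xrightarrow{\res_H\varphi}\PP^1$ equals $\overline{\varphi}$, so that
\[ (\res_H\varphi)_\ast\OO_{\res_H C} \cong \bigl(\overline{\varphi}_\ast\OO_{\overline{C}}\bigr)^{H}, \]
the invariants being taken under the Galois action of $H\subseteq S_d$. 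Because $\charac k = 0$ or $\charac k > d$ forces $\lvert H\rvert \mid d!$ to be invertible in $k$, the Reynolds operator $\tfrac{1}{\lvert H\rvert}\sum_{h\in H} h$ provides an exact projection onto invariants; this guarantees both that $(\pi_\ast\OO_{\overline{C}})^{H} = \OO_{\res_H C}$ and that forming $H$-invariants commutes with the finite (hence affine) pushforward $\overline{\varphi}_\ast$, so the displayed isomorphism holds integrally over all of $\PP^1$ and not merely at the generic point.

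Next I would feed in the isotypic decomposition. Writing $\overline{\varphi}_\ast\OO_{\overline{C}} \cong \bigoplus_{\lambda\vdash d} V_\lambda \otimes_k \mathcal{E}_\lambda$, where the Galois action lives entirely on the Specht module $V_\lambda$ and is trivial on the multiplicity bundle $\mathcal{E}_\lambda$ (so that $\mathcal{W}_\lambda = V_\lambda\otimes_k\mathcal{E}_\lambda$ and $\mathcal{W}_\lambda\cong\mathcal{E}_\lambda^{\dim V_\lambda}$ as in~\ref{ssec:MS}, with $\mathcal{E}_\lambda$ of splitting type the scrollar invariants of $\lambda$), taking $H$-invariants passes through the sum and touches only the first tensor factor:
\[ \bigl(\overline{\varphi}_\ast\OO_{\overline{C}}\bigr)^{H} \cong \bigoplus_{\lambda\vdash d} V_\lambda^{\,H}\otimes_k \mathcal{E}_\lambda \cong \bigoplus_{\lambda\vdash d}\mathcal{E}_\lambda^{\oplus \dim V_\lambda^{H}}. \]
Frobenius reciprocity then rewrites the exponent as $\dim V_\lambda^{H} = \dim\Hom_H(\mathbf{1},\Res^{S_d}_H V_\lambda) = \mult(V_\lambda, \Ind^{S_d}_H\mathbf{1})$, so that the splitting type of $(\res_H\varphi)_\ast\OO_{\res_H C}$ is exactly the union over all $\lambda\vdash d$ of the splitting type of $\mathcal{E}_\lambda$, each taken with multiplicity $\mult(V_\lambda,\Ind^{S_d}_H\mathbf{1})$.

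Finally I would pass from the full splitting type to the scrollar invariants by stripping off one copy of $\OO_{\PP^1}$. The trivial partition $\lambda=(d)$ contributes precisely this: its scrollar invariant is $0$, so $\mathcal{E}_{(d)}\cong\OO_{\PP^1}$, and $\mult(V_{(d)},\Ind^{S_d}_H\mathbf{1})=1$ because the permutation representation on the transitive $S_d$-set $S_d/H$ contains the trivial representation with multiplicity one. Removing this summand therefore amounts to discarding the partition $(d)$, leaving the union over the non-trivial $\lambda$ with the stated multiplicities, which is the assertion of the theorem.

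The main obstacle I anticipate is the first step: making the descent identification $(\res_H\varphi)_\ast\OO_{\res_H C}\cong(\overline{\varphi}_\ast\OO_{\overline{C}})^{H}$ precise as an isomorphism of vector bundles on all of $\PP^1$, including over the branch points of $\varphi$ where $\overline{C}\to\PP^1$ ramifies. This is exactly where tameness (invertibility of $\lvert H\rvert$) is needed, and where one must verify that the $H$ used to form the geometric quotient is literally the Galois action underlying the isotypic decomposition, so that $H$-invariants act on the $V_\lambda$-factor as claimed.
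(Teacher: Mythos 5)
Your proposal is correct, but it proves the theorem by a genuinely different route than the paper: you argue sheaf-theoretically, in the Mehta--Seshadri style that the paper mentions in~\ref{ssec:MS} and at the start of Section~\ref{sec:scrollar.invs} as a possible alternative, whereas the paper's own proof uses explicit function-field arithmetic with reduced bases in the style of Hess. Concretely, the paper takes a reduced basis of each $W_\lambda$ that is a union of reduced bases of irreducible summands $V_i \cong V_\lambda$ (proof of Corollary~\ref{cor:scrollar.invariants.Wlambda}), cuts out reduced bases of the fixed spaces $V_i^H$ by Frobenius reciprocity (Lemma~\ref{lem: induced and fixed space}, applied over $k$ rather than $k(t)$), and then---this is the paper's substitute for your descent step---verifies that the union of these bases over all $\lambda$ is an honest reduced basis of $\OO_{L^H}$ and $\OO_{L^H}^\infty$, by hitting an arbitrary integral element with the central projectors $\rho \in Z(k[S_d])$ of Lemma~\ref{lem: projecting to a single W}, which are defined over $k$ and hence preserve integrality both at $0$ and at $\infty$. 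Your Reynolds-operator identification $(\res_H\varphi)_\ast\OO_{\res_H C} \cong \bigl(\overline{\varphi}_\ast\OO_{\overline{C}}\bigr)^H$ replaces that integrality computation, and your use of the equivariant decomposition $\overline{\varphi}_\ast\OO_{\overline{C}} \cong \bigoplus_{\lambda} V_\lambda \otimes_k \mathcal{E}_\lambda$ together with uniqueness of splitting types on $\PP^1$ replaces the reduced-basis bookkeeping. Two remarks. First, that equivariant decomposition (with the $S_d$-action living only on the $V_\lambda$-factor) is merely cited, not proved, in~\ref{ssec:MS}, so a complete write-up must establish it at the sheaf level; this is the standard Morita/semisimplicity argument, valid because $k[S_d]$ is split semisimple when $\charac k = 0$ or $\charac k > d$, so it is a fillable gap, and you correctly flag it as the point needing care. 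Second, the paper's longer route has a payoff your argument does not provide: the explicit reduced bases of $\OO_{L^H}$ constructed in its proof are re-used later, e.g.\ in the proofs of Proposition~\ref{prop:genus.irrep}, Proposition~\ref{prop:scrollar.invs.duality} and Theorem~\ref{thm:schreyerisscrollar}, whereas your approach yields the splitting types without producing such bases.
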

\noindent In other words, the decomposition of $(\res_H \varphi)_\ast \mathcal{O}_{\res_H C }$ is obtained by taking $\mult(V_\lambda, \Ind^{S_d}_H \mathbf{1})$ 
horizontal slices of the block~\eqref{eq:multiscrollarVB} corresponding to $\lambda$, for each partition $\lambda \vdash d$.
A proof can be found in Section~\ref{sec:scrollar.invs}. Note that some multiplicities may be zero, in which case the corresponding scrollar invariants do not appear. E.g., for $d=4$ 
and $D_4 = \langle (1\,2), (1 \, 3 \, 2 \, 4) \rangle$ the dihedral group of order $8$ one has $\Ind^{S_4}_{D_4} \mathbf{1} \cong V_{(4)} \oplus V_{(2^2)} $,
which in combination with Theorem~\ref{thm:schreyerisscrollar} shows that the scrollar invariants of $\res_{D_4} C$ with respect to $\res_{D_4} \varphi$ are given by $b_1$ and $b_2$. %~\eqref{eq:scrollarrecillas}. 
 This is not a coincidence, as the resolvent with respect to $D_4$ (also known as ``Lagrange's cubic resolvent") is nothing but the degree $3$ covering found through Recillas' trigonal construction~\cite[\S8.6]{vangeemen}. 
 
 As another exemplary corollary to Theorems~\ref{thm:schreyerisscrollar} and~\ref{thm:scrollar.invariants.resolvent} we state:
\begin{theorem} \label{thm:S2Sd-2}
Consider a simply branched degree $d \geq 4$ cover $\varphi : C \to \PP^1$ over a field $k$ with $\charac k = 0$ or $\charac k > d$. Let $H$ be the Young subgroup $S_2 \times S_{d-2}$ of $S_d$. 
Then the multi-set of scrollar invariants of $\res_H C$ with respect to $\res_H \varphi$ is obtained by taking the union of the multi-sets
\begin{itemize}
  \item $\{e_1, e_2, \ldots, e_{d-1} \}$, the scrollar invariants of $C$ with respect to $\varphi$, and 
  \item 
$\{ b_1 , b_2, \ldots, b_{d(d-3)/2} \}$,
the splitting type of the first syzygy bundle of $C$ with respect to $\varphi$.
\end{itemize}
\end{theorem}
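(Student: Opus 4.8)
The plan is to deduce this from Theorems~\ref{thm:schreyerisscrollar} and~\ref{thm:scrollar.invariants.resolvent}, so that the only non-formal ingredient is a single representation-theoretic computation: the decomposition of $\Ind^{S_d}_H \mathbf{1}$ into irreducibles for $H = S_2 \times S_{d-2}$. Once this decomposition is known, Theorem~\ref{thm:scrollar.invariants.resolvent} tells us exactly which multi-sets of scrollar invariants of partitions to collect, and with what multiplicities, and the two earlier results let us rewrite the relevant multi-sets in the stated form.

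First I would identify $\Ind^{S_d}_{S_2 \times S_{d-2}} \mathbf{1}$ with the permutation representation of $S_d$ on the unordered pairs $\{a,b\} \subseteq \{1, \ldots, d\}$, that is, with the Young permutation module $M^{(d-2,2)}$ (here $d \geq 4$ guarantees $d-2 \geq 2$). By Young's rule the multiplicity of $V_\lambda$ in $M^{(d-2,2)}$ is the Kostka number $K_{\lambda,(d-2,2)}$, and a direct count of semistandard tableaux of content $(d-2,2)$ gives
\[ \Ind^{S_d}_{S_2 \times S_{d-2}} \mathbf{1} \cong V_{(d)} \oplus V_{(d-1,1)} \oplus V_{(d-2,2)}, \]
with each constituent of multiplicity one. (This is the $k=2$ instance of the standard decomposition of the permutation module on $k$-subsets; one may double-check it via the dimension identity $1 + (d-1) + \tfrac{d(d-3)}{2} = \binom{d}{2}$.)

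Next I would feed this into Theorem~\ref{thm:scrollar.invariants.resolvent}. Among the three constituents only the non-trivial partitions $(d-1,1)$ and $(d-2,2)$ enter the union; the trivial partition $(d)$ is excluded. As each of the two non-trivial constituents occurs with multiplicity $\mult(V_\lambda, \Ind^{S_d}_H \mathbf{1}) = 1$, the multi-set of scrollar invariants of $\res_H C$ with respect to $\res_H \varphi$ is exactly the union, taken once each, of the scrollar invariants of $(d-1,1)$ and of $(d-2,2)$. It then remains to translate these two multi-sets. For $(d-1,1)$, Proposition~\ref{prop:scrollar.invs.of.hooks} with $i=1$ yields precisely $\{e_1, \ldots, e_{d-1}\}$. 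For $(d-2,2) = (d-1-1,\,2,\,1^{0})$, Theorem~\ref{thm:schreyerisscrollar} with $i=1$ identifies its scrollar invariants with the splitting type $\{b_1, \ldots, b_{\beta_1}\}$ of the first syzygy bundle, where $\beta_1 = \tfrac{d}{2}(d-3)\binom{d-2}{0} = d(d-3)/2$ by~\eqref{eq:schreyerinvs}; taking the union of the two multi-sets gives the asserted statement.

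Since every step after the branching computation is a verbatim application of the two cited theorems, there is no genuine obstacle here. The only point deserving care—and which I would state explicitly rather than leave implicit—is the decomposition $M^{(d-2,2)} \cong V_{(d)} \oplus V_{(d-1,1)} \oplus V_{(d-2,2)}$; in particular one must check that no further partition (e.g.\ $(d-2,1,1)$, which fails to dominate $(d-2,2)$) contributes, so that the multiplicities are exactly as claimed and the two multi-sets appear with multiplicity one each.
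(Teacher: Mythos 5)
Your proof is correct and follows essentially the same route as the paper: the paper's own proof consists of exactly the decomposition $\Ind^{S_d}_{S_2\times S_{d-2}} \mathbf{1} \cong V_{(d)} \oplus V_{(d-1,1)} \oplus V_{(d-2,2)}$ combined with Proposition~\ref{prop:scrollar.invs.of.hooks}, Theorem~\ref{thm:schreyerisscrollar} and Theorem~\ref{thm:scrollar.invariants.resolvent}. Your justification of that decomposition via Young's rule and Kostka numbers, and the dimension check $1 + (d-1) + \tfrac{d(d-3)}{2} = \binom{d}{2}$, merely make explicit a computation the paper leaves to the reader.
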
 
\noindent The (very short) proof can be read in~\ref{ssec:SdSd-2proof}.

\subsection{Syzygies from Galois representations.} \label{ssec:veryintro} 
The main auxiliary tool behind Theorem~\ref{thm:schreyerisscrollar} is an explicit, purely Galois-theoretic way of constructing minimal graded free resolutions of
certain configurations of $d \geq 4$ points in $\PP^{d-2}$, that were introduced by Bhargava in
the context of ring parametrizations~\cite[\S2]{bhargavaquinticrings}. These configurations arise by considering a degree $d$ extension $F \subseteq K$ of fields with $\charac F = 0$ or $\charac F > d$, along with a basis
$\alpha_0 = 1, \alpha_1, \ldots, \alpha_{d-1}$ of $K$ over $F$.
Denote by $L$ the Galois closure; we assume for convenience that $\Gal(L/F)$ is the full symmetric group $S_d$. Write $\sigma_1 = \id, \sigma_2, \ldots, \sigma_d$ for the embeddings $K \hookrightarrow L$ that fix $F$ element-wise,
take the dual basis 
$\alpha_0^\ast, \alpha_1^\ast, \ldots, \alpha_{d-1}^\ast$ with respect to $\Tr_{L/F}$, and define
\[ \alpha_i^{\ast (j)} = \sigma_j(\alpha_i^\ast) \]
for each $i = 0, 1, \ldots, d-1$ and $j = 1, \ldots, d$.
Then the requested points in $\PP^{d-2}$ are
\begin{equation} \label{eq:asspoints} 
[\alpha_1^{* (1)} : \ldots : \alpha_{d-1}^{* (1)}], \ [\alpha_1^{* (2)} : \ldots : \alpha_{d-1}^{* (2)}], \  \ldots, \ [\alpha_1^{* (d)} : \ldots : \alpha_{d-1}^{* (d)}].
\end{equation}
No $d-1$ of these points lie on a hyperplane, so they are ``in general position". 

Thus, from~\cite[(4.2)]{schreyer} we know that any minimal graded free resolution of their joint coordinate ring must have
\begin{equation*} %\label{eq:bettitable}
    \begin{array}{c|cccccc}
        & 0 & 1 & 2 & \ldots & d-3 & d-4 \\
    \hline 
    0 & 1 & 0 & 0 & \ldots & 0 & 0 \\
    1 & 0 & \beta_1 & \beta_2 & \ldots & \beta_{d-3} & 0 \\
    2 & 0 & 0    & 0       & \ldots & 0 & 1 
    \end{array} 
\end{equation*}
as its Betti table.
In our minimal graded free resolution, the details of which can be found in Section~\ref{sec:minfreerep}, the $i$th syzygy module arises from the isotypic subrepresentation  
$W_{\lambda_{i+1}} \subseteq L$ corresponding to the partition $\lambda_{i+1} = (d-i-1,2,1^{i-1})$, where as before we view $L$ as the regular representation of $S_d$ through the normal basis theorem.

The connection between the partitions $\lambda_{i+1}$ and syzygies of $d$ general points in $\PP^{d-2}$ is not a new observation: this was studied by Wilson~\cite[\S5]{wilsonphd}. 
In his discussion, the syzygy modules are constructed from
the Specht modules $V_{\lambda_{i+1}}$; we tend to think of these as ``vertical slices" of the corresponding isotypic components $W_{\lambda_{i+1}}$. 
Our new Galois-theoretic construction is somehow orthogonal to this 
and uses ``horizontal slices", which are not representations. 
%In particular, our resolution is not equivariant; 
Nonetheless, as we will see, they better suit our needs.

\subsection{} \label{ssec:intro_relative_embedding}
In Section~\ref{sec:schreyer.invs.scrollar} we will explain how Bhargava's point configuration shows up very naturally when studying the
geometric generic fiber of our cover $\varphi : C \to \PP^1$, henceforth assumed to be of degree $d \geq 4$.
In more detail, from Casnati--Ekedahl~\cite[Thm.\,2.1]{casnati_ekedahl} we know that $\varphi$ decomposes as
\[ C \stackrel{\iota}{\longhookrightarrow} \PP(\mathcal{E}) \stackrel{\pi}{\longrightarrow} \PP^1, \qquad 
\mathcal{E} = \OO_{\PP^1}(e_1) \oplus \OO_{\PP^1}(e_2) \oplus \ldots \oplus \OO_{\PP^1}(e_{d-1}),
 \]
with $\pi$ the natural $\PP^{d-2}$-bundle map and $\iota$ the ``relative canonical embedding".\footnote{
The standard (i.e., absolute) canonical map $C \to \PP^{g-1}$ is obtained from $\iota$ by composing it with the ``tautological map" 
\[ \kappa : \PP(\mathcal{E}) \stackrel{\cong}{\rightarrow} \PP(\mathcal{E}(-2)) \to \PP^{e_1 + \ldots + e_{d-1} - d} = \PP^{g-1}, \] 
the image of which is a rational normal scroll; see~\cite[\S1]{eisenbudharris}. If $e_1 > 2$ then $\kappa$ is an embedding, in which case 
one can reverse the construction and recover $\PP(\mathcal{E})$ from the canonical model of $C$ as the union
of the linear spans inside $\PP^{g-1}$ of the fibers of $\varphi$. Each such linear span is indeed a $\PP^{d-2}$,
by the geometric Riemann--Roch theorem. This is Schreyer's original approach from~\cite{schreyer}. \label{footnote:tautological}
}  
By identifying $C$ with its relative canonical image, we can view each geometric fiber of $\varphi$, including the geometric generic fiber, as a configuration of $d$ points in $\PP^{d-2}$; an illustration of the case $d = 4$ can be found in Figure~\ref{fig:constructionofscroll}.
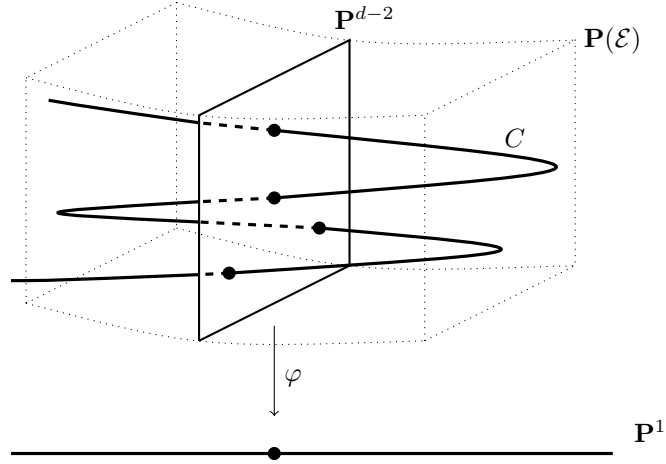
\begin{figure}[ht] 
\begin{center}
 \begin{tikzpicture}
   \draw [thick] (0,0) -- (2,1) -- (2,4) -- (0,3) -- (0,0);
   \node at (2.2,4.3) {\small $\PP^{d-2}$};
   \draw [fill=black,black] (1,2.8) circle (0.08);
   \draw [fill=black,black] (1,1.9) circle (0.08);
   \draw [fill=black,black] (1.6,1.5) circle (0.08);   
   \draw [fill=black,black] (0.4,0.9) circle (0.08);   
   \draw [->] (1,0.2) -- (1,-1);
   \draw [very thick] (-2.5,-1.5) -- (5.5,-1.5);
   \node at (6,-1.2) {\small $\PP^1$};
   \node at (1.25,-0.5) {\small $\varphi$};
   \draw [fill=black,black] (1,-1.5) circle (0.08);
   \draw [very thick] (1, 2.8) .. controls (6,2.3) .. (1, 1.9);
   \draw [very thick] (1.6, 1.5) .. controls (5,1.2) .. (0.4, 0.9);      
   \draw [very thick, dashed] (1, 2.8) -- (0, 2.9);
   \draw [very thick] (0,2.9) .. controls (-1.4,3.1) .. (-2, 3.2);
   \draw [very thick, dashed] (1, 1.9) -- (0, 1.85);
   \draw [very thick, dashed] (1.6, 1.5) -- (0, 1.58);
   \draw [very thick] (0, 1.85) .. controls (-2.5,1.7) .. (0, 1.58);
   \draw [very thick, dashed] (0.4, 0.9) -- (0, 0.88);
   \draw [very thick] (0, 0.88) .. controls (-2, 0.8) .. (-2.5, 0.8);
   \draw [thin, dotted] (3,0) -- (5,1) -- (5,4) -- (3,3) -- (3,0);
   \draw [thin, dotted] (-2.3,0.5) -- (-0.3,1.5) -- (-0.3,4.5) -- (-2.3,3.5) -- (-2.3,0.5);
   \draw [thin, dotted] (-2.3,0.5) .. controls (0,-0.1) .. (3,0);
   \draw [thin, dotted] (-0.3,1.5) .. controls (2,0.9) .. (5,1);
   \draw [thin, dotted] (-0.3,4.5) .. controls (2,3.9) .. (5,4);
   \draw [thin, dotted] (-2.3,3.5) .. controls (0,2.9) .. (3,3);
   \node at (5.5,4) {\small $\PP(\mathcal{E})$};
   \node at (4.2,2.7) {\small $C$};
 \end{tikzpicture}
\end{center}
\caption{\small Configuration of $d$ points in $\PP^{d-2}$ attached to $\varphi$.}
\label{fig:constructionofscroll}
\end{figure}
The field of definition of any point in the support of the geometric generic fiber can be identified with $k(C)$. It is then always possible
to equip that point with projective coordinates $[\alpha_1^\ast : \alpha_2^\ast : \ldots : \alpha_{d-1}^\ast]$ satisfying $\Tr_{k(C)/k(t)}(\alpha_i^\ast) = 0$ for all $i = 1, \ldots, d-1$. By dualizing, we obtain a basis $1, \alpha_1, \ldots, \alpha_{d-1}$ of $k(C)$ over $k(t)$ from which the geometric generic fiber of $\varphi$ is recovered through Bhargava's construction.
(We will actually need a slightly stronger fact, namely that $1, \alpha_1, \ldots, \alpha_{d-1}$ can be arranged to be a so-called ``reduced basis" of $k(C)$ over $k(t)$; see~\ref{ssec:bhargava_is_relativecanonical} for further details.)

\subsection{} \label{ssec:schreyer_inf} 
Casnati--Ekedahl, building on 
Schreyer, further showed that a minimal graded free resolution of the geometric generic fiber can be completed to a minimal resolution of $C$ relative to $\PP(\mathcal{E})$:
\begin{multline}  
 0 \rightarrow \mathcal{O}_{\PP(\mathcal{E})} (-dH + (g-d-1)R) \rightarrow 
 \\
\bigoplus_{j=1}^{\beta_{d-3}} \mathcal{O}_{\PP(\mathcal{E})}(-(d-2)H + b_j^{(d-3)}R) \rightarrow \bigoplus_{j=1}^{\beta_{d-4}} \mathcal{O}_{\PP(\mathcal{E})}(-(d-3)H + b_j^{(d-4)}R)  \rightarrow   \label{Schresolution} \\
\cdots \rightarrow \bigoplus_{j=1}^{\beta_1} \mathcal{O}_{\PP(\mathcal{E})}(-2H + b_j^{(1)}R) \rightarrow \mathcal{O}_{\PP(\mathcal{E})} \rightarrow \mathcal{O}_C \rightarrow 0.
\end{multline}
Here $R = [\pi^\ast \mathcal{O}_{\PP^1}(1)]$ and $H = [\mathcal{O}_{\PP(\mathcal{E})}(1)]$ denote the $\PP^{d-2}$-ruling and the class of ``hyperplane sections",\footnote{More precisely: $\mathcal{O}_{\PP(\mathcal{E})}(1) = j^\ast \mathcal{O}_{\PP^{g + 2d - 3}}(1)$ with $j : \PP(\mathcal{E})\to \PP^{e_1 + \ldots + e_{d-1} + d-2}$ the ``tautological map" from~\cite[\S1]{eisenbudharris}, but now associated with $\mathcal{E}$ rather than with $\mathcal{E}(-2)$ as was the case in Footnote~\ref{footnote:tautological} (this difference is the source of the ambiguity mentioned in~\ref{ssec:differentschreyer}).} respectively, known to form a $\ZZ$-basis of $\Pic(\PP(\mathcal{E}))$; see~\cite{eisenbudharris,schreyer}. 
 This introduces the Schreyer invariants from~\eqref{eq:schreyerinvs}. 
%We refer to the elements of a splitting type as ``Schreyer invariants" of $C$ with respect to $\varphi$. 
At this point, it should come as no surprise to the reader that our strategy to prove Theorem~\ref{thm:schreyerisscrollar} will be to apply this construction to the minimal graded free resolution discussed in~\ref{ssec:veryintro}. 
% Under the assumption that $\varphi$ is simply branched, i.e.\ all non-trivial ramification is
%of type $(2, 1^{d-2})$, the main by-product of our resolution from~\ref{ssec:veryintro} will be that all 
%Schreyer invariants are also scrollar invariants, up to some shift by a small fixed integer. 
%As an aside,
%let us point out that the scrollar invariants $e_i$ are the function-field analogues of $\log \lambda_i$, with $\lambda_1 \leq \lambda_2 \leq \ldots \leq \lambda_{d-1}$ the successive minima of the Minkowski lattice attached to a degree-$d$ number field; thus, our result implies that the Schreyer invariants are ``geometric", in Minkowski's sense.

%The existing literature contains one manifestation of this surprising fact: if $d = 4$ and $\varphi$ admits no ramification of type $(2^2)$ or $(4)$, then the scrollar invariants of the degree $3$ covering obtained from $\varphi$ through Recillas' trigonal construction~\cite{recillas} are given by 
%\begin{equation} \label{eq:scrollarrecillas}
%b_1^{(1)} + 2 \qquad \text{and} \qquad b_2^{(1)} + 2, \end{equation}
%see Casnati~\cite[Def.\,6.3-6.4]{casnati} or Deopurkar--Patel~\cite[Prop.\,4.6]{deopurkar_patel}. 
%By adopting a function-field theoretic point of view, 
% this allows for a vast generalization, as we now discuss in more detail.

\subsection{} \label{ssec:differentschreyer} Recall from~\ref{ssec:differentscrollar} that the scrollar invariants are being defined inconsistently in the existing literature. There is a corresponding ambiguity for the Schreyer invariants: several references, including Schreyer's original treatment~\cite{schreyer}, instead define them as
\[ 
  b_i^{(j)} - 2i - 2.
\]
The reason for the shifts is that working with $\mathcal{E}(-2)$ makes it more natural to use the generator $H - 2R \in \Pic(\PP(\mathcal{E}))$ rather than $H$. Thus, when interpreting our results for the invariants as they were initially introduced by Schreyer, the shifts must be taken into account. 
%The normalizations chosen in the current paper, both for the scrollar invariants and the Schreyer invariants, result in the most natural versions of Proposition~\ref{prop:scrollar.invs.of.hooks}
%and Theorem~\ref{thm:schreyerisscrollar}.

\subsection{Further applications and remarks.}
Our work has three immediate further applications, which are discussed in Section~\ref{sec:applications}. 
Firstly, Theorem~\ref{thm:scrollar.invariants.resolvent} in combination with Proposition~\ref{prop:scrollar.invs.of.hooks} and Theorem~\ref{thm:schreyerisscrollar} gives a way of constructing many new examples of multi-sets of integers that are realizable as the multi-set of
scrollar invariants of some $\PP^1$-cover. Most of these examples are highly non-balanced, i.e., there are large gaps between the scrollar invariants, so they live in a different regime from the ones provided by the existing literature, see e.g.~\cite{ballico,linearpencils,coppens,coppensmartens}.
Secondly, since the Schreyer invariants are scrollar, they are 
subject to the ``Maroni bound" coming from the Riemann--Roch theorem, which leads to non-trivial upper and lower bounds that seem unreported. Thirdly, from Theorem~\ref{thm:scrollar.invariants.resolvent} we see that Gassmann equivalent subgroups of $S_d$ (see~\ref{ssec:gassmannequiv}) give rise to resolvent covers having coinciding multi-sets of scrollar invariants. The number-theoretic counterpart of this statement reads that arithmetically equivalent number fields have Minkowski lattices with similar-sized successive minima: this fact was recently proved by the second-listed author~\cite{floris_minima}, taking inspiration from the current work.

\subsection{} \label{ssec:balanced} A multi-set $\Sigma$ of integers is ``balanced" if $|\max \Sigma - \min \Sigma \, | \leq 1$. 
Consider the Hurwitz space $\mathcal{H}_{d,g}$ of simply branched degree $d$ covers $\varphi : C \to \PP^1$ by curves of genus $g$.
Ballico~\cite{ballico} has proved that the multi-set of scrollar invariants of a sufficiently general 
element of $\mathcal{H}_{d,g}$ is balanced. 
Bujokas and Patel~\cite[Conj.\,A]{bujokas_patel} have conjectured
that the same is true for the splitting types of the syzygy bundles of a relative minimal resolution, provided that $g$ is sufficiently large with respect to $d$.\footnote{Work by Bopp and Hoff~\cite{bopphoff} shows the necessity of this assumption.} They prove this for the first syzygy bundle subject to the bound $g \geq (d-2)^2$, and for all higher syzygy bundles assuming $g \equiv 1 \bmod d$. 

More generally, for any partition $\lambda \vdash d$ one can wonder about the generic behaviour of its scrollar invariants  on $\mathcal{H}_{d,g}$. By Proposition \ref{prop:scrollar.invs.of.hooks} one cannot expect balancedness for all $\lambda$, even if $g$ is large enough.\footnote{We thank Aaron Landesman for pointing this out to us.} We ask:

\begin{problem}
Let $d \geq 2$ and $g \geq 0$ be integers and let $\lambda \vdash d$ be a non-trivial partition. Consider an algebraically closed field $k$ with $\charac k = 0$ or $\charac k > d$. Under what conditions on $d, g, \lambda$ can we conclude that the scrollar invariants of $\lambda$ with respect to  
a general element $\varphi : C \to \PP^1$ of $\mathcal{H}_{d, g}$ over $k$ are balanced?
%which partitions $\lambda\vdash d$ are the scrollar invariants of $\lambda$ with respect to a general element $\varphi: C\to \PP^1$ of $\HH_{d,g}$ balanced?
\end{problem}

%More generally, one can ask what the scrollar invariants of a partition $\lambda\vdash d$ are with respect to a general element $\varphi: C\to \PP^1$ of $\HH_{d,g}$.

A weaker question is answered in a recent preprint by Landesman and Litt~\cite[Ex.\,1.3.7]{LandesmanLitt}, who prove that the scrollar invariants of any $\lambda \vdash d$ with respect to a sufficiently general element $\varphi: C \to \PP^1$ of $\mathcal{H}_{d,g}$ are always ``consecutive": if $e_{\lambda, 1}\leq \ldots \leq e_{\lambda, \dim V_\lambda}$ are the scrollar invariants of $\lambda$ with respect to $\varphi$ then $e_{\lambda, i+1} - e_{\lambda, i} \leq 1$ for $i=1, \ldots, \dim V_\lambda -1$.

\subsection{Acknowledgements} The first-listed author is supported by the European Research Council (ERC) with grant nr.\ 101020788 Adv-ERC\-ISOCRYPT, by CyberSecurity Research Flanders with ref.\ VR20192203, and by Research Council KU
Leuven with grant nr.\ C14/18/067. The second-listed author is supported by the Research Foundation -- Flanders
(FWO) with grant nr.\ 11F1921N. The third-listed author is supported by the National Natural Science Foundation
of China with grant nr.\ 12071371. We have benefited from conversations with 
Alex Bartel, Marc Coppens, Lifan Guan, Florian Hess, Michael Hoff, Aaron Landesman, Alexander Lemmens, Dongwen Liu, Wenbo Niu, Frank-Olaf Schreyer, Takashi Taniguchi, Frederik Vercauteren and Yigeng Zhao, all of whom we thank for this.
We have also benefited from an inspiring ``Research in Pairs" stay at the Mathematisches Forschungsinstitut Oberwolfach in 2021. Finally, we owe thanks to an anonymous referee for many suggestions to improve the exposition.

\section{Auxiliary facts from representation theory} \label{sec:representation.theory}

\subsection{} \label{ssec:notation} We begin with some notation. Fix an integer $d \geq 4$. For a partition $\lambda = (d_r, \ldots, d_1)$ of $d$ we interchangeably write
\[ S_{d_1} \times S_{d_2} \times \cdots \times S_{d_r} \qquad \text{and} \qquad S_\lambda\] 
for the Young subgroup $\Sym\{1, \ldots, d_1\} \times \Sym\{d_1 + 1, \ldots, d_2\} \times \cdots \times \Sym\{d - d_r + 1, \ldots, d\}$ of $S_d = \Sym\{1, \ldots, d\}$, obtained by concatenating cycles. 
If no confusion is possible, then for $d' < d$ we will view $S_{d'}$ as a subgroup of $S_d$ by identifying it with $S_1 \times S_1 \times \cdots \times S_1 \times S_{d'}$. For $i \in  \{ 2, \ldots, d-2 \}$ we write $\lambda_i$ to denote the partition $(d-i, 2, 1^{i-2})$. We extend this notation by letting
$\lambda_0 = (d)$, $\lambda_1 = (d-1, 1)$ and $\lambda_d = (1^d)$. Note that we do not define $\lambda_{d-1}$.
If $R$ is a $\ZZ$-graded ring (e.g., a polynomial ring), then for any $i \in \ZZ$ we write $R_i$ to denote its homogeneous degree $i$ part.

\subsection{} We state two basic facts on representations of finite groups, where we work over an arbitrary field $F$ with $\charac F = 0$ or $\charac F > d$. We stick to $S_d$, but modulo further assumptions on $F$ (being algebraically closed of characteristic $0$ is always sufficient) the direct generalizations of these results hold for any finite group and are well-known to specialists. 

%\todo[inline]{Yongqiang}

\begin{lemma}\label{lem: induced and fixed space} %\todo{guess we should be able to find a reference for this one?}
Let $V$ be an irreducible representation of $S_d$ and let $H \subseteq S_d$ be a subgroup. Then
$\dim V^H = \mult (V, \Ind_H^{S_d} \mathbf{1})$.
\end{lemma}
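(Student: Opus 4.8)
The plan is to deduce the identity from Frobenius reciprocity together with Schur's lemma, the only genuinely $S_d$-specific input being the absolute irreducibility of $V$.

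First I would use that induction $\Ind_H^{S_d} = F[S_d] \otimes_{F[H]} (-)$ is left adjoint to restriction $\Res_H$, an adjunction valid over an arbitrary field. Applying it to $\mathbf{1}$ and $V$ yields a natural isomorphism
\[ \Hom_{S_d}(\Ind_H^{S_d}\mathbf{1}, V) \cong \Hom_H(\mathbf{1}, \Res_H V). \]
An $H$-equivariant map out of the trivial representation is determined by the image $v$ of a generator, and $H$-equivariance forces $hv = v$ for all $h \in H$; hence the right-hand side is canonically identified with the fixed space $V^H$. This gives $\dim \Hom_{S_d}(\Ind_H^{S_d}\mathbf{1}, V) = \dim V^H$.

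Next I would compute the left-hand dimension. Since $\charac F = 0$ or $\charac F > d$, we have $\charac F \nmid d! = \lvert S_d \rvert$, so Maschke's theorem applies and $\Ind_H^{S_d}\mathbf{1}$ splits as a direct sum of irreducibles $\bigoplus_\mu V_\mu^{\oplus m_\mu}$, with $m_V = \mult(V, \Ind_H^{S_d}\mathbf{1})$. By Schur's lemma $\Hom_{S_d}(V_\mu, V) = 0$ unless $V_\mu \cong V$, in which case it equals $\operatorname{End}_{S_d}(V)$; therefore
\[ \dim \Hom_{S_d}(\Ind_H^{S_d}\mathbf{1}, V) = m_V \cdot \dim_F \operatorname{End}_{S_d}(V). \]
Combining with the previous paragraph gives $\dim V^H = \mult(V, \Ind_H^{S_d}\mathbf{1}) \cdot \dim_F \operatorname{End}_{S_d}(V)$.

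The main point — and the step carrying the only real content — is therefore to establish $\dim_F \operatorname{End}_{S_d}(V) = 1$, i.e.\ that $V$ is absolutely irreducible over $F$. For the symmetric group this is a standard and special feature: the Specht modules are defined over $\QQ$ (indeed over $\ZZ$) and remain irreducible with trivial endomorphism algebra after base change to any field of characteristic $0$ or $> d$. I would invoke this directly (or, equivalently, note that all ordinary character values of $S_d$ are rational integers, so no nontrivial division algebra or field extension can intervene in $\operatorname{End}_{S_d}(V)$). With $\operatorname{End}_{S_d}(V) = F$ in hand the two computations coincide, yielding the claim. As a sanity check valid in characteristic $0$, one can instead argue entirely with characters: Frobenius reciprocity gives $\mult(V, \Ind_H^{S_d}\mathbf{1}) = \frac{1}{\lvert H \rvert}\sum_{h\in H}\chi_V(h)$, and the same expression is the trace of the idempotent $\frac{1}{\lvert H \rvert}\sum_{h\in H} h$ projecting $V$ onto $V^H$, hence equals $\dim V^H$.
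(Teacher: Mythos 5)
Your proof is correct and follows essentially the same route as the paper's: Frobenius reciprocity applied to $\Hom_H(\mathbf{1},\Res^{S_d}_H V)\cong\Hom_{S_d}(\Ind_H^{S_d}\mathbf{1},V)$ together with the identification of the first space with $V^H$. The one refinement you add is to make explicit that the final equality $\dim\Hom_{S_d}(\Ind_H^{S_d}\mathbf{1},V)=\mult(V,\Ind_H^{S_d}\mathbf{1})$ rests on $\operatorname{End}_{S_d}(V)=F$, i.e.\ the absolute irreducibility of Specht modules over any field of characteristic $0$ or $>d$ --- a point the paper leaves implicit in its standing assumptions on $F$.
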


\begin{proof}
 We have
 \begin{align*}
   \dim V^H & = \mult( \mathbf{1}, \Res^{S_d}_H V) 
             = \dim \Hom_H (\mathbf{1}, \Res^{S_d}_H V) \\
            & = \dim \Hom_{S_d} (\Ind^{S_d}_H \mathbf{1}, V) 
             =  \mult (V, \Ind_H^{S_d} \mathbf{1})
 \end{align*}
 where the third equality follows from Frobenius reciprocity.
\end{proof}

\begin{lemma}\label{lem: projecting to a single W}
Consider a partition $\lambda \vdash d$. Then there exists a unique $\rho \in Z(F[S_d])$ such that $\rho(V_\mu)=0$ for every partition $\mu \neq \lambda$ and such that $\rho$ induces the identity map on $V_\lambda$.
\end{lemma}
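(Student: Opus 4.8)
The plan is to recognize $\rho$ as the primitive central idempotent of the group algebra $F[S_d]$ attached to $\lambda$, and to extract both existence and uniqueness from the Artin--Wedderburn structure of $F[S_d]$. First I would note that the hypothesis $\charac F = 0$ or $\charac F > d$ guarantees $\charac F \nmid d!$, so that $F[S_d]$ is semisimple by Maschke's theorem. Since $S_d$ is split over every field --- the Specht modules $V_\mu$ are absolutely irreducible, and may even be realized over the prime field --- Artin--Wedderburn yields an isomorphism of $F$-algebras
\[ F[S_d] \;\xrightarrow{\ \sim\ }\; \prod_{\mu \vdash d} \operatorname{End}_F(V_\mu), \]
under which an element is sent to the tuple of its actions on the various $V_\mu$.

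Taking centers and using $Z(\operatorname{End}_F V_\mu) = F\cdot \id_{V_\mu}$ (again by absolute irreducibility), this restricts to an isomorphism
\[ Z(F[S_d]) \;\xrightarrow{\ \sim\ }\; \prod_{\mu \vdash d} F, \qquad \rho \longmapsto \bigl(c_\mu(\rho)\bigr)_{\mu \vdash d}, \]
where $c_\mu(\rho)$ is the scalar by which the central element $\rho$ acts on $V_\mu$; that $\rho$ acts by a scalar is Schur's lemma, valid precisely because $\operatorname{End}_{S_d}(V_\mu) = F$. For existence I would take $\rho$ to be the preimage of the tuple having a $1$ in the slot indexed by $\lambda$ and a $0$ in every other slot. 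By construction $\rho$ then acts as the identity on $V_\lambda$ and as $0$ on each $V_\mu$ with $\mu \neq \lambda$, which is exactly the required behaviour. Concretely this $\rho$ is the familiar idempotent $e_\lambda = \tfrac{\dim V_\lambda}{d!}\sum_{g \in S_d} \chi_\lambda(g^{-1})\,g$, and one could alternatively verify its two defining properties directly from the orthogonality relations for the irreducible characters $\chi_\mu$.

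Uniqueness is then immediate from the displayed isomorphism on centers: the two stated conditions say precisely that $c_\lambda(\rho) = 1$ and $c_\mu(\rho) = 0$ for all $\mu \neq \lambda$, so the tuple $\bigl(c_\mu(\rho)\bigr)_\mu$ is completely pinned down, and injectivity of $\rho \mapsto \bigl(c_\mu(\rho)\bigr)_\mu$ forces $\rho$ itself to be unique.

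I expect the only real subtlety --- rather than a genuine obstacle --- to be the bookkeeping around the characteristic hypothesis and absolute irreducibility: one must check that $\charac F > d$ indeed prevents $\charac F$ from dividing $d!$, so that Maschke applies, and that $S_d$ being split over $F$ is what makes the Wedderburn factors full matrix algebras and the endomorphism rings $\operatorname{End}_{S_d}(V_\mu)$ equal to $F$, so that Schur's lemma produces honest scalars rather than elements of a larger division algebra. These are exactly the facts flagged in the paragraph preceding the two lemmas, so I would simply invoke them.
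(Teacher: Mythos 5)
Your proof is correct and takes essentially the same route as the paper: the paper's proof is a one-line citation to Serre's Theorem 8, which is precisely the statement that the primitive central idempotent $e_\lambda = \tfrac{\dim V_\lambda}{d!}\sum_{g \in S_d}\chi_\lambda(g^{-1})\,g$ acts as the identity on the $\lambda$-isotypic part and as zero elsewhere. Your Artin--Wedderburn argument is just a self-contained derivation of that same fact, with the characteristic hypothesis ($\charac F > d$ implies $\charac F \nmid d!$) and the splitness of $S_d$ handled correctly.
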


\noindent (Here $Z(F[S_d])$ denotes the center of the group ring $F[S_d]$.)

\begin{proof}
This follows from~\cite[Thm.\,8]{serreRepr}.
%Denote by $\mathcal{C}_1, \ldots, \mathcal{C}_n$ the conjugacy classes of $S_d$. Define
%\[
%c_i = \sum_{g\in \mathcal{C}_i} g
%\]
%and recall from~\cite[\S1.10]{sagan} that $c_1, \ldots, c_n$ form a basis for $Z(F[S_d])$. The $c_i$ induce $S_d$-equivariant maps on every representation of $S_d$, because $c_i$ is contained in the center of $F[S_d]$. For $\mu$ a partition of $d$, we obtain by Schur's lemma that $c_i$ acts by scalar multiplication on $V_\mu$. In fact
%\[
%c_i: V_\mu\to V_\mu: x\mapsto \frac{\chi_\mu(\mathcal{C}_i)\cdot | \mathcal{C}_i | }{\dim V_\mu},
%\]
%where $\chi_\mu$ is the character of $V_\mu$. Define the matrix $M_{\mu, i} = \frac{\chi_\mu(\mathcal{C}_i)\cdot | \mathcal{C}_i | }{\dim V_\mu}$ and note that this is simply the character table of $S_d$ multiplied on the left by $D_1 = \diag (1/\dim V_\mu)_\mu$ and on the right by $D_2 = \diag (| \mathcal{C}_i|)_i$. By the orthogonality relations, the determinant of the character table is contained in $F^\times$. But so are the determinants of $D_1$ and $D_2$. Hence $\det M \in F^\times$ and by solving a linear system we obtain a unique $\rho\in Z(F[S_d])$ with the desired properties.
\end{proof}

\subsection{} More specific to $S_d$, we have:
\begin{lemma}\label{lem: multiplicity fixed}
For any $2\leq i\leq d-2$ we have $\dim \left(V_{\lambda_i}^{S_{\lambda_i}}\right)=1$.
\end{lemma}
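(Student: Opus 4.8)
The plan is to convert the statement about a fixed subspace into a statement about an irreducible multiplicity, and then to compute that multiplicity combinatorially. By Lemma~\ref{lem: induced and fixed space} applied with $V = V_{\lambda_i}$ and $H = S_{\lambda_i}$, we have
\[ \dim\left( V_{\lambda_i}^{S_{\lambda_i}} \right) = \mult\left( V_{\lambda_i}, \Ind_{S_{\lambda_i}}^{S_d} \mathbf{1} \right), \]
so it suffices to show that the right-hand side equals $1$. The key observation is that $\Ind_{S_{\lambda_i}}^{S_d} \mathbf{1}$ is exactly the Young permutation module $M^{\lambda_i}$ attached to the partition $\lambda_i$, i.e.\ the permutation representation of $S_d$ on the cosets $S_d / S_{\lambda_i}$.

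First I would invoke Young's rule, which decomposes the Young permutation module as $M^\lambda \cong \bigoplus_{\mu \vdash d} V_\mu^{\oplus K_{\mu\lambda}}$, where $K_{\mu\lambda}$ is the Kostka number counting semistandard Young tableaux of shape $\mu$ and content $\lambda$. This is legitimate in our setting because the hypothesis $\charac F = 0$ or $\charac F > d$ forces $F[S_d]$ to be semisimple (Maschke's theorem, using that no prime factor of $d!$ can equal $\charac F$ when $\charac F > d$), so the usual characteristic-zero decomposition theory applies verbatim and the $V_\mu$ exhaust the irreducibles. Reading off the multiplicity of $V_{\lambda_i}$ then gives $\mult(V_{\lambda_i}, M^{\lambda_i}) = K_{\lambda_i, \lambda_i}$, a \emph{diagonal} Kostka number.

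It remains to check the elementary combinatorial fact that $K_{\lambda\lambda} = 1$ for every partition $\lambda$. This follows by a forced-filling argument: in any semistandard tableau of shape $\lambda$ and content $\lambda$, every entry equal to $1$ must lie in the first row because columns strictly increase, and there are exactly $\lambda_1$ such entries, so the first row consists entirely of $1$'s; deleting it and inducting on the number of rows forces row $j$ to consist entirely of $j$'s. Hence there is a unique such tableau, $K_{\lambda_i, \lambda_i} = 1$, and the proof concludes.

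The argument is essentially bookkeeping, so I do not anticipate a serious obstacle; the only points requiring care are (i) correctly matching $\Ind_{S_{\lambda_i}}^{S_d}\mathbf{1}$ with the permutation module $M^{\lambda_i}$ under the paper's indexing convention for Young subgroups, and (ii) confirming that Young's rule is being applied in a characteristic where $F[S_d]$ is semisimple, which the standing hypotheses guarantee. An alternative, more self-contained route would sidestep Young's rule altogether and instead exhibit the fixed space directly inside a concrete model of the Specht module $V_{\lambda_i}$ (for instance via polytabloids), showing that the $S_{\lambda_i}$-invariants are spanned by a single explicit vector; this is more hands-on but avoids citing the Kostka decomposition.
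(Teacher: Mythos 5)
Your proposal is correct and follows essentially the same route as the paper: both apply Lemma~\ref{lem: induced and fixed space} to reduce the claim to $\mult(V_{\lambda_i}, \Ind_{S_{\lambda_i}}^{S_d} \mathbf{1}) = 1$, which the paper simply cites from~\cite[Cor.\,2.4.7]{sagan}. Your only deviation is unpacking that citation into Young's rule plus the forced-filling proof that the diagonal Kostka number $K_{\lambda\lambda}$ equals $1$, together with the (correct) semisimplicity check for $\charac F > d$.
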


\begin{proof}
We know that $\mult (V_{\lambda_i}, \Ind_{S_{\lambda_i}}^{S_d} \mathbf{1}) = 1$, see e.g.~\cite[Cor.\,2.4.7]{sagan}, so
this follows from Lemma~\ref{lem: induced and fixed space}. 
\end{proof}

We also need facts on how certain tensor products decompose into irreducibles:

\begin{lemma} \label{lem: tensorstandard}
 Consider a partition $\lambda \vdash d$. Then
  \[ V_\lambda \otimes V_{(d-1,1)} \cong \bigoplus_{\mu \vdash d} V_\mu^{c_{\mu \lambda} - \delta_{\mu \lambda}}, \]
  where $c_{\mu \lambda}$ equals the number of ways of transforming $\mu$ into $\lambda$ by removing a box and adding a box; here $\delta_{\mu \lambda}$ denotes the Kronecker delta.
\end{lemma}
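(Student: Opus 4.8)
The plan is to exploit the fact that the standard representation $V_{(d-1,1)}$ sits inside the natural permutation module of $S_d$, which is induced from the trivial representation of a point stabilizer. Concretely, writing $M$ for the permutation representation of $S_d$ on $\{1,\dots,d\}$, one has $M \cong \mathbf{1} \oplus V_{(d-1,1)}$ and, identifying the point stabilizer with $S_{d-1} \subseteq S_d$, also $M \cong \Ind_{S_{d-1}}^{S_d} \mathbf{1}$. Tensoring with $V_\lambda$ and invoking the tensor (projection) identity $V_\lambda \otimes \Ind_{S_{d-1}}^{S_d} \mathbf{1} \cong \Ind_{S_{d-1}}^{S_d}\Res_{S_{d-1}}^{S_d} V_\lambda$ reduces everything to an induce-then-restrict computation:
\[ V_\lambda \oplus \left( V_\lambda \otimes V_{(d-1,1)} \right) \cong \Ind_{S_{d-1}}^{S_d} \Res_{S_{d-1}}^{S_d} V_\lambda. \]

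Next I would evaluate the right-hand side by the branching rule for the symmetric group. Restriction gives $\Res_{S_{d-1}}^{S_d} V_\lambda \cong \bigoplus_{\nu} V_\nu$, the sum over all partitions $\nu \vdash d-1$ obtained from $\lambda$ by removing a corner box; induction then gives $\Ind_{S_{d-1}}^{S_d} V_\nu \cong \bigoplus_{\mu} V_\mu$, the sum over all $\mu \vdash d$ obtained from $\nu$ by adding a box. Composing, the multiplicity of $V_\mu$ in $\Ind\Res V_\lambda$ is exactly the number of partitions $\nu \vdash d-1$ with $\nu \subseteq \lambda$ and $\nu \subseteq \mu$, i.e.\ the number of remove-a-box-then-add-a-box routes from $\lambda$ to $\mu$. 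Since such a route is determined by its intermediate partition $\nu$, this count is manifestly symmetric in $\lambda$ and $\mu$ and therefore equals $c_{\mu\lambda}$ as defined in the statement.

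Subtracting the copy of $V_\lambda$ coming from the $\mathbf{1}$-summand of $M$ then yields $V_\lambda \otimes V_{(d-1,1)} \cong \bigoplus_\mu V_\mu^{c_{\mu\lambda} - \delta_{\mu\lambda}}$, as desired; the subtraction is legitimate because $c_{\lambda\lambda}$ counts the removable corners of $\lambda$ and hence is at least $1$. I expect the only genuinely delicate points to be bookkeeping. One must check that the tensor identity applies verbatim (this is standard and character-theoretic, so poses no real difficulty over our field $F$ with $\charac F = 0$ or $\charac F > d$), and, more importantly, that the combinatorial count matches the stated $c_{\mu\lambda}$ including its prescribed direction of \emph{removing then adding}. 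The symmetry observation above --- that both the $\lambda \to \mu$ and $\mu \to \lambda$ counts equal the number of common $(d-1)$-box sub-partitions --- is what makes the orientation in the definition immaterial, and this is the step I would be most careful to state cleanly.
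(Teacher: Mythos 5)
Your proof is correct, and it is genuinely different from what the paper does: the paper's entire ``proof'' of this lemma is a citation to Hamermesh (pp.~257--258), so it supplies no argument at all. Your route --- writing the permutation module as $\mathbf{1} \oplus V_{(d-1,1)} \cong \Ind_{S_{d-1}}^{S_d} \mathbf{1}$, applying the projection formula $V_\lambda \otimes \Ind_{S_{d-1}}^{S_d} \mathbf{1} \cong \Ind_{S_{d-1}}^{S_d} \Res_{S_{d-1}}^{S_d} V_\lambda$, and then computing the right-hand side with the two branching rules --- is the standard clean derivation, and every ingredient you use (semisimplicity of $F[S_d]$ and $F[S_{d-1}]$, absolute irreducibility and rationality of Specht modules, the branching rules) is valid under the paper's hypothesis $\charac F = 0$ or $\charac F > d$. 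Your bookkeeping is also right: the multiplicity of $V_\mu$ in $\Ind\Res V_\lambda$ is the number of common one-box-smaller partitions $\nu \vdash d-1$ of $\lambda$ and $\mu$, which is manifestly symmetric and hence agrees with the paper's $c_{\mu\lambda}$ despite its ``remove from $\mu$, add to reach $\lambda$'' orientation; and the subtraction of the single copy of $V_\lambda$ is legitimate since $c_{\lambda\lambda} \geq 1$ (every nonempty partition has a removable corner). What your approach buys is a self-contained proof that makes the symmetry $c_{\mu\lambda} = c_{\lambda\mu}$ and the nonnegativity of the exponents transparent; what the citation buys the authors is brevity, plus a pointer to the same source they reuse for the companion Lemma on $V_\lambda \otimes V_{(d-2,2)}$, whose analogous statement would be more tedious to derive by your method (though still possible by iterating it, as the paper itself remarks).
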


\begin{proof}
This can be found in~\cite[p.\,257-258]{hamermesh}.
\end{proof}

\noindent It is understood that the box removals are valid, in the sense that they result in the Young diagram of a partition of $d-1$. Writing $\lambda = (d_r, \ldots, d_1)$, we note that Lemma~\ref{lem: tensorstandard} admits the rephrasing
\[ V_\lambda \otimes V_{(d-1,1)} \cong V_\lambda^{| \{d_1, \ldots, d_r \}| - 1} \oplus \bigoplus_{\mu} V_\mu, \]
with $\mu$ ranging over all partitions whose Young diagram can be obtained from that of $\lambda$ by removing one box and adding \emph{another} box. Warning: here $|\{d_1, \ldots, d_r\}|$ denotes the cardinality as a set, rather than as a multi-set.

\begin{lemma} \label{lem: tensornotsostandard}
  Consider a partition $\lambda \vdash d$. Then
  \[ V_{\lambda} \otimes V_{(d-2,2)} \cong \bigoplus_{\mu \vdash d} V_\mu^{-c_{\mu \lambda} + \frac{1}{2} (d_{\mu \lambda} + e_{\mu \lambda} - e'_{\mu \lambda} ) }, \]
  where $c_{\mu \lambda}$, $d_{\mu \lambda}$, $e_{\mu \lambda}$, $e'_{\mu \lambda}$ denote
  the number of ways of transforming $\mu$ into $\lambda$ by 
  \begin{itemize}
    \item removing a box and adding a box,
    \item consecutively removing two boxes and consecutively adding two boxes,
    \item removing two horizontally adjacent boxes and adding two horizontally adjacent boxes, or removing two vertically adjacent boxes and adding two vertically adjacent boxes,
    \item removing two horizontally adjacent boxes and adding two vertically adjacent boxes, or removing two vertically adjacent boxes and adding two horizontally adjacent boxes,
  \end{itemize}
  respectively.
\end{lemma}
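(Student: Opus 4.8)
The plan is to work in the Grothendieck ring of $S_d$-representations, rewriting the troublesome factor $V_{(d-2,2)}$ as a combination of honest permutation modules, where the projection formula reduces everything to Pieri-type (Littlewood--Richardson with a two-box partition) computations. First I would record the two-row instance of Young's rule, $\Ind_{S_{d-k}\times S_k}^{S_d}\mathbf 1\cong\bigoplus_{j=0}^{k}V_{(d-j,j)}$, which for $k=1,2$ yields
\[
V_{(d-2,2)}=\Ind_{S_{d-2}\times S_2}^{S_d}\mathbf 1-\Ind_{S_{d-1}}^{S_d}\mathbf 1
\]
in the representation ring. Tensoring with $V_\lambda$ and applying the tensor identity $V_\lambda\otimes\Ind_H^{S_d}\mathbf 1\cong\Ind_H^{S_d}\Res_H^{S_d}V_\lambda$ turns the problem into restricting $V_\lambda$ to $S_{d-2}\times S_2$ (resp.\ to $S_{d-1}$) and inducing back up.

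By the branching/Pieri rule, the multiplicity of $V_\mu$ in $V_\lambda\otimes\Ind_{S_{d-2}\times S_2}^{S_d}\mathbf 1$ equals $\sum_{\kappa\vdash d-2}\bigl(c^\lambda_{\kappa,(2)}c^\mu_{\kappa,(2)}+c^\lambda_{\kappa,(1^2)}c^\mu_{\kappa,(1^2)}\bigr)$, where the coefficient $c^\lambda_{\kappa,(2)}$ (resp.\ $c^\lambda_{\kappa,(1^2)}$) is $1$ exactly when $\lambda/\kappa$ is a horizontal (resp.\ vertical) strip of size two and $0$ otherwise; the single-box version of the same argument shows that $V_\lambda\otimes\Ind_{S_{d-1}}^{S_d}\mathbf 1$ contributes multiplicity $c_{\mu\lambda}$. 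Hence the multiplicity of $V_\mu$ in $V_\lambda\otimes V_{(d-2,2)}$ is $S(\mu,\lambda)-c_{\mu\lambda}$, where $S(\mu,\lambda)$ counts the $\kappa\vdash d-2$ for which $\lambda/\kappa$ and $\mu/\kappa$ are both horizontal strips, plus those for which both are vertical strips.

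It then remains to identify $S(\mu,\lambda)$ with $\tfrac12(d_{\mu\lambda}+e_{\mu\lambda}-e'_{\mu\lambda})$, and this purely combinatorial matching is the \emph{main obstacle}. The key observation is to classify each common sub-diagram $\kappa$ by the shapes of the two-box skew shapes $\mu/\kappa$ and $\lambda/\kappa$: each is a horizontal domino, a vertical domino, or a pair of boxes in distinct rows and distinct columns (a ``disjoint'' pair). A disjoint pair is simultaneously a horizontal \emph{and} a vertical strip, so such $\kappa$ are counted twice in $S(\mu,\lambda)$, while dominoes are counted once. On the other side, $d_{\mu\lambda}$ counts ordered removal-then-addition sequences, and the number of valid orders for a two-box move is $1$ for a domino (the inner box is not a corner until the outer one is removed) but $2$ for a disjoint pair (either box may go first), while $e_{\mu\lambda}$ and $e'_{\mu\lambda}$ record the like- and unlike-orientation domino pairings. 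Writing $N(X,Y)$ for the number of $\kappa$ with $\mu/\kappa$ of type $X$ and $\lambda/\kappa$ of type $Y$, a short bookkeeping shows that both $S(\mu,\lambda)$ and $\tfrac12(d_{\mu\lambda}+e_{\mu\lambda}-e'_{\mu\lambda})$ equal
\[
N(\mathrm{HD},\mathrm{HD})+N(\mathrm{VD},\mathrm{VD})+N(\mathrm{HD},\mathrm{DJ})+N(\mathrm{DJ},\mathrm{HD})+N(\mathrm{VD},\mathrm{DJ})+N(\mathrm{DJ},\mathrm{VD})+2N(\mathrm{DJ},\mathrm{DJ}),
\]
the off-diagonal terms $N(\mathrm{HD},\mathrm{VD})$ and $N(\mathrm{VD},\mathrm{HD})$ cancelling against $-e'_{\mu\lambda}$. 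Substituting back gives the stated multiplicity $-c_{\mu\lambda}+\tfrac12(d_{\mu\lambda}+e_{\mu\lambda}-e'_{\mu\lambda})$ of $V_\mu$, completing the proof.
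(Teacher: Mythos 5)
Your proof is correct, and it takes a genuinely different route from the paper's: there, the entire proof of this lemma is the one-line citation ``This follows along \cite[p.\,258-259]{hamermesh} or from a double application of Lemma~\ref{lem: tensorstandard}.'' What you do differently is make the statement self-contained. You write $V_{(d-2,2)}=\Ind_{S_{d-2}\times S_2}^{S_d}\mathbf 1-\Ind_{S_{d-1}}^{S_d}\mathbf 1$ via Young's rule, use the projection formula to trade tensoring with these permutation modules for restriction-then-induction, and apply the Pieri rule, so that the multiplicity of $V_\mu$ becomes $S(\mu,\lambda)-c_{\mu\lambda}$, a weighted count of common sub-diagrams $\kappa\vdash d-2$ minus the single-box count. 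Your classification of the two-box skew shapes $\mu/\kappa$ and $\lambda/\kappa$ into horizontal dominoes, vertical dominoes and disjoint pairs is exactly the right bookkeeping device, and I verified the final matching: disjoint pairs are simultaneously horizontal and vertical strips (hence counted twice in $S(\mu,\lambda)$) and admit two removal or addition orders (hence weight $2$ inside $d_{\mu\lambda}$), dominoes admit one order and one orientation, and the mixed-orientation terms $N(\mathrm{HD},\mathrm{VD})+N(\mathrm{VD},\mathrm{HD})$ enter $d_{\mu\lambda}$ with coefficient one and are cancelled precisely by $-e'_{\mu\lambda}$, so indeed $S(\mu,\lambda)=\tfrac12\bigl(d_{\mu\lambda}+e_{\mu\lambda}-e'_{\mu\lambda}\bigr)$ and the lemma follows. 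As for what each approach buys: the paper's citation is short, and its suggested alternative of iterating Lemma~\ref{lem: tensorstandard} stays within single-box moves, but executed honestly that route also requires controlling $V_\lambda\otimes V_{(d-2,1^2)}$, i.e.\ the splitting of $V_{(d-1,1)}^{\otimes 2}$ into symmetric and alternating parts; your argument is longer but explains transparently where the otherwise mysterious factor $\tfrac12$ and the minus sign on $e'_{\mu\lambda}$ come from, and can be checked line by line without consulting the reference. One point you leave implicit: the lemma lives over a field $F$ with $\charac F=0$ or $\charac F>d$, where $F[S_d]$ is semisimple and the Specht modules are absolutely irreducible, so your Grothendieck-ring and character-level manipulations are valid verbatim; this matches the standing assumptions of Section~\ref{sec:representation.theory}, but deserves a sentence.
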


\begin{proof}
This follows along~\cite[p.\,258-259]{hamermesh} or from a double application of Lemma~\ref{lem: tensorstandard}.
\end{proof}

Using that
\begin{equation} \label{eq: decompfirststep} 
  \Sym^2 V_{(d-1,1)} \ \cong \ V_{(d)} \oplus V_{(d-1,1)} \oplus V_{(d-2,2)},
\end{equation}
see~\cite[Prob.\,4.19]{fultonharris}, we get the following corollaries:
\begin{corollary} \label{cor: multiplicity double next}
We have
\[ \mult(V_{\lambda_{i+2}}, V_{\lambda_i}\otimes \Sym^2 V_{(d-1, 1)} ) = \left\{ \begin{array}{ll}
0 & \text{for all $2 \leq i \leq d-4$,} \\
1 & \text{if $i = d-2$.} \\ \end{array} \right. \]
\end{corollary}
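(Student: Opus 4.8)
The plan is to expand $\Sym^2 V_{(d-1,1)}$ via~\eqref{eq: decompfirststep} and evaluate the three resulting multiplicities separately. Since $V_{(d)}$ is the trivial representation we have $V_{\lambda_i}\otimes V_{(d)}\cong V_{\lambda_i}$, so the $V_{(d)}$-summand contributes $\mult(V_{\lambda_{i+2}},V_{\lambda_i})=0$ because $\lambda_{i+2}\neq\lambda_i$. By Lemma~\ref{lem: tensorstandard} the $V_{(d-1,1)}$-summand contributes $c_{\lambda_{i+2},\lambda_i}$ (the Kronecker delta vanishes), and by Lemma~\ref{lem: tensornotsostandard} the $V_{(d-2,2)}$-summand contributes $-c_{\lambda_{i+2},\lambda_i}+\tfrac12\bigl(d_{\lambda_{i+2},\lambda_i}+e_{\lambda_{i+2},\lambda_i}-e'_{\lambda_{i+2},\lambda_i}\bigr)$. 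The two copies of $c_{\lambda_{i+2},\lambda_i}$ cancel, so the total multiplicity equals $\tfrac12\bigl(d_{\lambda_{i+2},\lambda_i}+e_{\lambda_{i+2},\lambda_i}-e'_{\lambda_{i+2},\lambda_i}\bigr)$, and everything reduces to reading off these three domino counts.

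The organizing observation is that each transformation in Lemmas~\ref{lem: tensorstandard} and~\ref{lem: tensornotsostandard} removes and adds a fixed number of boxes, so since the Young diagrams of $\lambda_{i+2}$ and $\lambda_i$ differ in exactly four boxes, the removed set must be $\lambda_{i+2}\setminus\lambda_i$ and the added set must be $\lambda_i\setminus\lambda_{i+2}$ — there is no other choice. In particular $c_{\lambda_{i+2},\lambda_i}=0$, since the diagrams differ by a genuine two-box move and cannot be related by moving a single box. For $2\leq i\leq d-4$ one has $\lambda_{i+2}=(d-i-2,2,1^i)$ and $\lambda_i=(d-i,2,1^{i-2})$, and the symmetric difference consists of the two bottom boxes of the first column of $\lambda_{i+2}$ (a vertical domino) together with the last two boxes of the first row of $\lambda_i$ (a horizontal domino). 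I would check that each domino can be stripped off, respectively attached, one corner at a time through valid intermediate partitions, forcing a unique order, so that $d_{\lambda_{i+2},\lambda_i}=1$; moreover $e_{\lambda_{i+2},\lambda_i}=0$ and $e'_{\lambda_{i+2},\lambda_i}=1$ because we remove a vertical and add a horizontal domino (opposite orientations). This gives $\tfrac12(1+0-1)=0$, hence total multiplicity $0$.

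For the boundary value $i=d-2$ the relevant partitions are $\lambda_{i+2}=\lambda_d=(1^d)$ and $\lambda_i=\lambda_{d-2}=(2,2,1^{d-4})$. Here the symmetric difference consists of the two bottom boxes of the single column of $(1^d)$ (a vertical domino) and the second column $\{(1,2),(2,2)\}$ of $(2,2,1^{d-4})$ (again a vertical domino). Both dominoes now have the same orientation, so $d_{\lambda_d,\lambda_{d-2}}=1$, $e_{\lambda_d,\lambda_{d-2}}=1$ and $e'_{\lambda_d,\lambda_{d-2}}=0$, yielding $\tfrac12(1+1-0)=1$ and total multiplicity $1$. (The value $i=d-3$ is excluded from the statement precisely because $\lambda_{i+2}=\lambda_{d-1}$ is left undefined.)

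The only real subtlety, and the step I would be most careful about, is the bookkeeping of $d$, $e$, $e'$: one must confirm that the removed and added box-sets genuinely form the claimed dominoes — which uses $d-i-2\geq 2$ in the range $2\leq i\leq d-4$, so that $\lambda_{i+2}$ really has the stated shape and the horizontal domino can be appended to the first row without violating the partition inequalities — that each domino admits a unique valid corner-by-corner order, and that no further transformation contributes. Because the symmetric difference is forced to be exactly two dominoes, all three counts take values in $\{0,1\}$, and this also renders harmless any ambiguity in the precise meaning of ``consecutively'' in Lemma~\ref{lem: tensornotsostandard}.
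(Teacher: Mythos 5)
Your proposal is correct and follows essentially the same route as the paper: decompose $\Sym^2 V_{(d-1,1)}$ via~\eqref{eq: decompfirststep}, reduce to Lemmas~\ref{lem: tensorstandard} and~\ref{lem: tensornotsostandard}, and read off the forced domino moves to get $d_{\lambda_{i+2}\lambda_i}=1$, $e_{\lambda_{i+2}\lambda_i}=0$, $e'_{\lambda_{i+2}\lambda_i}=1$ (resp.\ $e=1$, $e'=0$ when $i=d-2$). Your observation that the $c_{\lambda_{i+2}\lambda_i}$ terms cancel between the two lemmas is a small tidy refinement of the paper's argument (which instead notes $c_{\lambda_{i+2}\lambda_i}=0$ directly), but the substance is identical.
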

\begin{proof}
By Lemma~\ref{lem: tensorstandard} we only need to look at $V_{\lambda_{i+2}} \otimes V_{(d-2,2)}$, which we can handle with Lemma~\ref{lem: tensornotsostandard}.
Assume $2 \leq i \leq d-4$. Clearly $c_{\lambda_{i+2} \lambda_i} = 0$. There is only one way to transform
\begin{equation*}
    \begin{tikzpicture}[scale=0.3,baseline=(current  bounding  box.center)]
      \draw[pattern=north west lines, thick] (0,-1) rectangle (1,0);
      \draw[pattern=north west lines, thick] (0,0) rectangle (1,1);
      \draw[thick] (0,1) rectangle (1,2);
      \node at (0.5,3.33) {\small $\vdots$};
      \draw[thick] (0,4) rectangle (1,5);

      \draw[thick] (0,5) rectangle (1,6);
      \draw[thick] (1,5) rectangle (2,6);
      
      \draw[thick] (0,6) rectangle (1,7);
      \draw[thick] (1,6) rectangle (2,7);
      \draw[thick] (2,6) rectangle (3,7);
      \draw[thick] (3,6) rectangle (4,7);      
      \draw[thick] (6,6) rectangle (7,7);
      \node at (5.1,6.5) {\small $\cdots$};
      \draw[thick,decorate,decoration={brace,amplitude=5pt}](0,7.5) -- (7,7.5);
      \draw[thick,decorate,decoration={brace,amplitude=5pt}](-0.5,-1) -- (-0.5,5);    
      \node at (3.5,8.85) {\small $d - i - 2$};
      \node at (-1.6,2) {\small $i$};
    \end{tikzpicture}
    \qquad \text{into} \qquad 
    \begin{tikzpicture}[scale=0.3,baseline=(current  bounding  box.center)]
      \draw[thick] (0,1) rectangle (1,2);
      \node at (0.5,3.33) {\small $\vdots$};
      \draw[thick] (0,4) rectangle (1,5);

      \draw[thick] (0,5) rectangle (1,6);
      \draw[thick] (1,5) rectangle (2,6);
      
      \draw[thick] (0,6) rectangle (1,7);
      \draw[thick] (1,6) rectangle (2,7);
      \draw[thick] (2,6) rectangle (3,7);
      \draw[thick] (3,6) rectangle (4,7);      
      \draw[thick] (6,6) rectangle (7,7);
      \draw[pattern=north west lines, thick] (7,6) rectangle (8,7);
      \draw[pattern=north west lines, thick] (8,6) rectangle (9,7); 
      \node at (5.1,6.5) {\small $\cdots$};
      \draw[thick,decorate,decoration={brace,amplitude=5pt}](0,7.5) -- (9,7.5);
      \draw[thick,decorate,decoration={brace,amplitude=5pt}](-0.5,1) -- (-0.5,5);    
      \node at (4.5,8.85) {\small $d - i$};
      \node at (-2.65,3) {\small $i - 2$};
    \end{tikzpicture} 
    \vspace{0.2cm}
\end{equation*}
by consecutive removal of two boxes and consecutive addition of two boxes, so $d_{\lambda_{i+2} \lambda_i} = 1$.
However, these boxes are vertically resp.\ horizontally adjacent, so $e_{\lambda_{i+2} \lambda_i} = 0$ and 
$e'_{\lambda_{i+2} \lambda_i} = 1$. The case $i = d-2$ follows similarly.
\end{proof}

We also need particular statements about $\Sym^3 V_{(d-1,1)}$ for which we give ad-hoc proofs:
\begin{lemma} \label{lem: sym3}
Assume $d \geq 5$. Then 
\[ \mult(V_{\lambda_3}, \Sym^3 V_{(d-1,1)})  = 
\mult(V_{\lambda_d}, V_{\lambda_{d-3}} \otimes \Sym^3 V_{(d-1,1)}) = 0.\]
\end{lemma}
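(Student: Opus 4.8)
The plan is to reduce both vanishing statements to the single computation $\mult(V_{\lambda_3}, \Sym^3 V_{(d-1,1)}) = 0$, and to extract the latter not by decomposing $\Sym^3 V_{(d-1,1)}$ directly but by passing to $\Sym^3$ of the full permutation module, where Young's rule applies cleanly.

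First I would dispatch the second multiplicity by a sign twist. Since $V_{\lambda_d} = V_{(1^d)}$ is the sign representation and $V_{(1^d)} \otimes V_{(1^d)} \cong \mathbf{1}$, we have
\[ \mult(V_{(1^d)}, V_{\lambda_{d-3}} \otimes \Sym^3 V_{(d-1,1)}) = \mult(\mathbf{1}, (V_{(1^d)}\otimes V_{\lambda_{d-3}}) \otimes \Sym^3 V_{(d-1,1)}). \]
Tensoring with $V_{(1^d)}$ transposes the Young diagram, and in the paper's notation $\lambda_i^\ast = \lambda_{d-i}$, so $V_{(1^d)}\otimes V_{\lambda_{d-3}} \cong V_{\lambda_{d-3}^\ast} = V_{\lambda_3}$. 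As Specht modules of $S_d$ are self-dual, $\mult(\mathbf{1}, V_{\lambda_3}\otimes W) = \mult(V_{\lambda_3}, W)$ for any $W$, whence the second multiplicity equals $\mult(V_{\lambda_3}, \Sym^3 V_{(d-1,1)})$. Everything thus reduces to the first statement.

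For the first statement I would write $V_{(d-1,1)} = M \ominus \mathbf{1}$, where $M$ is the natural permutation module of $S_d$ on $\{1,\ldots,d\}$ (the degree-one monomials $x_1, \ldots, x_d$). Expanding $\Sym^3$ of a direct sum gives
\[ \Sym^3 M \cong \mathbf{1} \oplus V_{(d-1,1)} \oplus \Sym^2 V_{(d-1,1)} \oplus \Sym^3 V_{(d-1,1)}, \]
so $\mult(V_{\lambda_3}, \Sym^3 V_{(d-1,1)}) = \mult(V_{\lambda_3}, \Sym^3 M)$ after subtracting the first three summands, each of whose contributions vanishes: for $d \geq 5$ the partition $\lambda_3 = (d-3,2,1)$ is none of $(d)$, $(d-1,1)$, and — via~\eqref{eq: decompfirststep} — $(d-2,2)$. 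It then remains to show $V_{\lambda_3}$ does not occur in $\Sym^3 M$ at all. Realizing $\Sym^3 M$ as the degree-three monomials and splitting along the three $S_d$-orbits $\{x_i^3\}$, $\{x_i^2 x_j\}$, $\{x_i x_j x_k\}$ identifies it with $M^{(d-1,1)} \oplus M^{(d-2,1,1)} \oplus M^{(d-3,3)}$. By Young's rule (as in~\cite[Cor.\,2.4.7]{sagan}) the multiplicity of $V_{\lambda_3}$ in each $M^\mu$ is the Kostka number $K_{\lambda_3 \mu}$, nonzero only when $\lambda_3 \trianglerighteq \mu$; a one-line check of partial sums shows $(d-3,2,1)$ dominates none of $(d-1,1)$, $(d-2,1,1)$, $(d-3,3)$, so all three Kostka numbers vanish.

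I expect the main obstacle to be bookkeeping rather than conceptual: correctly expanding the $\Sym^3$ of a direct sum, and above all correctly identifying the three monomial orbits with the permutation modules $M^{(d-1,1)}, M^{(d-2,1,1)}, M^{(d-3,3)}$ (i.e.\ pinning down the right Young subgroups as stabilizers). Once that decomposition is in place, both the dominance checks and the sign-twist reduction are immediate.
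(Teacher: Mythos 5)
Your proof is correct, and your handling of the second vanishing coincides with the paper's: both reduce it to the first vanishing via the sign twist $V_{(1^d)} \otimes V_{\lambda_{d-3}} \cong V_{\lambda_{d-3}^\ast} = V_{\lambda_3}$ together with self-duality. For the first vanishing, however, you take a genuinely different route. The paper realizes $V_{\lambda_3}$ inside the polynomial ring $F[z_1,\ldots,z_d]$, where it is generated in degree $4 > 3$ (Fulton--Harris, Prob.~4.47), and then cites a proposition of Thi\'ery to conclude that $V_{\lambda_3}$ cannot occur in the degree-$3$ part, which contains $\Sym^3 V_{(d-1,1)}$. You instead note that $\Sym^3 V_{(d-1,1)}$ is a direct summand of $\Sym^3 M \cong F[z_1,\ldots,z_d]_3$ (with $M$ the natural permutation module), decompose the latter along the three monomial orbits as $M^{(d-1,1)} \oplus M^{(d-2,1,1)} \oplus M^{(d-3,3)}$, and kill each summand by Young's rule and the dominance criterion for Kostka numbers, since $(d-3,2,1)$ dominates none of the three shapes. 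In effect you give a self-contained proof of exactly the special case of Thi\'ery's result that the paper needs; this is arguably preferable, as it is elementary, stays within the toolkit the paper already invokes elsewhere (Sagan, Cor.~2.4.7), and removes an external citation, whereas the paper's route is shorter and rests on a more general principle (which degrees of the polynomial ring can contain a given Specht module). Two trivial bookkeeping points: for $d = 5$ the third orbit gives $(d-3,3) = (2,3)$, which must be reordered to the partition $(3,2)$ before checking dominance (the check still fails, since $2 \not\geq 3$); and once $\Sym^3 V_{(d-1,1)}$ is known to be a direct summand of $\Sym^3 M$, your intermediate verification that $\mathbf{1}$, $V_{(d-1,1)}$ and $\Sym^2 V_{(d-1,1)}$ avoid $V_{\lambda_3}$ is not needed --- monotonicity of multiplicities across direct summands already suffices.
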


\begin{proof}
As for the first vanishing, 
the Specht module $V_{\lambda_3}$ can be naturally realized inside the polynomial ring $F[z_1, z_2, \ldots, z_d]$ equipped with the natural $S_d$-action (where the $z_i$ are variables),
where it is generated by polynomials of degree $4 > 3$, see~\cite[Prob.\,4.47]{fultonharris}.
From~\cite[Prop.\,5]{thiery} we see that $V_{\lambda_3}$ cannot appear in the decomposition of
$F[z_1, z_2, \ldots, z_d]_3$, which contains $\Sym^3 V_{(d-1,1)}$ as a subrepresentation, so the desired conclusion follows.

As for the second vanishing, 
tensoring $\Sym^3 V_{(d-1,1)}$ with $V_{\lambda_{d-3}}$ produces the sign representation $V_{(1, \ldots, 1)}$ as a component if and only if tensoring with the dual $V_{(d-3, 2, 1)}$ of $V_{\lambda_3}$ produces the trivial representation as a component. This can only happen if $V_{(d-3, 2, 1)}$ is a subrepresentation of $\Sym^3 V_{(d-1,1)}$, which it is not, by the first part.
\end{proof}

Finally, we will also make use of 
\begin{equation} \label{eq: adhocmults} 
  \mult (V_{(1^3)}, \Sym^3 V_{(2,1)}) = 1 \quad \text{and} \quad
 \mult (V_{(1^4)}, \Sym^4 V_{(3, 1)}) = 0,
\end{equation}
which follow very easily along the previous lines of thought (or by explicit computation).

\section{A minimal free resolution from Galois theory} \label{sec:minfreerep}

%In this section we construct our minimal free resolution using Galois theory and representation theory as discussed in~\ref{ssec:veryintro}.

\subsection{} We resume with the notation and assumptions from~\ref{ssec:veryintro}, where moreover
we assume (without loss of generality) that $\Tr_{L/F}(\alpha_i) = 0$ for $i = 1, \ldots, d-1$. 
Note that Bhargava's point configuration~\eqref{eq:asspoints}  is closed under the action of $\Gal(L/F)$. 
Therefore, as an algebraic set, it is defined over $F$.
Write $R = F[x_1, \ldots, x_{d-1}]$ and let $I \subseteq R$ be the ideal of this algebraic set.
The main result of Section~\ref{sec:minfreerep} is a new and explicit minimal free resolution of the coordinate ring $R/I$ as a graded $R$-module.
%The goal of this section is the construction of a minimal graded free resolution of Bhargava's point configuration~\eqref{eq:asspoints} from representation theory. 
As mentioned, our approach is Galois-theoretic and therefore quite different from that of Wilson~\cite[\S5]{wilsonphd}. Instead, there is common ground 
with the approach of Behnke~\cite{behnke} and it may be possible to deduce several of the statements below from his work (we did not succeed in doing so). We note that this section is technical and the reader may want to skip it upon a first reading.

\subsection{}  \label{ssec:resumenotation} Choosing $\alpha$ such that $K = F(\alpha)$, we can identify $\Gal(L/F)$ with 
\[ \Sym \left\{ \alpha^{(1)} = \sigma_1(\alpha) =  \alpha,  \alpha^{(2)} = \sigma_2(\alpha),  \ldots,  \alpha^{(d)} = \sigma_d(\alpha)  \right\}, \]
which in turn is identified with $S_d = \Sym \{1, 2, \ldots, d\}$ 
by writing indices rather than field elements, e.g., $(1\,2)$ refers to the field automorphism swapping $\alpha^{(1)}$ and $\alpha^{(2)}$ and fixing the other $\alpha^{(i)}$'s.
As discussed in the introduction, we view $L$ as the regular representation of $S_d$ along with its isotypic components $W_\lambda$, with $\lambda$ running over the partitions of $d$. For example, $W_{(d)}$
and $W_{(1^d)}$ are the one-dimensional subspaces generated by $1$ and
\[ \delta = \prod_{1 \leq i < j \leq d} (\alpha^{(i)} - \alpha^{(j)}), \] 
respectively. A less degenerate example is $W_{(d-1,1)}$, which 
is the $(d-1)^2$-dimensional subspace generated by the trace zero elements
\[ \begin{array}{cccc} \alpha_1^{(1)}, & \alpha_2^{(1)}, & \cdots & \alpha_{d-1}^{(1)}, \\
\alpha_1^{(2)}, & \alpha_2^{(2)}, & \cdots & \alpha_{d-1}^{(2)}, \\
\vdots & \vdots & \ddots & \vdots \\
\alpha_1^{(d-1)}, & \alpha_2^{(d-1)}, & \cdots & \alpha_{d-1}^{(d-1)}, \\ \end{array} \]
where $\alpha_i^{(j)} = \sigma_j(\alpha_i)$.
Each of the vertical columns spans a $V_{(d-1,1)}$.

We fix a similar kind of basis for $W_{\lambda_i}$, for $i = 2,  \ldots, d-2$. 
Namely, from Lemma~\ref{lem: multiplicity fixed} we know that each irreducible subrepresentation of $W_{\lambda_i}$ has
a one-dimensional intersection with $L^{S_{\lambda_i}}$. Thus we can find linearly independent elements
\[ \omega_1^i, \ \ \omega_2^i, \ \ \ldots, \ \ \omega_{\beta_{i - 1}}^i \ \ \in W_{\lambda_i} \]
that are fixed by $S_{\lambda_i}$. This leaves each $\omega^i_j$ with $\beta_{i - 1}$ linearly independent conjugates, which together span a $V_{\lambda_i}$. Taking the union over all $j$ then produces our basis of $W_{\lambda_i}$.

\subsection{} \label{ssec:shapeofresolution}
 %It again relies on representation theory, but our approach is Galois-theoretic and therefore quite different from that of Wilson.
%Most notably, in our case the $S_d$-module structure comes from the action of $\Gal(L/F)$. 
For $2 \leq i \leq d-2$ we consider the 
$F$-vector space $V_i$ obtained from $W_{\lambda_i}$ by intersecting it with $L^{S_{\lambda_i}}$, where $\lambda_i$ is as defined in~\ref{ssec:notation}; in other words
\[ V_i = \Span \{ \omega_1^i, \omega_2^i, \ldots, \omega_{\beta_{i - 1}}^i \}. \]
We stress that these $V_i$'s are \emph{not} subrepresentations of $L$. 
Instead, we think of $V_i$ as a ``horizontal slice" of our isotypic component $W_{\lambda_i}$, which 
can be recovered from $V_i$ by taking its closure under the action of $S_d$ (in other words, $W_{\lambda_i}$ is the smallest subrepresentation of $L$ containing $V_i$). 
Likewise, we define $V_0 = F$ and $V_d = \Span\{\delta\} = F \delta$.
Our resolution will take the form
\begin{equation} \label{eq:resolution.Bhargava.points}
\begin{tikzpicture}[scale=0.23,baseline=(current  bounding  box.center)]
      \draw[thick] (1,-3) rectangle (2,-2);
      \draw[thick] (1,-2) rectangle (2,-1);
      \draw[thick] (1,-1) rectangle (2,0);
      \draw[thick] (1,0) rectangle (2,1);
      \draw[thick] (1,1) rectangle (2,2);
      \node at (1.5,3.42) {\tiny $\vdots$};
      \draw[thick] (1,4) rectangle (2,5);
      \draw[thick] (1,5) rectangle (2,6);
      \draw[thick] (1,6) rectangle (2,7);
      
      \draw[thick] (12.5,-1) rectangle (13.5,0);
      \draw[thick] (12.5,0) rectangle (13.5,1);
      \draw[thick] (12.5,1) rectangle (13.5,2);
      \node at (13,3.42) {\tiny $\vdots$};
      \draw[thick] (12.5,4) rectangle (13.5,5);

      \draw[thick] (12.5,5) rectangle (13.5,6);
      \draw[thick] (13.5,5) rectangle (14.5,6);
      
      \draw[thick] (12.5,6) rectangle (13.5,7);
      \draw[thick] (13.5,6) rectangle (14.5,7);  

      \draw[thick] (28.2,0) rectangle (29.2,1);
      \draw[thick] (28.2,1) rectangle (29.2,2);
      \node at (28.7,3.42) {\tiny $\vdots$};
      \draw[thick] (28.2,4) rectangle (29.2,5);

      \draw[thick] (28.2,5) rectangle (29.2,6);
      \draw[thick] (29.2,5) rectangle (30.2,6);
      
      \draw[thick] (28.2,6) rectangle (29.2,7);
      \draw[thick] (29.2,6) rectangle (30.2,7);  
      \draw[thick] (30.2,6) rectangle (31.2,7);    
      
      \node at (21.3,8.7) {$0 \to V_d^\ast \otimes R(-d) \to V_{d-2}^\ast \otimes R(-d+2) \to V_{d-3}^\ast \otimes R(-d+3) \to \ldots$ };
\end{tikzpicture} 
\end{equation} 
\begin{equation*}
\hspace{3cm} \begin{tikzpicture}[scale=0.23,baseline=(current  bounding  box.center)]
      \node at (20, 8.7) {$  \to V_3^\ast \otimes R(-3) \to V_2^\ast \otimes R(-2) \to V_0^\ast \otimes R \to R/I \to 0$,};

      \draw[thick] (2.6,4) rectangle (3.6,5);
      \draw[thick] (2.6,5) rectangle (3.6,6);
      \draw[thick] (3.6,5) rectangle (4.6,6);      
      \draw[thick] (2.6,6) rectangle (3.6,7);
      \draw[thick] (3.6,6) rectangle (4.6,7);
      \draw[thick] (4.6,6) rectangle (5.6,7);
      \draw[thick] (7.6,6) rectangle (8.6,7);
      \draw[thick] (8.6,6) rectangle (9.6,7);
      \node at (6.7,6.5) {\tiny $\cdots$};

      \draw[thick] (14.1,5) rectangle (15.1,6);
      \draw[thick] (15.1,5) rectangle (16.1,6);      
      \draw[thick] (14.1,6) rectangle (15.1,7);
      \draw[thick] (15.1,6) rectangle (16.1,7);
      \draw[thick] (16.1,6) rectangle (17.1,7);
      \draw[thick] (19.1,6) rectangle (20.1,7);
      \draw[thick] (20.1,6) rectangle (21.1,7);
      \draw[thick] (21.1,6) rectangle (22.1,7);
      \node at (18.2,6.5) {\tiny $\cdots$};
   
      \draw[thick] (25.5,6) rectangle (26.5,7);
      \draw[thick] (26.5,6) rectangle (27.5,7);
      \draw[thick] (27.5,6) rectangle (28.5,7);
      \draw[thick] (30.5,6) rectangle (31.5,7);
      \draw[thick] (31.5,6) rectangle (32.5,7);
      \draw[thick] (32.5,6) rectangle (33.5,7);
      \draw[thick] (33.5,6) rectangle (34.5,7);
      \draw[thick] (34.5,6) rectangle (35.5,7);      

      \node at (29.6,6.5) {\tiny $\cdots$};
\end{tikzpicture}
\vspace{0.2cm}
\end{equation*}
%\begin{align}
%0 \to V_d^\ast \otimes R(-d) \to V_{d-2}^\ast \otimes R(-d+2) \to V_{d-3}^\ast \otimes R(-d+3) \to \ldots \label{eq:resolution.Bhargava.points} \\
%\to V_4^\ast \otimes R(-4) \to V_3^\ast \otimes R(-3) \to V_2^\ast \otimes R(-2) \to V_0^\ast \otimes R \to R/I \to 0, \notag
%\end{align}
where $V_i^\ast = \Hom_F(V_i, F)$ denotes the dual of $V_i$; at each step of the resolution we have depicted the Young diagram of the corresponding partition.
%Thus our resolution indeed realizes the correct Betti numbers. 
Our syzygy modules do not come equipped with an $S_d$-module structure, in particular one is free to drop the one-dimensional factors $V_0^\ast$ and $V_d^\ast$ if wanted, but these are included to emphasize the self-duality of the resolution.
The space
\[ V_1 = W_{\lambda_1} \cap L^{S_{\lambda_1}} = \Span \{ \alpha_1, \alpha_2, \ldots, \alpha_{d-1} \}, \]
corresponding to the standard representation, seems missing, but will play a key role in
the construction of the morphisms, and in fact 
the polynomial ring $R = F[x_1, \ldots, x_{d-1}]$ will come about as $\Sym V_1^\ast$ (as is also the case in Behnke's resolution from~\cite{behnke}).

\begin{comment}
\begin{remark}
As pointed out to us by the referee and Aaron Landesman, our resolution can be interpreted as the Casnati--Ekedahl resolution over the base $BS_d$. Since ??, we give a more ad-hoc treatment of this resolution. The $V_i$ can also be defined using the Mehta--Seshadri correspondence~\cite{MS} (see also~\cite[Sec.\,2]{landesmanlitt2}). \todo[inline]{Write more... Also say something about this Mehta--Seshadri correspondence.}
\end{remark}
\end{comment}

%\subsection{} We recall that any two configurations of $d$ points in $\PP^{d-2}$, say having coordinates in $F^\text{alg.cl.}$, 
%with no $d-1$ of them lying on a hyperplane, are projectively equivalent. Thus, even though the point set~\eqref{eq:asspoints} seems very specific, when composed with a suitable linear change of coordinates, our minimal free resolution applies to arbitrary sets of $d$ points in general position.

\subsection{Construction.} \label{ssec:firststep} We now explain how the morphisms are constructed.
We begin with the first step of the resolution~\eqref{eq:resolution.Bhargava.points}.
Start from the decomposition~\eqref{eq: decompfirststep} of $\Sym^2 V_{(d-1,1)}$
into irreducible subrepresentations.
Let $y_1, \ldots, y_{d-1}$ be an $F$-basis for $V_{(d-1, 1)}$ such that $y_1$ is fixed by $S_{d-1}$, and all other $y_i$ are conjugate to $y_1$ in a way that is compatible with the action of $S_d$ on $L$,
i.e., for all $i = 1, \ldots, d-1$ we have $\sigma_i(y) = y_i$. 
By Lemma~\ref{lem: multiplicity fixed}
%Since $\mult (V_{(d-2, 2)}, \Ind_{S_2\times S_{d-2}}^{S_d} \mathbf{1})=1$, it follows from Lemma \ref{lem: induced and fixed space} that 
there is, up to scalar multiplication, a unique element in $V_{(d-2,2)}$ that is fixed by
$S_2 \times S_{d-2}$. Under the isomorphism~\eqref{eq: decompfirststep} this corresponds to an element
\[ p^1 = \sum_{m,n = 1}^{d-1} p^1_{mn} y_m \otimes y_n \in \Sym^2 V_{(d-1,1)}  \]
for certain $p_{mn}^1 \in F$ where, without loss of generality, we may assume that $p^1_{mn} = p^1_{nm}$ for
all $m, n$.
% is fixed by $S_2 \times S_{d-2}$ and such that its conjugates generate a representation isomorphic
%to $V_{(d-2,2)}$. 
We use this element to construct a map
\[ 
\psi_1 : \Sym^2 V_1 \to V_2 : \alpha \otimes \beta \mapsto \sum_{m,n=1}^{d-1} p^1_{mn} \alpha^{(m)} \beta^{(n)}  
\]
where as usual $\alpha^{(m)} = \sigma_m(\alpha)$ and $\beta^{(n)} = \sigma_n(\beta)$.
%where we become more precise on the aforementioned compatibility requirement: it is assumed that any Galois automorphism $\sigma$ mapping $\alpha = \alpha^{(1)} \in V_1$ to $\alpha^{(m)}$ also maps $y_1$ to $y_m$. 
Through dualization we obtain a map $\psi_1^\ast : V_2^\ast \to \Sym^2 V_1^\ast$ whose codomain, 
after identifying $V_1^\ast$ with $R_1$, can be viewed as $R_2$. Thus this yields a map $V_2^*\otimes R(-2) \to V_0^\ast \otimes R$, as desired.

\subsection{} \label{ssec:explicitquadrics} Before discussing the next steps, we show that $\psi_1$ can be made quite concrete.
The Specht module $V_{(d-2,2)}$ can be naturally realized inside
$F[z_1,  \ldots, z_d]_2$.
Explicitly, it is the subspace generated by $(z_1 - z_2)(z_3 - z_4)$ and all its conjugates~\cite[Prob.\,4.47]{fultonharris}. 
By Lemma~\ref{lem: multiplicity fixed},
inside $F[z_1,  \ldots, z_d]_2 \cong V_{(d)}^2 \oplus V_{(d-1,1)}^2 \oplus V_{(d-2,2)}$,
%Since $\mult (V_{(d-2, 2)}, \Ind_{S_2\times S_{d-2}}^{S_d} \mathbf{1})=1$, it follows from Lemma \ref{lem: induced and fixed space} that 
this subspace contains up to scalar multiplication a unique polynomial that is fixed by $S_2\times S_{d-2}$. It is easily made explicit:
\[
p^1 = \sum_{\tau \in S_{d-2}} (z_1 - z_{\tau(3)})(z_2 - z_{\tau(4)})
\]
%\[
%p^1(z) = z_1z_2 + \frac{1}{d-3}\sum_{3\leq i<j\leq d} z_iz_j - \frac{1}{d-1}\sum_{1\leq i<j\leq d}z_iz_j.
%\]
(abusingly, we again call this polynomial $p^1$; here we recall from~\ref{ssec:notation} that we 
view $S_{d-2}$ as the subgroup of $S_d$ fixing $1$ and $2$).
This gives rise to a map
\[
V_1\to V_2: \alpha\mapsto p^1(\alpha^{(1)}, \ldots, \alpha^{(d)})
\]
which is nothing but the quadratic map corresponding to the symmetric bilinear map $\psi_1$ from above; let us call it $\psi_1'$. With respect to our bases $\alpha_1, \ldots, \alpha_{d-1}$ of $V_1$ and $\omega_1^2, \ldots, \omega^2_{\beta_1}$ of $V_2$, it may be represented as
\begin{equation*} %\label{eq:reprofpsi1}
\psi_1'\left(\sum_{i= j}^{d-1} a_j\alpha_j\right) = \sum_{\ell = 1}^{\beta_1} \sum_{j,k = 1}^{d-1} Q^\ell_{j,k}a_ja_k \omega^2_\ell,
\end{equation*}
for certain $Q^\ell_{j,k}\in F$, where we can assume $Q^\ell_{jk} = Q^\ell_{kj}$ for all $j,k$. Thus $\psi_1$ defines $\beta_1$ quadratic forms $Q^\ell \in R$. One checks that
\[ \psi_1^\ast : V_2^\ast \to \Sym^2 V_1^\ast : \omega_\ell^{2 \ast} \mapsto \sum_{j,k=1}^{d-1} Q_{jk}^\ell \alpha_j^\ast \otimes \alpha_k^\ast, \]
therefore the image of $V_2^\ast \otimes R(-2) \to R$ is the ideal generated by these $Q^\ell$'s. 
We will soon prove that it equals $I$. We end by noting that, for $d = 4,5$,
the quadratic forms $Q^\ell$ can also be obtained by applying Bhargava's parametrizations from~\cite{bhargavaquarticrings,bhargavaquinticrings} to
the extension $K / F$.

\subsection{} \label{ssec:nextsteps}
Now assume that $2 \leq i \leq d-3$. Let $y_1, \ldots, y_{d-1}$ be as before
and take an $F$-basis 
$w^i_1, \ldots, w^i_{\beta_{i-1}}$ of $V_{\lambda_i}$ such that $w_1^i$ is fixed by $S_{\lambda_i}$, and all other $w^i_j$'s are conjugate to $w^i_1$.
 By Lemma~\ref{lem: multiplicity fixed} and~Lemma~\ref{lem: tensorstandard} we have that 
\[
\mult(V_{\lambda_{i+1}}, V_{\lambda_i} \otimes V_{(d-1,1)})=1, \qquad \dim\left(V_{\lambda_{i+1}}^{S_{\lambda_{i+1}}}\right) = 1.
\]
Hence there is, up to scalar multiplication, a unique element
\[
p^i = \sum_{m = 1}^{\beta_{i-1}} \sum_{n = 1}^{d-1} p^i_{mn}w^i_m\otimes y_n \in V_{\lambda_i}\otimes V_{(d-1, 1)}
\]
which is fixed by $S_{\lambda_{i+1}}$ and such that its conjugates generate a representation isomorphic to $V_{\lambda_{i+1}}$. We use this $p^i$ to construct a map
\[
\psi_i: V_i\otimes V_1\to V_{i+1}: \omega \otimes \alpha \mapsto  \sum_{m = 1}^{\beta_{i-1}} \sum_{n = 1}^{d-1} p_{mn}^i \omega^{(m)}\alpha^{(n)},
\]
where the conjugation of $\omega$ is labelled compatibly with that of $w^i_1$, that is,
$\omega^{(m)} = \sigma(\omega)$ for any $\sigma$ mapping $w_1^i$ to $w_m^i$. 
By identifying $V_1^\ast$ with $R_1$ as before, the dual map $\psi_i^\ast : V_{i+1}^\ast \to V_i^\ast \otimes V_1^\ast$
is naturally converted into a map $V_{i+1}^\ast \otimes R(-i-1) \to V_i^\ast \otimes R(-i)$.

\subsection{} \label{ssec:laststepconstruction}
We have now constructed our resolution~\eqref{eq:resolution.Bhargava.points} except for the last step. For this, consider the representation $V_{\lambda_{d-2}}\otimes \Sym^2 V_{(d-1,1)}$ and note that, by Corollary~\ref{cor: multiplicity double next}, it contains a unique subrepresentation isomorphic to $V_{(1, \ldots, 1)}$. As above, consider a basis $w_1^{d-2}, \ldots, w_{\beta_{d-3}}^{d-2}$ 
of $V_{\lambda_{d-2}}$ such that $w_1^{d-2}$ is fixed by $S_{\lambda_{d-2}}$ and the other $w_j^{d-2}$'s are conjugate to it.
Then there exists an element
\[
p^{d-2} = \sum_{i = 1}^{\beta_{d-3}} \sum_{j,\ell = 1}^{d-1} p_{ij\ell}^{d-2} w_i^{d-2}\otimes (y_j\otimes y_\ell),
\]
unique up to scalar multiplication, on which $S_d$ acts as the sign representation. 
We can assume that $p_{ij\ell}^{d-2} = p_{i\ell j}^{d-2}$, leading to a linear map
\[
\psi_{d-2}: V_{d-2}\otimes \Sym^2 V_1 \to V_d.
\]
Upon dualizing, tensoring with $R$ and identifying $V_1^*$ with $R_1$, this yields a morphism of graded $R$-modules $\psi_{d-2}^*: V_d^* \otimes R(-d) \to V_{d-2}^*\otimes R(-d+2)$.
%We obtain our desired resolution 
%\begin{equation}\label{eq:resolution}
%0\to V_d^* \otimes S(-d) \to V_{d-2}^*\otimes S(-d+2)\to \ldots\to V_3^*\otimes S(-3)\xrightarrow{\psi_2^*} V_2^*\otimes S(-2)\xrightarrow{\psi_1^*} S\to S/I\to 0.
%\end{equation}

\subsection{A chain complex.} Our next goal is to prove that the sequence~\eqref{eq:resolution.Bhargava.points} is a complex.
We first discuss what this means in terms of the maps $\psi_i$.

\begin{lemma}\label{lem:complex.equivalence}
Assuming $d \geq 5$, the sequence~\eqref{eq:resolution.Bhargava.points} is a chain complex if and only if
\begin{enumerate}
\item the quadrics $Q^\ell$ vanish on the points~\eqref{eq:asspoints},
\item we have $\psi_2(\psi_1(\alpha\otimes \alpha)\otimes \alpha) = 0$ for any $\alpha\in V_1$, 
\item we have $\psi_{i+1}(\psi_i(\omega\otimes \alpha)\otimes \alpha) = 0$ for any $2\leq i\leq d-4, \alpha\in V_1$ and $\omega\in V_i$,
\item we have $\psi_{d-2}(\psi_{d-3}(\omega\otimes \alpha)\otimes (\alpha\otimes \alpha)) = 0$ for any $\alpha\in V_1$ and $\omega\in V_{d-3}$.
\end{enumerate}
\end{lemma}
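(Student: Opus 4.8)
The plan is to unwind the chain-complex condition into conditions on the maps $\psi_i$ one differential at a time. Write $\partial_i$ for the differential $V_{i+1}^\ast \otimes R(-i-1) \to V_i^\ast \otimes R(-i)$ induced by $\psi_i$ (with the evident conventions at the two ends), together with the augmentation $V_0^\ast \otimes R = R \twoheadrightarrow R/I$. Each $\partial_i$ is $R$-linear and, under the identification $V_1^\ast \cong R_1$, is determined by its restriction $\psi_i^\ast$ to the generating summand $V_{i+1}^\ast$ sitting in degree $i+1$. Consequently a composite $\partial_i \circ \partial_{i+1}$, which raises degree by two, vanishes if and only if the induced $F$-linear map $V_{i+2}^\ast \to V_i^\ast \otimes R_2$ on generators vanishes. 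This reduces the whole question to finitely many finite-dimensional checks and is the first step.

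The second step is to dualize each such map. Since $\charac F = 0$ or $>d$, the symmetric powers of $V_1$ and of $V_1^\ast$ are canonically dual, and the transpose of $V_{i+2}^\ast \to V_i^\ast \otimes R_2$ is, after composing with the multiplication $R_1 \otimes R_1 \to R_2$, the symmetrization of the bilinear map $V_i \otimes \Sym^2 V_1 \to V_{i+2}$ sending $\omega \otimes (\alpha \otimes \beta)$ to $\psi_{i+1}(\psi_i(\omega \otimes \alpha) \otimes \beta)$. Thus $\partial_i \circ \partial_{i+1} = 0$ is equivalent to the vanishing of this symmetric map. Now I invoke polarization: over a field with $\charac F \neq 2$ a symmetric bilinear map is zero precisely when its diagonal is, which turns the interior compositions into the conditions $\psi_{i+1}(\psi_i(\omega \otimes \alpha) \otimes \alpha) = 0$ of item~(3), in exactly the range $2 \le i \le d-4$ where both $\psi_i$ and $\psi_{i+1}$ are the interior maps $V_j \otimes V_1 \to V_{j+1}$.

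The remaining cases are the two ends and the augmentation, handled in the same spirit. The composite of $\partial_1$ with $R \twoheadrightarrow R/I$ vanishes if and only if $\im \partial_1 \subseteq I$; by~\ref{ssec:explicitquadrics} this image is the ideal $(Q^1, \ldots, Q^{\beta_1})$, so this is item~(1), the statement that the $Q^\ell$ vanish on the points~\eqref{eq:asspoints}. At the left end $\psi_1$ is itself the quadratic map $\Sym^2 V_1 \to V_2$, so $\partial_1 \circ \partial_2$ transposes to a symmetric trilinear map $\Sym^3 V_1 \to V_3$; at the right end $\psi_{d-2}$ carries the $\Sym^2 V_1$ factor, so $\partial_{d-3} \circ \partial_{d-2}$ transposes to a symmetric trilinear map as well. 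Polarization for trilinear maps, which needs $\charac F > 3$ and hence holds since $\charac F = 0$ or $> d \ge 5$, converts these into the diagonal conditions~(2) and~(4) respectively.

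The only delicate point is the bookkeeping of the dualizations and symmetrizations, in particular matching the two asymmetric ends, where $\Sym^2 V_1$ replaces $V_1$, against the uniform interior pattern; the conceptual content is entirely the polarization identity together with the reduction to generators via $R$-linearity. I expect the main effort to lie in verifying that the transpose of $\partial_i \circ \partial_{i+1}$ is precisely the claimed symmetrized composite of $\psi_i$ and $\psi_{i+1}$, and not some twist of it, since this is where the conjugation-labelling conventions fixed in~\ref{ssec:nextsteps} must be tracked carefully.
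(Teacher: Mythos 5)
Your proposal is correct, and it performs the same structural reduction as the paper --- use $R$-linearity to reduce the vanishing of each composite to a finite-dimensional statement on generators, then compare the dualized composite with the diagonal conditions (1)--(4) --- but it closes the argument with a genuinely different key lemma. You justify the equivalence between ``the symmetrized composite vanishes'' and ``its diagonal vanishes'' by the polarization identities for symmetric bilinear and trilinear maps, which is legitimate here since $\charac F = 0$ or $\charac F > d \geq 5$. The paper instead writes $\psi_i$ and $\psi_{i+1}$ in explicit bases, so that the dual composite applied to a generator becomes a concrete polynomial, as in \eqref{eq:psi1*(psi2*)} and \eqref{eq:psi_i*(psi_i+1*}, while conditions (2)--(4) say that these polynomials vanish as functions on $F^{d-1}$; the two statements are then identified using the fact that $F$ is infinite (it admits an $S_d$-extension), so a non-zero polynomial cannot vanish identically. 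Your route is coordinate-free and cleanly isolates where the characteristic hypothesis enters; the paper's route requires no polarization and has the side benefit that the syzygies of the resolution appear explicitly as polynomials in the course of the proof, a point the paper emphasizes immediately after. The one step you defer --- checking that the transpose of $\partial_i \circ \partial_{i+1}$ is exactly the symmetrization of $\omega \otimes \alpha \otimes \beta \mapsto \psi_{i+1}(\psi_i(\omega \otimes \alpha) \otimes \beta)$, with no hidden twist from the conjugation labelling of~\ref{ssec:nextsteps} --- is precisely what the paper's coordinate computation carries out, and it does hold as you expect, so this is a routine verification rather than a gap. Your treatment of item (1) via $\im \partial_1 = (Q^1, \ldots, Q^{\beta_1}) \subseteq I$ coincides with the paper's.
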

\begin{proof}
The first step
$V_2^*\otimes R(-2) \to V_0^*\otimes R \to R/I$
being a complex is equivalent to the quadrics $Q^\ell$ being contained in the ideal $I$, i.e.\ the quadrics must vanish on~\eqref{eq:asspoints}.
The subsequent steps are handled using a direct computation with bases. 
%To this end,
%for $i = 3, \ldots, d-2$ we fix bases
%\[ \omega_1^i, \ldots, \omega_{\beta_{i-1}}^i \text{ of } V_i \]
%and we reconsider
%our bases $\alpha_1, \ldots, \alpha_{d-1}$ of $V_1$ and $\omega_1^2, \ldots, \omega_{\beta_1}^2$ of $V_2$.
%in which the sygyzies will make an explicit appearance. 

For the second step of the resolution, 
we may write the maps $\psi_1$ and $\psi_2$ as
\[
\psi_1(\alpha_j\otimes \alpha_k) = \sum_{\ell = 1}^{\beta_1} Q_{jk}^\ell \omega_\ell^2, \quad \psi_2(\omega_\ell^2\otimes \alpha_n) = \sum_{m = 1}^{\beta_2} L_{\ell n}^m \omega_m^3,
\]
for certain $Q_{jk}^\ell, L_{\ell n}^m$ in $F$. In terms of the dual bases with respect to $\Tr_{L/F}$, the maps $\psi_1^*$ and $\psi_2^*$ satisfy
\[
\psi_1^*(\omega_\ell^{2*}) = \sum_{j,k = 1}^{d-1} Q_{jk}^\ell \alpha_j^*\otimes \alpha_k^*, \quad \psi_2^*(\omega_m^{3*}) = \sum_{\ell = 1}^{\beta_1} \sum_{n = 1}^{d-1} L_{\ell n}^m \omega_\ell^{2*} \otimes \alpha_n^*.
\]
Let $q(x)$ be homogeneous in $R$, then in the sequence of maps a computation shows that
\begin{equation}\label{eq:psi1*(psi2*)}
\psi_1^*(\psi_2^*(\omega_m^{3*}\otimes q(x))) = q(x)\sum_{\ell = 1}^{\beta_1} \left( \sum_{n = 1}^{d-1} L_{\ell n}^m x_n\right) \left( \sum_{j,k = 1}^{d-1} Q_{jk}^\ell x_jx_k\right).
\end{equation}
On the other hand, letting $\alpha=\sum_r a_r\alpha_r \in V_1$ for $a_r\in F$, one computes that
\begin{equation}\label{eq:psi1(psi2)}
\psi_2(\psi_1(\alpha\otimes \alpha)\otimes \alpha) = \sum_{m = 1}^{\beta_2} \left( \sum_{\ell = 1}^{\beta_1} \left(\sum_{n = 1}^{d-1} L_{\ell n}^m a_n\right) \left( \sum_{j,k = 1}^{d-1} Q_{jk}^\ell a_j a_k\right) \right) \omega_m^3.
\end{equation}
We indeed see that~\eqref{eq:psi1(psi2)} is zero for all $\alpha \in V_1$ if and only if~\eqref{eq:psi1*(psi2*)} is zero for all $m = 1, \ldots, \beta_2$ and $q(x) \in R$. The ``only if" part relies on the fact that
a non-zero polynomial in $R$ cannot vanish on all of $F^{d-1}$, because $F$ is infinite: indeed, by assumption it admits an $S_d$-extension, so it cannot be finite.\footnote{But even for a finite field $F$
it is true that a non-zero polynomial of degree $< \charac F$ cannot vanish everywhere.}

The middle steps are handled similarly. Explicitly, for $2\leq i\leq d-4$ we may write the maps $\psi_i$ and $\psi_{i+1}$ as
\[
\psi_i(\omega_j^i\otimes \alpha_k)= \sum_{\ell = 1}^{\beta_i} L_{jk}^\ell \omega_\ell^{i+1}, 
\quad \psi_{i+1}(\omega_\ell^{i+1}\otimes \alpha_n) = \sum_{m = 1}^{\beta_{i+1}} L'^m_{\ell n} \omega_m^{i+2},
\]
for certain $L_{jk}^\ell, L_{\ell n}^m \in F$.
On the dual bases this gives
\[
\psi_i^*(\omega_\ell^{i+1*}) = \sum_{j = 1}^{\beta_{i-1}} \sum_{k = 1}^{d-1} L_{jk}^\ell \omega_j^{i*}\otimes \alpha_k^*, \quad \psi_{i+1}^*(\omega_m^{i+2*}) = \sum_{\ell = 1}^{\beta_i} \sum_{n = 1}^{d-1} L'^m_{\ell n} \omega_\ell^{i+1*}\otimes \alpha_n^*.
\]
For $q(x)\in R$, one computes that
\begin{equation}\label{eq:psi_i*(psi_i+1*}
\psi_i^*(\psi_{i+1}^*(\omega_m^{i+2*}\otimes q(x)) = \sum_{j= 1}^{\beta_{i-1}} \omega_j^{i*}\otimes \left( q(x)\sum_{\ell = 1}^{\beta_i} \left( \sum_{n=1}^{d-1} L'^m_{\ell n} x_n\right) \left(\sum_{k=1}^{d-1} L_{jk}^\ell x_k\right) \right).
\end{equation}
On the other hand, if we let $\alpha = \sum_r a_r\alpha_r$ for $a_r\in F$, then
\begin{equation}\label{eq:psi_i+1(psi_i)}
\psi_{i+1}(\psi_i(\omega_j^i\otimes \alpha)\otimes \alpha) = \sum_{m = 1}^{\beta_{i+1}} 
\left( \sum_{\ell = 1}^{\beta_i} \left( \sum_{n = 1}^{d-1} L'^m_{\ell n} a_n\right) \left( \sum_{k=1}^{d-1} L_{j k}^\ell a_k\right)\right) \omega_m^{i+2}.
\end{equation}
Again, it is clear that~\eqref{eq:psi_i+1(psi_i)} is zero for all $j = 1, \ldots, \beta_{i-1}$ and all $\alpha \in V_1$ if and only if~\eqref{eq:psi_i*(psi_i+1*} is zero for all $m = 1, \ldots, \beta_{i+1}$ and all $q(x) \in R$.

We omit the details of the final step, which again can be dealt with analogously.
\end{proof}

 The reader may have observed that the syzygies of our resolution make an explicit 
appearance in the above proof, more precisely in~\eqref{eq:psi1*(psi2*)} and~\eqref{eq:psi_i*(psi_i+1*}.

\subsection{} \label{ssec:proofthatquadricsvanish}
So we turn to proving the statements (1--4) from Lemma~\ref{lem:complex.equivalence}. Towards proving that the quadrics vanish as wanted, we give an explicit description of the coordinates
of our points~\eqref{eq:asspoints}; this again follows~\cite[\S2]{bhargavaquinticrings}. 
Consider the matrix
\begin{equation} \label{eq:discmatrix}
D = \begin{pmatrix}
1 & 1 & \hdots & 1 \\
\alpha_1^{(1)} & \alpha_{1}^{(2)} & \hdots & \alpha_1^{(d)} \\
\vdots & \vdots & \ddots & \vdots \\
\alpha_{d-1}^{(1)} & \alpha_{d-1}^{(2)} & \hdots & \alpha_{d-1}^{(d)}
\end{pmatrix},
\end{equation}
and denote by $D_{j,i}$ the minor of $D$ corresponding to $\alpha_j^{(i)}$, i.e.\ it is $(-1)^{i+j}$ times the determinant of $D$ with the $j$-th row and $i$th column removed. Then we have 
$\alpha_j^{*(i)} = D_{j+1, i}/\det D$.

\begin{theorem}\label{thm:quadrics.vanish}
The quadrics $Q^\ell$ vanish on the points~\eqref{eq:asspoints}, i.e.\ for any $\ell$ we have
\[
\sum_{i,j = 1}^{d-1} Q^\ell_{ij}\alpha_i^*\alpha_j^*=0.
\]
\end{theorem}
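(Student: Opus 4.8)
\subsection*{Proof strategy} The plan is to prove the stronger assertion that each quadric value
\[ c^\ell := \sum_{j,k=1}^{d-1} Q^\ell_{jk}\,\alpha_j^*\alpha_k^* \]
vanishes as an element of $L$. Since the value of $Q^\ell$ at the $i$-th point of~\eqref{eq:asspoints} is $\sigma_i(c^\ell)$, this controls all $d$ points at once. The conceptual obstacle is that the $c^\ell$ lie in $L$ rather than in $F$: substituting the coordinates $\alpha_j^*$ into $\psi_1'$ and summing only pins down the single element $\sum_\ell c^\ell\omega_\ell^2\in L$, not the individual $c^\ell$. To quarantine the coordinates away from the ``geometric'' factor in which the $\omega_\ell^2$ live, I would extend scalars to $L$ and work inside $V_2\otimes_F L\subseteq L\otimes_F L$. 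Assembling all $\ell$ simultaneously, the scalar extension $(\psi_1')_L$ of the quadratic map $\psi_1'$ sends the ``tautological point'' $v:=\sum_{j=1}^{d-1}\alpha_j\otimes\alpha_j^*$ to
\[ \Psi \ :=\ (\psi_1')_L(v)\ =\ \sum_{\ell=1}^{\beta_1} \omega_\ell^2\otimes c^\ell . \]
Because $\{\omega_\ell^2\otimes 1\}$ is an $L$-basis of the free module $V_2\otimes_F L$, it now suffices to prove $\Psi=0$.

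For this I would use the explicit form $\psi_1'(\alpha)=\sum_{\tau\in S_{d-2}}(\alpha^{(1)}-\alpha^{(\tau(3))})(\alpha^{(2)}-\alpha^{(\tau(4))})$ from~\ref{ssec:explicitquadrics}. Since multiplication in $L\otimes_F L$ is componentwise, $\Psi$ factors as $\sum_\tau A_\tau B_\tau$ with $A_\tau=E_1-E_{\tau(3)}$ and $B_\tau=E_2-E_{\tau(4)}$, where $E_a:=\sum_{j=1}^{d-1}\alpha_j^{(a)}\otimes\alpha_j^*$. The key input is the companion orthogonality relation $\sum_{j=0}^{d-1}\alpha_j^{(a)}\alpha_j^{*(b)}=\delta_{ab}$, which is equivalent to the formula $\alpha_j^{*(i)}=D_{j+1,i}/\det D$ (it expresses $D^{-1}D=\mathrm{id}$); isolating the $j=0$ term gives $\sum_{j=1}^{d-1}\alpha_j^{(a)}\alpha_j^{*(b)}=\delta_{ab}-\alpha_0^{*(b)}$. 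Transporting along the standard isomorphism $L\otimes_F L\cong\prod_{\sigma\in\Gal(L/F)}L$, $a\otimes b\mapsto(\sigma(a)b)_\sigma$, a short computation with this relation evaluates the $\sigma$-component of $E_a$ as $[\sigma\cdot a=1]-\alpha_0^*$, where $\sigma\cdot a$ denotes the index of the root $\sigma(\alpha^{(a)})$. Crucially, the nuisance term $\alpha_0^*$ cancels in each difference $A_\tau$ and $B_\tau$.

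The verification then collapses to a purely combinatorial identity. Let $r$ be the unique index with $\sigma\cdot r=1$; then the $\sigma$-component of $\Psi$ equals
\[ \sum_{\tau\in S_{d-2}}\bigl(\delta_{1,r}-\delta_{\tau(3),r}\bigr)\bigl(\delta_{2,r}-\delta_{\tau(4),r}\bigr). \]
As every $\tau\in S_{d-2}$ fixes $1$ and $2$ and permutes $\{3,\dots,d\}$, one has $\tau(3),\tau(4)\in\{3,\dots,d\}$ with $\tau(3)\neq\tau(4)$; inspecting the cases $r=1$, $r=2$, and $r\geq 3$ separately shows that every summand is zero (the case $r\geq 3$ using precisely that $\tau(3)$ and $\tau(4)$ cannot both equal $r$). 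Hence every component of $\Psi$ vanishes, so $\Psi=0$ and all $c^\ell=0$. I expect the only genuine obstacle to be the bookkeeping of the first paragraph---arranging the scalar extension so that a single vanishing statement yields all $\beta_1$ quadrics at once; once the coordinates are isolated in the second tensor factor, only the orthogonality relation and an elementary case check remain.
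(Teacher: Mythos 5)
Your proof is correct, and it takes a genuinely different route from the one in the paper, even though both arguments ultimately rest on the same algebraic core: the fact that the matrix $D = (\alpha_j^{(i)})$ and the matrix of conjugated duals $(\alpha_j^{*(i)})$ are inverse-transposes of each other. In the paper this fact enters as the vanishing of determinants with a repeated column; in your argument it is the orthogonality relation $\sum_{j=0}^{d-1}\alpha_j^{(a)}\alpha_j^{*(b)}=\delta_{ab}$. Where the two proofs diverge is in how they get from that identity to the statement. The paper first establishes the ``transposed'' identity~\eqref{eq:vanishingidentity}, in which $\psi_1$ is evaluated at pairs of \emph{dual} basis vectors and paired against $\alpha_m^{(p)}\alpha_n^{(p)}$, and then must undo the transposition through several trace-duality manipulations (multiplying by $\alpha_r^{*(p)}$ and summing over $p$, expanding $\alpha_m^*$ via $\Tr_{L/F}(\alpha_i^*\alpha_m^*)$, then multiplying by $\alpha_r$ and summing over $r$). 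You avoid that dance entirely by quarantining coefficients and coordinates in the two factors of $L\otimes_F L\cong\prod_\sigma L$ and evaluating the scalar-extended quadratic map at the tautological element $\sum_j\alpha_j\otimes\alpha_j^*$; the vanishing then becomes a componentwise, term-by-term Kronecker-delta check, which I have verified case by case ($r=1$, $r=2$, $r\geq 3$). The trade-off is instructive: your argument needs the explicit combinatorial form $p^1=\sum_{\tau\in S_{d-2}}(z_1-z_{\tau(3)})(z_2-z_{\tau(4)})$ from~\ref{ssec:explicitquadrics}, since the difference structure is exactly what cancels the nuisance terms $\alpha_0^*$, whereas the paper's proof of~\eqref{eq:vanishingidentity} only uses that $p^1$ has no diagonal monomials $z_i^2$; in exchange, your proof handles all $\beta_1$ quadrics in a single stroke, dispenses with the trace bookkeeping, and makes the Galois-theoretic mechanism transparent. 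One point deserves an explicit sentence in a final write-up: the identification of $(\psi_1')_L(v)$ simultaneously with $\sum_\ell\omega_\ell^2\otimes c^\ell$ and with $\sum_\tau A_\tau B_\tau$ requires knowing that the polynomial identity defining the $Q^\ell$, verified on $F$-points, persists after base change to $L$; this holds because $F$ is infinite (it admits an $S_d$-extension), an argument the paper itself uses in the proof of Lemma~\ref{lem:complex.equivalence}, and it also uses $\charac F\neq 2$ to symmetrize coefficients. This is bookkeeping, not a gap.
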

\begin{proof}
 We claim that for $p=1, \ldots, d$ we have 
\begin{equation} \label{eq:vanishingidentity}
\sum_{m,n=0}^{d-1}\psi_1(\alpha_m^* \otimes \alpha_n^*)\alpha_m^{(p)}\alpha_n^{(p)} = 0.
\end{equation}
Indeed, using the definition of $\psi_1$ we can expand this as 
\[ \sum_{\substack{i, j = 1 \\ i \neq j}}^d c_{ij} \sum_{m, n = 0}^{d-1} D_{m+1, i} D_{n+1, j}\alpha_m^{(p)}\alpha_n^{(p)} \]
for certain coefficients $c_{ij} \in F$ in which we have absorbed a denominator
$\det D^2$ (which is a non-zero element of $F$, being the discriminant of $1, \alpha_1, \ldots, \alpha_{d-1}$ with respect to $K/F$).
%\[
%\alpha_m^{*(i)} \alpha_n^{*(j)} \alpha_m^{(p)}\alpha_n^{(p)} = \frac{1}{\det D^2} 
%\]
%where $i\neq j$. (Here we note that $\det D^2$ is the discriminant of , in particular it is an element of %$F^\ast$.) 
In each term, at least one of $i,j$ is not equal to $p$. Say $i \neq p$, then the corresponding summand can be written as  
\[
 c_{ij} \cdot \sum_{n = 1}^{d-1} D_{n+1, j}\alpha_n^{(p)} \cdot \sum_{m = 1}^{d-1} D_{m+1, i}\alpha_m^{(p)} = 0,
\]
and one sees that the last factor is $0$ because it is the determinant of the matrix obtained from $D$ by replacing the $i$th column with a copy of the $p$th column. The case $j \neq p$ is analogous, hence the claim follows.
%Thus
%\[
%\sum_{m,n=0}^{d-1}\psi_1(\alpha_m^*, \alpha_n^*)\alpha_m^{(p)}\alpha_n^{(p)} = 0.
%\]

Expand $\alpha_m\alpha_n$ with respect to the basis $1, \alpha_1, \ldots, \alpha_{d-1}$ and let $c_{mn}^q \in F$ denote the coordinate at $\alpha_q$. Then we may rewrite~\eqref{eq:vanishingidentity} as
\[
\sum_{m,n=0}^{d-1}\sum_{q=0}^{d-1} \psi_1(\alpha_m^* \otimes \alpha_n^*)c_{mn}^q \alpha_q^{(p)}=0.
\]
Multiplying by $\alpha_r^{*(p)}$ and summing over $p$ yields
\[
\sum_{m,n=0}^{d-1}c_{mn}^r \psi_1(\alpha_m^* \otimes \alpha_n^*)=0
\]
for all $r=0, \ldots, d-1$. Since 
$\alpha_m^*=\sum_{i=1}^{d-1} \Tr_{L/F}(\alpha_i^\ast \alpha_m^\ast) \alpha_i$ 
%$\alpha_m^*=\sum_{i=1}^{d-1} \langle \alpha_i^*, \alpha_m^*\rangle \alpha_i$ 
(here we use the fact that the $\alpha_i$ have trace zero), 
%where $\langle \cdot, \cdot \rangle$ is the trace pairing, 
we get for any $\ell$
\[
\sum_{m,n=0}^{d-1}c_{mn}^r\sum_{i,j=1}^{d-1} Q_{ij}^\ell \Tr_{L/F}(\alpha_i^\ast \alpha_m^\ast) \Tr_{L/F}(\alpha_j^\ast \alpha_n^\ast) = 0.
\]
%\[
%\sum_{m,n=0}^{d-1}c_{mn}^r\sum_{i,j=1}^{d-1} Q_{ij}^\ell\langle \alpha_i^*, \alpha_m^*\rangle \langle\alpha_j^*, \alpha_n^*\rangle = 0.
%\]
Multiplying by $\alpha_r$ and summing over $r$ finally yields the desired vanishing.
%\[
%\sum_{i,j=1}^{d-1}Q_{ij}^\ell\alpha_i^*\alpha_j^* = 0. \qedhere
%\]
\end{proof}

\subsection{} As for the other statements in Lemma~\ref{lem:complex.equivalence}, we have:

%By Theorem \ref{thm:quadrics.vanish}, the last step of the resolution is indeed a complex, but it remains to show that it is exact and minimal. This will be done in~\ref{ssec:exactness}.

\begin{lemma} \label{lem:2iscomplex}
If $d \geq 5$, then for $\alpha\in V_1$, we have that $\psi_2(\psi_1(\alpha\otimes \alpha)\otimes \alpha) = 0$.
\end{lemma}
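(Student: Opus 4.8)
The plan is to recognize the cubic map $\mu : V_1 \to V_3$, $\alpha \mapsto \psi_2(\psi_1(\alpha \otimes \alpha) \otimes \alpha)$, as the concrete incarnation of an $S_d$-equivariant projection whose target multiplicity vanishes by Lemma~\ref{lem: sym3}. Since $\mu(\alpha) \in V_3 \subseteq W_{\lambda_3}$ and $\mu$ is cubic in $\alpha$, it factors through $\Sym^3 V_1$, and the hypothesis $d \geq 5$ is exactly what makes Lemma~\ref{lem: sym3} available. First I would turn the source into a genuine representation: for a fixed $\alpha = \sum_r a_r \alpha_r \in V_1$, the conjugates $\alpha^{(1)}, \ldots, \alpha^{(d)}$ span a subspace $U_\alpha \subseteq L$, and because $\alpha$ has trace zero the assignment $e_i \mapsto \alpha^{(i)}$ exhibits $U_\alpha$ as an $S_d$-equivariant quotient of the standard representation $V_{(d-1,1)}$. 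Field multiplication $U_\alpha \otimes U_\alpha \otimes U_\alpha \to L$ is $S_d$-equivariant, since every $\sigma \in \Gal(L/F) = S_d$ is a ring homomorphism.

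Next I would show that, under this multiplication, the two contractions defining $\psi_1$ and $\psi_2$ compute the unique equivariant projections. By~\ref{ssec:firststep} the coefficients $p^1_{mn}$ are those of the $S_{\lambda_2}$-fixed generator of the $V_{(d-2,2)} = V_{\lambda_2}$-line inside $\Sym^2 V_{(d-1,1)}$, unique up to scalar by~\eqref{eq: decompfirststep} and Lemma~\ref{lem: multiplicity fixed}; similarly, by~\ref{ssec:nextsteps} the $p^2_{mn}$ realize the unique equivariant projection $V_{\lambda_2} \otimes V_{(d-1,1)} \to V_{\lambda_3}$ guaranteed by Lemmas~\ref{lem: multiplicity fixed} and~\ref{lem: tensorstandard}. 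Consequently the assignment $\alpha^{\otimes 3} \mapsto \mu(\alpha)$ is, after transport into $L$ via the Galois embedding, the image of the composite equivariant map
\[ \Sym^3 V_{(d-1,1)} \hookrightarrow \Sym^2 V_{(d-1,1)} \otimes V_{(d-1,1)} \longrightarrow V_{\lambda_2} \otimes V_{(d-1,1)} \longrightarrow V_{\lambda_3}, \]
which lands in the $V_{\lambda_3}$-isotypic component $W_{\lambda_3}$. Since the source $\Sym^3 V_{(d-1,1)}$ contains no copy of $V_{\lambda_3}$ by Lemma~\ref{lem: sym3}, this composite is the zero map, and hence $\mu(\alpha) = 0$. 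The argument covers non-generic $\alpha$ as well, because $\Sym^3 U_\alpha$ is a quotient of $\Sym^3 V_{(d-1,1)}$ and thus still contains no $V_{\lambda_3}$.

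The main obstacle is the bookkeeping in the first two steps: the spaces $V_1, V_2, V_3$ are horizontal slices of the isotypic components $W_{\lambda_i}$ rather than subrepresentations, so one cannot directly assert that $\mu$ is $S_d$-equivariant. The care needed is to verify that $\psi_1(\alpha \otimes \alpha)$ is genuinely the $S_{\lambda_2}$-fixed vector of the $V_{\lambda_2}$-copy sitting inside $\Sym^2 U_\alpha$, and that feeding it into $\psi_2$ reproduces the abstract projection onto $V_{\lambda_3}$ on fixed vectors; only then does the composite coincide on the nose with an equivariant projection of $\Sym^3 V_{(d-1,1)}$. Once this identification is secured, the vanishing is immediate. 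An alternative, purely computational route would expand $\mu(\alpha)$ in the basis $\alpha_1, \ldots, \alpha_{d-1}$ as in~\eqref{eq:psi1(psi2)} and check the vanishing of the resulting cubic forms directly, but this obscures the conceptual reason, and I would fall back on it only if the equivariance identification proves unwieldy.
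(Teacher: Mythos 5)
Your proposal is correct and rests on the same key ingredient as the paper's proof, namely Lemma~\ref{lem: sym3} together with the $S_d$-equivariance of multiplication in $L$; the difference lies in how the vanishing is extracted, and it bears directly on the obstacle you flag. You reconstruct $\alpha \mapsto \psi_2(\psi_1(\alpha\otimes\alpha)\otimes\alpha)$ as a composite of abstract equivariant maps $\Sym^3 V_{(d-1,1)} \hookrightarrow \Sym^2 V_{(d-1,1)}\otimes V_{(d-1,1)} \to V_{\lambda_2}\otimes V_{(d-1,1)} \to V_{\lambda_3}$ and invoke Schur's lemma; this forces you to verify that $\psi_1$ and $\psi_2$, defined only on the horizontal slices $V_1, V_2$ (which are not subrepresentations), really do compute those projections on fixed vectors --- exactly the bookkeeping you identify as the main difficulty. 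The paper's proof renders that identification unnecessary: it observes that $\eta = \psi_2(\psi_1(\alpha\otimes\alpha)\otimes\alpha)$ lies in $W_{\lambda_3}$ (by construction of $\psi_2$) and simultaneously in the subrepresentation $U = \Span\{\alpha^{(j)}\alpha^{(k)}\alpha^{(\ell)} \mid 1\leq j,k,\ell\leq d\}$ of $L$, which is the image of the equivariant map $\Sym^3 V_{(d-1,1)} \to L$, $y_j\otimes y_k\otimes y_\ell \mapsto \alpha^{(j)}\alpha^{(k)}\alpha^{(\ell)}$; by Lemma~\ref{lem: sym3} the constituents of $U$ include no copy of $V_{\lambda_3}$, so $U \cap W_{\lambda_3} = 0$ and $\eta = 0$. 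Thus no equivariance of the composite itself is needed, only two membership statements. Your route can be completed (e.g.\ $\psi_1(\alpha\otimes\alpha)$ is the image of $p^1$ under the equivariant map induced by $y_m \mapsto \alpha^{(m)}$, and $\eta$ is the image of $p^2$ under a similarly induced map, after which Schur's lemma applies), and it proves slightly more --- it exhibits the entire composite as the zero map --- but at the cost of precisely the verification that the paper's two-membership formulation sidesteps.
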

\begin{proof}
Let $\eta = \psi_2(\psi_1(\alpha\otimes \alpha)\otimes \alpha)$. By construction of $\psi_2$, $\eta$ lies in $W_{\lambda_3}$. On the other hand, $\eta$ is also an element of the subrepresentation $U=\Span\{\alpha^{(j)}\alpha^{(k)}\alpha^{(\ell)}\mid 1\leq j,k,\ell\leq d\}$ of $L$. Now $U$ is the image of the equivariant map
\[
\Sym^3 V_{(d-1,1)} \to L: y_j\otimes y_k\otimes y_\ell \mapsto \alpha^{(j)}\alpha^{(k)}\alpha^{(\ell)}.
\]
However, by Lemma~\ref{lem: sym3}, $\Sym^3 V_{(d-1,1)}$ does not contain the irreducible representation $V_{\lambda_3}$ and so indeed $\eta = 0$.
\end{proof}

\begin{lemma} \label{lem:3iscomplex}
For $2\leq i\leq d-4, \alpha\in V_1$ and $\omega\in V_i$, we have  $\psi_{i+1}( \psi_i(\omega\otimes \alpha) \otimes \alpha)) = 0$.
\end{lemma}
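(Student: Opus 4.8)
The plan is to follow the template of the proof of Lemma~\ref{lem:2iscomplex} almost verbatim, replacing the input coming from Lemma~\ref{lem: sym3} by the multiplicity computation recorded in Corollary~\ref{cor: multiplicity double next}. Write $\eta = \psi_{i+1}(\psi_i(\omega \otimes \alpha) \otimes \alpha)$ and extract its two salient features. First, by the very construction of $\psi_{i+1}$, its output lands in $V_{i+2} \subseteq W_{\lambda_{i+2}}$, so $\eta$ sits inside the isotypic component $W_{\lambda_{i+2}}$. Secondly, unwinding the defining formulas for $\psi_i$ and $\psi_{i+1}$ (exactly as in the computation of Lemma~\ref{lem:complex.equivalence}), $\eta$ is visibly a linear combination of triple products $\omega^{(m)}\alpha^{(n)}\alpha^{(n')}$, each built from a single conjugate of $\omega$ and two conjugates of $\alpha$.

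The key step is to reinterpret this second feature representation-theoretically. Since $S_d$ acts on $L$ by field automorphisms, the multiplication map
\[ m : W_{\lambda_i} \otimes \Sym^2 V_{(d-1,1)} \to L, \qquad v \otimes (y_n y_{n'}) \mapsto v\,\alpha^{(n)}\alpha^{(n')}, \]
is $S_d$-equivariant, where $V_{(d-1,1)} = \Span\{y_1, \ldots, y_{d-1}\}$ and $y_n \mapsto \alpha^{(n)}$ as in~\ref{ssec:firststep}. By the previous paragraph $\eta \in \im m$. Now $W_{\lambda_i} \cong V_{\lambda_i}^{\oplus \dim V_{\lambda_i}}$, so Corollary~\ref{cor: multiplicity double next} yields $\mult(V_{\lambda_{i+2}}, W_{\lambda_i} \otimes \Sym^2 V_{(d-1,1)}) = 0$ for $2 \leq i \leq d-4$. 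Consequently $\im m$ contains no copy of $V_{\lambda_{i+2}}$, so its $\lambda_{i+2}$-isotypic part is zero; intersecting with the first feature gives $\eta \in W_{\lambda_{i+2}} \cap \im m = 0$, as desired.

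The one point that must be handled with care---and essentially the only place where something could go wrong---is that the two conjugates of $\alpha$ have to be packaged into a single symmetric square $\Sym^2 V_{(d-1,1)}$, and not into the larger $\Sym^2 W_{(d-1,1)}$, which would also carry $\wedge^2 V_{(d-1,1)}$-type constituents and could reintroduce $V_{\lambda_{i+2}}$. This is legitimate precisely because both factors are conjugates of one and the same element $\alpha$, so they are governed by the single copy $\Span\{\alpha^{(1)}, \ldots, \alpha^{(d)}\} \cong V_{(d-1,1)}$; this is exactly the setting in which Corollary~\ref{cor: multiplicity double next} is stated. Everything else is the routine unwinding of $\psi_i, \psi_{i+1}$ already performed in Lemma~\ref{lem:complex.equivalence}, so no genuinely new computation is required, and the representation-theoretic bookkeeping has effectively been front-loaded into the corollary.
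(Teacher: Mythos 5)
Your proof is correct and follows essentially the same route as the paper: both identify $\eta$ as lying simultaneously in $W_{\lambda_{i+2}}$ and in the image of an $S_d$-equivariant multiplication map governed by $V_{\lambda_i}\otimes \Sym^2 V_{(d-1,1)}$, and then invoke Corollary~\ref{cor: multiplicity double next} to conclude that this image has trivial $\lambda_{i+2}$-isotypic part. The only (immaterial) difference is that you take the full isotypic component $W_{\lambda_i}\cong V_{\lambda_i}^{\oplus \dim V_{\lambda_i}}$ as the source of your map, where the paper uses a single copy of $V_{\lambda_i}$ whose image is exactly $\Span\{\sigma(\omega)\alpha^{(j)}\alpha^{(k)}\}$; the multiplicity of $V_{\lambda_{i+2}}$ vanishes either way.
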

\begin{proof}
This is similar to the previous lemma. Let $\eta = \psi_{i+1}( \psi_i(\omega\otimes \alpha) \otimes \alpha))$. Then $\eta$ lives in $W_{\lambda_{i+2}}$, by construction of $\psi_{i+1}$. However, $\eta$ is also contained in the subrepresentation $U = \Span \{\sigma(\omega) \alpha^{(j)}\alpha^{(k)} \mid \sigma\in S_d, 1\leq j,k\leq d\}$. The space $U$ is the image of the map
\[
V_{\lambda_i}\otimes \Sym^2 V_{(d-1,1)} \to L: \sigma(w_1^i)\otimes (y_j\otimes y_k)\mapsto \sigma(\omega)\alpha^{(j)}\alpha^{(k)}.
\]
However, by Corollary~\ref{cor: multiplicity double next}, $V_{\lambda_i}\otimes \Sym^2 V_{(d-1,1)}$ does not contain the representation $V_{\lambda_{i+2}}$, proving the desired result.
\end{proof}

\begin{lemma} \label{lem:4iscomplex}
If $d \geq 5$ then we have $\psi_{d-2}(\psi_{d-3}(\omega\otimes \alpha)\otimes (\alpha\otimes \alpha)) = 0$ for all $\alpha\in V_1$ and $\omega\in V_{d-3}$. 
\end{lemma}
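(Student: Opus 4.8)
The plan is to mimic the representation-theoretic arguments from Lemmas~\ref{lem:2iscomplex} and~\ref{lem:3iscomplex} almost verbatim, adapting them to the degenerate endpoint. Set $\eta = \psi_{d-2}(\psi_{d-3}(\omega\otimes \alpha)\otimes (\alpha\otimes \alpha))$. By construction of $\psi_{d-2}$ (see~\ref{ssec:laststepconstruction}), the element $\eta$ lands in the target $V_d = \Span\{\delta\} = W_{(1^d)}$, i.e.\ it transforms according to the sign representation. The goal is to show that $\eta$ must simultaneously lie in a subrepresentation of $L$ that \emph{cannot} contain the sign representation, forcing $\eta = 0$.

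First I would identify the relevant subrepresentation. The element $\psi_{d-3}(\omega\otimes\alpha)$ lies in $W_{\lambda_{d-2}}$, and feeding it together with $\alpha\otimes\alpha$ into $\psi_{d-2}$ produces an element of the subrepresentation
\[ U = \Span\{\sigma(\omega)\,\alpha^{(j)}\alpha^{(k)}\alpha^{(\ell)} \mid \sigma\in S_d,\ 1\leq j,k,\ell\leq d\} \subseteq L, \]
which is the image of the equivariant map
\[ V_{\lambda_{d-3}}\otimes \Sym^3 V_{(d-1,1)} \to L:\ \sigma(w_1^{d-3})\otimes(y_j\otimes y_k\otimes y_\ell)\mapsto \sigma(\omega)\,\alpha^{(j)}\alpha^{(k)}\alpha^{(\ell)}. \]
Thus $\eta$ lies in the image of $V_{\lambda_{d-3}}\otimes \Sym^3 V_{(d-1,1)}$, while also being forced into the sign-isotypic part $W_{(1^d)}=W_{\lambda_d}$. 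Consequently it suffices to show that $V_{\lambda_d}=V_{(1^d)}$ does \emph{not} occur in the decomposition of $V_{\lambda_{d-3}}\otimes \Sym^3 V_{(d-1,1)}$.

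This last multiplicity statement is exactly the second vanishing recorded in Lemma~\ref{lem: sym3}, namely $\mult(V_{\lambda_d}, V_{\lambda_{d-3}}\otimes \Sym^3 V_{(d-1,1)}) = 0$, valid for $d\geq 5$. Invoking it immediately yields $\eta=0$, completing the proof. The main thing to be careful about is the bookkeeping at the degenerate endpoint: unlike the interior steps, here the symmetric cube $\Sym^3 V_{(d-1,1)}$ rather than $\Sym^2$ enters, because the final differential squares the single variable $\alpha$ against the already-quadratic input, so one must confirm that the correct equivariant source is $V_{\lambda_{d-3}}\otimes\Sym^3 V_{(d-1,1)}$ and not some variant. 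Once the source is pinned down correctly, the argument is a direct transcription of the earlier lemmas, with Lemma~\ref{lem: sym3} supplying the needed vanishing; the small case $d=5$ is covered since the hypothesis $d\geq 5$ matches that of Lemma~\ref{lem: sym3}. I expect no genuine obstacle beyond this identification of the source representation.
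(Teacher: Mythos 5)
Your proposal is correct and is essentially the paper's own proof: the paper likewise observes that $\eta$ lies both in the sign-isotypic part (by construction of $\psi_{d-2}$) and in the image of the equivariant map out of $V_{\lambda_{d-3}}\otimes \Sym^3 V_{(d-1,1)}$, and then invokes the second vanishing of Lemma~\ref{lem: sym3} to conclude $\eta = 0$. You have merely written out explicitly the details that the paper compresses into ``the proof idea is the same as in the previous two lemmas.''
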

\begin{proof}
The proof idea is the same as in the previous two lemmas. This time one uses that $V_{\lambda_{d-3}}\otimes \Sym^3 V_{(d-1,1)}$ does not contain the sign representation $V_{(1,\ldots , 1)}$, by Lemma~\ref{lem: sym3}.
\end{proof}

\subsection{} The case $d = 4$ is not covered by our treatment so far. We leave it to the reader to check that the requirements 
from Lemma~\ref{lem:complex.equivalence} should be replaced by
\begin{enumerate}
  \item the quadrics $Q^\ell$ vanish on the points~\eqref{eq:asspoints},
  \item we have $\psi_{2}(\psi_{1}(\alpha\otimes \alpha)\otimes (\alpha\otimes \alpha)) = 0$ for any $\alpha\in V_1$.
\end{enumerate}
The first requirement is covered by Theorem~\ref{thm:quadrics.vanish}, while the second requirement can be checked as in Lemma~\ref{lem:4iscomplex}, with $\mult(V_{(1^4)}, \Sym^4 V_{(3,1)}) = 0$ from~\eqref{eq: adhocmults} as the key representation-theoretic ingredient.

%We have now proven that our constructed resolution is indeed a chain complex. So it remains to prove that it is exact and minimal, which will be done in the next section.

%\subsection*{Exactness and minimality.}

\subsection{Exactness and minimality.} We now prove that our chain complex is indeed a minimal free resolution. 
This is equivalent with proving surjectivity of the maps $\psi_i$:

\begin{lemma}
If the linear maps 
$\psi_1  :  \Sym^2 V_1 \to V_2$, $\psi_i  :  V_i\otimes V_1\to V_{i+1}$ for $2\leq i\leq d-3$,
and $\psi_{d-2} :  V_{d-2}\otimes \Sym^2 V_1\to V_d$ 
 are all surjective, then the complex~\eqref{eq:resolution.Bhargava.points} is a minimal free resolution.
\end{lemma}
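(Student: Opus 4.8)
The plan is first to dispose of minimality, which is automatic, and then to concentrate all the work on acyclicity. Every differential of~\eqref{eq:resolution.Bhargava.points} strictly raises the internal degree --- by $1$ for the maps $\phi_i$ with $2\le i\le d-3$ and by $2$ for the two outer maps $\phi_1$ and $\phi_{d-2}$ --- so each matrix entry is a homogeneous form of positive degree and hence lies in $\mathfrak m=(x_1,\dots,x_{d-1})$. Thus no differential has a unit entry and the complex is minimal no matter what. Since Lemma~\ref{lem:complex.equivalence}, together with Theorem~\ref{thm:quadrics.vanish} and Lemmas~\ref{lem:2iscomplex}--\ref{lem:4iscomplex}, already shows that~\eqref{eq:resolution.Bhargava.points} is a complex, the lemma reduces to proving that it is acyclic and that $\im\phi_1=I$.

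To prove acyclicity I would apply the Buchsbaum--Eisenbud criterion. Write the free modules as $F_0=R$, $F_i=V_{i+1}^\ast\otimes R(-i-1)$ for $1\le i\le d-3$ and $F_{d-2}=V_d^\ast\otimes R(-d)$, of ranks $1,\beta_1,\dots,\beta_{d-3},1$, and let $r_i=\sum_{j\ge i}(-1)^{j-i}\operatorname{rank}F_j$ be the expected ranks, so that $r_i+r_{i+1}=\operatorname{rank}F_i$ automatically. The criterion asserts acyclicity precisely when $\operatorname{rank}\phi_i=r_i$ and $\operatorname{depth}I_{r_i}(\phi_i)\ge i$ for every $i$, where $I_{r_i}(\phi_i)$ is the ideal of $r_i\times r_i$ minors. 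The hypothesis of the lemma is tailored to the first of these: surjectivity of $\psi_i$ dualizes to injectivity of $\psi_i^\ast$, which is exactly the leading (lowest-degree) coefficient matrix of $\phi_i$, and it is this injectivity that is meant to prevent the generic rank of $\phi_i$ from dropping below $r_i$. In other words, surjectivity of the $\psi_i$ is designed to yield generic exactness of~\eqref{eq:resolution.Bhargava.points}, equivalently exactness after inverting a single nonzerodivisor.

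For the depth conditions I would argue geometrically. On the complement of the configuration~\eqref{eq:asspoints} the localization of $R/\im\phi_1$ vanishes, and there the generically-exact complex of free modules is locally split, so each $\phi_i$ has constant rank $r_i$ and some $r_i\times r_i$ minor is a unit. Hence the vanishing locus of $I_{r_i}(\phi_i)$ is contained in the affine cone over~\eqref{eq:asspoints}, which has codimension $d-2$ in $\Spec R$; therefore $\operatorname{depth}I_{r_i}(\phi_i)\ge d-2\ge i$ for all $i\le d-2$, and all depth conditions hold at once. (The self-duality of~\eqref{eq:resolution.Bhargava.points} lets one verify only the lower half of these conditions.) Once acyclicity is in hand, the complex resolves $R/J$ with $J=(Q^\ell)$; the resolution presents $R/J$ as a Cohen--Macaulay ring of codimension $d-2$ and of the correct degree $d$, and since $J\subseteq I$ cuts out the same $d$ points this forces $J=I$, giving $\im\phi_1=I$.

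The main obstacle is the codimension input underlying the depth conditions: one must show that each differential loses rank only along the cone over~\eqref{eq:asspoints}, and nowhere else. This is the step where the general position of Bhargava's points (no $d-1$ of them on a hyperplane) and the surjectivity of the contractions $\psi_i$ have to be genuinely combined --- the formal part of Buchsbaum--Eisenbud being routine once one knows that surjectivity of the $\psi_i$ pins every generic rank to its expected value $r_i$ and that the resulting degeneracy loci are as small as the geometry forces them to be.
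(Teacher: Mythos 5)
Your preliminary reductions are fine: minimality is indeed automatic since every entry of every differential is a form of positive degree, and Lemma~\ref{lem:complex.equivalence} together with Theorem~\ref{thm:quadrics.vanish} and Lemmas~\ref{lem:2iscomplex}--\ref{lem:4iscomplex} already makes~\eqref{eq:resolution.Bhargava.points} a complex. But the Buchsbaum--Eisenbud step, which is where all the content lies, has two genuine gaps --- and you flag them yourself as ``the main obstacle'', which is an accurate self-assessment. First, the rank condition: surjectivity of $\psi_i$, i.e.\ injectivity of $\psi_i^\ast$, only says that the columns of the matrix of linear forms representing the differential are linearly independent over $F$; it does not force the rank over $\operatorname{Frac}(R)$ to be $\beta_i$. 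For instance $\left(\begin{smallmatrix} x & y \\ x & y \end{smallmatrix}\right)$ has $F$-independent columns but generic rank $1$, so ``injectivity of the leading coefficient matrix'' cannot by itself pin the generic ranks. Second, the depth condition: your argument is circular. To bound the degeneracy loci you assert that on the complement of the cone over~\eqref{eq:asspoints} the complex is locally split; but over a local ring a bounded complex of finite free modules is split exact if and only if it is exact, so ``locally split off the cone'' is precisely the local exactness that the criterion is supposed to deliver --- it does not follow from generic exactness plus vanishing of $H_0$, since higher homology can be supported where $H_0$ vanishes (ruling this out is the entire point of an acyclicity criterion). Moreover, even the vanishing of $H_0=R/(Q^\ell)$ off the cone is unavailable at this stage: Theorem~\ref{thm:quadrics.vanish} gives only $(Q^\ell)\subseteq I$, hence $V((Q^\ell))\supseteq V(I)$, not the reverse inclusion that your support claim needs.

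The paper closes exactly these gaps by a different, external input that your proposal never invokes: Schreyer's computation~\cite[(4.2)]{schreyer} of the Betti table of $d$ points in general position in $\PP^{d-2}$ (general position holds for~\eqref{eq:asspoints}, as no $d-1$ of the points lie on a hyperplane). Knowing in advance that $I$ is generated by exactly $\beta_1$ linearly independent quadrics and that each successive syzygy module is generated by exactly $\beta_{i+1}$ independent linear syzygies, the proof becomes a bootstrapping dimension count: the $Q^\ell$ are $\beta_1$ quadrics lying in $I$, linearly independent if and only if $\psi_1$ is surjective, hence a minimal generating set of $I$; the complex property shows $\psi_2^\ast$ supplies $\beta_2$ linear syzygies among them, independent if and only if $\psi_2$ is surjective, hence a minimal generating set of the syzygy module; and so on inductively. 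If you insist on the Buchsbaum--Eisenbud route, you would have to prove the generic-rank and codimension statements by hand, which essentially amounts to reproving what Schreyer's result already encodes; citing it instead collapses the proof to a few lines.
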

\begin{proof}
From~\cite[(4.2)]{schreyer} we know that $I$ is generated by $\beta_1$ linearly independent quadrics.
Thanks to Theorem~\ref{thm:quadrics.vanish} we also know that the $\beta_1$ quadrics $Q^\ell$ are elements of $I$. Thus they form a minimal generating set if and only if they are linearly independent. But this is equivalent to $\psi_1$ being surjective.
Assuming surjectivity of $\psi_1$, again from~\cite[(4.2)]{schreyer} we then know that the $R$-module of syzygies between these $Q^\ell$'s is generated by $\beta_2$ linearly independent linear syzygies. 
But we have just showed that $\psi_2^\ast$ produces $\beta_2$ linear syzygies. Thus they form a minimal generating set if and only if they are linearly independent. In turn, this is equivalent to the surjectivity of $\psi_2$. 
An inductive application of this argument concludes the proof.
\end{proof}

%$R/I$ has a minimal free resolution of the form
%\[
%0\to R(-d) \to R(-(d-2))^{\beta_{d-3}} \to \ldots \to R(-3)^{\beta_2} \to R(-2)^{\beta_1} \to R \to R/I\to 0.
%\]
%For the map $\psi_1$, note that surjectivity is equivalent to the quadrics $Q^\ell$ as constructed above to be linearly independent. From the above resolution, we know that the ideal $I$ is generated by $\beta_1$ quadrics. Therefore, the quadrics $Q^\ell$ form a minimal generating set for $I$ and the complex is exact at $V_2^*\otimes R(-2)^{\beta_1} \to V_0^*\otimes R \to R/I$.

\subsection{} To prove surjectivity we need the following technical lemma;
here, by the product $W \cdot W'$ of $W, W' \subseteq L$ we mean the subspace generated by all $ww'$ for $w \in W$ and $w' \in W'$.

\begin{lemma}\label{lem:containment.Wlambda}
We have $W_{\lambda_2}\subseteq W_{(d-1,1)}^2$, $W_{\lambda_{i+1}}\subseteq W_{\lambda_i}\cdot W_{(d-1, 1)}$ for $2\leq i\leq d-3$, and $W_{(1^d)} \subseteq  W_{\lambda_{d-2}}
\cdot W_{(d-1,1)}^2 $.
\end{lemma}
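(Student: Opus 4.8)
The plan is to prove each of the three containments by showing that the relevant isotypic component appears inside the indicated product of subspaces. The key principle throughout is that $W_\mu \subseteq W \cdot W'$ holds as soon as the irreducible representation $V_\mu$ appears in the image of the multiplication map, and this in turn can be detected representation-theoretically: the product $W \cdot W'$ is the image of an equivariant map out of some tensor power of $V_{(d-1,1)}$ (tensored with the relevant $V_{\lambda_i}$), so $W_\mu$ lies in the product provided $V_\mu$ occurs as a subrepresentation of that source. Crucially, I do not want a mere multiplicity count: I need the multiplication map to actually \emph{realize} $V_\mu$, i.e.\ to be nonzero on the corresponding isotypic piece.

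First I would set up the general mechanism. For the middle containments $W_{\lambda_{i+1}} \subseteq W_{\lambda_i} \cdot W_{(d-1,1)}$, I would consider the equivariant multiplication map $W_{\lambda_i} \otimes W_{(d-1,1)} \to L$, whose image is exactly $W_{\lambda_i} \cdot W_{(d-1,1)}$. By Lemma~\ref{lem: tensorstandard} we have $\mult(V_{\lambda_{i+1}}, V_{\lambda_i} \otimes V_{(d-1,1)}) = 1$, so $V_{\lambda_{i+1}}$ does occur in the source. It then suffices to show the multiplication map does not kill this copy. The cleanest way is to invoke the element $p^i \in V_{\lambda_i} \otimes V_{(d-1,1)}$ fixed by $S_{\lambda_{i+1}}$ from~\ref{ssec:nextsteps}, whose conjugates generate a $V_{\lambda_{i+1}}$, and observe that $\psi_i$ maps it to $\sum p^i_{mn}\, (w^i_m)\cdot y_n \in W_{\lambda_{i+1}}$. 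The nonvanishing of this image is essentially the claim that $\psi_i$ is not identically zero, which I would argue either directly from the explicit realization (the generator being a genuine nonzero product of field elements in $L$) or by noting that the target $W_{\lambda_{i+1}}$ is the unique isotypic component of that type and the map is nonzero by construction. The same argument, using $\Sym^2 V_{(d-1,1)} \cong V_{(d)} \oplus V_{(d-1,1)} \oplus V_{(d-2,2)}$ from~\eqref{eq: decompfirststep}, handles $W_{\lambda_2} \subseteq W_{(d-1,1)}^2$: the component $V_{(d-2,2)} = V_{\lambda_2}$ appears in $\Sym^2 V_{(d-1,1)}$ with multiplicity one, giving the containment once nonvanishing is checked.

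For the final containment $W_{(1^d)} \subseteq W_{\lambda_{d-2}} \cdot W_{(d-1,1)}^2$, I would proceed analogously using the map $V_{\lambda_{d-2}} \otimes \Sym^2 V_{(d-1,1)} \to L$ and Corollary~\ref{cor: multiplicity double next}, which guarantees that the sign representation $V_{(1^d)} = V_{\lambda_d}$ occurs (with multiplicity one) in $V_{\lambda_{d-2}} \otimes \Sym^2 V_{(d-1,1)}$ in the case $i = d-2$. The generator is precisely the element $p^{d-2}$ from~\ref{ssec:laststepconstruction}, on which $S_d$ acts by the sign, so its image under $\psi_{d-2}$ lies in $W_{(1^d)}$; I then need this image to be nonzero. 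Since $W_{(1^d)} = F\delta$ is one-dimensional, nonvanishing is equivalent to the product genuinely producing a nonzero multiple of $\delta = \prod_{i<j}(\alpha^{(i)} - \alpha^{(j)})$.

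The main obstacle I anticipate is exactly this nonvanishing of the multiplication maps: multiplicity counts in the tensor products only tell us that $V_\mu$ \emph{could} appear in the product, not that it \emph{does}. In principle the equivariant multiplication map could annihilate the relevant isotypic copy. I expect this to be the genuine content of the lemma. The cleanest route is to reduce nonvanishing to a statement about the point configuration~\eqref{eq:asspoints} or to a direct computation in $L$ at the level of the fixed vectors $\omega^i_1$ and $y_1, \ldots, y_{d-1}$: since these are honest elements of the field $L$ and $L$ is a domain, generic products are nonzero, and one needs only exhibit a single specialization where the relevant coordinate is nonzero. Alternatively, one can argue uniformly that if any such multiplication map vanished on its distinguished copy, the corresponding complex differential $\psi_i$ would be zero, contradicting the fact (established via Schreyer's Betti table~\cite[(4.2)]{schreyer} together with the surjectivity discussion) that the resolution has the prescribed nonzero ranks. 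I would present the representation-theoretic containment as the easy part and concentrate the argument on verifying that each distinguished generator maps to a nonzero element of the one- or higher-dimensional target isotypic component.
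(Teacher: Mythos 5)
Your proposal has a genuine gap, and it sits exactly where you placed the weight of the argument. The ``key principle'' you state at the outset --- that $W_\mu \subseteq W \cdot W'$ holds as soon as $V_\mu$ appears in the image of the multiplication map --- is not correct as stated, except when $\dim V_\mu = 1$. The lemma asserts that the \emph{full} isotypic component $W_{\lambda_{i+1}}$, of dimension $(\dim V_{\lambda_{i+1}})^2$, lies in $W_{\lambda_i} \cdot W_{(d-1,1)}$, whereas your nonvanishing argument yields much less. Indeed, each equivariant map $\phi_{jk} : V_{\lambda_i} \otimes V_{(d-1,1)} \to L$, $\sigma(w_1^i) \otimes y_n \mapsto \sigma(\omega_j^i)\alpha_k^{(n)}$, has a source containing exactly \emph{one} copy of $V_{\lambda_{i+1}}$ (Lemma~\ref{lem: tensorstandard}), so its image contributes at most one copy; nonvanishing of $\phi_{jk}(p^i) = \psi_i(\omega_j^i \otimes \alpha_k)$ therefore produces a single copy of $V_{\lambda_{i+1}}$ inside the product, of dimension $\dim V_{\lambda_{i+1}}$, not the whole of $W_{\lambda_{i+1}}$. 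A subrepresentation of $L$ can perfectly well meet $W_{\lambda_{i+1}}$ in a proper nonzero subrepresentation, so nothing forces the $\lambda_{i+1}$-isotypic part of the product to be full. To close this gap you would need the vectors $\psi_i(\omega_j^i \otimes \alpha_k)$, over \emph{all} $j,k$, to span the entire horizontal slice $V_{i+1}$ --- which is precisely the surjectivity of $\psi_i$. Only in the last containment, where $W_{(1^d)} = F\delta$ is one-dimensional, does your principle reduce to plain nonvanishing.

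This makes your fallback argument circular: in the paper, the surjectivity of the $\psi_i$ (equivalently, the statement that the constructed complex is a minimal free resolution with the prescribed nonzero ranks) is proved \emph{from} Lemma~\ref{lem:containment.Wlambda}; it is the step immediately following it. Schreyer's table~\cite[(4.2)]{schreyer} only constrains what any minimal free resolution of $R/I$ looks like; it says nothing about the particular complex built from the $p^i$'s until exactness and minimality are known, and that is exactly what rests on this lemma. Your other route (``exhibit a single specialization where the relevant coordinate is nonzero'') is not carried out, and even if it were, it would only give nonvanishing, hence one copy, by the previous paragraph. The paper's actual proof avoids both problems and is of a different nature: since the product is $S_d$-stable and $V_{i+1}$ generates $W_{\lambda_{i+1}}$ as a representation, it suffices to show $V_{i+1}$ lies in the product; this is done field-theoretically, by expanding monomials in the explicit generators $\alpha^{(d-i)}+\alpha^{(d-i+1)}, \alpha^{(d-i+2)}, \ldots, \alpha^{(d)}$ of $L^{S_{\lambda_{i+1}}}$, proving $\Sym^2 W_{(d-1,1)} \supseteq K$ by a linear-independence trick, then running a representation-theoretic sieve (Lemmas~\ref{lem: tensorstandard} and~\ref{lem: tensornotsostandard}) to see that only $W_{\lambda_i} \cdot W_{(d-1,1)}$ can contribute to the $\lambda_{i+1}$-part, and finally handling the sign component by an explicit Vandermonde expansion. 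These constructive steps are the actual content of the lemma, and none of them appears in your proposal.
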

\begin{proof}
%Consider our basis $\alpha_0 = 1, \alpha_1, \ldots, \alpha_{d-1}$ of $K$ over $F$, and
%recall that $\alpha_1, \ldots, \alpha_{d-1}$ form a basis for $V_1$, since they were chosen to have trace zero.  
%For any $\eta\in W_{\lambda_{i+1}}$, the set $\operatorname{\sigma (\eta)}_{\sigma \in S_d} \cap L^{S_{\lambda_{i+1}}}$ has dimension one. Hence 
Denoting by $\Sym^2 W_{(d-1,1)}$ the subspace of $L$ generated by all elements of the form
$\alpha^{(m)} \beta^{(n)} + \alpha^{(n)} \beta^{(m)}$
with $\alpha, \beta \in V_1$ and $m, n \in \{1, \ldots, d\}$, we will first show that
\begin{equation} \label{eq:weirdinclusion}
 W_{\lambda_{i+1}} \subseteq W_{(d-1,1)}^{i-1} \Sym^2 W_{(d-1,1)}
\end{equation}
for all $i = 1, \ldots, d-3$. Since $\Sym^2 W_{(d-1,1)} \subseteq W^2_{(d-1,1)}$, this will settle
the first inclusion.

Because of the $S_d$-action, it is enough to prove that $V_{i+1}$ is contained in the right-hand side of~\eqref{eq:weirdinclusion}.
In turn, it suffices to prove this for
$L^{S_{\lambda_{i+1}}} \supseteq V_{i+1}$. Since this field is generated by 
$\alpha^{(d-i)}+\alpha^{(d-i+1)}$, $\alpha^{(d-i+2)}$, $\alpha^{(d-i+3)}$, \ldots, $\alpha^{(d)}$ for some primitive element $\alpha \in K$, any element is an $F$-linear combination of elements of the form
\[
(\alpha^{(d-i)} + \alpha^{(d-i+1)})^{f_1} \alpha^{(d-i+2)f_2} \cdots \alpha^{(d)f_i}.
\]
Every appearance of $\alpha^{(m)f_j}$  can be rewritten as an $F$-linear combination of the elements 
$\alpha_0^{(m)} = 1, \alpha_1^{(m)}, \ldots, \alpha_{d-1}^{(m)}$.
After doing this, we find that every element of $L^{S_{\lambda_{i+1}}}$ is an $F$-linear combination of elements of the form
\[
\left( \alpha_{k_0}^{(d-i)}\alpha_{k_1}^{(d-i+1)} + \alpha_{k_0}^{(d-i + 1)}\alpha_{k_1}^{(d-i)} \right) \alpha_{k_2}^{(d-i+2)} \cdots \alpha_{k_i}^{(d)},
\]
for $0\leq k_j\leq d-1$. This shows that $V_{i+1}$ is contained in 
\begin{equation} \label{eq:weirderinclusion} 
  (W_{(d)} + W_{(d-1,1)})^{i-1} \Sym^2 (W_{(d)} + W_{(d-1,1)}).
\end{equation}
We claim that $\Sym^2 W_{(d-1,1)}$ contains $W_{(d)}$ and $W_{(d-1,1)}$. This claim readily implies that $\Sym^2 (W_{(d)} + W_{(d-1,1)}) = \Sym^2 W_{(d-1,1)}$ and also that 
\[ (W_{(d)} + W_{(d-1,1)}) \Sym^2 W_{(d-1,1)} = W_{(d-1,1)} \Sym^2 W_{(d-1,1)}, \] so that~\eqref{eq:weirderinclusion} equals $W_{(d-1,1)}^{i-1} \Sym^2 W_{(d-1,1)}$, thus settling~\eqref{eq:weirdinclusion}.

In order to prove the claim, it suffices to show that $\Sym^2 W_{(d-1,1)}$ contains $K = V_0 + V_1$. Pick a
non-zero element $\beta_1 \in K \cap \Sym^2 W_{(d-1,1)}$ that is not contained in $F$; e.g.\ one of $\alpha_1^2, \alpha_1\alpha_2$ will do. By replacing it with 
\[ \beta_1 - \frac{1}{d!} \Tr_{L/F}(\beta_1) = \beta_1 - \frac{1}{d} \sum_{j=1}^d \beta_1^{(j)} \]
if needed, we can assume that it has trace zero, so that it belongs to $V_1$. Extending this to a basis $\beta_1, \ldots, \beta_{d-1}$ of $V_1$, it is easy to see that $\beta_1^2, \beta_1\beta_2, \ldots, \beta_1 \beta_{d-1}, \beta_1 \in K \cap \Sym^2 W_{(d-1,1)}$ are linearly independent over $F$, hence they must generate $K$, as wanted. 

Having settled the case $i=1$, we now assume $2 \leq i \leq d-3$. Decomposing 
\[ V_{(d-1,1)}^{\otimes i-2} \otimes \Sym^2 V_{(d-1,1)} = \bigoplus_{\lambda} V_\lambda^{c_{i, \lambda}}  \] 
we see from~\eqref{eq:weirdinclusion} that
\[
W_{\lambda_{i+1}}\subseteq W_{(d-1,1)}^{i-1} \Sym^2 W_{(d-1,1)} \subseteq \sum_{\substack{\lambda \text{ with} \\ c_{i,\lambda} > 0}} W_\lambda \cdot W_{(d-1,1)}.
\]
By decomposing $\Sym^2 V_{(d-1,1)}$ as in~\eqref{eq: decompfirststep} and using Lemma~\ref{lem: tensorstandard} and Lemma~\ref{lem: tensornotsostandard}, one sees that $c_{i,\lambda} >0$ only if the Young diagram corresponding to $\lambda$ has at most $i$ boxes not in its first row and $\lambda \neq (d-i, 1^i)$. Consequently, again using Lemma~\ref{lem: tensorstandard}, we see that the only partition $\lambda$ with $c_{i,\lambda}>0$ such that $V_\lambda \otimes V_{(d-1,1)}$ contains $V_{\lambda_{i+1}}$ is $\lambda=\lambda_i$. We conclude that 
\[
W_{\lambda_{i+1}} \subseteq W_{\lambda_i} \cdot W_{(d-1,1)},
\]
as wanted.

For the last inclusion we give an ad-hoc proof. Recall that $W_{(1^d)}$ is generated by
the Vandermonde determinant
\[ \left| \begin{array}{cccc} 1 & \alpha^{(1)} &  \hdots & \alpha^{(1)d-1} \\ 
%1 & \alpha^{(2)} &  \hdots & \alpha^{(2)d-1} \\
\vdots & \vdots & \ddots & \vdots \\
1 & \alpha^{(d)} & \hdots & \alpha^{(d)d-1} \\
\end{array} \right| \]
for some primitive element $\alpha \in K$.
Expanding this determinant with respect to the last two columns yields a linear combination of
\begin{equation*} %\label{eq: expandeddisc} 
 \left| \begin{array}{cccc} 1 & \alpha^{(1)} &  \hdots & \alpha^{(1)d-3} \\ 
%1 & \alpha^{(2)} &  \hdots & \alpha^{(2)d-3} \\
\vdots & \vdots & \ddots & \vdots \\
1 & \alpha^{(d-2)} & \hdots & \alpha^{(d-2)d-3} \\
\end{array} \right| \cdot (\alpha^{(d)} - \alpha^{(d-1)}) \cdot (\alpha^{(d-1)} \alpha^{(d)})^{d-1}
\end{equation*}
and conjugates thereof. By~\cite[Prob.\,4.47]{fultonharris} the product of the first two factors is in the image of the Specht module $V_{\lambda_{d-2}}$ under the natural map $F[z_1, \ldots, z_d] \to L : z_i \mapsto \alpha^{(i)}$, so it is in $W_{\lambda_{d-2}}$. 
By mimicking the argumentation from the start of this proof, the last factor is seen to be in 
\[ (W_{(d)} + W_{(d-1, 1)})^2 = W_{(d-1,1)}^2, \] where the equality holds because $W_{(d-1, 1)}^2 \supseteq \Sym^2 W_{(d-1,1)}$ contains $W_{(d)}$ and $W_{(d-1,1)}$.
\end{proof}

\subsection{} We are now ready to conclude:
\begin{lemma}
The maps $\psi_i$ are all surjective, so that the complex~\eqref{eq:resolution.Bhargava.points} is a minimal free resolution.
\end{lemma}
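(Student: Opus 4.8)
The plan is to recognise each map $\psi_i$ as the map induced on $V_{\lambda_{i+1}}$-isotypic multiplicity spaces by an $S_d$-equivariant field-multiplication map, and then to read off its surjectivity directly from Lemma~\ref{lem:containment.Wlambda}. For $2 \leq i \leq d-3$ I would consider the composite
\[ \bar{T}_i : W_{\lambda_i} \otimes_F W_{(d-1,1)} \longrightarrow L \longrightarrow W_{\lambda_{i+1}}, \]
whose first arrow is multiplication in $L$ and whose second arrow is the projection onto the isotypic component $W_{\lambda_{i+1}}$ provided by Lemma~\ref{lem: projecting to a single W}. Both arrows are $S_d$-equivariant (the first because $\sigma(xy) = \sigma(x)\sigma(y)$). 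Since $W_{\lambda_{i+1}} \subseteq W_{\lambda_i} \cdot W_{(d-1,1)}$ by Lemma~\ref{lem:containment.Wlambda}, and since the projection restricts to the identity on $W_{\lambda_{i+1}}$, the map $\bar{T}_i$ is surjective. For the two boundary maps I would instead use $\Sym^2 W_{(d-1,1)} \to W_{\lambda_2}$ and $W_{\lambda_{d-2}} \otimes \Sym^2 W_{(d-1,1)} \to W_{(1^d)}$; these factor through the symmetric square because multiplication in $L$ is commutative, and they are surjective by the first and last inclusions of Lemma~\ref{lem:containment.Wlambda}.

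Next I would pass to multiplicity spaces. As $\charac F = 0$ or $\charac F > d$, the category of $F[S_d]$-modules is semisimple, so $\Hom_{S_d}(V_{\lambda_{i+1}}, -)$ is exact and carries the surjection $\bar{T}_i$ to a surjection of $V_{\lambda_{i+1}}$-multiplicity spaces. It then remains to identify these multiplicity spaces with the domain and codomain of $\psi_i$. For any partition $\mu$ appearing here, evaluation at the $S_\mu$-fixed vector of $V_\mu$ (unique up to scalar by Lemma~\ref{lem: multiplicity fixed} and Lemma~\ref{lem: induced and fixed space}) gives a canonical isomorphism $\Hom_{S_d}(V_\mu, W_\mu) \cong W_\mu^{S_\mu}$; for $\mu = \lambda_i, (d-1,1), \lambda_{i+1}$ this recovers $V_i, V_1, V_{i+1}$. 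Combined with $\mult(V_{\lambda_{i+1}}, V_{\lambda_i} \otimes V_{(d-1,1)}) = 1$ (Lemma~\ref{lem: tensorstandard}), the source multiplicity space becomes $V_i \otimes V_1$ and the target becomes $V_{i+1}$. The generator of the one-dimensional space $\Hom_{S_d}(V_{\lambda_{i+1}}, V_{\lambda_i} \otimes V_{(d-1,1)})$ is exactly the element $p^i$ of~\ref{ssec:nextsteps}, so that the induced map $V_i \otimes V_1 \to V_{i+1}$ is $\psi_i$ itself; hence $\psi_i$ is surjective.

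For the two symmetric-square cases one extra bookkeeping step is needed, since $\Sym^2$ of a tensor product is not a tensor product. Using
\[ \Sym^2\!\big(V_{(d-1,1)} \otimes V_1\big) \cong \Sym^2 V_{(d-1,1)} \otimes \Sym^2 V_1 \ \oplus\ \wedgepow{2} V_{(d-1,1)} \otimes \wedgepow{2} V_1 \]
together with $\wedgepow{2} V_{(d-1,1)} \cong V_{(d-2,1,1)}$, I would check that neither $V_{\lambda_2} = V_{(d-2,2)}$ nor, after tensoring with $V_{\lambda_{d-2}}$, the sign representation $V_{(1^d)}$ occurs in the exterior-square summand. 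The relevant multiplicities therefore come entirely from the symmetric-square summand and equal $1$, by~\eqref{eq: decompfirststep} and Corollary~\ref{cor: multiplicity double next}; the multiplicity spaces are then $\Sym^2 V_1$ and $V_{d-2} \otimes \Sym^2 V_1$, matching the domains of $\psi_1$ and $\psi_{d-2}$, and the argument of the previous paragraph gives their surjectivity.

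The step I expect to be the main obstacle is the final identification in the second paragraph: verifying that the abstractly induced map on multiplicity spaces is genuinely the concrete map $\psi_i$ built from $p^i$ in~\ref{ssec:nextsteps}, and not merely some surjection between spaces of matching dimension. This amounts to matching the labelling of the conjugates $\omega^{(m)}$ and $\alpha^{(n)}$ with the chosen isomorphisms $W_\mu \cong V_\mu \otimes W_\mu^{S_\mu}$, so that multiplication-then-projection in $L$ reproduces the contraction against $p^i$. Once this compatibility is checked, surjectivity of $\bar{T}_i$ transfers to $\psi_i$, and together with the earlier reduction of exactness and minimality to the surjectivity of the $\psi_i$, this shows that~\eqref{eq:resolution.Bhargava.points} is a minimal free resolution.
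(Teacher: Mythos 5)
Your proof is correct, and it reorganizes the argument in a genuinely different way from the paper, even though both rest on the same inputs, namely Lemma~\ref{lem:containment.Wlambda} and the multiplicity-one facts (\eqref{eq: decompfirststep}, Lemma~\ref{lem: tensorstandard}, Lemma~\ref{lem: multiplicity fixed}, Corollary~\ref{cor: multiplicity double next}). The paper argues element-by-element: a given $\gamma \in V_{i+1}$ is expanded via Lemma~\ref{lem:containment.Wlambda} as a combination of products $\sigma(\omega_j^i)\alpha_k^{(n)}$, the expansion is cut into blocks $D_{jk}$, each block is forced into $V_{i+1}$ by applying the central idempotent of Lemma~\ref{lem: projecting to a single W} and averaging over $S_{\lambda_{i+1}}$, and each non-zero block is then recognized, via Schur's lemma and one-dimensionality of fixed spaces, as a scalar multiple of $\psi_i(\omega_j^i\otimes\alpha_k)$. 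You replace this block surgery by one functorial step (surjectivity of multiplication-followed-by-isotypic-projection on whole isotypic components, pushed through the exact functor $\Hom_{S_d}(V_{\lambda_{i+1}},-)$), which treats all middle maps uniformly and makes transparent that surjectivity of $\psi_i$ is exactly the content of Lemma~\ref{lem:containment.Wlambda}; the price is the compatibility check you flag in your final paragraph, and that check is precisely where the paper's per-block Schur argument reappears. It does go through: normalize the generator $\iota$ of the one-dimensional space $\Hom_{S_d}(V_{\lambda_{i+1}}, V_{\lambda_i}\otimes V_{(d-1,1)})$ by $\iota(u_0)=p^i$, where $u_0$ is the $S_{\lambda_{i+1}}$-fixed vector of $V_{\lambda_{i+1}}$; under your identifications the induced map on multiplicity spaces then sends $\omega\otimes\alpha$ to the isotypic projection of $\sum_{m,n}p^i_{mn}\,\omega^{(m)}\alpha^{(n)}=\psi_i(\omega\otimes\alpha)$, and this projection acts as the identity on that element because $\psi_i(\omega\otimes\alpha)$, being the image of $p^i$ under an $S_d$-equivariant map into $L$, already lies in $W_{\lambda_{i+1}}$. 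So the induced map is literally $\psi_i$ rather than merely some surjection of matching dimension, and the only labelling subtlety involved (that $\sigma(\omega)$ depends only on $\sigma(w_1^i)$) is already implicit in the paper's own definition of $\psi_i$ in~\ref{ssec:nextsteps}. Your bookkeeping for the two boundary maps is also sound: $\wedgepow{2} V_{(d-1,1)}\cong V_{(d-2,1^2)}$ is irreducible and distinct from $V_{(d-2,2)}$, and the sign representation cannot occur in $V_{\lambda_{d-2}}\otimes V_{(d-2,1^2)}$ because the dual partition of $\lambda_{d-2}=(2^2,1^{d-4})$ is $(d-2,2)\neq(d-2,1^2)$; hence all relevant multiplicity sits in the $\Sym^2\otimes\Sym^2$ summand, exactly as you claim.
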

\begin{proof}
%Throughout the proof, we use the bases as provided by Lemma \ref{lem:psi.nonzero}.
We first prove the surjectivity of $\psi_1 : \Sym^2 V_1 \to V_2$. 
By Lemma \ref{lem:containment.Wlambda} any $\gamma \in V_2$ can be written as
\[
\gamma = \sum_{\substack{1\leq m,n\leq d \\ 1\leq j\leq k \leq d-1 }} c_{jk}^{mn} \alpha_j^{(m)} \alpha_k^{(n)}
\]
for $c_{jk}^{mn}\in F$.
For every pair $j,k$ define
$D_{jk} = \sum_{1\leq m,n\leq d} c_{jk}^{mn} \alpha_j^{(m)} \alpha_k^{(n)}$.
We show that we may take the individual $D_{jk}$'s to be in $V_2$ as well. For this, let $\rho\in Z(F[S_d])$ be the element obtained by applying Lemma~\ref{lem: projecting to a single W} to the partition $(d-2,2)$. Now consider for every $j,k$ the element
\[
D_{jk}' = \frac{1}{2(d-2)!}\sum_{\tau\in S_2\times S_{d-2}} \tau(\rho(D_{jk})).
\]
We still have $\gamma = \sum_{1 \leq j \leq k \leq d-1} D'_{jk}$ with $D'_{jk} \in \Span \{ \alpha_j^{(m)}\alpha_k^{(n)} \mid 1\leq m,n\leq d \}$, and each $D'_{jk}$ is contained in $L^{S_2\times S_{d-2}} \cap W_{(d-2,2)} = V_2$, as wanted. 

We now prove that every $D_{jk}'$ is in the image of $\psi_1$. If $D_{jk}'=0$, there is nothing to prove. So we assume that $D_{jk}'\neq 0$. Consider the equivariant map
\[
\phi_{jk}: V_{(d-1,1)}\otimes V_{(d-1,1)} \to L: y_m\otimes y_n \mapsto \alpha_j^{(m)}\otimes \alpha_k^{(n)}
\]
whose image is precisely $\Span \{ \alpha_j^{(m)}\alpha_k^{(n)} \mid 1\leq m,n\leq d \}$. From Lemma~\ref{lem: tensorstandard} we see that $V_{(d-1,1)}\otimes V_{(d-1,1)}$ contains a unique copy of $V_{(d-2,2)}$, which contains the element $p^1$ from~\ref{ssec:firststep}. 
The image of $\phi_{jk}$ also contains $V_{(d-2,2)}$ as a subrepresentation because $D_{jk}'\neq 0$. So Schur's lemma implies that $\phi_{jk}$ cannot map $p^1$ to zero. Since both $\phi_{jk}(p^1)$
and $D_{jk}'$ are fixed under the action of $S_2 \times S_{d-2}$, we find from Lemma~\ref{lem: multiplicity fixed} that $D_{jk}' = d_{jk} \phi_{jk}(p^1) = \psi_1( d_{jk} \alpha_j\otimes \alpha_k)$ for some $d_{jk}\in F^\times$. But then
\[ \gamma = \psi_1 \left( \sum_{1 \leq i \leq j \leq d-1} d_{jk} \alpha_j \otimes \alpha_k \right), \]
i.e.\ $\psi_1$ is surjective.

For the other maps, the argument is similar. Let $2\leq i\leq d-3$ and take $\gamma \in V_{i+1}$. We wish to prove that $\gamma$ is in the image of $\psi_i : V_i \otimes V_1 \to V_{i+1}$. By Lemma~\ref{lem:containment.Wlambda} we may write $\gamma$ as 
\[
\gamma = \sum_{\substack{1\leq j\leq \beta_{i-1} \\ 1\leq k\leq d-1 \\ \sigma\in S_d, 1\leq n\leq d}} c_{jk}^{\sigma,n} \sigma(\omega_j^i)\alpha_k^{(n)},
\]
for $c_{jk}^{\sigma, n}\in F$. For every pair $j,k$ define
\[
D_{jk} = \sum_{\substack{\sigma\in S_d \\ 1\leq n\leq d}} c_{jk}^{\sigma,n} \sigma(\omega_j^i)\alpha_k^{(n)}.
\]
Let $\rho\in Z(F[S_d])$ come from Lemma~\ref{lem: projecting to a single W} applied to $\lambda_{i+1}$ and define
\[
D_{jk}' = \frac{1}{2(d-i-1)!}\sum_{\tau\in S_{\lambda_{i+1}} } \tau(\rho(D_{jk})).
\]
We still have $\gamma = \sum_{1 \leq j \leq \beta_{i-1}, 1 \leq k \leq d-1} D'_{jk}$ with $D'_{jk} \in \Span \{ \sigma(\omega_j^i)\alpha_k^{(n)} \mid \sigma\in S_d, 1\leq n\leq d \}$, and each $D'_{jk}$ is contained in $L^{S_{\lambda_{i+1}}} \cap W_{\lambda_{i+1}} = V_{i+1}$. 
We prove that every $D_{jk}'$ is in the image of $\psi_i$, from which the surjectivity of $\psi_i$ follows. We may assume that $D_{jk}'\neq 0$, for else there is nothing to prove. Consider the equivariant map
\[
\phi_{jk}: V_{\lambda_i}\otimes V_{(d-1,1)} \to L: \sigma(w^i_1)\otimes y_n \mapsto \sigma(\omega_j^i)\alpha_k^{(n)}.
\]
The image of this map is precisely $\Span\{\sigma(\omega_j^i) \alpha_k^{(n)} \mid \sigma\in S_d, 1\leq n\leq d\}$. Lemma \ref{lem: tensorstandard}, Schur's lemma and the fact that $D_{jk}'\neq 0$ yields as before that $D_{jk}' = d_{jk}\phi_{jk}(p^i)  = 
\psi_1 (d_{jk} \omega_j^i \otimes \alpha_k)$
for some $d_{jk}\in F^\times$, as wanted.

The surjectivity of the last map $\psi_{d-2}$ can be proved similarly, the main representation-theoretic ingredient now being that $V_{\lambda_{d-2}} \otimes \Sym^2 V_{(d-1,1)}$ contains a unique component $V_{(1^d)}$ by Corollary~\ref{cor: multiplicity double next}. We
omit further details.
\end{proof}

\subsection{Three points in $\PP^1$} \label{ssec:threepoints}
The remainder of Section~\ref{sec:minfreerep} discusses two stand-alone observations, which can be skipped by the reader eager to move forward.
First, we consider the ideal $I$ of the $3$ points in $\PP^1$ associated with a basis $1, \alpha_1, \alpha_2$ of a cubic $S_3$-extension $K/F$ as in~\ref{ssec:veryintro}, where we assume $\Tr_{L/F}(\alpha_i) = 0$ for $i = 1, 2$. This ideal is generated by one cubic polynomial in $R$ rather than
by quadrics, hence the resolution takes the form
\[
0\to R(-3)\to R \to R/I\to 0.
\]
This exceptional behaviour also has a representation-theoretic explanation: for $d = 3$ the decomposition~\eqref{eq: decompfirststep} fails, instead we have $\Sym^2 V_{(2,1)} \cong V_{(3)} \oplus V_{(2,1)}$, so we cannot construct the quadratic map from~\ref{ssec:firststep}. 
Here, the correct representation to look at is $\Sym^3 V_{(2,1)}$, which has a unique
component $V_{(1^3)}$ in view of~\eqref{eq: adhocmults}. Let 
\[
 p = \sum_{j,k,\ell = 1}^2 p_{jk\ell} y_j\otimes y_k\otimes y_\ell \in \Sym^3 V_{(2,1)}
\]
be a generator of this component, expanded with respect to some basis $y_1, y_2$ of $V_{(2,1)}$, this defines the cubic map
\[
\psi: V_1 \to V_3: \alpha \mapsto p(\alpha^{(1)}, \alpha^{(2)}, \alpha^{(3)})
\]
from $V_1 = W_{(2,1)} \cap L^{S_2}$ to $V_3 = W_{(1,1,1)}$,
which is easy to make explicit: 
%$\psi$ simply maps an element $\alpha$ to the discriminant of , i.e.\ for $\alpha\in V_1$
\[
\psi(\alpha) = (\alpha^{(1)} - \alpha^{(2)})(\alpha^{(1)} - \alpha^{(3)})(\alpha^{(2)} - \alpha^{(3)}).
\]
This naturally induces a linear map $\Sym^3 V_1\to V_3$, mapping
$\alpha_j\otimes \alpha_k \otimes \alpha_\ell$ to $C_{jk\ell}\delta$, for some 
$C_{jk\ell} \in F$
and some fixed non-zero $\delta \in V_3$, corresponding to a cubic form 
\[
C(x_1, x_2) = \sum_{j,k,\ell = 1}^2 C_{jk\ell}x_j x_k x_\ell
\]
which again can be seen to vanish on the points~\eqref{eq:asspoints}. The resulting complex
$0\to V_3^*\otimes R(-3) \to V_0^*\otimes R \to R/I\to 0$
is a minimal free resolution of $R/I$.
We note that the cubic form $C$ can also be obtained by applying the Delone--Faddeev parametrization~\cite[Prop.\,4.2]{gangrosssavin} to the cubic extension $K/F$.

\subsection{Self-duality} \label{ssec:duality}
As a second side trip, we note that there is a ``dual" repre\-sentation-theoretic construction of a minimal free resolution
of our coordinate ring $R/I$, in which the spaces $V_0^\ast$, $V_2^\ast$, $V_3^\ast$, \ldots, $V_{d-2}^\ast$, $V_d^\ast$
appear in opposite order. This construction links the well-known self-duality for minimal free resolutions of $d$ general points in $\PP^{d-2}$~\cite{behnke,schreyer} to the duality for representations of $S_d$, obtained by
 tensoring with the sign representation $V_{\lambda_d} = V_{(1^d)}$ (i.e.\ by transposing Young diagrams). %We briefly explain how this works.

\subsection{} For the first step, tensoring the decomposition of $V_{(d-2,2)}$ from~\eqref{eq: decompfirststep} with the sign representation yields  
\[ V_{(1^d)} \otimes \Sym^2 V_{(d-1,1)} \cong V_{(1^d)} \oplus V_{(2, 1^{d-2})} \oplus V_{(2^2, 1^{d-4})}, \] 
so there is a unique component $V_{\lambda_{d-2}}$. Moreover, Lemma \ref{lem: multiplicity fixed} shows that $V_{\lambda_{d-2}}^{S_{\lambda_{d-2}}}$ has dimension $1$. Denote by $u$ a generator of $V_{(1,\ldots, 1)}$. Let 
\[
q^1 = \sum_{m,n = 1}^{d-1} q_{mn}^1 u\otimes (y_m \otimes y_n)\in V_{(1,\ldots, 1)}\otimes \Sym^2 V_{(d-1,1)}
\]
be the unique element (up to multiplication by a scalar from $F^\times$) which is fixed under $S_{\lambda_{d-2}}$ and such that its conjugates generate the representation $V_{\lambda_{d-2}}$. 
We can assume that $q_{mn}^1 = q_{nm}^1$ and use this element to define a quadratic map
\[
\varphi_1: V_{d}\otimes V_1 \to V_{d-2}: \delta \otimes \alpha\mapsto q^1(\delta, \alpha^{(1)}, \ldots, \alpha^{(d)}),
\]
which naturally induces a linear map $V_d\otimes \Sym^2 V_1\to V_{d-2}$, also denoted by $\varphi_1$. Dualizing yields the first step of the resolution for $I$
\[
V_{d-2}^*\otimes R(-2) \xrightarrow{\varphi_1^*} V_d^*\otimes R\to R/I\to 0.
\]

The subsequent steps are similar. Let $2\leq i\leq d-3$, then from Lemma~\ref{lem: tensorstandard} 
we see that $V_{\lambda_{d-i-1}}$ appears once in $V_{\lambda_{d-i}}\otimes V_{(d-1,1)}$. 
By Lemma~\ref{lem: multiplicity fixed}
%Since $V_{\lambda_{d-i-1}}^{S_{\lambda_{d-i-1}}}$ has dimension 1, 
there is, up to scalar multiplication, a unique
\[
q^i = \sum_{m = 1}^{\beta_{d-i-1}} \sum_{n = 1}^{d-1} q_{mn}^i w_m^{d-i} \otimes y_n \in V_{\lambda_{d-i}}\otimes V_{(d-1,1)},
\]
that is fixed by $S_{\lambda_{d-i-1}}$ and whose conjugates generate a $V_{\lambda_{d-i-1}}$.\footnote{Notice that for $3 \leq i \leq d - 3$ the elements $p^{d-i}$ from~\ref{ssec:nextsteps} are also taken from $V_{\lambda_{d-i}} \otimes V_{(d-1,1)}$, but they are fixed under $S_{\lambda_{d-i + 1}}$ and generate a $V_{\lambda_{d- i +1}}$, rather than being fixed under $S_{\lambda_{d-i-1}}$ and generating a $V_{\lambda_{d - i - 1}}$.} In the same way as before, this induces a linear map
\[
\varphi_i: V_{d-i}\otimes V_1\to V_{d-i-1}
\]
which, when dualized, yields a next step in the resolution. 

To conclude, using Lemma~\ref{lem: tensorstandard} and Lemma~\ref{lem: tensornotsostandard} along with~\eqref{eq: decompfirststep} one finds a unique trivial component in the representation $V_{(d-2,2)} \otimes \Sym^2 V_{(d-1,1)}$, resulting in a linear map $\varphi_{d-2}: V_2\otimes \Sym^2 V_1\to V_0$ whose dual is the last step of the resolution. 

%Note that, compared to TODO(ref: eq to original resolution), the irreducible representations corresponding to the terms in this resolution appear in the opposite order.

\subsection{} \label{ssec:lastofresolution} The resulting sequence%\todo{this part could use some extra details}
\begin{multline*}
0\to V_0^*\otimes R(-d)\xrightarrow{\varphi_{d-2}^*}  V_2^*\otimes R(-d+2) \xrightarrow{\varphi_{d-3}^*} \ldots  \\
  \ldots \xrightarrow{\varphi_3^*} V_{d-3}^*\otimes R(-3) \xrightarrow{\varphi_2^*} V_{d-2}^* \otimes R(-2) \xrightarrow{\varphi_1^*} V_d^*\otimes R\to R/I \to 0
\end{multline*}
is indeed a minimal graded free resolution because 
it is explicitly isomorphic to our sequence from~\ref{ssec:shapeofresolution}. 
We sketch this for steps $2 \leq i \leq d-3$; the analysis for $i = 1$ and $i=d-2$ is completely similar. 
Choose elements $c_\sigma^i \in F$, where $\sigma$ ranges over $S_d$, with the following property: if $w \in V_{\lambda_i}$ is fixed by $S_{\lambda_i}$, then $\sum_{\sigma \in S_d} c^i_\sigma \sigma(u \otimes w)$ is non-zero and fixed under $S_{\lambda_{d-i}}$. Note that such coefficients indeed exist in view of Lemma~\ref{lem: multiplicity fixed}, because the $\sigma(u \otimes w)$'s span a $V_{(1, \ldots, 1)} \otimes V_{\lambda_i} \cong V_{\lambda_{d-i}}$. 
It follows, for each $m = 1, \ldots, \beta_{i-1} = \beta_{d-i-1}$, 
that $\sum_{\sigma\in S_d} c_\sigma^i \sigma(\delta \omega_m^{i}) \in W_{\lambda_{d-i}}$ is fixed under $S_{\lambda_{d-i}}$ and non-zero; consequently 
\[ \varpi^i_m = \sum_{\sigma\in S_d} c_\sigma^i \sigma(\delta \omega_m^{i}), \quad m = 1, \ldots, \beta_{d-i-1} \]
is a basis for $V_{d-i}$. We leave it to the reader to verify that the map $\psi_i$ expressed with respect to the bases $\omega_m^i$ and $\omega_m^{i+1}$ coincides with the map $\varphi_i$ expressed with respect to the bases $\varpi_m^i$ and $\varpi_m^{i+1}$.

%We work relative to this basis in the dual resolution and relate the maps $\varphi_i$ and $\psi_i$. 
 
%To this end, let $p^i \in V_{\lambda_i} \otimes V_{(d-1,1)}$ be as in~\ref{ssec:nextsteps} and recall that it is fixed under $S_{\lambda_{i+1}}$ and that its conjugates generate a representation isomorphic to $V_{\lambda_{i+1}}$, therefore
%\[
%\sum_{\sigma \in S_d} c_\sigma^{i+1} \sigma(u \otimes p^i) = u\otimes \sum_{\sigma} \sgn (\sigma) c_\sigma^{i+1} \sigma(p^i) \in (V_{(1, \ldots, 1)} \otimes V_{\lambda_i}) \otimes V_{(d-1,1)}
%\]
%is a valid instance of $q^i$.
%Writing 
%\[ \psi_i(\omega_\ell^i\otimes \alpha_j) = \sum_{m=1}^{\beta_i} L_{\ell j}^m \omega_m^{i+1} \] for certain $L_{\ell j}^m\in F$, we find by direct computation that $\varphi_i (\varpi^i_\ell \otimes \alpha_j)$ equals
%\[
% \varphi_i\left( \sum_{\sigma \in S_d} c_\sigma^i \sigma(\delta \omega_\ell^i)\otimes \alpha_j\right) = \sum_{m = 1}^{\beta_i} L_{\ell j}^m \left( \sum_{\sigma \in S_d} c_\sigma^{i+1} \sigma(\delta \omega_m^{i+1}) \right) = \sum_{m = 1}^{\beta_i} L_{\ell j}^m \varpi_m^{i+1},
%\]
%as wanted. 
%Thus, with respect to our chosen bases, the maps in the new resolution are exactly the same as the maps in the original resolution.

\section{Scrollar invariants of representations and resolvents}\label{sec:scrollar.invs}

%In this section we introduce our generalized scrollar invariants associated to irreducible representations of the symmetric group, and prove various results about them. In particular we prove Proposition~\ref{prop:scrollar.invs.of.hooks} on the scrollar invariants associated to hook partitions as well as Theorem~\ref{thm:scrollar.invariants.resolvent} about the scrollar invariants of resolvent covers. %In Proposition~\ref{prop:scrollar.invs.duality} we prove a duality statement, expressing the scrollar invariants of a partition in terms of those of its dual partition. 

%Recall that $\varphi: C\to \PP^1$ is a degree $d$ morphism over a field of characteristic zero or larger than $d$, where $C$ is a smooth projective genus $g$ curve.

\subsection{} We now return to our simply branched degree $d$ cover $\varphi : C \to \PP^1$ over a field $k$ satisfying $\charac k = 0$ or $\charac k > d$, with $C$
a curve of genus $g$. In this section we 
explain the decomposition~\eqref{eq:multiscrollarVB} that underlies Definition~\ref{def_scrollar_lambda}, introducing the scrollar invariants of $C$ with respect to a partition
$\lambda \vdash d$: this is done in~\ref{ssec:defscrollpart}. We then proceed with studying some of their first properties.
Recall that this notion generalizes that of the scrollar invariants $e_1, e_2, \ldots, e_{d-1}$ of $C$ with respect to $\varphi$, which arise as the scrollar invariants with respect to the partition $(d-1,1)$. Several other examples are discussed, which are all subsumed by Proposition~\ref{prop:scrollar.invs.of.hooks} on the scrollar invariants of hooks: its proof can be found in~\ref{ssec:scrollar.invs.of.hooks}.
In~\ref{ssec:volanddual} we prove our ``volume formula" generalizing the sum formula $e_1 + e_2 + \ldots + e_{d-1} = g + d - 1$, and we also present an explicit formula relating the scrollar invariants of $\lambda$ to those of the dual partition $\lambda^\ast$. 
The main result of this section is Theorem~\ref{thm:scrollar.invariants.resolvent}, expressing the scrollar invariants of resolvent covers in terms of scrollar invariants with respect to certain partitions: a proof can be found in~\ref{ssec:resolvent.curves}. 
Our treatment is based on explicit function field arithmetic, in the style of Hess~\cite{hessRR}; alternatively, it should be possible to develop much of the following material using the formalism of parabolic vector bundles and the Mehta--Seshadri correspondence~\cite{MS} mentioned in~\ref{ssec:MS}.

%Let us point out that one may alternatively define the scrollar invariants by appealing to the Metha--Sesahdri correspondence again~\cite{MS}. With this perspective, one can give alternative proofs of many of the following proofs using the theory of parabolic vector bundles.

\subsection{Reduced bases.} \label{ssec:reducedbasisdef}
 Consider the extension
$k(t) = k(\PP^1) \subseteq k(C) = K$
induced by $\varphi$. The simple branching assumption will not play an important role until~\ref{ssec:genusformula}. Following Hess, see~\cite{hessRR} and~\cite[pp.\,43-52]{hessnotes}, we can interpret the decomposition~\eqref{pushfwd} in terms of this extension. 
%This viewpoint will be useful in defining the scrollar invariants for representations of $\Gal(L / k(t))$, where $L$ denotes the Galois closure of $K / k(t)$.
Denote by $\OO_K$ resp.\ $\OO_{K}^\infty$ the integral closure of $k[t]$ resp.\ $k[t^{-1}]$ in $K$. Geometrically, these rings correspond to two affine patches of the curve $C$, one above $\AA^1 = \PP^1 \setminus \{ \infty \}$ and the other above $\PP^1\setminus \{0\}$.

\begin{theorem}\label{thm:Hess.reducedbasis}
There exists a $k[t]$-basis $1, \alpha_1, \ldots, \alpha_{d-1}$ of $\OO_K$, together with unique integers $1\leq r_1\leq \ldots \leq r_{d-1}$ such that $1, t^{-r_1}\alpha_1, \ldots, t^{-r_{d-1}}\alpha_{d-1}$ is a $k[t^{-1}]$-basis of $\OO_{K}^\infty$.
\end{theorem}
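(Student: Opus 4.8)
The plan is to read the statement as the arithmetic incarnation of the Birkhoff--Grothendieck splitting of $\varphi_\ast\OO_C$, and to produce the basis by Hess's reduction procedure. First I would introduce the degree function at infinity: for $x \in K^\times$ set $\mu(x) = \max_{P \mid \infty}(-v_P(x)/e_P)$, where $P$ ranges over the places of $K$ above the place $\infty$ of $k(t)$, $v_P$ is the normalized valuation and $e_P$ the ramification index, and put $\mu(0) = -\infty$. This is the arithmetic avatar of the normalized pole order of $x$ along $D = \varphi^\ast\infty$, and it satisfies $\mu(cx) = \deg_t(c) + \mu(x)$ for $c \in k(t)$, $\mu(x+y) \le \max(\mu(x),\mu(y))$, and $\mu(x) \ge 0$ for every $x \in \OO_K$, with equality iff $x \in k$. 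I would recall that $\OO_K$ is a free $k[t]$-module of rank $d$ (integral closure in a finite separable extension) and that $k[t]\cdot 1 = \OO_K \cap k(t)$ is saturated, so $1$ extends to a $k[t]$-basis. Call a $k[t]$-basis $b_0 = 1, b_1, \ldots, b_{d-1}$ \emph{reduced} if $\mu(\sum_i c_i b_i) = \max_i(\deg_t(c_i) + \mu(b_i))$ for all $c_i \in k[t]$; clearing denominators, this identity then holds for all $c_i \in k(t)$.

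Second, I would establish existence of a reduced basis by descent. Starting from any $k[t]$-basis with $b_0 = 1$, suppose it is not reduced, witnessed by $c_i \in k[t]$ with $\mu(\sum_i c_i b_i) < M := \max_i(\deg_t c_i + \mu(b_i))$. Let $S$ be the set of indices attaining $M$, let $\gamma_i$ be the leading coefficient of $c_i$, and pick $j \in S$ maximizing $\mu(b_j)$ (so $j \ne 0$, and $\deg_t c_i - \deg_t c_j = \mu(b_j)-\mu(b_i) \ge 0$ is an integer for $i \in S$). Replacing $b_j$ by $b_j' = b_j + \sum_{i\in S\setminus\{j\}}\gamma_j^{-1}\gamma_i\, t^{\deg_t c_i - \deg_t c_j}\, b_i$ is a unipotent change of basis over $k[t]$, so it preserves $\OO_K$ and keeps $b_0 = 1$; and since $b_j' = \gamma_j^{-1} t^{-\deg_t c_j}\sum_{i\in S}\gamma_i t^{\deg_t c_i} b_i$ while that sum has $\mu < M$, we get $\mu(b_j') < \mu(b_j)$. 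Thus $\sum_i \mu(b_i)$ strictly decreases. As this sum lies in the discrete set $\tfrac{1}{e}\ZZ_{\ge 0}$ (with $e$ the lcm of the $e_P$) and is bounded below by $0$, the procedure terminates at a reduced basis.

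Finally, from a reduced basis I would read off the invariants by setting $r_i = \lceil \mu(b_i)\rceil$, ordered so that $0 = r_0 \le r_1 \le \cdots \le r_{d-1}$, with $r_i \ge 1$ for $i \ge 1$ since $\mu(b_i) > 0$. A valuation count at the places above $\infty$ shows $t^{-r_i}b_i \in \OO_K^\infty$. For the basis property I take $y \in \OO_K^\infty$, write $y = \sum_i c_i b_i$ with $c_i \in k(t)$, and use that $y$ is regular above $\infty$ (so $\mu(y) \le 0$, whence $\deg_t(c_i t^{r_i}) \le 0$ by reducedness) and regular away from $0$ (so each $c_i$, being a coordinate of $y$ in the integral-closure basis, has finite poles only at $t = 0$); together these force $c_i t^{r_i} \in k[t^{-1}]$, so $\{t^{-r_i}b_i\}$ generates $\OO_K^\infty$ over $k[t^{-1}]$ and, being $k(t)$-independent, is a basis. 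Uniqueness of the $r_i$ I would obtain from the intrinsic filtration $L(nD) = \{x \in \OO_K : \mu(x) \le n\}$: reducedness gives $\dim_k L(nD) = \sum_i \max(0, n - r_i + 1)$, so $\#\{i : r_i \le n\}$ is the first difference and $\#\{i : r_i = n\}$ the second difference of $n \mapsto \dim_k L(nD)$. The step I expect to be the main obstacle is this last bridge between the two orders $\OO_K$ and $\OO_K^\infty$: the function $\mu$ records behaviour only at $\infty$, and one must carefully control poles simultaneously at $0$, at $\infty$ and at the finite places — and handle the possibly ramified, possibly split fibre over $\infty$ — to see that reducedness with respect to a single place nonetheless diagonalizes the gluing of the two patches.
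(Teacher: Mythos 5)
Your proof is correct, and the main thing to note in comparison is that the paper does not actually prove Theorem~\ref{thm:Hess.reducedbasis} itself: its entire proof is the citation \cite[Cor.\,4.3]{hessRR} (see also \cite[p.\,46]{hessnotes}). What you have written is a self-contained reconstruction of precisely the reduction theory behind that citation: your degree function $\mu(x)=\max_{P\mid\infty}\left(-v_P(x)/e_P\right)$, the notion of reduced basis, the unipotent leading-term cancellation, and termination of the descent via $\sum_i \mu(b_i)\in \tfrac{1}{e}\ZZ_{\geq 0}$ are exactly Hess's reduction algorithm (the same procedure that \texttt{ShortBasis()} implements in the experiments of Section~\ref{sec:examples}, and the same mechanism the paper re-uses in Lemma~\ref{lem:scrollarsofV} and Lemma~\ref{lem:reducedbasis.guess}); so the route is mathematically the same as the cited source's, and what your version buys is self-containedness plus the explicit dimension formula $\dim_k L(nD)=\sum_i\max(0,\,n-r_i+1)$, while the paper's citation buys brevity. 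Two small points to tighten. First, in the descent step you assert $j\neq 0$ without argument; the reason is that a violation forces $|S|\geq 2$ (if the maximum $M$ were attained at a single index, the ultrametric property of $\mu$ would force $\mu(\sum_i c_ib_i)=M$), and any $i\in S$ with $i\neq 0$ has $\mu(b_i)>0=\mu(b_0)$, so the maximizer $j$ of $\mu$ over $S$ is nonzero. Second, your uniqueness argument derives the formula for $\dim_k L(nD)$ from reducedness of the basis you constructed, whereas the theorem's uniqueness quantifies over \emph{every} pair $(\{\alpha_i\},\{r_i\})$ satisfying the gluing property; the fix is one line using an identity you already have, namely that $L(nD)=\OO_K\cap t^n\OO_K^\infty$ is intrinsic, and for an arbitrary admissible pair the two basis properties directly give $L(nD)=\bigoplus_i \{c\in k[t] : \deg_t c\leq n-r_i\}\cdot\alpha_i$ with no reducedness needed, so the first and second differences of $n\mapsto \dim_k L(nD)$ pin down the $r_i$ for all admissible pairs at once. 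With these two remarks spelled out, your proof is complete.
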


\begin{proof}
This follows from~\cite[Cor.\,4.3]{hessRR}; see also~\cite[p.\,46]{hessnotes}.
\end{proof}

\noindent The integers $r_i$
describe how $\varphi_\ast \mathcal{O}_C$ splits, that is, $r_i = e_i$ for $i=1, \ldots, d-1$.
An accompanying basis $1, \alpha_1, \ldots, \alpha_{d-1}$ as above is called a ``reduced basis". 
%\todo[inline]{Mention something about computing reduced bases? (via eg magma)}
%because $\alpha_0 \in (k[C]_0 \cap k[C]_\infty) \setminus \{0\} = k \setminus \{0\}$ it can be assumed that $\alpha_0 = 1$.

\subsection{} \label{ssec:minkowski_analogy} As explained in~\cite[\S7]{hessRR} and already touched upon in~\ref{ssec:introdefscrollar}, the notion of a reduced basis is the function-field theoretic analogue of that of a Minkowski-reduced $\ZZ$-basis $1, v_1, \ldots, v_{d-1}$ of the ring of integers of a degree $d$ number field $E$, and under this correspondence the scrollar invariants $e_i$ can be seen as the equivalents of $\log \, \lVert v_i \rVert$, with $\lVert \cdot \rVert$ denoting the $\ell_2$-norm under the Minkowski embedding. Given the analogy between $g$ and $\log \sqrt{|\Delta_E|}$, where $\Delta_E$ is the discriminant of $E$, it is interesting to view the property $e_1 + e_2 + \ldots + e_{d-1} = g + d - 1$ against Minkowski's second theorem
\[ \lVert v_1 \rVert \cdot \lVert v_2 \rVert \cdots \lVert v_{d-1} \rVert \sim_d \sqrt{|\Delta_E|}, \]
and to view the Maroni bound $e_{d-1} \leq (2g+2d-2)/d$ against the observation
$\lVert v_{d-1} \rVert = O_d(|\Delta_E|^{1/d})$
due to Peikert--Rosen~\cite[Lem.\,5.4]{peikertrosen}, see also~\cite[Thm.\,1.6]{2torsionclassgroup}.

\subsection{}
For the purposes of this paper, it is convenient to assume that $\Gal(L/k(t))$ is the full symmetric group $S_d$, although the observations below
apply more generally. 
Let $\OO_{L}$ resp.\ $\OO_{L}^\infty$ be the integral closure of $k[t]$ resp.\ $k[t^{-1}]$ in $L$. For any $k(t)$-vector space $V \subseteq L$, say of dimension $n$, we define 
\[ \OO_V = \OO_L\cap V, \qquad \OO_{V}^\infty = \OO_{L}^\infty\cap V. \] 
Note that $\OO_V$ is a free $k[t]$-submodule of $L$ of rank $n$, and similarly for $\OO_{V}^\infty$.
%One can think of $\OO_V$ as a lattice of rank $\dim V$ inside $\OO_L\subset L$.
By \cite{hessRR} there are unique integers $0\leq r_1\leq \ldots \leq r_n$, together with a $k[t]$-basis $v_1, \ldots, v_n$ of $\OO_V$ such that
\[
t^{-r_1}v_1, \ldots, t^{-r_n}v_n
\]
form a $k[t^{-1}]$-basis of $\OO_{V}^\infty$. We call the $r_i$ the ``scrollar invariants" of $V$ and an accompanying basis $v_1, \ldots, v_n$ is called a ``reduced basis". The scrollar invariant $0$ is realized if and only if $v_1 \in k$. Note that the scrollar invariants of $K$ are just $\{ 0 \} \cup \{ e_1, \ldots, e_{d-1} \}$, where $e_i$ are the scrollar invariants of $C$ with respect to $\varphi$.

We prove two auxiliary lemmas:

\begin{lemma} \label{lem:scrollarsofV}
Let $V \subseteq L$ be a $k(t)$-subspace with scrollar invariants $r_1, \ldots, r_n$ and reduced basis $v_1, \ldots, v_n$. For every integer $r$ define
\[ V^r = \{ v \in \OO_V \mid t^{-r } v \in \OO_{V}^\infty \} \]
and let $j_r$ be maximal such that $r_{j_r} \leq r$.
Then $V^r$ is a $k$-vector space with basis 
$\{t^i v_j \}_{j=1, \ldots, j_r, i = 0, \ldots r - r_j}$, and
$k(t) V^r$ is a $k(t)$-vector space with basis $\{ v_1, \ldots, v_{j_r} \}$.
\end{lemma}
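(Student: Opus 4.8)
The plan is to work directly with the reduced basis $v_1, \dots, v_n$ provided by Theorem~\ref{thm:Hess.reducedbasis} (applied to $V$) and to translate the membership condition defining $V^r$ into an explicit degree bound on polynomial coefficients. Since $\OO_V$ is a free $k[t]$-module with basis $v_1, \dots, v_n$ and $\OO_V^\infty$ is a free $k[t^{-1}]$-module with basis $t^{-r_1}v_1, \dots, t^{-r_n}v_n$, every $v \in \OO_V$ can be written uniquely as $v = \sum_{j=1}^n f_j(t)\, v_j$ with $f_j \in k[t]$. Rewriting this in terms of the basis of $\OO_V^\infty$ gives
\[
t^{-r} v = \sum_{j=1}^n \bigl( t^{r_j - r} f_j(t) \bigr)\,(t^{-r_j} v_j).
\]
Because the $t^{-r_j}v_j$ form a $k[t^{-1}]$-basis of $\OO_V^\infty$, the membership $t^{-r}v \in \OO_V^\infty$ holds if and only if each coefficient $t^{r_j - r} f_j(t)$ lies in $k[t^{-1}]$.

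First I would analyse this coefficient condition. Writing $f_j(t) = \sum_i a_{ji} t^i$ with $a_{ji}\in k$, the product $t^{r_j - r} f_j(t)$ lies in $k[t^{-1}]$ exactly when every exponent $r_j - r + i$ is $\leq 0$, i.e.\ when $\deg f_j \leq r - r_j$; in particular $f_j = 0$ whenever $r_j > r$, that is, whenever $j > j_r$ (using that the $r_j$ are increasing and $j_r$ is maximal with $r_{j_r}\le r$). Hence $v \in V^r$ if and only if $v = \sum_{j=1}^{j_r} f_j(t)\, v_j$ with each $f_j$ of degree at most $r - r_j$. Since the space of polynomials of degree at most $r - r_j$ is spanned by $1, t, \dots, t^{r-r_j}$, it follows that $V^r$ is the $k$-span of the elements $t^i v_j$ for $j = 1, \dots, j_r$ and $i = 0, \dots, r - r_j$. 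These elements are $k$-linearly independent, since $v_1, \dots, v_n$ are $k(t)$-linearly independent, so they form the claimed basis and in particular $V^r$ is a $k$-vector space.

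For the second assertion I would simply pass to the $k(t)$-span. The displayed basis contains each $v_j$ (take $i = 0$) for $j \le j_r$, so $v_1, \dots, v_{j_r} \in k(t) V^r$; conversely every basis element $t^i v_j$ with $j \le j_r$ lies in the $k(t)$-span of $v_1, \dots, v_{j_r}$, whence $k(t)V^r = \Span_{k(t)}\{v_1, \dots, v_{j_r}\}$. As $v_1, \dots, v_{j_r}$ are $k(t)$-linearly independent, they form a basis of $k(t)V^r$, as wanted.

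I expect no serious obstacle here: the only point requiring care is the bookkeeping of the degree inequality $\deg f_j \le r - r_j$ and the correct identification of the cutoff index $j_r$, together with the degenerate case $r - r_j < 0$ forcing $f_j = 0$. Everything else is a routine consequence of the freeness of $\OO_V$ and $\OO_V^\infty$ over $k[t]$ and $k[t^{-1}]$ respectively.
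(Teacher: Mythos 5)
Your proof is correct and follows essentially the same route as the paper: expand $v$ over the reduced basis with $k[t]$-coefficients and translate the condition $t^{-r}v \in \OO_V^\infty$ into the degree bound $\deg f_j \le r - r_j$ using that the $t^{-r_j}v_j$ form a $k[t^{-1}]$-basis. The paper's proof states this equivalence without computation ("easily seen"), and your write-up simply supplies the details it omits.
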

\begin{proof}
It is clear that $\{ t^i v_j \}_{j = 1, \ldots, n, i = 0, 1, 2, \ldots}$ is a $k$-basis for $\OO_V$. When expanding $v\in \OO_V$ with respect to this basis, the requirement $t^{-r}v\in \OO_{V}^\infty$ is easily seen to be equivalent to the vanishing of the coordinates at $t^iv_j$ with $i + j_r - r > 0$, from which the first claim follows. The second claim is immediate from the first one.
\end{proof}

The next lemma is frequently useful in proving that a candidate-reduced basis is indeed a reduced basis. 

\begin{lemma}\label{lem:reducedbasis.guess}
Let $V$ be a $k(t)$-vector subspace of $L$ with scrollar invariants $r_1\leq \ldots \leq r_n$. Suppose that $v_1', \ldots, v_n'\in \OO_V$ form a $k(t)$-basis for $V$ and that there are integers $r'_1\leq \ldots \leq r'_n$ such that 
\[
r_1 + \ldots + r_n = r_1' + \ldots + r_n'
\]
and such that $t^{-r_i'}v_i'\in \OO_{V}^\infty$. Then 
$r_i = r_i'$ for all $i$, and $v_1', \ldots, v_n'$ is a reduced basis for $V$. 
\end{lemma}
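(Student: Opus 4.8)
The plan is to compare the candidate basis with a genuine reduced basis through a single change-of-basis matrix over $k[t]$, and to read off both claims from a degree count on its determinant; the equal-sum hypothesis is exactly what makes this count sharp.

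First I would fix a genuine reduced basis $v_1, \ldots, v_n$ of $\OO_V$ with scrollar invariants $r_1 \le \cdots \le r_n$, so that $t^{-r_i} v_i$ is a $k[t^{-1}]$-basis of $\OO_V^\infty$; such a basis exists, with the $r_i$ uniquely determined, by~\cite{hessRR} (cf.\ Theorem~\ref{thm:Hess.reducedbasis}). Writing $v_j' = \sum_i P_{ij} v_i$, the containment $v_j' \in \OO_V$ forces $P_{ij} \in k[t]$. The crucial translation step is to rewrite $t^{-r_j'} v_j' = \sum_i P_{ij} t^{r_i - r_j'} (t^{-r_i} v_i)$ and to observe that the hypothesis $t^{-r_j'} v_j' \in \OO_V^\infty$ is equivalent to the degree bounds $\deg_t P_{ij} \le r_j' - r_i$ for all $i, j$.

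Next I would expand $\det P = \sum_\sigma \sgn(\sigma) \prod_j P_{\sigma(j), j}$ and bound $\deg_t \prod_j P_{\sigma(j),j} \le \sum_j (r_j' - r_{\sigma(j)}) = \sum_j r_j' - \sum_i r_i = 0$, where the last equality is precisely the equal-sum hypothesis. Since $P$ has entries in $k[t]$ and $\det P \ne 0$ (the $v_j'$ being a $k(t)$-basis of $V$), a nonzero polynomial of degree $\le 0$ must be a nonzero constant, whence $\det P \in k^\times$ and $P \in \GL_n(k[t])$; thus $v_1', \ldots, v_n'$ is already a $k[t]$-basis of $\OO_V$. The same bookkeeping at infinity then closes the argument: setting $\tilde P_{ij} = P_{ij} t^{r_i - r_j'} \in k[t^{-1}]$ one has $t^{-r_j'} v_j' = \sum_i \tilde P_{ij}(t^{-r_i} v_i)$, and since the factor $t^{\sum_j r_{\sigma(j)} - \sum_j r_j'} = t^{\sum_i r_i - \sum_j r_j'} = 1$ is the same for every $\sigma$, it factors out to give $\det \tilde P = \det P \in k^\times$. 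Hence $\tilde P \in \GL_n(k[t^{-1}])$ and the $t^{-r_j'} v_j'$ form a $k[t^{-1}]$-basis of $\OO_V^\infty$.

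At this point $v_1', \ldots, v_n'$ satisfies the defining property of a reduced basis with invariant sequence $r_1' \le \cdots \le r_n'$, so the uniqueness of the scrollar invariants of $V$ (see~\cite{hessRR}) yields $r_i' = r_i$ for all $i$ and identifies $v_1', \ldots, v_n'$ as reduced. I expect the main obstacle to be purely bookkeeping: getting the direction of the inequality $\deg_t P_{ij} \le r_j' - r_i$ right and checking that $\det \tilde P$ lands in $k^\times$ rather than merely in $k[t^{-1}]$. I note that this route deliberately avoids Lemma~\ref{lem:scrollarsofV}; a dimension count through the spaces $V^r$ would only deliver a majorization between $\{r_i\}$ and $\{r_i'\}$, which together with equal sums is \emph{not} enough to force equality, so the determinant argument seems to be the essential ingredient.
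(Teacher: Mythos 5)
Your proof is correct and is essentially the paper's own argument: both compare the candidate basis with a genuine reduced basis via a change-of-basis matrix with entries in $k[t]$, use integrality of the $t^{-r_j'}v_j'$ at infinity together with the equal-sum hypothesis to force the determinant to lie in $k[t]\cap k[t^{-1}]=k$ (your Leibniz degree count is just an explicit version of the paper's identity $\det(D_2BD_1^{-1})=\det B$), and conclude that both change-of-basis matrices are invertible over their respective rings, so that $v_1',\ldots,v_n'$ is reduced with invariants $r_i'=r_i$. One quibble with your closing remark: the dimension count through the spaces $V^r$ of Lemma~\ref{lem:scrollarsofV} actually yields the termwise inequality $r_i\leq r_i'$ (not merely a majorization), which combined with equal sums \emph{does} force $r_i=r_i'$; what that route genuinely fails to deliver is the second conclusion, namely that $v_1',\ldots,v_n'$ is itself a reduced basis, and for that the determinant argument is indeed essential.
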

\begin{proof}
Let $v_1, \ldots, v_n$ be a reduced basis of $V$. Define the matrices
\begin{align*}
D_1 &= \diag(t^{r_1}, \ldots, t^{r_{n}}),\\
D_2 &= \diag(t^{r_1'}, \ldots, t^{r_{n}'}).
\end{align*}
Note that $\det D_1 = \det D_2 = t^{ r_1+\ldots + r_n }$. Let $B$ be the change of basis matrix from $\{v_i'\}_i$ to $\{v_i\}_i$. Since the $v_i'$ are integral over $k[t]$, this matrix has entries in $k[t]$. We also have $\det B \neq 0$ since the $v_i'$ form a $k(t)$-basis of $V$. We make a similar reasoning above infinity. The change of basis matrix from $\{t^{-r_i'}v_i'\}_i$ to $\{t^{-r_i}v_i\}_i$ is given by $D_2 B D_1^{-1}$. It has entries in $k[t^{-1}]$ since $t^{-r_i'}v_i'$ is integral over $k[t^{-1}]$. It follows that 
$\det (D_2BD_1^{-1}) =  \det B \in k[t]\cap k[t^{-1}] = k$.
Since $\det B$ is non-zero, we conclude that $\{v_i'\}_i$ is a reduced basis for $V$.
\end{proof}

\subsection{Scrollar invariants of representations and partitions.} \label{ssec:defscrollpart} We now introduce our scrollar invariants associated to irreducible representations. Recall that we can view $L$ as the regular representation of $S_d$.
We split $L$ into isotypic components $W_\lambda \cong 
V_\lambda^{\dim V_\lambda}$ as in~\ref{ssec:informalmainresult}, one for each partition $\lambda \vdash d$. The patches $\mathcal{O}_{W_{\lambda}}$ and $\mathcal{O}_{W_{\lambda}}^\infty$ then glue together to the vector bundle $\mathcal{W}_\lambda$ from Section~\ref{ssec:informalmainresult}. 
The next corollaries to Lemma~\ref{lem:scrollarsofV} establish the decomposition~\eqref{eq:multiscrollarVB} and lie at the heart of everything
that follows:

\begin{corollary}
If $V\subseteq L$ is an irreducible subrepresentation then all scrollar invariants of $V$ are equal to each other.
\end{corollary}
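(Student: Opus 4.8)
The plan is to exploit the $S_d$-action directly on the two lattices. Since $S_d = \Gal(L/k(t))$ acts by $k(t)$-algebra automorphisms, every $\sigma \in S_d$ fixes $k[t]$ and $k[t^{-1}]$ elementwise, and therefore preserves their integral closures $\OO_L$ and $\OO_L^\infty$ in $L$ (if $v$ is integral over $k[t]$ then so is $\sigma(v)$, and likewise over $k[t^{-1}]$). As $V$ is by hypothesis a subrepresentation, $\sigma$ also preserves $V$, and hence both $\OO_V = \OO_L \cap V$ and $\OO_V^\infty = \OO_L^\infty \cap V$.

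The key step is to feed this equivariance into the spaces $V^r$ from Lemma~\ref{lem:scrollarsofV}. First I would check that each $V^r = \{ v \in \OO_V \mid t^{-r}v \in \OO_V^\infty \}$ is $S_d$-stable: if $v \in V^r$ and $\sigma \in S_d$, then $\sigma(v) \in \OO_V$, and since $\sigma$ fixes $t^{-r}$ we have $t^{-r}\sigma(v) = \sigma(t^{-r}v) \in \OO_V^\infty$, so $\sigma(v) \in V^r$. Consequently $k(t)V^r$ is a $k(t)[S_d]$-subrepresentation of $V$.

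Now irreducibility does all the work. By Lemma~\ref{lem:scrollarsofV}, $k(t)V^r$ has $k(t)$-basis $v_1, \ldots, v_{j_r}$, where $j_r$ is the number of scrollar invariants that are $\leq r$; in particular $\dim_{k(t)} k(t)V^r = j_r$. Since the only subrepresentations of the irreducible $V$ are $0$ and $V$, we must have $j_r \in \{0, n\}$ for every integer $r$. But $j_r = 0$ exactly when $r < r_1$, while $j_r \geq 1$ as soon as $r \geq r_1$; taking $r = r_1$ forces $j_{r_1} = n$, that is $r_n \leq r_1$. Combined with $r_1 \leq \ldots \leq r_n$ this yields $r_1 = \ldots = r_n$, as claimed.

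I do not anticipate a serious obstacle here; the only point demanding care is the $S_d$-equivariance of the two lattices $\OO_V$ and $\OO_V^\infty$, which rests solely on the Galois-theoretic characterization of $\OO_L$ and $\OO_L^\infty$ as integral closures. The argument is in fact robust enough to show, more generally, that the multi-set of scrollar invariants of an arbitrary subrepresentation splits into constant ``blocks'', one per irreducible constituent.
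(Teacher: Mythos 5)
Your proof is correct and takes essentially the same approach as the paper's: both arguments rest on the Galois action preserving $\OO_V$ and $\OO_V^\infty$ (and hence the spaces $V^r$ of Lemma~\ref{lem:scrollarsofV}), with irreducibility then forcing an all-or-nothing jump in dimension. The paper phrases this concretely — the conjugates of the reduced-basis vector $v_1$ all lie in $V^{r_1}$ and span $V$, so $j_{r_1} = n$ — while you phrase it structurally via $k(t)V^r$ being a subrepresentation, but the content is identical.
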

\begin{proof}
Let $v_1, \ldots, v_n$ be a reduced basis for $V$ and let $r_1\leq \ldots \leq r_n$ be the corresponding scrollar invariants. The $k$-vector space $V^{r_1}$ from Lemma~\ref{lem:scrollarsofV} has dimension $j_{r_1}$. But it contains all the conjugates of $v_1$, and by the irreducibility of $V$ we can find $n$ conjugates which are linearly independent over $k(t)$, so definitely over $k$. But then we must have $j_{r_1} = n$, i.e.\ $r_1 = r_2 = \ldots = r_n$.
\end{proof}

\begin{corollary} \label{cor:scrollar.invariants.Wlambda}
The scrollar invariants of $W_\lambda\subseteq L$ form a multi-set
\[ 
e_{\lambda,1}, e_{\lambda,1}, \ldots, e_{\lambda,1}, \quad e_{\lambda,2}, e_{\lambda,2}, \ldots, e_{\lambda,2}, \quad \ldots  \quad e_{\lambda, \dim V_\lambda}, e_{\lambda, \dim V_\lambda}, \ldots, e_{\lambda, \dim V_\lambda}
\]
with every block containing $\dim V_{\lambda}$ copies of the same entry.
\end{corollary}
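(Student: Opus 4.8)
The plan is to read off the multiplicities of the scrollar invariants directly from the $S_d$-equivariant geometry of the filtration $W_\lambda^r$ introduced in Lemma~\ref{lem:scrollarsofV}, rather than by first decomposing $W_\lambda$ into irreducible summands. The starting point is that, applying that lemma with $V = W_\lambda$, the number $j_r$ of scrollar invariants of $W_\lambda$ that are $\leq r$ equals $\dim_{k(t)} k(t)W_\lambda^r$, where $W_\lambda^r = \{ v \in \OO_{W_\lambda} \mid t^{-r} v \in \OO_{W_\lambda}^\infty \}$. So it suffices to prove that $\dim_{k(t)} k(t)W_\lambda^r$ is a multiple of $\dim V_\lambda$ for every integer $r$. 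Granting this, the multiplicity of each value occurring among the scrollar invariants is a difference of two such dimensions, hence again a multiple of $\dim V_\lambda$; since the total number of invariants is $\dim_{k(t)} W_\lambda = (\dim V_\lambda)^2$, the sorted multi-set then breaks up into exactly $\dim V_\lambda$ constant blocks of length $\dim V_\lambda$, and setting $e_{\lambda,j}$ equal to the common value of the $j$th block yields the asserted shape.

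The first step I would carry out is to check that the whole filtration is $S_d$-stable. Since $S_d = \Gal(L/k(t))$ acts $k(t)$-linearly on $L$ and every $\sigma \in S_d$ fixes $k[t]$ and $k[t^{-1}]$ elementwise, it carries integral elements to integral elements and hence preserves both $\OO_L$ and $\OO_L^\infty$; as $W_\lambda$ is by construction an $S_d$-subrepresentation, $\sigma$ also preserves $\OO_{W_\lambda} = \OO_L \cap W_\lambda$ and $\OO_{W_\lambda}^\infty = \OO_L^\infty \cap W_\lambda$. Finally, because $\sigma$ fixes $t$ it commutes with multiplication by $t^{-r}$, so the defining condition $t^{-r} v \in \OO_{W_\lambda}^\infty$ is $S_d$-invariant and $W_\lambda^r$ is an $S_d$-stable $k$-subspace. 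Passing to its $k(t)$-span (preserved because $S_d$ fixes $k(t)$) shows that $k(t)W_\lambda^r$ is a $k(t)$-subrepresentation of $W_\lambda$.

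It then remains to invoke the isotypic structure: since $W_\lambda$ is the isotypic component of type $\lambda$ and each Specht module $V_\lambda$ is absolutely irreducible over $k(t)$ (as $\charac k = 0$ or $\charac k > d$), every subrepresentation of $W_\lambda$ is again isotypic of type $\lambda$ and hence has $k(t)$-dimension divisible by $\dim V_\lambda$. Applying this to $k(t)W_\lambda^r$ gives the required divisibility of $j_r$ and finishes the argument. The point worth emphasising — and the reason this equivariant route is needed — is that one cannot simply quote the preceding corollary after splitting $W_\lambda$ into $\dim V_\lambda$ copies of $V_\lambda$: scrollar invariants are \emph{not} additive under direct sums, since for a decomposition $V = V' \oplus V''$ the lattice $\OO_V$ typically contains $\OO_{V'} \oplus \OO_{V''}$ strictly. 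Working $S_d$-equivariantly with the entire filtration $W_\lambda^r$ circumvents this difficulty, and the only facts that genuinely require verification are the Galois-stability of $\OO_L, \OO_L^\infty$ and the dimension divisibility coming from absolute irreducibility.
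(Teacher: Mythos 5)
Your proof is correct, and while it shares the paper's skeleton --- both apply Lemma~\ref{lem:scrollarsofV} to $V = W_\lambda$ and reduce the corollary to showing that every jump $\dim_{k(t)} k(t)W_\lambda^{r} - \dim_{k(t)} k(t)W_\lambda^{r-1}$ is a multiple of $n = \dim V_\lambda$ --- the mechanism you use for that divisibility is genuinely different. The paper argues bottom-up from a reduced basis: a basis vector $v_{ij}$ with scrollar invariant exactly $r$ has Galois conjugates spanning a copy of $V_\lambda$ inside $k(t)W_\lambda^{r}$ meeting $k(t)W_\lambda^{r-1}$ trivially, and one iterates until the jump is filled. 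You argue top-down: the filtration itself is Galois-stable, so $k(t)W_\lambda^{r}$ is a subrepresentation of the isotypic module $W_\lambda$, and its dimension is automatically divisible by $n$. Your route buys cleanliness and robustness: it avoids the paper's assertion that the conjugates of a single vector span exactly one copy of $V_\lambda$, which is not automatic in an isotypic component (the cyclic module generated by a vector of $W_\lambda$ can be a sum of several copies of $V_\lambda$; indeed $W_\lambda$ itself is cyclic, being a direct summand of the regular representation), and it makes explicit the equivariance of $k(t)W_\lambda^{r-1}$ that the paper's trivial-intersection step implicitly needs anyway. What the paper's constructive version buys in exchange is a refinement that is used later: its proof exhibits an internal decomposition $W_\lambda = V_1 \oplus \cdots \oplus V_n$ into irreducibles whose reduced bases assemble into a reduced basis of $W_\lambda$, and exactly this is quoted at the start of the proof of Theorem~\ref{thm:scrollar.invariants.resolvent}. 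So if your argument replaced the paper's, that later proof would need a supplement (which can be extracted from your filtration by choosing irreducible complements level by level). Finally, your cautionary remark is apt: scrollar invariants are not additive across direct sums, since $\OO_{V'} \oplus \OO_{V''}$ can sit strictly inside $\OO_{V' \oplus V''}$, so one indeed cannot simply split $W_\lambda$ into irreducibles and cite the preceding corollary.
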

\begin{proof} 
Write $n = \dim V_{\lambda}$, 
let 
$v_{11}, \ldots, v_{1n}, v_{21}, \ldots, v_{2n},\ldots, v_{n1}, \ldots, v_{nn}$
 be a reduced basis of $W_{\lambda}$, and let 
\[ 0 \leq r_{11}\leq \ldots \leq r_{1n} \leq r_{21} \leq \ldots \leq r_{2n} \leq  \ldots \leq r_{n1} \leq \ldots \leq r_{nn} \]
be the corresponding scrollar invariants. Using the notation from Lemma~\ref{lem:scrollarsofV}, we know that $\dim k(t) W_\lambda^{-3} = 0$ and $\dim k(t) W_\lambda^{r_{nn}} = n^2$. By Lemma~\ref{lem:scrollarsofV} it suffices to show that whenever
\begin{equation} \label{eq:ktWstrict}
   k(t)W_\lambda^{r-1} \subsetneq k(t) W_\lambda^r
\end{equation}
for some $r = 0, \ldots, r_{nn}$, the dimensions differ by a multiple of $n$. By assumption there exists a basis element $v_{ij}$ with
scrollar invariant exactly $r$. The conjugates of $v_{ij}$ span a space $V_{ij} \cong V_\lambda$ of dimension $n$ which is contained in $k(t) W_{\lambda}^r$ and which intersects $k(t)W_{\lambda}^{r-1}$ trivially. We can repeat this argument with $k(t)W_{\lambda}^{r-1}$ replaced by $k(t)W_{\lambda}^{r-1} + V_{ij}$ until~\eqref{eq:ktWstrict} becomes an equality, at which moment the dimension has indeed increased with a multiple of $n$. 
\end{proof}

\subsection{Resolvent curves and their scrollar invariants.}\label{ssec:resolvent.curves}
For every subgroup $H \subseteq S_d$ we call the fixed field $L^H$ the ``resolvent" of $K$ with respect to $H$. Geometrically, this corresponds to a curve, denoted by $\res_H C$, equipped with a morphism $\res_H \varphi : \res_H C \to \PP^1$ of degree $[S_d:H]$. Its isomorphism class as a $\PP^1$-cover is only dependent on the conjugacy class of $H$. We now prove Theorem~\ref{thm:scrollar.invariants.resolvent}, 
expressing the scrollar invariants of $L^H$ in terms of those of $L$. 

\begin{proof}[Proof of Theorem~\ref{thm:scrollar.invariants.resolvent}]
Fix a partition $\lambda$ and write $n = \dim V_\lambda$ and $r = \mult(V_\lambda, \Ind^{S_d}_H \mathbf{1})$. By the proof of Corollary~\ref{cor:scrollar.invariants.Wlambda} we can write
$W_\lambda = V_1 \oplus \ldots \oplus V_n$
as an internal direct sum of irreducible subrepresentations $V_i \cong V_\lambda$, in such a way that a reduced basis of $W_\lambda$ is obtained from reduced bases $\{v_{i1}, \ldots, v_{in} \}$ of the $V_i$'s by simply taking the union. As before, we denote by $e_{\lambda, i}$ the unique scrollar invariant of $V_i$, which appears with multiplicity $n$.
From Lemma~\ref{lem: induced and fixed space} we see that the fixed subspaces $V_i^H$ have 
dimension $r$, and by applying the lemma over $k$ rather than $k(t)$, we find that $V_i^H$ admits a $k(t)$-basis consisting of $k$-linear combinations of 
the $v_{ij}$'s. It is immediate that this basis is reduced, with again $e_{\lambda, i}$ as the unique corresponding scrollar invariant, now appearing with multiplicity $r$. The union of these reduced bases is
a reduced basis for $W_\lambda^H = V_1^H \oplus \ldots \oplus V_n^H$ (e.g., because it extends to a reduced basis for $W_\lambda$), so the scrollar invariants of $W_\lambda$ are obtained by considering $r$ horizontal slices in~\eqref{eq:multiscrollarVB}. 

We claim that taking the union over all partitions $\lambda \vdash d$ produces a reduced basis of $L^H$. For every partition $\lambda$ of $d$, let $\{\omega_{\lambda, i}\}_i$ be the reduced basis of $W_\lambda^H$ just constructed. (For some $\lambda$, this set may be empty.) This is clearly a $k(t)$-basis for $L^H$. Let us first prove that it also concerns a $k[t]$-basis of $\OO_{L^H}$. Let $\omega \in \OO_{L^H}$ and write
\[
\omega = \sum_{\text{partitions }\lambda} \sum_i a_{\lambda, i}\omega_{\lambda,i},
\]
for some $a_{\lambda, i}\in k(t)$; 
it suffices to prove that $a_{\lambda, i}\in k[t]$ for any $\lambda, i$. 
Fix $\lambda$ and let $\rho\in Z(k[S_d])$ be the corresponding element from Lemma \ref{lem: projecting to a single W}. As $\rho$ is defined over $k$, it maps $\OO_L$ to itself and 
\[
\rho(\omega) = \sum_i a_{\lambda, i}\omega_{\lambda,i} \in \OO_L \cap W_\lambda^{H} = \OO_{W_\lambda}^{H}.
\]
But the set $\{\omega_{\lambda,i} \}_i$ is a basis for $\OO_{W_\lambda}$ and so $a_{\lambda, i}\in k[t]$ for all $i$. An identical argument proves that the set $\{ t^{-e^\lambda_i} \omega_{\lambda,i}\}_{\lambda, i}$ is a $k[t^{-1}]$-basis for $\OO_{L}^\infty$, with $e^\lambda_i$ the scrollar invariant associated with the basis element $\omega_{\lambda,i}$.
%This allows us to conclude.
%Fix an irreducible representation $V_\lambda$ of $S_d$. Let $\omega^\lambda_i$, $i=1, \ldots, \dim V_\lambda$ be in $\OO_{W_\lambda}$ such that the elements $t^{-e_{\lambda, i}-2}\omega_i^\lambda$ are all integral over $k[t^{-1}]$. Then the set $\{\sigma(\omega_i^\lambda)\mid \sigma \in S_d\}$ automatically contains a reduced basis for $\OO_{W_\lambda}$. By Lemma \ref{lem: induced and fixed space} over $k$, the space $V_{\lambda}^{H}$ has dimension $j_\lambda := \mult(\lambda, \Ind_H^{S_d} \mathbf{1})$. Hence, for every $i=1, \ldots, \dim V_\lambda$, there exist $j_\lambda$ linearly independent elements $\tilde{\omega}_{i,1}^\lambda, \ldots, \tilde{\omega}_{i,j_\lambda}^\lambda$ fixed under $H$, each of which is a $k$-linear combination of the elements $\sigma(\omega_i^\lambda)$. Since we are taking $k$-linear combinations, we have that the $\tilde{\omega}_{i,j}^\lambda$ are all integral over $k[t]$. Even more, the complete set $\{\tilde{\omega}_{i,j}^\lambda \mid 1\leq i\leq \dim V_\lambda, 1\leq j\leq j_\lambda\}$ may be extended to a reduced basis for $W_\lambda$, where the scrollar invariant of $\tilde{\omega}^\lambda_{i,j}$ is $e_{\lambda, i}$.
%Now the elements $\tilde{\omega}_{i,j}^\lambda$ over all $i,j$ and $\lambda$ taken together form a reduced basis for $\OO_{L^H}$. Indeed, the argument is completely analogous to that of Theorem~\ref{thm:unionofreducedbases}, so we omit further details.
\end{proof}

\subsection{} \label{ssec:firstexamplespart}
Some first examples of partitions and their scrollar invariants are:
the trivial partition $(d)$, with scrollar invariant $\{ 0 \}$, and the partition $(d-1, 1)$, with scrollar invariants $\{ e_1, e_2, \ldots, e_{d-1} \}$.
To see the latter claim: one can view our given cover $\varphi : C \to \PP^1$ 
as its own resolvent with respect to $S_{d-1}$, and then the claim follows from Theorem~\ref{thm:scrollar.invariants.resolvent}
along with 
\[ \Ind_{S_{d-1}}^{S_d} \mathbf{1} \cong V_{(d)} \oplus V_{(d-1,1)}.\] Alternatively, if $1, \alpha_1, \ldots, \alpha_{d-1}$ is a reduced basis for $\OO_K$ then after a replacement of $\alpha_i$ with $\alpha_i - \frac{1}{d}\Tr_{K/k(t)}(\alpha_i)$ we may assume that $\Tr_{K/k(t)}(\alpha_i) = 0$. Then the $\alpha_1, \ldots, \alpha_{d-1}$ form a reduced basis for $V_1 = W_{(d-1,1)}\cap K$ from which it follows that $(d-1,1)$ has scrollar invariants $e_1, \ldots, e_{d-1}$ in $L$. 

Another basic case
is the partition $(1^d)$, with scrollar invariant $\{ g' + 1 \}$, where $g'$ denotes the genus of $\res_{A_d} C$.
This again follows from Theorem~\ref{thm:scrollar.invariants.resolvent}, now using $\Ind_{A_d}^{S_d} \mathbf{1} \cong V_{(d)} \oplus V_{(1^d)}$. From Theorem~\ref{thm:genusresolvents} below it will follow that $g' = g + d - 2$
as soon as $\varphi$ is simply branched.

\subsection{A genus formula for resolvent curves.} \label{ssec:genusformula}

From this section onward we assume that the morphism $\varphi: C\to \PP^1$ is simply branched, i.e.\ all non-trivial ramification is of the form $(2, 1^{d-2})$; recall that this implies that $\Gal(K/k(t)) \cong S_d$. Under this assumption, we can express the genus
of $\res_H \varphi$ in terms of that of $C$. 
%For more general ramification, one has to look at singular resolvents as well, see section \ref{ssec:assumptions} for a more complete discussion. 
This may be known to specialists,\footnote{E.g., a special case is covered by~\cite[pp.\,111-113]{eisenbudelkies} and a number theoretic analogue is discussed in mathoverflow question~\href{https://mathoverflow.net/questions/6674/how-do-i-calculate-the-discriminant-of-a-galois-closure-and-its-other-subfields}{6674}.} but we could not find it explicitly in the existing literature, so let us include a proof.

\begin{theorem} \label{thm:genusresolvents} 
Let $\varphi : C \to \PP^1$ be a simply branched morphism of degree $d \geq 2$ over a field $k$ with $\charac k = 0$ or $\charac k > 2$ and let $H$ be a proper subgroup of $S_d$. The morphism $\res_H \varphi  : \res_H C  \to \PP^1$ is branched over exactly the same points as $\varphi$. Each such point ramifies with pattern $(2^{p(H)}, 1^{[S_d : H] - 2p(H)})$, where
%For each such point the contribution to the degree of the ramification divisor is scaled up by  
\[  p(H) = (d-2)! \cdot \frac{ | \{ \text{transpositions } \sigma \in S_d \, | \, \sigma \notin H \} |  }{|H|}. \]
In particular, the genus of $\res_H C $ is $p(H) (g + d - 1) + 1 - [S_d : H]$.
\end{theorem}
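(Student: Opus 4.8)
The plan is to reduce everything to the local structure of the inertia groups in the geometric Galois closure $\overline{C} \to \PP^1$ and then apply Riemann--Hurwitz twice. Since the genus is a geometric invariant and all hypotheses (simple branching, $\Gal \cong S_d$) are geometric, I would first base change to an algebraic closure and assume $k = \bar k$, so that all residue degrees are trivial and decomposition groups coincide with inertia groups. The key structural input is that, because $\varphi$ is simply branched and $\charac k \neq 2$, the ramification at every branch point is tame, so the inertia group $I$ at a point $\overline P$ of $\overline C$ above a branch point $P$ is cyclic. As the cover $C \to \PP^1$ is recovered as the resolvent with respect to the point-stabilizer $S_{d-1}$, the ramification type $(2,1^{d-2})$ of $\varphi$ at $P$ says precisely that $I$ has orbits of sizes $2,1,\dots,1$ on $S_d/S_{d-1} = \{1,\dots,d\}$; a cyclic group with this orbit structure must be generated by a single transposition $\tau$. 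So the first step is to record that every inertia generator is a transposition, and since all transpositions are conjugate in $S_d$, the local picture is identical at each branch point.

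Next I would translate the ramification of $\res_H\varphi$ into combinatorics. The places of $\res_H C = L^H$ over $P$ correspond to the orbits of $I = \langle \tau\rangle$ acting on $S_d/H$, with ramification index equal to the orbit size. Since $\tau$ has order $2$, every orbit has size $1$ or $2$, giving the pattern $(2^{p(H)}, 1^{[S_d:H]-2p(H)})$ where $p(H)$ is the number of length-$2$ orbits. To see that $\res_H\varphi$ and $\varphi$ share the same branch locus, note that $P$ is unramified in $\res_H\varphi$ exactly when $\tau$ fixes every coset, i.e.\ when every conjugate of $\tau$ lies in $H$; as the transpositions generate $S_d$, this forces $H = S_d$, contradicting properness, so $p(H) > 0$ at every branch point of $\varphi$, while $\res_H\varphi$ is plainly unramified wherever $\varphi$ is.

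The heart of the computation is the formula for $p(H)$. A coset $gH$ lies in a $2$-orbit iff $g^{-1}\tau g \notin H$, a condition depending only on $gH$, and each such coset is represented by exactly $|H|$ elements $g$. Counting the pairs $(g,\tau')$ with $\tau' = g^{-1}\tau g$ a transposition not in $H$: for each of the $|\{\text{transpositions }\sigma\in S_d\mid \sigma\notin H\}|$ admissible targets $\tau'$ there are $|C_{S_d}(\tau)| = 2(d-2)!$ elements $g$ with $g^{-1}\tau g = \tau'$. Dividing by $|H|$ gives the number of moved cosets, and halving gives
\[ p(H) = (d-2)! \cdot \frac{|\{\text{transpositions }\sigma\in S_d\mid \sigma\notin H\}|}{|H|}, \]
as claimed.

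Finally I would assemble the genus formula. Riemann--Hurwitz for $\varphi$ (degree $d$, each of the $b$ branch points contributing exactly $e-1 = 1$) gives $2g - 2 = -2d + b$, so the number of branch points is $b = 2(g+d-1)$. Applying Riemann--Hurwitz to $\res_H\varphi$ (degree $n = [S_d:H]$, each branch point now contributing $p(H)$, since it carries $p(H)$ points of index $2$) yields $2\,g(\res_H C) - 2 = -2n + b\,p(H)$, and substituting $b = 2(g+d-1)$ gives $g(\res_H C) = p(H)(g+d-1) + 1 - [S_d:H]$. I expect the main obstacle to be the careful justification of the first step: that tame cyclic inertia with orbit type $(2,1^{d-2})$ is generated by a transposition, and that after base change the identification ``ramification indices $=$ inertia-orbit sizes on $S_d/H$'' holds verbatim. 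The remaining coset-counting and the two Riemann--Hurwitz applications are then routine (one can sanity-check the outcome against the case $H = A_d$, where $p(A_d) = 1$ recovers $g(\res_{A_d} C) = g + d - 2$, consistent with~\ref{ssec:firstexamplespart}).
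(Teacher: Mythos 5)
Your proposal is correct, and it reaches the ramification pattern by a genuinely different route than the paper. You use the standard local dictionary: after base change to $\overline{k}$, the points of $\res_H C$ over a branch point $P$ correspond to orbits of the inertia group $I = \langle \tau \rangle$ on $S_d/H$, with ramification index equal to orbit size, so the pattern $(2^{p(H)}, 1^{[S_d:H]-2p(H)})$ falls out at once; your centralizer/coset count is then exactly the paper's Lemma~\ref{lem:genusresolventslemma}. The paper never invokes this orbit dictionary. Instead it computes the discriminant ideal of $\OO_{L^H}$ via the Lenstra--Pila--Pomerance formula, deduces $\mathfrak{d}_{L^H} = \mathfrak{d}_K^{p(H)}$, concludes (via Neukirch) that the branch loci agree and that each branch point contributes $p(H)$ to the ramification divisor, and then pins down the actual pattern by a separate trick: for $H = \{\id\}$, the equality of all ramification indices in the Galois closure together with $p(\{\id\}) = d!/2$ forces those indices to be $2$, hence every resolvent has indices $1$ or $2$ only. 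Your approach buys directness and avoids the discriminant machinery entirely; the paper's avoids the decomposition-group formalism at the cost of the LPP formula and the closure trick. Both arguments share the transposition-inertia input (the paper cites van der Waerden for it) and conclude with the same Riemann--Hurwitz computation, which you carry out correctly.

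One small repair to your first step, which you rightly flagged as the delicate point: the implication ``simply branched and $\charac k \neq 2$, hence tame, hence cyclic inertia'' is circular as stated, since tameness of $\overline{\varphi}$ at $\overline{P}$ means $\charac k \nmid |I|$, which you do not know yet. But you do not need cyclicity at all: the orbit sizes of $I$ on $S_d/S_{d-1}$ equal the ramification indices $(2,1^{d-2})$ of $\varphi$ (this part of the dictionary holds with no tameness hypothesis, since residue extensions are trivial over $\overline{k}$), so every element of $I$ fixes the $d-2$ singleton orbits, and faithfulness of the $S_d$-action on $\{1,\dots,d\}$ gives an embedding $I \hookrightarrow S_2$. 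Thus $|I| \leq 2$, the nontrivial case is generated by a transposition, and tameness then follows a posteriori from $\charac k \neq 2$. With that reordering your argument is complete.
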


\noindent Note that the excluded case $H = S_d$ corresponds to the identity morphism $\PP^1 \to \PP^1$. Here $p(H) = 0$ so the theorem remains valid, except for the second sentence. At the other extreme, 
for $H = \{ \id \}$ which corresponds to the Galois closure $\overline{\varphi} : \overline{C} \to \PP^1$, we have $p(H) = d!/2$. In this case the theorem says that all ramification patterns of $\res_{\{\id\}} \varphi$ are $(2^{d! / 2})$.

\begin{lemma} \label{lem:genusresolventslemma}
Let $H$ be a subgroup of $S_d$ and let $\alpha\in L$ have stabilizer $H$ under the Galois action. Let $\sigma \in S_d$ be a transposition. Then the number of $\omega$ in the orbit of $\alpha$ for which $\sigma ( \omega) \neq \omega$ equals $2p(H)$.
\end{lemma}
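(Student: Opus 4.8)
\emph{The plan} is to turn the orbit-theoretic statement into a coset-counting problem for $H$. Since $\Stab(\alpha) = H$, the map $gH \mapsto g(\alpha)$ is a bijection between the left coset space $S_d / H$ and the orbit $\Orb(\alpha)$; thus counting the $\omega \in \Orb(\alpha)$ with $\sigma(\omega) \neq \omega$ is the same as counting the cosets $gH$ for which $\sigma(g(\alpha)) \neq g(\alpha)$. First I would rewrite this last condition group-theoretically. As the action is a left action, $\sigma(g(\alpha)) = (\sigma g)(\alpha)$, and two elements of the orbit coincide exactly when the corresponding group elements lie in the same left coset of $H$; hence $\sigma(g(\alpha)) = g(\alpha)$ if and only if $(g^{-1}\sigma g)(\alpha) = \alpha$, i.e.\ if and only if $g^{-1}\sigma g \in H$. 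Writing $N$ for the quantity to be computed, we therefore have that $N$ equals the number of cosets $gH$ with $g^{-1}\sigma g \notin H$.

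Next I would check that the condition $g^{-1}\sigma g \notin H$ is constant along each left coset $gH$, so that counting cosets is meaningful: replacing $g$ by $gh$ with $h \in H$ turns $g^{-1}\sigma g$ into $h^{-1}(g^{-1}\sigma g)h$, and since $H$ is closed under conjugation by its own elements, membership in $H$ is preserved in both directions. Consequently each qualifying coset accounts for exactly $|H|$ qualifying group elements, so that
\[ N = \frac{1}{|H|} \bigl| \{ g \in S_d : g^{-1}\sigma g \notin H \} \bigr|. \]

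It then remains to count the $g \in S_d$ with $g^{-1}\sigma g \notin H$. The key input is that the transpositions form a single conjugacy class in $S_d$: as $g$ ranges over $S_d$, the element $g^{-1}\sigma g$ ranges over all transpositions, and by the orbit--stabilizer count each transposition $\tau$ is attained for exactly $|C_{S_d}(\sigma)| = d! / \binom{d}{2} = 2(d-2)!$ values of $g$ (a coset of the centralizer of $\sigma$). Grouping the $g$'s according to the value $\tau = g^{-1}\sigma g$ and discarding those $\tau$ that lie in $H$ gives
\[ \bigl| \{ g \in S_d : g^{-1}\sigma g \notin H \} \bigr| = 2(d-2)! \cdot \bigl| \{ \text{transpositions } \tau \in S_d : \tau \notin H \} \bigr|, \]
and dividing by $|H|$ yields $N = 2p(H)$, as claimed.

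I expect no serious obstacle: the argument is a clean double count. The only points demanding care are the bookkeeping of left versus right cosets (dictated by the direction in which $S_d$ acts on $\Orb(\alpha)$) and the centralizer computation $|C_{S_d}(\sigma)| = 2(d-2)!$, which is where the simple-branching hypothesis implicitly enters through the fact that $\sigma$ is a transposition and all transpositions are conjugate.
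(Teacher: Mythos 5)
Your proof is correct and follows essentially the same route as the paper's: both identify the orbit with the coset space $S_d/H$, translate the condition $\sigma(\omega)\neq\omega$ into $g^{-1}\sigma g\notin H$, and count using the fact that conjugation $g\mapsto g^{-1}\sigma g$ maps onto the transpositions in a $2(d-2)!$-to-$1$ fashion. Your explicit check that the condition is constant on cosets is a detail the paper leaves implicit, but the argument is the same.
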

\begin{proof}
The map $S_d \to \{ \text{transpositions in } S_d\} : \tau \mapsto \tau^{-1} \sigma \tau$ is surjective and $2(d-2)!$-to-$1$. Therefore the number of $\tau \in S_d$ such that $\tau^{-1} \sigma \tau \notin H$ equals $2 \cdot |H| \cdot p(H)$. Thus the number of cosets $\tau H \in S_d / H$ for which $\sigma  (\tau ( \alpha)) \neq \tau (\alpha)$ equals $2p(H)$, from which the lemma follows.
\end{proof}

\begin{proof}[Proof of Theorem~\ref{thm:genusresolvents}] By Riemann--Hurwitz it suffices to prove that $\res_H \varphi$ is branched over the same points as $\varphi$ with the stated ramification patterns. Because these properties are local, we can work around $0 \in \PP^1$ without loss of generality. 

Let $\mathfrak{d}_{K}\subseteq k[t]$ denote the discriminant ideal of $K/k(t)$, i.e., the principal ideal generated by the field discriminant
$\Delta_{K/k(t)}(\alpha_1, \alpha_2, \ldots, \alpha_d)$,
where $\alpha_1, \alpha_2, \ldots, \alpha_d$ denotes any $k[t]$-basis of $\OO_K$. Similarly consider the integral closure 
$\OO_{L^H}$ of $k[t]$ inside $L^H$ along with its discriminant ideal $\mathfrak{d}_{L^H} \subseteq k[t]$. 
We will first show that
\begin{equation} \label{powerofdisc}
 \mathfrak{d}_{L^H} = \mathfrak{d}_{K}^{p(H)}.
\end{equation}
For this, we rely on a discriminant formula due to Lenstra, Pila and Pomerance~\cite[Thm.\,4.4]{hyperelliptic}, which in our case states that
\begin{equation} \label{lenstraformula}
 \mathfrak{d}_{L^H}^{d!} = \prod_{\sigma \in S_d \setminus \{\id \}} \Norm_{L/k(t)}(\mathfrak{I}_\sigma)^{ \left| \{ \, \tau : L^H\hookrightarrow L \, \mid \, \sigma \circ \tau \neq \tau \, \} \right| }
\end{equation}
with $\mathfrak{I}_\sigma$
denoting the ideal generated by all expressions of the form $\sigma(x) - x$ for $x \in \OO_L$. Note that a non-zero prime ideal $\mathfrak{P} \subseteq \OO_L$ divides $\mathfrak{I}_\sigma$ if and only if $\sigma$ is in the inertia group of $\mathfrak{P}$. Using that $\varphi$ is simply branched, from~\cite[Sz.\,1]{vanderwaerden} we see that this inertia group is either trivial or consists of two elements. Therefore whenever $\sigma$ is not a transposition, the corresponding factor in~\eqref{lenstraformula} contributes trivially. On the other hand, if $\sigma$ is a transposition, then 
the number of embeddings $\tau: L^H \hookrightarrow L$ for which $\sigma \circ \tau \neq \tau$ equals $2p(H)$: this follows from Lemma~\ref{lem:genusresolventslemma} when applied to a primitive element $\rho$ of $L^H$ over $k(t)$, which can be chosen to be a polynomial expression in the conjugates $\alpha^{(1)}, \alpha^{(2)}, \ldots, \alpha^{(d)}$ of a primitive element $\alpha=\alpha^{(1)}$ of $K$ over $k(t)$. We find that
\[ 
\mathfrak{d}_{L^H}^{d!} = \prod_{\substack{\text{transpositions} \\ \sigma \in S_d}} \Norm_{L/k(t)}(\mathfrak{I}_\sigma)^{2p(H)}. 
\]
This formula in combination with the same formula applied to the group $H = S_{d-1}$ of permutations fixing $1$ yields~\eqref{powerofdisc}.

It then follows from~\cite[III, Thm.\,2.6 \& 2.9]{neukirch} that $\res_H \varphi$ is branched over the same points as $\varphi$. It also follows that, degree-wise, each branch point contributes $p(H)$ to the ramification divisor. Here we have used that $\charac k = 0$ or $\charac k > 2$, which ensures tame ramification. For $H = \{ \id \}$, corresponding to the Galois closure, we know that all ramification indices must be equal to each other. But since $p(H) = d! /2$ and the Galois closure has degree $d!$, we conclude that these ramification indices must in fact be $2$. Consequently, the ramification indices must also be $1$ or $2$ for every resolvent cover, from which the theorem follows.
\end{proof}

%\noindent Observe that the above proof not only implies that the morphism $\res_H \varphi $ ramifies above exactly the same points of $\PP^1$, but also that the corresponding ramification patterns transform from simple branching
% $(2, 1^{d-2})$ into $(2^{p(H)}, 1^{[S_d : H] - 2p(H)})$. For the Galois closure, corresponding to $H = \{ \id \}$, this reads $(2^{d! / 2})$.

\subsection{Volume and duality.} \label{ssec:volanddual}
We prove two general facts about the scrollar invariants of a partition $\lambda$ of $d$, still under the assumption that $\varphi$ is simply branched. Firstly, we prove a closed formula for their sum
\[
\vol_K (\lambda) = e_{\lambda, 1} + e_{\lambda, 2} + \ldots + e_{\lambda, \dim V_\lambda}.
\]
We call this the ``volume" of $\lambda$ with respect to $K / k(t)$.\footnote{This is in view of Minkowski's second theorem and the analogy between the scrollar invariants and the successive minima of the Minkowski lattice attached to a number field~\cite[\S7]{hessRR}; in fact calling it the ``log-volume" would make the analogy more precise.}
This generalizes the well-known formula $g + d - 1$ for the sum of the scrollar invariants $e_1, \ldots, e_{d-1}$ of $C$ with respect to $\varphi$, i.e.\ the scrollar invariants of the partition $(d-1,1)$.

\begin{proposition}[volume formula] \label{prop:genus.irrep}
Assume that $\varphi: C\to \PP^1$ is simply branched. Let $\lambda$ be a partition of $d$. Then
\[
\vol_K (\lambda)  = p(\lambda) \cdot (g+d-1),
\]
where $p(\lambda) = \frac{1}{2}(\dim V_\lambda - \chi_\lambda((1\,2)))$.
\end{proposition}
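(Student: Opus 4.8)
The plan is to pin down all of the numbers $\vol_K(\lambda)$ simultaneously by testing them against resolvent covers, and then to invert the resulting linear system. First I would combine the two theorems just proved. Fix a partition $\mu \vdash d$ and take $H$ to be the Young subgroup $S_\mu$. On one hand, the scrollar invariants of $\res_{S_\mu} C$ with respect to $\res_{S_\mu}\varphi$ sum to $g(\res_{S_\mu} C) + [S_d : S_\mu] - 1$, which by Theorem~\ref{thm:genusresolvents} equals $p(S_\mu)(g+d-1)$. On the other hand, Theorem~\ref{thm:scrollar.invariants.resolvent} describes this same multi-set of scrollar invariants as the union, over all $\lambda \vdash d$, of the scrollar invariants of $\lambda$ taken with multiplicity $\mult(V_\lambda, \Ind_{S_\mu}^{S_d}\mathbf{1}) = \dim V_\lambda^{S_\mu}$, the last equality being Lemma~\ref{lem: induced and fixed space} (the trivial partition contributes only the invariant $0$ and is harmless). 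Summing over the multi-set therefore yields
\begin{equation}
\sum_{\lambda \vdash d} \dim\!\left(V_\lambda^{S_\mu}\right)\vol_K(\lambda) = p(S_\mu)(g+d-1) \qquad \text{for every } \mu \vdash d.
\tag{$\star$}
\end{equation}

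Next I would verify that the proposed values $\vol_K(\lambda) = p(\lambda)(g+d-1)$ satisfy $(\star)$; in fact I would check this for every subgroup $H$. Since $\dim V_\lambda^{H} = \mult(V_\lambda, \Ind_H^{S_d}\mathbf{1})$, the class function $\sum_\lambda \dim(V_\lambda^H)\chi_\lambda$ is exactly the permutation character $\pi_H$ of $S_d/H$. Evaluating at $\id$ and at a transposition gives $\sum_\lambda \dim(V_\lambda^H)\dim V_\lambda = \pi_H(\id) = [S_d:H]$ and $\sum_\lambda \dim(V_\lambda^H)\chi_\lambda((1\,2)) = \pi_H((1\,2))$, so that
\[
\sum_{\lambda \vdash d} \dim\!\left(V_\lambda^H\right)p(\lambda) = \tfrac12\bigl([S_d:H] - \pi_H((1\,2))\bigr).
\]
Now $\pi_H((1\,2))$ is the number of elements fixed by $(1\,2)$ in the orbit of an element with stabilizer $H$, while Lemma~\ref{lem:genusresolventslemma} says that the number of elements of this orbit moved by $(1\,2)$ equals $2p(H)$. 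Hence $\pi_H((1\,2)) = [S_d:H] - 2p(H)$, giving $\sum_\lambda \dim(V_\lambda^H)p(\lambda) = p(H)$; multiplying by $(g+d-1)$ shows that the candidate values solve $(\star)$ with $H = S_\mu$ for each $\mu$.

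Finally I would argue uniqueness. By Young's rule~\cite{sagan} the coefficient matrix $\bigl(\dim V_\lambda^{S_\mu}\bigr)_{\lambda,\mu}$ is precisely the matrix of Kostka numbers $(K_{\lambda\mu})$, and ordering the partitions by any linear refinement of the dominance order makes it unitriangular, since $K_{\mu\mu}=1$ and $K_{\lambda\mu}=0$ unless $\lambda \trianglerighteq \mu$; in particular it is invertible. Thus, regarded as a square linear system in the unknowns $\vol_K(\lambda)$, the relations $(\star)$ have a unique solution, and since both $\vol_K(\lambda)$ and $p(\lambda)(g+d-1)$ solve it they must coincide. The main obstacle is the middle step: one has to identify the combinatorial coefficient matrix with the Kostka matrix and establish the character identity, the crucial input being the orbit count of Lemma~\ref{lem:genusresolventslemma}, which converts $p(H)$ into the value of the permutation character at a transposition.
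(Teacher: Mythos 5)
Your proof is correct, and its skeleton coincides with the paper's: both set up a linear system indexed by Young subgroups, verify that the candidate values $p(\lambda)(g+d-1)$ solve it via the character identity $\sum_\lambda \dim(V_\lambda^H)\,p(\lambda) = p(H)$ (the paper gets this from the same orbit count underlying Lemma~\ref{lem:genusresolventslemma}), and conclude by triangularity of the coefficient matrix $\bigl(\dim V_\lambda^{S_\mu}\bigr)_{\lambda,\mu}$ --- the paper's citation of~\cite[p.\,88]{sagan} is exactly your Kostka-matrix argument. Where you genuinely diverge is in how the relations $(\star)$ are established. You obtain them as a formal consequence of Theorem~\ref{thm:scrollar.invariants.resolvent}, Theorem~\ref{thm:genusresolvents} and the elementary identity ``sum of scrollar invariants $=$ genus $+$ degree $-1$'' applied to the resolvent cover. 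The paper instead proves the claim $\sum_{\lambda\in \Ind_H^{S_d}\mathbf{1}} \vol_K(\lambda) = p(H)(g+d-1)$ by a direct lattice computation: it takes the reduced basis of $L^H$ constructed in the proof of Theorem~\ref{thm:scrollar.invariants.resolvent}, notes that the trace-form matrix splits into blocks along the isotypic components, and compares generators of the discriminant ideals over $k[t]$ and over $k[t^{-1}]$, using only the relation $\mathfrak{d}_{L^H} = \mathfrak{d}_K^{p(H)}$ from the \emph{proof} of Theorem~\ref{thm:genusresolvents}, not the genus formula itself. Your route is shorter and treats the two theorems as black boxes; the paper's route stays at the level of discriminants of orders, which is the form that gets recycled later for the good-ramification case (Lemma~\ref{lem:genus.good.ramification}). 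One small point to watch: for $\mu = (d)$ you would be invoking Theorem~\ref{thm:genusresolvents} with $H = S_d$, which its statement excludes; either observe that this row of the system is trivially $0 = 0$ (since $\vol_K((d)) = 0$ and $p(S_d) = 0$), or drop that row and substitute the known value $X_{(d)} = 0$, which leaves your unitriangularity argument intact.
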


\noindent The quantity $p(\lambda)$ admits several other characterizations: 
\begin{equation*}
p(\lambda) = \langle V_{(1^2)}, \operatorname{Res}_{S_2}^{S_d} \lambda\rangle_{S_2} = \mult(\lambda, \Ind_{S_2}^{S_d} V_{(1^2)}).
\end{equation*}
One can also verify the inductive formula 
$ p(\lambda) = \sum_{\lambda' < \lambda} p(\lambda')$
for $d \geq 3$, where the sum is over all partitions of $d-1$ which are smaller than $\lambda$ (i.e., which are obtained from $\lambda$ by removing a box from the Young diagram).

\begin{proof}[Proof of Proposition~\ref{prop:genus.irrep}]
We claim that
\begin{equation*}
\sum_{\lambda\in \Ind_H^{S_d} \mathbf{1}} \vol_K(\lambda) = p(H)(g+d-1).
\end{equation*}
for each subgroup $H \subseteq S_d$, where the sum is understood to run over all partitions $\lambda$ of $d$, counted with multiplicity $\mult(V_\lambda, \Ind_H^{S_d} \mathbf{1})$.
Assuming the claim, we note that
\[ p(H) = \frac{1}{2}(\dim \Ind_{H}^{S_d} \mathbf{1} - \chi_{\Ind_H^{S_d} \mathbf{1} }((1\,2))),\] 
therefore 
\[
p(H) = \sum_{\lambda\in \Ind_H^{S_d} \mathbf{1} } p(\lambda),
\]
allowing us to conclude that the linear system of equations
\[
\sum_{\lambda\in \Ind_H^{S_d} \mathbf{1}} X_\lambda = \sum_{\lambda\in \Ind_H^{S_d} \mathbf{1}} p(\lambda)(g+d-1)
\]
(one equation for each $H \subseteq S_d$) in the variables $X_\lambda$ (one variable for each $\lambda \vdash d$) admits the solution $X_\lambda = \vol_K(\lambda)$. But, clearly, also $X_\lambda = p(\lambda)(g + d - 1)$ is a solution, so it suffices to see that the solution is unique.
For this, restrict to those equations for which $H=S_\rho$ is a Young subgroup, with $\rho$ a partition of $d$. By~\cite[p.\,88]{sagan} the resulting coefficient matrix is upper triangular (with respect to a certain ordering on all partitions) and all diagonal elements are $1$. Hence our system indeed has a unique solution, as wanted.

 To prove the claim, we consider a reduced basis 
$\{ \omega_i^\lambda \}_{\lambda, i}$ of $L^H / k(t)$ as in the proof
of Theorem~\ref{thm:scrollar.invariants.resolvent}.
That is: for every fixed partition $\lambda$ of $d$ the elements
$\omega_i^\lambda$,  $i = 1, \ldots, r \dim V_\lambda$
form a reduced basis of $W_\lambda$, and the corresponding scrollar invariants $e_i^\lambda$
are the $e_{\lambda, j}$'s, each counted with multiplicity $r$.
The discriminant ideal
$\mathfrak{d}_{L^H} \subseteq k[t]$
is generated by
$\det T$, where $T = (\Tr_{L/k(t)}(\omega_i^\lambda \omega_j^{\lambda'}))_{\lambda,\lambda',i,j}$. If $\lambda \neq \lambda'$ are two  partitions of $S_d$ then $\Tr_{L/k(t)}(\omega_i^\lambda \omega_j^{\lambda'}) = 0$, as $V_\lambda\otimes V_{\lambda'}$ does not contain a copy of the trivial representation. Thus $T$ falls apart into the block matrices $T_\lambda = (\Tr_{L/k(t)}(\omega_i^\lambda \omega_j^\lambda))_{i,j}$. We obtain that
\[
 \mathfrak{d}_K^{p(H)} = \mathfrak{d}_{L^H}  = (\det T) = \left( \prod_{\lambda} \det T_\lambda \right)  
\]
where the first equality follows from the proof of Theorem~\ref{thm:genusresolvents}; note that this uses the simple branching assumption (also note that this equality remains true for $H = S_d$).
We now repeat this argument above infinity, where we write $\mathfrak{d}_{K}^\infty, \mathfrak{d}_{L^H}^\infty \subseteq k[t^{-1}]$
to denote the discriminant ideals of $\mathcal{O}_{K}^\infty, \mathcal{O}_{L^H}^\infty$.
 Since the $\omega_i^\lambda$'s form a reduced basis, the corresponding matrices $T_{\lambda}^\infty$ are equal to 
 \[ (\Tr_{L/k(t)}(t^{-e_i^\lambda-e_j^\lambda} \omega_i^\lambda \omega_j^\lambda))_{i,j}. \] 
% We also have $\mathfrak{d}_{K}^\infty = t^{-2(g+d-1)}\mathfrak{d}_K$, so we find
We similarly find
\[
 (\mathfrak{d}_{K}^\infty)^{p(H)}  = \mathfrak{d}_{L^H}^\infty = \left( \prod_{\lambda } \det T_{\lambda}^\infty \right)
 = \left( \prod_{\lambda }  t^{-2\sum_i e_i^\lambda }  \det T_\lambda \right). \\
 \]
Using that a generator of $\mathfrak{d}_{K}^\infty$ is obtained from a generator of $\mathfrak{d}_K$
by scaling with $t^{-2(g + d - 1)}$ we find that
\[ -2p(H)(g + d - 1) =
 \sum_\lambda -2\sum_i e_i^\lambda  = -2 \sum_{\lambda \in \Ind_H^{S_d} \mathbf{1}} e_{\lambda, j}
 \]
from which the claim follows.
\end{proof}

\subsection{} 
Secondly, we can explicitly relate the scrollar invariants of a partition $\lambda$ to those of the dual partition $\lambda^*$, which is obtained by transposing the corresponding Young diagram. 
%We show that the scrollar invariants of $W_{\lambda}$ and $W_{\lambda^*}$ are also dual in a natural way. 

\begin{proposition}[duality] \label{prop:scrollar.invs.duality} 
Let $\lambda$ be a partition of $d$ and consider its multi-set of scrollar invariants $\{ e_{\lambda, i} \}_i$. Then the multi-set of scrollar invariants of $\lambda^*$ is given by $\{ g + d- 1 - e_{\lambda, i} \}_i$.
\end{proposition}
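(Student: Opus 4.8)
The plan is to realise the duality as a trace-pairing duality implemented by multiplication with the discriminant element $\delta=\prod_{i<j}(\alpha^{(i)}-\alpha^{(j)})$ that generates $W_{(1^d)}$. First I would record the representation-theoretic fact $V_{\lambda^*}\cong V_\lambda\otimes V_{(1^d)}$: since $\sigma(\delta x)=\sgn(\sigma)\,\delta\,\sigma(x)$ for $\sigma\in S_d$, multiplication $m_\delta$ by $\delta$ is a $k(t)$-linear isomorphism $W_\lambda\xrightarrow{\sim}W_{\lambda^*}$ (injective as $\delta\neq 0$, surjective since $\dim_{k(t)}W_\lambda=\dim_{k(t)}W_{\lambda^*}$). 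I would then introduce the pairing
\[
B\colon W_\lambda\times W_{\lambda^*}\to k(t),\qquad B(u,w)=\Tr_{L/k(t)}(\delta^{-1}uw),
\]
which makes sense because $\delta^{-1}\in W_{(1^d)}$ (as $\delta^2=\disc\in k(t)$), and which is nondegenerate because $B(u,w)=\langle m_{\delta^{-1}}(u),w\rangle$ for the nondegenerate restriction $\langle\,\cdot\,,\cdot\,\rangle$ of the trace form to $W_{\lambda^*}$. The whole proof then amounts to transporting a reduced basis of $W_\lambda$ to its $B$-dual basis in $W_{\lambda^*}$ and tracking the integral structures.

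The two facts I need, writing $\epsilon=t^{-(g+d-1)}\delta$, are:
\begin{enumerate}
\item $(\OO_{W_\lambda})^{B}=\OO_{W_{\lambda^*}}$ (dual lattice over $k[t]$), and
\item $(\OO_{W_\lambda}^\infty)^{B}=t^{\,g+d-1}\,\OO_{W_{\lambda^*}}^\infty$ (dual lattice over $k[t^{-1}]$).
\end{enumerate}
Granting these, let $\{v_{ij}\}$ be a reduced basis of $W_\lambda$ with scrollar invariants $e_{\lambda,i}$ organised in blocks as in Corollary~\ref{cor:scrollar.invariants.Wlambda}, and let $\{w_{ij}\}$ be the $B$-dual basis. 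Then (1) shows $\{w_{ij}\}$ is a $k[t]$-basis of $\OO_{W_{\lambda^*}}$; rescaling the primal basis to $\{t^{-e_{\lambda,i}}v_{ij}\}$ (a $k[t^{-1}]$-basis of $\OO_{W_\lambda}^\infty$) and dualising with (2) shows $\{t^{-((g+d-1)-e_{\lambda,i})}w_{ij}\}$ is a $k[t^{-1}]$-basis of $\OO_{W_{\lambda^*}}^\infty$. Hence $\{w_{ij}\}$ is a reduced basis of $W_{\lambda^*}$ with scrollar invariants $(g+d-1)-e_{\lambda,i}$, which is the assertion. As a consistency check, $\sum_i((g+d-1)-e_{\lambda,i})\dim V_\lambda=\vol_K(\lambda^*)$ by Proposition~\ref{prop:genus.irrep} together with $p(\lambda)+p(\lambda^*)=\dim V_\lambda$; alternatively this sum identity lets one invoke Lemma~\ref{lem:reducedbasis.guess} and only verify the integrality halves of (1) and (2).

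To prove (1) and (2) I would first reduce them to a single statement about the different of $L/k(t)$. The central idempotents $\rho_\mu\in Z(k[S_d])$ from Lemma~\ref{lem: projecting to a single W} are defined over $k$, hence preserve $\OO_L$ and $\OO_L^\infty$, yielding the module decompositions $\OO_L=\bigoplus_\mu\OO_{W_\mu}$ and $\OO_L^\infty=\bigoplus_\mu\OO_{W_\mu}^\infty$; together with the trace-orthogonality $\Tr_{L/k(t)}(W_\mu\cdot W_\nu)=0$ for $\mu\neq\nu$ this identifies $(\OO_{W_\lambda})^B$ with the trace-dual of the lattice $\delta^{-1}\OO_{W_\lambda}$ and identifies the trace-dual of $\OO_{W_{\lambda^*}}$ with $\mathfrak{C}\cap W_{\lambda^*}$, where $\mathfrak{C}=\mathfrak{d}_{L/k(t)}^{-1}$ is the codifferent. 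Summing over all $\mu$ then shows that (1) and (2) are equivalent to
\[
\mathfrak{d}_{L/k(t)}=\delta\,\OO_L\qquad\text{and}\qquad\mathfrak{d}_{L/k(t)}^\infty=\epsilon\,\OO_L^\infty,
\]
the extra factor $t^{-(g+d-1)}$ in $\epsilon$ producing the shift $t^{\,g+d-1}$ in (2).

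This last point is where simple branching is essential and is the main obstacle; I would settle it by a local valuation computation. At a finite prime $\mathfrak{P}$ of $\OO_L$ above a branch point the inertia group is generated by a single transposition $(ij)$ and the ramification is tame of index $2$, so $v_{\mathfrak{P}}(\mathfrak{d}_{L/k(t)})=1$; on the other hand, among the factors $\alpha^{(a)}-\alpha^{(b)}$ of $\delta$ exactly the one with $\{a,b\}=\{i,j\}$ degenerates at $\mathfrak{P}$, with valuation $1$, the others remaining units — because the reduction of the minimal polynomial of $\alpha$ has precisely one double root, by the shape $(2,1^{d-2})$ of the ramification. Thus $v_{\mathfrak{P}}(\delta)=1=v_{\mathfrak{P}}(\mathfrak{d}_{L/k(t)})$ at ramified primes and both vanish elsewhere, giving $\delta\,\OO_L=\mathfrak{d}_{L/k(t)}$; globally this is consistent with $\Norm_{L/k(t)}(\delta)$ generating $\mathfrak{d}_K^{\,d!/2}=\mathfrak{d}_L$ as in the proof of Theorem~\ref{thm:genusresolvents}. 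The computation at infinity is identical, the factor $t^{-(g+d-1)}$ being exactly the scaling between generators of $\mathfrak{d}_K$ and $\mathfrak{d}_K^\infty$ recorded in the proof of Proposition~\ref{prop:genus.irrep}.
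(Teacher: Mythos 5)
Your construction is the paper's construction in different clothing: since $B(u,\delta x)=\Tr_{L/k(t)}(ux)$, the $B$-dual basis of a reduced basis of $W_\lambda$ is exactly $\delta$ times the trace-dual basis, which is precisely the candidate reduced basis $\{\delta\omega_i^\ast\}$ that the paper writes down. Both arguments then rest on the same two inputs: that the different of $L/k(t)$ is generated by $\delta$ on the finite patch (with the twist $t^{-(g+d-1)}\delta$ playing that role above infinity), and the volume formula of Proposition~\ref{prop:genus.irrep} fed into Lemma~\ref{lem:reducedbasis.guess}. So the route is not genuinely different; what differs is that you attempt to \emph{prove} the different identities by a local valuation computation, where the paper appeals to the $(2^{d!/2})$ ramification pattern coming from Theorem~\ref{thm:genusresolvents}.

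That local computation contains a genuine gap. For $\delta=\prod_{i<j}(\alpha^{(i)}-\alpha^{(j)})$ built from an arbitrary primitive element $\alpha$, one has $\delta^2=\disc(1,\alpha,\ldots,\alpha^{d-1})=c(t)^2\,\Delta$, where $\Delta$ generates $\mathfrak{d}_K$ and $c(t)\in k[t]$ is the index $[\OO_K:k[t][\alpha]]$. Your step ``the reduction of the minimal polynomial of $\alpha$ has precisely one double root, by the shape $(2,1^{d-2})$ of the ramification'' is not valid: the ramification pattern governs how $(t-t_0)$ factors in $\OO_K$, and this matches the factorization of the minimal polynomial modulo $(t-t_0)$ only when $k[t][\alpha]$ is maximal at $t_0$ (Dedekind's criterion). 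Whenever $c(t)$ is non-constant --- which cannot be avoided for many $K$, since monogenicity of $\OO_K$ over $k[t]$ would force the different ideal of $\OO_K$ to be principal --- extra coincidences $\alpha^{(a)}\equiv\alpha^{(b)}\bmod\mathfrak{P}$ occur, over ramified as well as unramified points, and both of your displayed identities $\mathfrak{d}_{L/k(t)}=\delta\,\OO_L$ and $\mathfrak{d}_{L/k(t)}^\infty=\epsilon\,\OO_L^\infty$ fail by the factor $c(t)$. This is fatal even for your fallback via Lemma~\ref{lem:reducedbasis.guess}: writing $\delta=c(t)\delta_0$ with $\delta_0$ a correctly normalized generator, the element $\epsilon=t^{-(g+d-1)}\delta=c(t)\,t^{-(g+d-1)}\delta_0$ has poles above $t=\infty$ of order $\deg c$, so already the integrality half of your fact (2) is false and the dual basis elements $t^{-(g+d-1-e_{\lambda,i})}w_{ij}$ are not integral over $k[t^{-1}]$. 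The repair is to normalize $\delta$ before doing anything else: take $\delta$ to be a reduced-basis generator of $\OO_L\cap W_{(1^d)}$, concretely $\delta=\sqrt{D_0}$ with $D_0\in k[t]$ the squarefree polynomial vanishing at the finite branch points (here $k(t)\oplus W_{(1^d)}=L^{A_d}=k(t)(\sqrt{D_0})$, and squarefreeness of $D_0$ is exactly what simple branching buys). For this $\delta$ your valuation computation is correct on both patches --- valuation $1$ at the tame, index-$2$ ramified primes and $0$ elsewhere, and the matching statement at infinity --- and the rest of your argument, as well as the paper's, then goes through; this normalization is also how the paper's phrase ``$\DD_{L/k(t)}$ is generated by $\delta$ because all branch points have pattern $(2^{d!/2})$'' must be read.
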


\begin{proof}
It suffices to show that the scrollar invariants of $W_\lambda$ and $W_{\lambda^\ast}$ are obtained
from each other through the map $e \mapsto g + d - 1 - e$.
Let $\omega_i$, $i=1, \ldots, (\dim V_\lambda)^2$, be a reduced basis for $W_\lambda$ with corresponding scrollar invariants $e_{\lambda, i}$ and let $\delta$ be a generator of $W_{(1^d)}$ as in~\ref{ssec:resumenotation}. By Theorem~\ref{thm:genusresolvents} the scrollar invariant of $W_{(1^d)}$ is $g+d-1$. Let $\DD_{L/k(t)}$ denote the different ideal of the extension $L/k(t)$ and note that 
this is just the principal ideal generated by $\delta$ because all branch points have ramification pattern $(2^{d!/2})$ by Theorem~\ref{thm:genusresolvents}.
%because $K/k(t)$ is simply branched, the discussion following the proof of Theorem \ref{thm:genusresolvents} implies that $\DD_{L/k(t)}$ is the principal ideal generated by $\delta$. 
Let $\omega_i^*$ be the dual basis of $W_\lambda$ with respect to the trace pairing, i.e., $\Tr_{L / k(t)}(\omega_i \omega_j^*) = \delta_{ij}$ where $\delta_{ij}$ is the Kronecker delta. We claim that the elements $\delta\omega_i^*$ form a reduced basis of $W_{\lambda^*}$. To prove this, we use Lemma \ref{lem:reducedbasis.guess}. First of all, note that the elements
\[ \delta \omega_i^* \in \delta \DD^{-1}_{L/k(t)} = \OO_L \]
are integral over $k[t]$.
%\[
%\delta^2 (\omega_i^*)^2 \in \delta^2 (\DD^{-1}_{L/k(t)})^2 = \OO_L,
%\]
%so that the $\delta\omega_i^*$ are integral over $k[t]$. 
Also, it is clear that they form a $k(t)$-basis for $W_{\lambda^*}$. Playing the same game above infinity, so starting with  $t^{-e_{\lambda, i} }\omega_i$, we find the elements $t^{-(g+d-1-e_{\lambda, i})-2}\delta \omega_i^*$, which are all integral over $k[t^{-1}]$. Using Proposition \ref{prop:genus.irrep} for $\lambda$, we compute
\[
\sum_i (g+d-1-e_{\lambda, i})  = \frac{1}{2}(\dim V_\lambda + \chi_\lambda((1\,2)))(g+d-1),
\]
from which the desired result follows, as $\dim V_\lambda = \dim V_{\lambda^*}$ and $\chi_{\lambda^*}((1\,2)) = - \chi_\lambda ((1\,2))$.
\end{proof}

\subsection{Scrollar invariants of hooks.} \label{ssec:scrollar.invs.of.hooks}

Recall from~\ref{ssec:firstexamplespart} that the scrollar invariants of the standard partition $(d-1,1)$ are simply the scrollar invariants $e_1, \ldots, e_{d-1}$ of our input map $\varphi: C\to \PP^1$, and that the sign representation $(1^d)$ has $g + d - 1$ as its unique scrollar invariant. These facts are part of the more general statement from Proposition~\ref{prop:scrollar.invs.of.hooks}, which we now prove:

\begin{proof}[Proof of Proposition~\ref{prop:scrollar.invs.of.hooks}]
Note that indeed
\[ \dim V_\lambda = {d - 1 \choose i} \]
is the number of $i$-element subsets of $\{1, 2, \ldots, d- 1\}$.
Take a reduced basis $\alpha_1, \ldots, \alpha_{d-1}$ of $V_1 = W_{(d-1,1)}\cap L^{S_{d-1}}$, with scrollar invariants $e_1, \ldots, e_{d-1}$. Then 
$\{\alpha_j^{(m)} \mid 1\leq j,m\leq d-1\}$ 
is a reduced basis for $W_{(d-1,1)}$. The determinant of the matrix
\[
D = \begin{pmatrix}
\alpha_1^{(1)} & \alpha_1^{(2)} & \hdots & \alpha_1^{(d-1)} \\
%\alpha_2^{(1)} & \alpha_2^{(2)} & \hdots & \alpha_2^{(d-1)} \\
\vdots & \vdots & \ddots & \vdots \\
\alpha_{d-1}^{(1)} & \alpha_{d-1}^{(2)} & \hdots & \alpha_{d-1}^{(d-1)}
\end{pmatrix}
\]
appears, up to sign, as a summand in the expansion of the determinant of~\eqref{eq:discmatrix} 
along the first row. Since the latter determinant is non-zero and all these summands are conjugate to each other, we see that $\det D \neq 0$.
 We claim that the $i\times i$ minors of $D$ form a reduced basis for $W_\lambda$. First note that acting with $S_d$ on such a minor generates a representation isomorphic to $V_\lambda$, so it is indeed contained in $W_\lambda$. Secondly, we have 
\[ \dim W_\lambda = (\dim V_\lambda)^2 = {d-1 \choose i}^2 \]
minors and they are linearly independent because the $(d-1) \times (d-1)$ matrix whose entries are these minors has determinant 
\[ \det(D)^{\binom{d-2}{i-1}}\neq 0 \]
by the Sylvester--Franke theorem. Thus our minors form a basis of $W_\lambda$, and clearly they are all integral over $k[t]$. Now repeat this construction starting from the basis
$t^{-e_1}\alpha_1, \ldots, t^{-e_{d-1}} \alpha_{d-1}$, in order to obtain another basis of $W_\lambda$ consisting of $i \times i$ minors which are integral over $k[t^{-1}]$. The statement then follows through an application of Lemma \ref{lem:reducedbasis.guess}.
\end{proof}

\subsection{} As a first non-trivial application of the work done above, we can describe the scrollar invariants of $\res_{A_{d-1}} \varphi : \res_{A_{d-1}} C \to \PP^1$, where $A_{d-1}$ denotes the subgroup of $S_d$  consisting of all even permutations fixing $1$.

\begin{corollary}\label{cor:scrollars.Ad-1}
The scrollar invariants of the curve $\res_{A_{d-1}} C$ with respect to $\res_{A_{d-1}} \varphi $ are $e_1, \ldots, e_{d-1}, g+d-1-e_1, \ldots, g+d-1-e_{d-1}$ and $g+d-1$. 
\end{corollary}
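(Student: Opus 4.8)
The plan is to apply Theorem~\ref{thm:scrollar.invariants.resolvent} with $H = A_{d-1}$, which reduces the corollary to two tasks: decomposing $\Ind^{S_d}_{A_{d-1}} \mathbf{1}$ into irreducibles, and then reading off the scrollar invariants of each partition that occurs, weighted by its multiplicity. Since the morphism $\res_{A_{d-1}} \varphi$ has degree $[S_d : A_{d-1}] = 2d$, one expects $2d-1$ scrollar invariants (the implicit $0$ coming from the trivial partition), and indeed the list $e_1, \ldots, e_{d-1}, g+d-1-e_1, \ldots, g+d-1-e_{d-1}, g+d-1$ has exactly $2d-1$ entries, which is a useful consistency check to keep in mind.

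To obtain the decomposition I would factor the induction through the chain $A_{d-1} \subseteq S_{d-1} \subseteq S_d$, where $S_{d-1}$ is the stabilizer of $1$ and $A_{d-1}$ its even part. Because $A_{d-1}$ is the index-$2$ kernel of the sign character of $S_{d-1}$, inducing the trivial representation one step gives $\Ind^{S_{d-1}}_{A_{d-1}} \mathbf{1} \cong V_{(d-1)} \oplus V_{(1^{d-1})}$, the sum of the trivial and sign representations of $S_{d-1}$. Applying the branching rule for induction from $S_{d-1}$ to $S_d$ (each constituent becoming the direct sum over the partitions of $d$ obtained by adding a single box) then yields
\[
\Ind^{S_d}_{A_{d-1}} \mathbf{1} \cong V_{(d)} \oplus V_{(d-1,1)} \oplus V_{(2,1^{d-2})} \oplus V_{(1^d)},
\]
each summand occurring with multiplicity one; here adding a box to the single column $(1^{d-1})$ produces either $(2,1^{d-2})$ or $(1^d)$, while adding a box to the single row $(d-1)$ produces $(d)$ or $(d-1,1)$. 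As a final check the dimensions sum to $1 + (d-1) + (d-1) + 1 = 2d = [S_d : A_{d-1}]$.

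It then remains to name the scrollar invariants of the three non-trivial partitions (the trivial partition $(d)$ is excluded by the statement of Theorem~\ref{thm:scrollar.invariants.resolvent}). The partition $(d-1,1)$ contributes $e_1, \ldots, e_{d-1}$ and the sign partition $(1^d)$ contributes $g+d-1$, exactly as recorded in the discussion following Proposition~\ref{prop:scrollar.invs.of.hooks}. The remaining partition $(2,1^{d-2})$ is the hook $(d-i,1^i)$ with $i = d-2$, so by Proposition~\ref{prop:scrollar.invs.of.hooks} its scrollar invariants are the sums $\sum_{\ell \in S} e_\ell$ over the $(d-2)$-element subsets $S \subseteq \{1, \ldots, d-1\}$; since each such subset is the complement of a singleton $\{j\}$, these sums are precisely $g+d-1-e_j$ for $j = 1, \ldots, d-1$ (one may equally invoke the duality of Proposition~\ref{prop:scrollar.invs.duality}, noting that $(2,1^{d-2}) = (d-1,1)^\ast$). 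Taking the union of the three multi-sets, each with multiplicity one, produces the claimed list. The only real content is the computation of the induced representation; once that decomposition is in hand everything else is bookkeeping through the cited results, and the single point deserving care is keeping straight which added box turns $(1^{d-1})$ into $(2,1^{d-2})$ rather than into $(1^d)$.
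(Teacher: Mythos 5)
Your proof is correct and follows essentially the same route as the paper: apply Theorem~\ref{thm:scrollar.invariants.resolvent} to the decomposition $\Ind_{A_{d-1}}^{S_d}\mathbf{1} \cong V_{(d)}\oplus V_{(d-1,1)}\oplus V_{(2,1^{d-2})}\oplus V_{(1^d)}$ and identify the invariants of $(2,1^{d-2})$ via Proposition~\ref{prop:scrollar.invs.of.hooks} or the duality of Proposition~\ref{prop:scrollar.invs.duality}. The only difference is that you derive the decomposition (by inducing in stages through $S_{d-1}$ and the branching rule) where the paper simply asserts it, which is a harmless elaboration.
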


\begin{proof}
We have that $
\Ind_{A_{d-1}}^{S_d} \mathbf{1} = V_{(d)}\oplus V_{(d-1,1)} \oplus V_{(2, 1^{d-2})} \oplus V_{(1^d)}$.
Thus the result follows from Proposition~\ref{prop:scrollar.invs.duality} (or from Proposition~\ref{prop:scrollar.invs.of.hooks}), together with Theorem~\ref{thm:scrollar.invariants.resolvent}.
\end{proof}

\noindent We believe that, as soon as $d \geq 4$, the only resolvent covers whose scrollar invariants admit a description completely in terms of the scrollar invariants of $\varphi : C\to \PP^1$ are those with respect to $S_{d-1}, A_{d-1}, A_d$ (and the degenerate case $S_d$), although we do not have a proof for this.

\subsection{The case of $(3,1^{d-3})$ ramification} \label{ssec:31111}

For use in Section~\ref{sec:schreyer.invs.scrollar} we extend our proof of the volume formula for the partition $(d-2,2)$ such that it also covers the case of $(3, 1^{d-3})$ ramification. This allows for a slight generalization of our results. We say that a degree $d \geq 4$ morphism $\varphi: C\to \PP^1$ has ``good ramification" if all non-trivial ramification is of type $(2,1^{d-2})$ or $(3, 1^{d-3})$ and the Galois closure of $\varphi$ has the full symmetric group $S_d$ as its Galois group; the latter property no longer follows automatically.

\begin{lemma}\label{lem:genus.good.ramification}
If $\varphi$ has good ramification, then
\[
\vol_K ((d-2,2))  = p((d-2,2)) (g+d-1).
\]
\end{lemma}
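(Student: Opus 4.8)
The plan is to bypass the discriminant identity $\mathfrak{d}_{L^H} = \mathfrak{d}_K^{p(H)}$ used in the proof of Theorem~\ref{thm:genusresolvents} (which exploited that all inertia in the Galois closure is of order $2$, and thus fails under good ramification) and instead to extract $\vol_K((d-2,2))$ from a direct genus computation for a single resolvent. Concretely, I would apply Theorem~\ref{thm:scrollar.invariants.resolvent} with $H = S_{d-2} \times S_2 = S_{(d-2,2)}$. Note that the proof of that theorem uses only $\Gal(L/k(t)) \cong S_d$ and the central idempotents of Lemma~\ref{lem: projecting to a single W}, both of which are available under good ramification; it does \emph{not} use simple branching. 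Since $\Ind_{S_{d-2} \times S_2}^{S_d} \mathbf{1} \cong V_{(d)} \oplus V_{(d-1,1)} \oplus V_{(d-2,2)}$ with each multiplicity one, the scrollar invariants of $\res_H C$ are the disjoint union of those of the three partitions, each taken once. Summing and using that the scrollar invariants of any degree $N$ cover of $\PP^1$ add up to $(\text{genus}) + N - 1$ (a ramification-independent fact), I obtain
\[
\vol_K((d)) + \vol_K((d-1,1)) + \vol_K((d-2,2)) = g_{\res_H C} + \binom{d}{2} - 1,
\]
where $N = [S_d:H] = \binom{d}{2}$.

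The two remaining volumes are ramification-independent: $\vol_K((d)) = 0$, and $\vol_K((d-1,1)) = e_1 + \cdots + e_{d-1} = g + d - 1$, the latter following from $\chi(\OO_C) = \chi(\varphi_\ast \OO_C)$ applied to~\eqref{pushfwd}. So everything reduces to computing $g_{\res_H C}$. For this I would determine the ramification of $\res_H \varphi$ directly, using that its fibre over a branch point is governed by the cycle type of the local monodromy acting on the $2$-element subsets of $\{1,\dots,d\}$. Over a $(2,1^{d-2})$ branch point the monodromy is a transposition, which acts with pattern $(2^{d-2}, 1^{1+\binom{d-2}{2}})$, contributing $d-2$ to the ramification divisor; over a $(3,1^{d-3})$ branch point the monodromy is a $3$-cycle, which acts with pattern $(3^{d-2}, 1^{\binom{d-3}{2}})$, contributing $2(d-2)$. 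All these indices are $2$ or $3$, hence tame since $\charac k = 0$ or $\charac k > d \geq 4$.

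Writing $b_2, b_3$ for the numbers of branch points of each type, Riemann--Hurwitz for $\varphi$ itself gives $b_2 + 2b_3 = 2(g+d-1)$, and Riemann--Hurwitz for $\res_H \varphi$ then yields
\[
2 g_{\res_H C} - 2 = -2\binom{d}{2} + (d-2)(b_2 + 2b_3) = -2\binom{d}{2} + 2(d-2)(g+d-1),
\]
so that the sum of the three volumes equals $(d-2)(g+d-1)$. Subtracting the known values of $\vol_K((d))$ and $\vol_K((d-1,1))$ leaves $\vol_K((d-2,2)) = (d-3)(g+d-1)$, and a short character computation confirms $p((d-2,2)) = \tfrac12\big(\dim V_{(d-2,2)} - \chi_{(d-2,2)}((1\,2))\big) = d-3$, giving the claim. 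The main obstacle is the bookkeeping in the ramification step: one must verify the cycle types of a transposition and a $3$-cycle on $2$-subsets and check tameness, but the decisive point is conceptual, namely that a $(3,1^{d-3})$ point contributes exactly twice what a $(2,1^{d-2})$ point does to the resolvent's ramification divisor, which matches the weighting $b_2 + 2b_3$ already forced by $\varphi$'s own genus. This coincidence is precisely why the volume formula survives unchanged under the weaker hypothesis.
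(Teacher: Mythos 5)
Your proof is correct, but it follows a genuinely different route from the paper's. The paper stays inside its discriminant framework: it extends the identity $\mathfrak{d}_{L^H} = \mathfrak{d}_K^{p(H)}$ for $H = S_2 \times S_{d-2}$ to good ramification by checking that the exponents in the Lenstra--Pila--Pomerance formula~\cite{hyperelliptic} also come out right when the inertia is generated by a $3$-cycle $\sigma$ --- concretely, that $\beta_1 - \chi_{(d-2,2)}(\sigma) = 3(d-3)$, verified via the Murnaghan--Nakayama rule~\cite[Thm.\,4.10.2]{sagan} --- and then re-runs the trace-form argument from the proof of Proposition~\ref{prop:genus.irrep} to obtain $\sum_{\lambda \in \Ind_H^{S_d}\mathbf{1}} \vol_K(\lambda) = p(H)(g+d-1)$, after which it subtracts $\vol_K((d)) = 0$ and $\vol_K((d-1,1)) = g+d-1$. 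You reach the same sum identity with no discriminant theory at all: you correctly observe (as the paper itself notes in~\ref{ssec:reducedbasisdef}) that the proof of Theorem~\ref{thm:scrollar.invariants.resolvent} uses only $\Gal(L/k(t)) \cong S_d$ and not simple branching, and you then compute the resolvent genus directly by Riemann--Hurwitz from the cycle types of a transposition, namely $(2^{d-2},1^{1+\binom{d-2}{2}})$, and of a $3$-cycle, namely $(3^{d-2},1^{\binom{d-3}{2}})$, acting on $2$-subsets, converting the genus into the volume sum via the Euler-characteristic identity $\sum_i e_i = (\text{genus}) + N - 1$. Both arguments hinge on the very same numerical coincidence, dressed differently: in the paper it is the exponent identity (a $3$-cycle moves $3(d-2) = 3p(H)$ two-subsets, a transposition $2(d-2) = 2p(H)$), while in yours it is that a $(3,1^{d-3})$ point contributes $2(d-2)$ to the resolvent ramification divisor, exactly twice the $(d-2)$ of a $(2,1^{d-2})$ point, matching the weights in $b_2 + 2b_3 = 2(g+d-1)$. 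Your route is more elementary and geometric, avoids~\cite{hyperelliptic} altogether, and makes visibly clear why $(3,1^{d-3})$ is ``good'' for $(d-2,2)$; the paper's route keeps the treatment uniform with Theorem~\ref{thm:genusresolvents} and Proposition~\ref{prop:genus.irrep}, and its character-theoretic criterion is better suited to probing other ramification patterns and partitions (cf.~\ref{ssec:assumptionssometimesok}). Two points you handle but that deserve emphasis: one must invoke the \emph{proof} of Theorem~\ref{thm:scrollar.invariants.resolvent} rather than its statement (which formally assumes simple branching), and the identification of the ramification indices of $\res_H \varphi$ with the orbit sizes of the local monodromy on $S_d/H$ is precisely where tameness ($\charac k = 0$ or $\charac k > d$) enters.
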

\begin{proof}
We start by modifying the proof of Theorem~\ref{thm:genusresolvents} for the subgroup $H = S_2\times S_{d-2}$, in such a way that it covers the case of good ramification, rather than just simple branching. 
Remember that this proof boiled down to showing that
\[ \left| \{ \, \tau : L^H\hookrightarrow L \, \mid \, \sigma \circ \tau \neq \tau \, \} \right| = p(H) \cdot \left| \{ \, \tau : L^{S_{d-1}}\hookrightarrow L \, \mid \, \sigma \circ \tau \neq \tau \, \} \right| \]
for every transposition $\sigma \in S_d$. It suffices to show that this is also true in the case where $\sigma$ is a $3$-cycle, in which case the right-hand side becomes $3p(H) = 3(d-2)$. Let us rewrite the left-hand side as 
\[ \frac{ | \{\tau\in S_d \mid \tau^{-1}\sigma \tau \notin H \}|}{|H|} = \chi_{\Ind_H^{S_d} \mathbf{1}} (\id) - \chi_{\Ind_H^{S_d} \mathbf{1}} (\sigma). \] 
%The same argument is valid for $S_{d-1}$ and so it is enough to prove that
%\[
%\chi_{\Ind_H^{S_d} \mathbf{1}} (\id) - \chi_{\Ind_H^{S_d} \mathbf{1}} (\sigma) = (d-2)(\chi_{\Ind_{S_{d-1}}^{S_d} \mathbf{1}}(\id) - \chi_{\Ind_{S_{d-1}}^{S_d} \mathbf{1}}(\sigma)).
%\]
%The right hand side is equal to $(d-2)(d-4)$. // DENK DAT DIT NIET KLOPT 
Since we know that $\Ind_H^{S_d} \mathbf{1} \cong V_{(d)}\oplus V_{(d-1,1)}\oplus V_{(d-2,2)}$ it suffices to prove that $\beta_1 - \chi_{(d-2,2)}(\sigma) = 3(d-3)$ for every $3$-cycle $\sigma$. The character $\chi_{(d-2,2)}(\sigma)$ can be computed using the Murnaghan--Nakayama rule~\cite[Thm.\,4.10.2]{sagan} and is found to be equal to 
\[
\chi_{(d-2,2)}(\sigma) = \begin{cases}
-1 & \text{if } d = 4,5, \\
0 & \text{if } d=6, \\
\dim V_{(d-5,2)} & \text{if } d\geq 7.
\end{cases}
\]
A small calculation then proves the desired genus formula for $L^H$. 

To conclude, the proof of Proposition \ref{prop:genus.irrep} shows that 
\[
\sum_{\lambda\in \Ind_H^{S_d} \mathbf{1}}  \vol_K(\lambda)  = p(H)(g+d-1),
\]
so the result follows from the fact that $\vol_K( (d-1,1)) = g+d-1$ and $\Ind_H^{S_d} \mathbf{1} \cong V_{(d)}\oplus V_{(d-1,1)}\oplus V_{(d-2,2)}$.
\end{proof}

In general, we expect that for every partition $\lambda \vdash d$ there is a list of ``allowed" ramification patterns, such that Proposition~\ref{prop:genus.irrep} holds. For $\lambda=(d-1,1)$ any ramification type is allowed, while for $\lambda=(d-2,2)$ we have just proven that $(2, 1^{d-2})$ and $(3,1^{d-3})$ are both good. We expect that no other non-trivial ramification is good for $(d-2,2)$. We will say more about this in~\ref{ssec:assumptions}--\ref{ssec:assumptionssometimesok}.

\section{Schreyer's invariants are scrollar} \label{sec:schreyer.invs.scrollar}

%In this section we reinterpret Schreyer's invariants of a degree $d\geq 4$ cover $C\to \PP^1$ as the scrollar invariants of associated to certain partitions of $d$. In other words, we give a proof of Theorem~\ref{thm:schreyerisscrollar}.

\subsection{}  
This section is devoted to proving Theorem~\ref{thm:schreyerisscrollar}, reinterpreting the Schreyer invariants of a simply branched degree $d \geq 4$ morphism $\varphi : C \to \PP^1$ as scrollar invariants of the partitions $\lambda_{i+1} = (d-i-1, 2, 1^{i-1})$, where $i = 1, \ldots, d-3$. The proof can be found in~\ref{ssec:schreyerscrollarsection}. We begin with gathering some facts on the relative canonical embedding.

\subsection{Defining equations of $C$ inside $\PP(\mathcal{E})$ and their syzygies.}
%Consider a morphism $\varphi : C \to \PP^1$ of degree $d \geq 4$.
%\footnote{If $d=3$ then our curve $C$ arises in $\PP(\mathcal{E})$ as a divisor in the class $3H - (g+2)R$. In other words, it is cut out by a binary cubic form 
%$\sum_{j_1 + j_2 = 3} \varphi_{j_1,j_2} \cdot x_1^{j_1}x_2^{j_2}$
%whose coefficients $\varphi_{j_1, j_2} \in k[s,t]$ are of degree $j_1 e_1 + j_2 e_2 - g-2$.} 
We know from~\eqref{Schresolution} that 
the relative canonical embedding realizes $C$ inside $\PP(\mathcal{E})$ as the intersection of $\beta_1$ 
%$= d(d-3)/2$ 
divisors
\begin{equation} \label{classofDi} 
  D_j \in 2H - b_j^{(1)}R
\end{equation}
%that sum up to 
%\[ (d-3)g -d^2 + 2d + 3,\]
and this is a minimal set of generators. 
Equipping the fibers of our $\PP^{d-2}$-bundle $\pi : \PP(\mathcal{E}) \to \PP^1$ with homogeneous coordinates $x_1, x_2, \ldots, x_{d-1}$ and similarly providing $\PP^1$ with homogeneous coordinates $s,t$, it makes sense to talk about defining equations. 
%; in fact if $S$ is singular it is somewhat better to work with the pull-back of $C$ along $\PP(\mathcal{E}) \to S$. 
Being in the class~\eqref{classofDi} then amounts to being defined by a quadratic form
\begin{equation} \label{eq:quadformgeneral} 
  \sum_{ j_1 + j_2 + \ldots + j_{d-1} = 2 }  \varphi_{j_1, j_2, \ldots, j_{d-1}} \cdot x_1^{j_1} x_2^{j_2} \cdots x_{d-1}^{j_{d-1}} 
\end{equation}
where each coefficient $\varphi_{j_1, j_2, \ldots, j_{d-1}} \in k[s,t]$ is homogeneous of degree 
\begin{equation} \label{degquadform} 
  j_1 e_1 + j_2 e_2 + \ldots + j_{d-1}e_{d-1} - b_j^{(1)} 
\end{equation}
(the coefficient is zero if this quantity is negative). The morphism $\varphi$ is just the restriction of $\pi$ to $C$, i.e., it amounts to projection on the $(s,t)$-coordinates. 

The next steps of the resolution~\eqref{Schresolution} can be made explicit as well, in terms of syzygies.
Concretely, for $1 \leq i \leq d-4$ the map
\[ \bigoplus_{j=1}^{\beta_{i+1}} \mathcal{O}_{\PP(\mathcal{E})}(-(i+2)H + b_j^{(i+1)}R) 
\to \bigoplus_{j=1}^{\beta_i} \mathcal{O}_{\PP(\mathcal{E})}(-(i+1)H + b_j^{(i)}R) \]
can be represented by a $\beta_i \times \beta_{i+1}$ matrix whose entry on row $j_1$ and column $j_2$
is a linear form
$\varphi_1 x_1 + \varphi_2 x_2 + \ldots + \varphi_{d-1} x_{d-1}$
where each $\varphi_\ell \in k[s,t]$ is homogeneous of degree
\[ e_\ell + b_{j_1}^{(i)} - b_{j_2}^{(i+1)} . \]
The last step of the resolution is then again described by $\beta_{d-3} = \beta_1$ quadratic forms as in~\eqref{eq:quadformgeneral}, where now the coefficients
$\varphi_{j_1, j_2, \ldots, j_{d-1}} \in k[s,t]$ are seen to be homogeneous of degree 
\[ 
  j_1 e_1 + j_2 e_2 + \ldots + j_{d-1}e_{d-1} + b_j^{(d-3)} + d + 1 - g, 
\]
which in fact just equals~\eqref{degquadform} for an appropriate ordering of the Schreyer invariants, because of duality: see~\eqref{eq:schreyerduality} below.

%Once again we refer to~\cite{casnati_ekedahl,schreyer} for background and justification.

\subsection{} \label{ssec:bhargava_is_relativecanonical} 
The geometric generic fiber of $\varphi$ is the configuration of $d$ points in $\PP^{d-2}$ over $k(t)$ 
cut out by the quadrics
\[   \sum_{ j_1 + j_2 + \ldots + j_{d-1} = 2 }  \varphi_{j_1, j_2, \ldots, j_{d-1}}(1,t) \cdot x_1^{j_1} x_2^{j_2} \cdots x_{d-1}^{j_{d-1}} \in k(t)[x_1, x_2, \ldots, x_{d-1}] \]
for $j = 1, 2, \ldots, \beta_1$.
Let us become more precise about our claim from~\ref{ssec:intro_relative_embedding}
that this geometric generic fiber is in fact a point configuration of Bhargava type.

\begin{proposition} \label{prop:bhargava_is_casnati_ekedahl}
Consider a morphism $\varphi : C \to \PP^1$ of degree $d \geq 3$ and consider 
a reduced basis $1, \alpha_1, \ldots, \alpha_{d-1}$ of the corresponding function field extension $k(t) \subseteq k(C)$.
The geometric generic fiber of $\varphi$, when viewed as a configuration of $d$ points in $\PP^{d-2}$ through the relative canonical embedding, can be identified with Bhargava's point set 
\[ [\alpha_1^{* (1)} : \ldots : \alpha_{d-1}^{* (1)}], \ [\alpha_1^{* (2)} : \ldots : \alpha_{d-1}^{* (2)}], \  \ldots, \ [\alpha_1^{* (d)} : \ldots : \alpha_{d-1}^{* (d)}] \]
attached to $1, \alpha_1, \ldots, \alpha_{d-1}$ as in~\ref{ssec:veryintro}.
\end{proposition}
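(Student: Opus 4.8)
The plan is to make the Casnati--Ekedahl embedding completely explicit over the affine patch $\AA^1 = \PP^1 \setminus \{\infty\}$ and to read off the homogeneous coordinates of the generic fibre directly from the trace-dual of the reduced basis. The conceptual point is that the relative canonical embedding is governed by the relative dualizing sheaf $\omega_{C/\PP^1}$, and that over $\AA^1$ the sections of $\omega_{C/\PP^1}$ are precisely the inverse different $\DD^{-1} = \{ x \in K \mid \Tr_{K/k(t)}(x\,\OO_K) \subseteq k[t] \}$, i.e.\ the $k[t]$-module trace-dual to $\OO_K$. This is exactly where the dual basis $\alpha_i^\ast$ enters the picture, and it explains why Bhargava's points are built from the $\alpha_i^\ast$ rather than the $\alpha_i$ themselves.

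First I would invoke relative duality to identify $\varphi_\ast \omega_{C/\PP^1} \cong (\varphi_\ast \OO_C)^\vee \cong \OO_{\PP^1} \oplus \mathcal{E}$, with $\mathcal{E} = \bigoplus_i \OO_{\PP^1}(e_i)$ the Casnati--Ekedahl bundle, and recall that $\iota : C \hookrightarrow \PP(\mathcal{E})$ is the embedding associated with this rank $d-1$ summand; here $\mathcal{E}$ is the trace-zero summand, i.e.\ the kernel of the canonical trace $\varphi_\ast\omega_{C/\PP^1} \to \OO_{\PP^1}$. Restricting to $\AA^1$, where $\mathcal{E}$ is free and $\varphi_\ast\omega_{C/\PP^1}$ becomes $\DD^{-1}$, I would observe that the trace-dual basis $\alpha_0^\ast, \alpha_1^\ast, \ldots, \alpha_{d-1}^\ast$ of the reduced basis $1, \alpha_1, \ldots, \alpha_{d-1}$ is a $k[t]$-basis of $\DD^{-1}$, that $\alpha_0^\ast = 1/d$ spans the trivial summand (using $\Tr_{K/k(t)}(\alpha_i) = 0$), and that the remaining trace-zero elements $\alpha_1^\ast, \ldots, \alpha_{d-1}^\ast$ form a basis of $\mathcal{E}$ over $\AA^1$. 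Hence these are the linear coordinate functions of $\iota$, so a point $P$ of $C$ over $t \in \AA^1$ is sent to $[\alpha_1^\ast(P) : \cdots : \alpha_{d-1}^\ast(P)]$.

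It remains to pass to the geometric generic fibre, which is obtained by base-changing $K$ along $k(t) \hookrightarrow \overline{k(t)}$: its $d$ points are the images of the generic point $P$ (whose residue field is $K = k(C)$) under the $d$ embeddings $\sigma_1, \ldots, \sigma_d : K \hookrightarrow L$, so applying $\sigma_j$ to the coordinates above yields the point $[\sigma_j(\alpha_1^\ast) : \cdots : \sigma_j(\alpha_{d-1}^\ast)] = [\alpha_1^{\ast(j)} : \cdots : \alpha_{d-1}^{\ast(j)}]$, which is precisely the $j$-th point of Bhargava's configuration attached to the same basis $1, \alpha_1, \ldots, \alpha_{d-1}$. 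Crucially, both constructions are read off from the one fixed reduced basis, so there is no ambiguity in the comparison.

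The hard part will be the normalization rather than the conceptual identification: I must check that the Casnati--Ekedahl construction really produces the functions $\alpha_i^\ast$ as its coordinates (and not the $\alpha_i$, nor a $t$-dependent change of basis), that the constant $\alpha_0^\ast = 1/d$ is exactly the section quotiented out when passing from $\varphi_\ast\omega_{C/\PP^1}$ to $\mathcal{E}$, and that the behaviour of the $\alpha_i^\ast$ at $\infty$ is compatible with the splitting $\mathcal{E} = \bigoplus_i \OO_{\PP^1}(e_i)$. The reduced-basis hypothesis is what controls this last point: since $1, t^{-e_1}\alpha_1, \ldots, t^{-e_{d-1}}\alpha_{d-1}$ is an $\OO_K^\infty$-basis, the dual elements $\alpha_i^\ast$ behave like sections of $\OO_{\PP^1}(e_i)$, and a direct check at $\infty$ in the spirit of Lemma~\ref{lem:reducedbasis.guess} confirms that the $\alpha_i^\ast$ glue to the expected global generators of $\mathcal{E}$, pinning down $\iota$ as the relative canonical embedding.
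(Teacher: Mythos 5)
Your proposal is correct and follows essentially the same route as the paper's proof: both identify the coordinates of the Casnati--Ekedahl relative canonical embedding over $\AA^1$ with the trace-dual elements $\alpha_1^\ast, \ldots, \alpha_{d-1}^\ast$ (checking via the reduced-basis property that $1, t^{e_1}\alpha_1^\ast, \ldots, t^{e_{d-1}}\alpha_{d-1}^\ast$ trivializes $\varphi_\ast\OO_C(K_{C/\PP^1}) \cong \OO_{\PP^1}\oplus\mathcal{E}$ above infinity), and then obtain the $d$ geometric points by applying the embeddings $\sigma_j$. The only real difference is how the trivialization is justified: you invoke relative duality $\varphi_\ast\omega_{C/\PP^1}\cong(\varphi_\ast\OO_C)^\vee$ and the identification of sections with the inverse different, whereas the paper gets the same statement from the explicit minor formulas $\alpha_i^\ast = D_{i+1,1}/\det D$; these are interchangeable justifications of the identical key step.
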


\begin{proof}
Recall from~\ref{ssec:reducedbasisdef} that our reduced basis $1, \alpha_1, \ldots, \alpha_{d-1}$ is a $k[t]$-basis
of $\OO_{k(C)}$ such that $1, t^{-e_1 }\alpha_1, \ldots, t^{-e_{d-1} }\alpha_{d-1}$ is a basis of $\OO_{k(C)}^\infty$ over $k[t^{-1}]$, and that this property is tantamount to the decomposition
\[ \varphi_\ast \OO_C \cong \OO_{\PP^1} \oplus \mathcal{E}^\vee, \qquad \mathcal{E} = \OO_{\PP^1}(e_1) \oplus \cdots \oplus \OO_{\PP^1}(e_{d-1}) \]
from~\eqref{pushfwd}. 
Also, remember from~\ref{ssec:firstexamplespart} that the $\alpha_i$ can be assumed to have trace zero, i.e., they form a reduced basis of $V_1$. Then the dual basis is of the form
$1, \alpha_1^*, \ldots, \alpha_{d-1}^*$. We claim that this dual basis plays a similar role
in the decomposition
\begin{equation*} %\label{eq:relpushf} 
  \varphi_\ast \OO_C(K_{C / \PP^1}) \cong \OO_{\PP^1} \oplus \mathcal{E}
\end{equation*} 
with $K_{C / \PP^1}$ the ramification divisor of $\varphi$ (this is a relative canonical divisor). Indeed, 
the explicit formulae $\alpha_i^\ast = \det D_{i+1,1}/\det D$  
from~\ref{ssec:proofthatquadricsvanish} imply that
$1, \alpha_1^\ast, \ldots, \alpha_{d-1}^\ast$ forms a $k[t]$-basis of $\OO_C(K_{C/ \PP^1})(\AA^1)$,
while the counterparts of these formulae above infinity show that $1$, $t^{e_1}\alpha_1^\ast$, \ldots, $t^{e_{d-1}} \alpha_{d-1}^\ast$
is a $k[t^{-1}]$-basis of $\OO_C(K_{C/ \PP^1})(\PP^1 \setminus \{ 0 \})$.\footnote{The same reasoning for the absolute canonical divisor $K_C = K_{C / \PP^1} - 2(t)_\infty = (dt)$ shows that $1, \alpha_1^\ast, \ldots, \alpha_{d-1}^\ast$ and $t^{-2}, t^{e_1 - 2} \alpha_1^\ast, \ldots, t^{e_{d-1} - 2} \alpha_{d-1}^\ast$ are respective bases for $\OO_C(K_C)(\AA^1)$ and $\OO_C(K_C)(\PP^1 \setminus \{0\})$.
This yields the decomposition $\varphi_\ast \OO_C(K_C) \cong \OO_{\PP^1}(-2) \oplus \mathcal{E}(-2)$ mentioned in~\ref{ssec:differentscrollar}. \label{footnote:dt}}

%Thus $\iota$,
%locally above $0$, embeds $C$ using the functions $\alpha_1^\ast$, $\alpha_2^\ast$, \ldots, $\alpha_{d-1}^\ast$, from which the result follows.

%We quickly recall 
%how the relative canonical embedding is constructed, while referring to Schreyer~\cite{schreyer}, Casnati--Ekedahl~\cite[Thm.\,2.1]{casnati_ekedahl} %, Farkas--Kemeny~\cite[\S4]{farkas_kemeny} 
%or the recent account Landesman--Vakil--Wood~\cite[\S3.1]{landesman_vakil_wood} for further details.
%  For any degree $d \geq 3$ morphism $\varphi : C \to \PP^1$ with scrollar invariants $e_1, e_2, \ldots, e_{d-1}$, one can consider $\mathcal{E} = \OO_{\PP^1}(e_1) \oplus \OO_{\PP^1}(e_2) \oplus \ldots \oplus \OO_{\PP^1}(e_{d-1})$ as in~\ref{ssec:intro_relative_embedding}, where we caution the reader that several authors work instead with the ?? of $\varphi$, which is $\mathcal{E}$ twisted by $\OO_{\PP^1}(-2)$. Through dualization, the isomorphism~\eqref{pushfwd} implies
%\begin{equation} \label{eq:decomp_canonical} 
%  \varphi_\ast \OO_C(K_C) \cong \OO_{\PP^1}(-2) \oplus \mathcal{E},
%\end{equation}
%which yields an injection $\varphi^\ast \mathcal{E} \to \OO_C(K_C)$ inducing a closed immersion $\iota : C \to \PP(\mathcal{E})$, which is precisely our relative canonical embedding.\todo{entire proof to be double-checked and adapted to new convention}

The relative canonical embedding is the map $\iota : C \hookrightarrow \PP(\mathcal{E})$
induced by the morphism $\varphi^\ast \mathcal{E} \to \varphi^\ast \varphi_\ast \OO_C(K_{C / \PP^1}) \to \OO_C(K_{C / \PP^1})$ (which turns out to be surjective) coming from 
the natural inclusion $\mathcal{E} \to \varphi_\ast \OO_C(K_{C/ \PP^1})$; see Casnati--Ekedahl~\cite[Thm.\,2.1]{casnati_ekedahl} %, Farkas--Kemeny~\cite[\S4]{farkas_kemeny} 
or the recent treatment due to Landesman--Vakil--Wood~\cite[\S3.1]{landesman_vakil_wood} for details.
In view of the above, in explicit terms this means that on the patch above~$\AA^1$, the morphism $\iota$ is given by
\[ \varphi^{-1}(\AA^1) \, \to \, \AA^1 \times \PP^{d-2} : p \mapsto ( \, \varphi(p), \, [\alpha_1^\ast(p) : \alpha_2^\ast(p) : \ldots : \alpha_{d-1}^\ast(p)] \, ), \]
from which the proposition follows.
\end{proof}

\subsection{Schreyer's invariants are scrollar.} %\label{ssec:schreyerisscrollar}
A key role in the proof of Theorem~\ref{thm:schreyerisscrollar} will be played by the following formula, saying that the volume of the partition $\lambda_{i+1}$ is as it should be:

\begin{lemma}\label{lem:sum.schreyer.invs}
Let $1\leq i\leq d-3$. Then we have that 
\[
\vol_{k(C)} (\lambda_{i+1}) = \sum_{j=1}^{\beta_i} b_j^{(i)}.
\]
\end{lemma}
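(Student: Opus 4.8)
The plan is to reduce the statement to a single numerical identity and then to extract the left-hand side from the relative canonical resolution~\eqref{Schresolution} by an Euler-characteristic computation on $\PP(\mathcal{E})$. By the volume formula (Proposition~\ref{prop:genus.irrep}) we have $\vol_{k(C)}(\lambda_{i+1}) = p(\lambda_{i+1})(g+d-1)$ with $p(\lambda_{i+1}) = \tfrac12(\dim V_{\lambda_{i+1}} - \chi_{\lambda_{i+1}}((1\,2)))$, so it suffices to prove
\[ \sum_{j=1}^{\beta_i} b_j^{(i)} = p(\lambda_{i+1})(g+d-1). \]
I would read the left-hand side as the degree of the $i$th syzygy bundle $\mathcal{N}_i = \bigoplus_j \OO_{\PP^1}(b_j^{(i)})$, viewing each free term in~\eqref{Schresolution} as $\pi^\ast(\text{bundle on }\PP^1)\otimes \OO_{\PP(\mathcal{E})}(\ast H)$.

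Next I would compute $\deg \mathcal{N}_i$ from~\eqref{Schresolution} by intersection theory on $X=\PP(\mathcal{E})$. Since $\mathcal{E}=\bigoplus_\ell \OO_{\PP^1}(e_\ell)$ with $\sum_\ell e_\ell = g+d-1$, one has $c_1(\mathcal{E})=(g+d-1)R$, $c_{\geq 2}(\mathcal{E})=0$ and $R^2=0$, whence $\operatorname{ch}\OO_X(aH+bR)=e^{aH}(1+bR)$ and $\chi(\OO_X(aH+bR)) = f(a) + b\binom{a+d-2}{d-2}$ with $f(a)=\chi(\PP^1,\Sym^a\mathcal{E}^\vee)$. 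Twisting~\eqref{Schresolution} by $\OO_X(\ell H)$ and taking the alternating sum of Euler characteristics produces a polynomial identity in $\ell$: its left-hand side $\chi(\OO_C(\ell H))=\ell(2g+2d-2)+1-g$ is given by relative Riemann--Roch (using $H|_C=K_{C/\PP^1}$, cf.\ Proposition~\ref{prop:bhargava_is_casnati_ekedahl}), while its right-hand side is a linear combination of the shifted binomials $\binom{\ell-i-1+d-2}{d-2}$ with coefficients $(-1)^i\sum_j b_j^{(i)}$, plus known contributions from $f$. Taking finite differences in $\ell$ then isolates these coefficients one homological degree at a time, yielding each $\sum_j b_j^{(i)}$ as an explicit function of $g,d,i$.

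Finally I would match this closed form with $p(\lambda_{i+1})(g+d-1)$: the dimension $\beta_i=\dim V_{\lambda_{i+1}}$ comes from the hook-length formula and the transposition character $\chi_{\lambda_{i+1}}((1\,2))$ from the Murnaghan--Nakayama rule, exactly as in the proof of Lemma~\ref{lem:genus.good.ramification}. The self-duality of the relative canonical resolution, which gives $b_j^{(i)}+b_{j'}^{(d-2-i)}=g+d-1$ (the analogue of Proposition~\ref{prop:scrollar.invs.duality}, since $\lambda_{i+1}^\ast=\lambda_{d-1-i}$ and $p(\lambda_{i+1})+p(\lambda_{i+1}^\ast)=\beta_i$), can be used to halve the work by pairing $i$ with $d-2-i$.

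The main obstacle is precisely this isolation step: a single Euler characteristic only detects the alternating sum $\sum_i(-1)^i\sum_j b_j^{(i)}$, so disentangling the individual homological degrees genuinely requires the distinct $H$-twists of the terms of the \emph{minimal} resolution, together with the linear independence of the shifted binomials attached to each $\sum_j b_j^{(i)}$. Some care is also needed at the two ends of~\eqref{Schresolution}, where the $H$-degree jumps ($F_0=\OO_X$ sits in $H$-degree $0$ and the socle term $\OO_X(-dH+(g-d-1)R)$ in $H$-degree $d$); these produce boundary terms that must be tracked separately in the finite-difference bookkeeping.
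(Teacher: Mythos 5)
Your proposal is correct, and its skeleton coincides with the paper's own proof: both arguments use Proposition~\ref{prop:genus.irrep} to rewrite $\vol_{k(C)}(\lambda_{i+1})$ as $p(\lambda_{i+1})(g+d-1)$, and both evaluate $p(\lambda_{i+1})=\tfrac{1}{2}\bigl(\beta_i-\chi_{\lambda_{i+1}}((1\,2))\bigr)=(d-2-i)\binom{d-2}{i-1}$ via the hook length formula and the Murnaghan--Nakayama rule (with duality covering $\lambda_{d-3}$ and $\lambda_{d-2}$). Where you genuinely diverge is the other half of the identity: the paper obtains $\sum_{j=1}^{\beta_i} b_j^{(i)}=(d-2-i)\binom{d-2}{i-1}(g+d-1)$ simply by citing~\cite[Prop.\,2.9]{bopphoff} (modulo the convention shift discussed in~\ref{ssec:differentschreyer}), whereas you re-derive this formula from~\eqref{Schresolution} by Riemann--Roch on $\PP(\mathcal{E})$. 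Your derivation does work, and it is in essence the standard proof of the cited result: after twisting by $\ell H$, the fiberwise vanishing $\chi(\OO_{\PP(\mathcal{E})}(aH+bR))=0$ for $-(d-2)\le a\le -1$ ensures that at the twist $\ell=i+1$ only the terms of homological degree at most $i$ contribute --- in particular the socle term, whose $H$-degree after twisting is $i+1-d\in[-(d-2),-1]$, never interferes for $1\le i\le d-3$ --- so one can solve for the sums $\sum_j b_j^{(i)}$ inductively in $\ell=2,\ldots,d-2$; your linear-independence-of-shifted-binomials argument is an equivalent packaging of the same bookkeeping. What your route buys is self-containedness (nothing beyond Riemann--Roch and the minimality of~\eqref{Schresolution} is needed) and immunity to the normalization pitfall that comes with quoting~\cite{bopphoff}; what the paper's route buys is brevity. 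One convention slip to correct in a write-up: in the paper's normalization $\pi_\ast\OO_{\PP(\mathcal{E})}(H)=\mathcal{E}$ --- this is exactly what makes $H|_C=K_{C/\PP^1}$ and hence your formula $\chi(\OO_C(\ell H))=\ell(2g+2d-2)+1-g$ correct --- so your auxiliary function must be $f(a)=\chi(\PP^1,\Sym^a\mathcal{E})$ rather than $\chi(\PP^1,\Sym^a\mathcal{E}^\vee)$; with the dual bundle the degree contributions enter with the wrong sign and the final match with $p(\lambda_{i+1})(g+d-1)$ would fail.
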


\begin{proof}
Firstly, we claim that $p(\lambda_{i+1}) = (d-2-i)\binom{d-2}{i-1}$. To prove this claim, define
\[
\beta^{(e)}_i = \frac{e}{i+1}(e-2-i)\binom{e-2}{i-1}.
\]
Using the Murnaghan--Nakayama rule~\cite[Thm.\,4.10.2]{sagan}, one computes that 
\[
\chi_{(d-2,2)}((1\,2)) = \begin{cases}
0 & d=4, \\
1 & d=5, \\
1+\beta_1^{(d-2)} & d\geq 6, 
\end{cases} 
\quad \chi_{(d-3,2,1)}((1\,2)) = \begin{cases}
-1 & d = 5, \\
0 & d=6, \\
\beta_2^{(d-2)} & d\geq 7,
\end{cases} 
\]
while for $3\leq i\leq d-5$ we have $\chi_{\lambda_{i+1}}((1\,2)) = \beta_i^{(d-2)} - \beta_{i-2}^{(d-2)}$. The values of $\chi_{\lambda_{d-3}}((1\,2))$ and $\chi_{\lambda_{d-2}}((1\,2))$ can be determined by duality. The claim follows by an explicit calculation.

On the other hand, by~\cite[Prop.\,2.9]{bopphoff} we have that 
\begin{equation}%\label{eq:sum.schreyer.invs}
\sum_{j=1}^{\beta_i} b_j^{(i)} = (d-2-i){d-2 \choose i-1}(g+d-1),
\end{equation}
where we caution the reader that~\cite[Prop.\,2.9]{bopphoff} uses the different convention for the $b_j^{(i)}$ that was discussed in~\ref{ssec:differentschreyer}.
So the lemma follows from Proposition~\ref{prop:genus.irrep}.
%\[
%\vol_{k(C)}(\lambda_{i+1})  = p(\lambda_{i+1})(g + d - 1) = \sum_{j = 1}^{\beta_i} b_j^{(i)}. \qedhere
%\]
\end{proof}

\subsection{} \label{ssec:schreyerscrollarsection} We are now
ready to prove Theorem~\ref{thm:schreyerisscrollar}.

\begin{proof}[Proof of Theorem~\ref{thm:schreyerisscrollar}]
For $i = 1, \ldots, d-3$, write
$e_1^{(i)} \leq e_2^{(i)} \leq \ldots \leq e_{\beta_i}^{(i)}$
for the scrollar invariants of $\lambda_{i+1}$ with respect to $\varphi$, and likewise sort the Schreyer invariants such that
\[ b_1^{(i)} \leq b_2^{(i)} \leq \ldots \leq b_{\beta_i}^{(i)}.\]
Our aim is to prove that $e_j^{(i)} = b_j^{(i)}$ for all $i$ and $j$.

We follow the notation from Section \ref{sec:minfreerep}. 
In regards to bases, we now assume that $\alpha_1$, \ldots, $\alpha_{d-1}$ is a reduced basis for $V_1$ and similarly that $\omega^i_{1}, \ldots, \omega_{\beta_{i-1}}^i$ is a reduced basis for $V_i$, $i=2, \ldots, d-2$. Note that the scrollar invariants of $V_1$ are simply the scrollar invariants 
$e_1, \ldots, e_{d-1}$ of $C$ with respect to $\varphi$, while those of
$V_i$ are
\[ e_1^{(i-1)}, e_2^{(i-1)}, \ldots, e_{\beta_{i-1}}^{(i-1)} \]
for $ i =2, \ldots, d-2$. This follows, for instance, from the proof of Theorem~\ref{thm:scrollar.invariants.resolvent}.

Applying the construction from~Section~\ref{sec:minfreerep}, in view of Proposition~\ref{prop:bhargava_is_casnati_ekedahl} we obtain
 a minimal graded free resolution 
\[
...\to V_4^*\otimes R(-4)\xrightarrow{\psi^*_3} V_3^*\otimes R(-3)\xrightarrow{\psi_2^*} V_2^*\otimes R(-2)\xrightarrow{\psi_1^*} R\to R/I\to 0.
\]
of the homogeneous coordinate ring of the geometric generic fiber of $\varphi$ as a graded module
over the polynomial ring $R = k(t)[x_1, \ldots, x_{d-1}]$.
We can assume that the polynomials
$p^1, \ldots, p^{d-3}$, used to define
the maps 
\[ \psi_1: \Sym^2 V_1\to V_2, \qquad \psi_i: V_i\otimes V_1\to V_{i+1}, i = 2, 3, \ldots, d-3, \]
%, \qquad \psi_{d-2}: V_{d-2}\otimes \Sym^2 V_1\to V_d \]
have coefficients in $k$: indeed, we can find them using representation theory over $k$, rather than $k(t)$. 
Consequently, the maps $\psi_1, \ldots, \psi_{d-3}$ send tensors of elements that are integral over $k[t]$, resp.\ $k[t^{-1}]$, to elements that are integral over $k[t]$, resp.\ $k[t^{-1}]$. %This implies that our resolution is defined over $k[t]$.

%Therefore, since our chosen bases for the $V_i$ are integral over $k[t]$, this resolution is already defined over $k[t]$. Our resolution, considered over $k[t][x_1, \ldots, x_{d-1}]$, is certainly a complex. Therefore, it must contain the minimal free resolution of $C$ over $k[t][x_1, \ldots, x_{d-1}]$, i.e.\ Schreyer's resolution, as a direct summand. The goal is to obtain degree bounds on the maps in this complex and show that it is already a minimal resolution.

Let us first discuss the implications for $\psi_1$. Because $\alpha_1, \ldots, \alpha_{d-1}$ and $\omega_1^2, \ldots, \omega_{\beta_1}^2$ are $k[t]$-bases
we must have
\[
\psi_1(\alpha_m\otimes \alpha_n) = \sum_{\ell = 1}^{\beta_1} Q_{mn}^\ell \omega^2_\ell,
\]
with $Q_{mn}^\ell\in k[t]$, for all $1 \leq m \leq n \leq d-1$. By linearity, it follows that 
\[
\psi_1(t^{-e_m}\alpha_m \otimes t^{-e_n} \alpha_n) = \sum_{\ell = 1}^{\beta_1} t^{-e_m-e_n+e_\ell^{(1)} } Q_{mn}^\ell (t^{-e_\ell^{(1)}  }\omega_\ell^2).
\]
But $t^{-e_1 } \alpha_1, \ldots, t^{-e_{d-1}}\alpha_{d-1}$ and $t^{-e_1^{(1)} }\omega_1^2, \ldots, t^{-e_{\beta_1}^{(1)} }\omega_{\beta_1}^2$ are $k[t^{-1}]$-bases, so by our reasoning we must also have 
\[ t^{-e_m-e_n+e_\ell^{(1)} } Q_{mn}^\ell \in k[t^{-1}] \]
for all $\ell$, 
which shows that
\[
\deg Q_{mn}^\ell \leq e_m + e_n - e_\ell^{(1)} ,
\]
or in other words that the quadric $Q^\ell$ defines an element of $2H - e_\ell^{(1)}R$ containing $C$.
By the minimality of~\eqref{Schresolution} we must have 
\[ e_\ell^{(1)} \leq b_\ell^{(1)} \]
for all $\ell$. But from Lemma~\ref{lem:sum.schreyer.invs} we already know that
\[
\sum_{\ell = 1}^{\beta_1} e_\ell^{(1)} = \sum_{\ell = 1}^{\beta_1} b_\ell^{(1)},
\]
from which Theorem~\ref{thm:schreyerisscrollar} follows in the case $i = 1$. 
%In the case of good ramification, one uses Lemma \ref{lem:genus.good.ramification} to conclude instead.

The idea for the following steps is similar. Let $2\leq i\leq d-3$. By our choice of bases we can write
\[
\psi_i(\omega^i_m\otimes \alpha_n) = \sum_{\ell = 1}^{\beta_i} L^\ell_{mn}\omega^{i+1}_m,
\]
for certain $L^\ell_{mn}\in k[t]$. Above infinity we find
\[
\psi_i(t^{-e_m^{(i-1)}}\omega^i_m \otimes t^{-e_n}\alpha_n) = \sum_{\ell = 1}^{\beta_i} t^{-e_m^{(i-1)} - e_n + e_\ell^{(i)} } L^\ell_{mn} (t^{-e_\ell^{(i)}}\omega_\ell^{i+1})
\]
and we conclude as above that
\[ t^{-e_m^{(i-1)} - e_n + e_\ell^{(i)} } L^\ell_{mn} \in k[t^{-1}], \]
or in other words that 
\[ \deg L^\ell_{mn} \leq e_m^{(i-1)} + e_n - e_\ell^{(i)}  = b_m^{(i-1)} + e_n - e_\ell^{(i)} ,
\]
where the equality follows by induction on $i$.
Again, the minimality of~\eqref{Schresolution} implies that 
\[ e_\ell^{(i)} \leq b_\ell^{(i)},\] 
which in view of Lemma~\ref{lem:sum.schreyer.invs} allows us to conclude
that $b_\ell^{(i)} = e_\ell^{(i)}$.
%The reasoning for the final step is identical and we omit the details.
\end{proof}

\subsection{} \label{ssec:schreyerduality} 
A well-known consequence to the self-duality of the relative minimal resolution, see~\cite[Thm.\,1.3]{casnati_ekedahl} or~\cite[Cor.\,4.4]{schreyer}, reads that
\begin{equation} \label{eq:schreyerduality} 
 \left\{ \, b_j^{(d-2-i)} \, \right\}_j = \left\{ \, g + d - 1 - b_j^{(i)} \, \right\}_j 
\end{equation}
as multi-sets. Assuming simple branching, one can view this as a special case of Proposition~\ref{prop:scrollar.invs.duality}. Indeed, because the partitions $\lambda_{i+1}$ and $\lambda_{d-i-1}$ are dual to each other, i.e., they are obtained from one another by transposing Young diagrams, this proposition along with
Theorem~\ref{thm:schreyerisscrollar} immediately implies the duality statement.

% HET VOLGENDE NOG WEGLATEN VOOR LANGE VERSIE; PROOF ENVIRONMENT CLASHT MET \OPTIONAL

\subsection{} \label{ssec:SdSd-2proof}
We end this section with a proof of our exemplary Theorem~\ref{thm:S2Sd-2}:
\begin{proof}[Proof of Theorem~\ref{thm:S2Sd-2}]
This follows from the decomposition
\[
\Ind_{S_2\times S_d}^{S_d} \mathbf{1} \cong V_{(d)}\oplus V_{(d-1,1)}\oplus V_{(d-2,2)}.
\]
along with Proposition~\ref{prop:scrollar.invs.of.hooks}, Theorem~\ref{thm:schreyerisscrollar} and
Theorem~\ref{thm:scrollar.invariants.resolvent}.
\end{proof}

\subsection{} \label{ssec:3111theorem} The case $i = 1$ of Theorem~\ref{thm:schreyerisscrollar} 
remains true under the weaker assumption of good ramification, as defined in~\ref{ssec:31111}.

\begin{proposition}[addendum to Theorem~\ref{thm:schreyerisscrollar}] \label{prop:schreyerrelaxed}
Consider a degree $d \geq 4$ cover $C \to \PP^1$ with good ramification.
The multi-set of scrollar invariants of
$(d-2,2)$ with respect to $\varphi$ is given by
\[ \left\{ \, b_j \, \right\}_{j=1, 2, \ldots, \beta_1}, \]
 the splitting type of the first syzygy bundle in the relative minimal
resolution of $C$ with respect to $\varphi$.
\end{proposition}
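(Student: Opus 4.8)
The plan is to re-run the $i=1$ case of the proof of Theorem~\ref{thm:schreyerisscrollar} essentially verbatim, tracking exactly where the simple-branching hypothesis was used and replacing that single input. The key observation is that good ramification still guarantees $\Gal(L/k(t)) \cong S_d$, so the Galois-theoretic minimal free resolution of Section~\ref{sec:minfreerep} applies to the generic fiber, and Proposition~\ref{prop:bhargava_is_casnati_ekedahl} (which needs only a degree $\geq 3$ cover together with a reduced basis) still identifies the geometric generic fiber of $\varphi$ with Bhargava's point configuration. Since $\lambda_2 = (d-2,2)$, this is genuinely the $i=1$ instance of the theorem.

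First I would fix a reduced basis $\alpha_1, \ldots, \alpha_{d-1}$ of $V_1$ and a reduced basis $\omega_1^2, \ldots, \omega_{\beta_1}^2$ of $V_2$, noting as in Section~\ref{sec:minfreerep} that $\psi_1$ can be taken with coefficients in $k$. Writing $\psi_1(\alpha_m \otimes \alpha_n) = \sum_{\ell} Q_{mn}^\ell \omega_\ell^2$ with $Q_{mn}^\ell \in k[t]$ and passing to the bases above infinity, the same computation as in~\ref{ssec:schreyerscrollarsection} yields $\deg Q_{mn}^\ell \leq e_m + e_n - e_\ell^{(1)}$, i.e.\ each quadric $Q^\ell$ defines a divisor in the class $2H - e_\ell^{(1)}R$ containing $C$. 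Minimality of the Casnati--Ekedahl resolution~\eqref{Schresolution} then forces $e_\ell^{(1)} \leq b_\ell^{(1)}$ for all $\ell$, exactly as before.

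The only place where simple branching entered the original argument is the sum equality $\sum_\ell e_\ell^{(1)} = \sum_\ell b_\ell^{(1)}$, which was deduced from Lemma~\ref{lem:sum.schreyer.invs} via the volume formula Proposition~\ref{prop:genus.irrep}. Here I would instead invoke Lemma~\ref{lem:genus.good.ramification}, which establishes $\vol_K((d-2,2)) = p((d-2,2))(g+d-1) = (d-3)(g+d-1)$ under good ramification; combined with the Bopp--Hoff identity $\sum_{j} b_j^{(1)} = (d-3)(g+d-1)$ (whose derivation concerns the relative resolution and is insensitive to the ramification type), this reproduces the required equality of sums. Since there are $\beta_1$ terms on each side, the inequalities $e_\ell^{(1)} \leq b_\ell^{(1)}$ together with equal sums force $e_\ell^{(1)} = b_\ell^{(1)}$ for every $\ell$, which is the claim.

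The main point to verify, and the only potential obstacle, is that the structural inputs cited from Casnati--Ekedahl~\cite{casnati_ekedahl}, Schreyer~\cite{schreyer} and Bopp--Hoff~\cite{bopphoff} do not tacitly require simple branching beyond the genericity of the fiber. This is harmless: the generic fiber of a good-ramification cover is still $d$ reduced points in general position, since the nontrivial ramification is confined to special fibers, so the Betti table from~\cite[(4.2)]{schreyer} and the sum formula are unchanged. With the volume formula now supplied by Lemma~\ref{lem:genus.good.ramification}, no genuinely new difficulty arises.
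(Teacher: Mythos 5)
Your proposal is correct and is essentially the paper's own proof: the paper likewise re-runs the $i=1$ case of Theorem~\ref{thm:schreyerisscrollar} verbatim, substituting Lemma~\ref{lem:genus.good.ramification} for Proposition~\ref{prop:genus.irrep} in establishing the sum identity of Lemma~\ref{lem:sum.schreyer.invs}. Your additional checks (that Proposition~\ref{prop:bhargava_is_casnati_ekedahl} and the Bopp--Hoff sum formula are insensitive to the branching type) are exactly the points the paper leaves implicit, so there is nothing to add.
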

\begin{proof}
This is an exact copy of the proof of Theorem~\ref{thm:schreyerisscrollar}, except that now one needs to use Lemma~\ref{lem:genus.good.ramification} as a substitute for Proposition~\ref{prop:genus.irrep}
in establishing Lemma~\ref{lem:sum.schreyer.invs}.
\end{proof}

\section{Examples} \label{sec:examples}

%In this section we give several examples of scrollar invariants in small degree, in particular treating the Lagrange cubic resolvent and the Cayley sextic resolvent. We also give several infinite families of resolvent covers and their scrollar invariants. 

\subsection{Low degree examples.} We have made a repository containing explicit descriptions of the scrollar invariants of any resolvent of any simply branched morphism $\varphi : C \to \PP^1$ of degree $d \leq 6$. It can be found at~\url{https://homes.esat.kuleuven.be/~wcastryc/}. Let us highlight three examples: Lagrange's cubic resolvent in degree $d = 4$, Cayley's sextic resolvent in degree $d = 5$, and an ``exotic resolvent" in degree $d = 6$.

\subsection{} \label{ssec:lagrangecubic}
Up to conjugation, $D_4 = \langle (1\,2), (1 \, 3 \, 2 \, 4) \rangle$ is the unique subgroup of $S_4$ 
which is isomorphic to the dihedral group of order $8$. In Galois theory, the resolvent of a quartic polynomial $f(x)$ with respect to $D_4$ is also known as ``Lagrange's cubic resolvent" and is best known as a tool for solving $f(x) = 0$ in terms of radicals. When applied to a simply branched degree $4$ cover of $\PP^1$ by a curve of genus $g$, the resolvent construction results in a degree $3$ cover of $\PP^1$ by a curve of genus $g + 1$, having the following scrollar invariants:
\begin{corollary}[Casnati] \label{cor:casnati}
Consider a degree $4$ simply branched cover $\varphi: C\to \PP^1$ having Schreyer invariants $b_1, b_2$. The scrollar invariants of  $\res_{D_4} \varphi : \res_{D_4} C \to \PP^1$ are equal to the Schreyer invariants of $C$ with respect to $\varphi$. 
\end{corollary}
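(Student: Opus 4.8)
The plan is to deduce this corollary as an immediate combination of Theorem~\ref{thm:scrollar.invariants.resolvent} and Theorem~\ref{thm:schreyerisscrollar}, once the relevant induced representation has been decomposed; this is precisely the computation sketched in~\ref{ssec:introresolvent}. First I would record the trivial numerology: since $[S_4 : D_4] = 3$, the cover $\res_{D_4}\varphi$ has degree $3$ and therefore carries exactly two scrollar invariants, already matching the count of the two Schreyer invariants $b_1, b_2$.

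The one genuinely computational step is to identify $\Ind^{S_4}_{D_4}\mathbf{1}$. I would work on characters: the value of $\chi_{\Ind^{S_4}_{D_4}\mathbf{1}}$ on a conjugacy class $[g]$ is the number of cosets in $S_4/D_4$ fixed by $g$, which equals $\frac{|C_{S_4}(g)|}{|D_4|}\cdot |[g]\cap D_4|$. Listing the elements $D_4 = \{e, (1\,2), (3\,4), (1\,2)(3\,4), (1\,3)(2\,4), (1\,4)(2\,3), (1\,3\,2\,4), (1\,4\,2\,3)\}$ by cycle type gives the character $(3,1,3,0,1)$ on the classes $(e, (1\,2), (1\,2)(3\,4), (1\,2\,3), (1\,2\,3\,4))$, which matches $V_{(4)}\oplus V_{(2^2)}$ against the character table of $S_4$. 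Hence $\Ind^{S_4}_{D_4}\mathbf{1}\cong V_{(4)}\oplus V_{(2^2)}$, each summand occurring with multiplicity $1$.

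Then I would invoke Theorem~\ref{thm:scrollar.invariants.resolvent}: the scrollar invariants of $\res_{D_4}C$ with respect to $\res_{D_4}\varphi$ are the union, over the \emph{non-trivial} partitions $\lambda\vdash 4$, of the scrollar invariants of $\lambda$, each taken with multiplicity $\mult(V_\lambda, \Ind^{S_4}_{D_4}\mathbf{1})$. By the previous step the only non-trivial partition with non-zero multiplicity is $(2^2)$, occurring once (the trivial partition $(4)$ is discarded). Finally, Theorem~\ref{thm:schreyerisscrollar} applied with $d=4$ and $i=1$ — for which $\lambda_2 = (d-i-1, 2, 1^{i-1}) = (2,2) = (2^2)$ — identifies the multi-set of scrollar invariants of $(2^2)$ with the splitting type $\{b_1, b_2\}$ of the first syzygy bundle, i.e.\ with the Schreyer invariants of $C$, yielding the claim.

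I expect no deep obstacle here, since the analytic content lives entirely in the two theorems already established; the only genuine task is the bookkeeping of the induced character, and in particular checking that no partition other than $(2^2)$ survives. As a consistency check I would confirm the genus assertion in the surrounding prose: by Proposition~\ref{prop:genus.irrep}, $p((2^2)) = \tfrac12(\dim V_{(2^2)} - \chi_{(2^2)}((1\,2))) = \tfrac12(2-0) = 1$, so $b_1+b_2 = \vol_{k(C)}((2^2)) = g+3$, which forces $\res_{D_4}C$ to have genus $g+1$.
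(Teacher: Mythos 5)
Your proposal is correct and follows exactly the paper's own route: decompose $\Ind_{D_4}^{S_4}\mathbf{1} \cong V_{(4)}\oplus V_{(2^2)}$ and combine Theorem~\ref{thm:scrollar.invariants.resolvent} with Theorem~\ref{thm:schreyerisscrollar} for $d=4$, $i=1$. The only difference is that you carry out the induced-character computation (and the genus sanity check) explicitly, whereas the paper dismisses the decomposition with ``one checks''; your bookkeeping is accurate.
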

\begin{proof}
One checks that
\[
\Ind_{D_4}^{S_4} \mathbf{1} \cong V_{(4)}\oplus V_{(2^2)}
\]
so, in the case where $\varphi$ is simply branched, this is an immediate consequence of Theorem~\ref{thm:schreyerisscrollar} and Theorem~\ref{thm:scrollar.invariants.resolvent}. 
%\optional{If $\varphi$ admits $(3,1)$-ramification, then this follows with the aid of Lemma~\ref{lem:genus.good.ramification}, as in the proof of Theorem~\ref{thm:S2Sd-2relaxed}. }
\end{proof}

Both Casnati~\cite[Def.\,6.3-6.4]{casnati} and Deopurkar--Patel~\cite[Prop.\,4.6]{deopurkar_patel} use the language of Recillas' trigonal construction~\cite{recillas}, rather than the cubic resolvent. This construction applies to arbitrary degree $4$ covers and produces smooth and geometrically integral curves as soon as there is no ramification of type $(2^2)$ or $(4)$, even in the case of a smaller Galois group (the only other option being $A_4$). Therefore Casnati's result is slightly stronger than
Corollary~\ref{cor:casnati}. Note that the case of an $S_4$-cover without ramification of type $(2^2)$ or $(4)$ can in fact be settled by invoking Proposition~\ref{prop:schreyerrelaxed} instead of Theorem~\ref{thm:schreyerisscrollar}.
%\optional{, but this is just an artifact of our technical assumptions; see~\ref{ssec:assumptions}--\ref{ssec:assumptionssometimesok} for a discussion}.

\subsection{} \label{ssec:lagrange_vs_recillas} For the sake of exposition, let us briefly recall the connection between Lagrange's cubic resolvent and Recillas' trigonal construction. This can be found in e.g.~\cite[\S8.6]{vangeemen}, \cite{vakiltwelvepts} or~\cite{recillas}. Recall that our curve $C$ 
arises as the complete intersection of two divisors
\[ 
D_1 \in 2H - b_1R \qquad \text{and} \qquad D_2 \in 2H - b_2R 
\]
inside $\PP(\mathcal{E})$.
Every fiber of the bundle map $ \pi : \PP(\EE)\to \PP^1$ is a $\PP^{2}$, in which our curve cuts out $4$ points, counting multiplicities. Given a configuration of $4$ generally positioned points in $\PP^2$, there are three ways in which these can be viewed as a union of two pairs. 
If each time we take the intersection point of the two lines spanned by these pairs then we find a ``dual" configuration of $3$ points in the same $\PP^2$, see Figure~\ref{dualthreepoints}. 
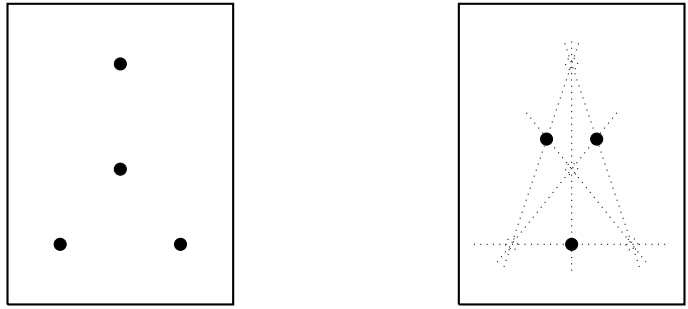
\begin{figure}[ht] 
\begin{center}
 \begin{tikzpicture}
   \draw [thick] (0,0) -- (3,0) -- (3,4) -- (0,4) -- (0,0);
   \draw [fill=black,black] (0.7,0.8) circle (0.08);
   \draw [fill=black,black] (2.3,0.8) circle (0.08);
   \draw [fill=black,black] (1.5,3.2) circle (0.08);
   \draw [fill=black,black] (1.5,1.8) circle (0.08);
   \draw [thick] (6,0) -- (9,0) -- (9,4) -- (6,4) -- (6,0);
   \draw [black,dotted] (6.7,0.8) circle (0.08);
   \draw [black,dotted] (8.3,0.8) circle (0.08);
   \draw [black,dotted] (7.5,3.2) circle (0.08);
   \draw [black,dotted] (7.5,1.8) circle (0.08);
   \draw [dotted] (6.2,0.8) -- (8.8,0.8);
   \draw [dotted] (6.6,0.5) -- (7.6,3.5);
   \draw [dotted] (8.4,0.5) -- (7.4,3.5);
   \draw [dotted] (7.5,3.5) -- (7.5,0.4);
   \draw [dotted] (8.1,2.55) -- (6.5,0.55);
   \draw [dotted] (6.9,2.55) -- (8.5,0.55);
   \draw [fill=black,black] (7.5,0.8) circle (0.08);
   \draw [fill=black,black] (7.166,2.2) circle (0.08);
   \draw [fill=black,black] (7.833,2.2) circle (0.08);
 \end{tikzpicture}
\end{center}
\caption{\small ``Dual" points associated with $4$ points in $\PP^2$ in general position.}
\label{dualthreepoints}
\end{figure}
By applying this procedure, or rather a scheme-theoretic version of it (to cope with multiplicities), to the fibers of $\varphi$ in $\PP(\EE)$ we find a family of point triples that swipe out a new curve $C'$ which naturally comes equipped with a degree $3$ map $\varphi' : C' \to \PP^1$; whence the trigonal construction. If we let $A_1, A_2 \in k[s,t]^{3 \times 3}$ denote symmetric matrices with homogeneous entries corresponding to the ternary quadratic forms defining $D_1$ and $D_2$, then it is easy to see that
\begin{equation} \label{degeneratemembers}
 \det(A_2x_1 + A_1x_2)  = 0
\end{equation}
is a defining binary cubic form for $C'$ in $\PP(\mathcal{E})$. Indeed, this follows by noting that $4$ points in $\PP^2$ in general position define a pencil of quadrics, and that the $3$ dual points are in a natural bijection with the degenerate members of this pencil. 
From expression~\eqref{degeneratemembers} 
one explicitly checks that the scrollar invariants of $\varphi'$ are $b_1, b_2$, e.g., using~\cite[Thm.\,9.1]{linearpencils}. One can also use~\eqref{degeneratemembers} to verify that $C'$ is indeed the geometric counterpart of Lagrange's cubic resolvent, using the formulas from~\cite[p.\,1351]{bhargavaquarticrings}.

\subsection{}
Next, we consider the 
subgroup
\[
\AGL_1(\FF_5) = \langle (1\,2\,3\,4\,5), (1\,2\,4\,3)\rangle \subseteq S_5.
\]
In Galois theory, the resolvent of a quintic polynomial $f(x)$ with respect to $\AGL_1(\FF_5)$ is known as ``Cayley's sextic resolvent"; its main use lies in determining whether the equation $f(x) = 0$ is solvable by radicals~\cite[Cor.\,13.2.11]{coxgalois}. When applied to a simply branched degree $5$ cover of $\PP^1$ by a curve of genus $g$, we obtain a degree $6$ cover of $\PP^1$ by a curve of genus $3g + 7$, with the following scrollar invariants:
\begin{corollary} \label{cor:cayley}
Consider a simply branched degree $5$ cover $\varphi : C \to \PP^1$. 
The degree $6$ cover $\res_{\AGL_1(\FF_5)} \varphi : \res_{\AGL_1(\FF_5)} C \to \PP^1$ has scrollar invariants \[ \{ b_1^{(2)} , b_2^{(2)} , b_3^{(2)} , b_4^{(2)} , b_5^{(2)}  \}, \]
where the $b_j^{(2)}$'s denote the elements of
the splitting type of the second syzygy bundle in the relative minimal resolution of $\varphi$.
\end{corollary}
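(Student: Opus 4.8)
The plan is to deduce the corollary from the two structural results already established, namely Theorem~\ref{thm:scrollar.invariants.resolvent} together with Theorem~\ref{thm:schreyerisscrollar}, so that everything reduces to a single representation-theoretic computation: the decomposition of the permutation representation $\Ind_{\AGL_1(\FF_5)}^{S_5} \mathbf{1}$ into irreducibles. Since $\AGL_1(\FF_5)$ has order $20$ and index $6$ in $S_5$, this induced representation has dimension $6$ and, being transitive, contains the trivial representation $V_{(5)}$ exactly once. First I would establish that
\[ \Ind_{\AGL_1(\FF_5)}^{S_5} \mathbf{1} \cong V_{(5)} \oplus V_{(2^2,1)}, \]
after which Theorem~\ref{thm:scrollar.invariants.resolvent} tells us that the scrollar invariants of $\res_{\AGL_1(\FF_5)} C$ are precisely the scrollar invariants of the single non-trivial partition $(2^2,1)$, each with multiplicity $\mult(V_{(2^2,1)}, \Ind_{\AGL_1(\FF_5)}^{S_5} \mathbf{1}) = 1$.

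To prove the displayed decomposition I would realize $\Ind_{\AGL_1(\FF_5)}^{S_5} \mathbf{1}$ as the action of $S_5$ by conjugation on its six Sylow $5$-subgroups, of which $\AGL_1(\FF_5)$ is the normalizer of one, and compute the permutation character $\pi(g)$, the number of Sylow $5$-subgroups normalized by $g$, class by class. The key input is the cycle-type census of $\AGL_1(\FF_5) \cong \langle x \mapsto ax+b \rangle$: it consists of the identity, four $5$-cycles, ten $4$-cycles (the maps with $a$ of multiplicative order $4$), and five double transpositions (the maps $x \mapsto -x + b$), and in particular it contains \emph{no} transpositions and no $3$-cycles. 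A short double-counting of incidences between the six normalizers and the elements of each cycle type then yields $\pi = 6,\,0,\,2,\,0,\,0,\,2,\,1$ on the classes $1^5,\ 2\,1^3,\ 2^2 1,\ 3\,1^2,\ 3\,2,\ 4\,1,\ 5$ respectively. Pairing $\pi$ against the character table of $S_5$ gives $\langle \pi, \pi \rangle = 2$, forcing a sum of two distinct irreducibles, while $\langle \pi, \chi_{(2^2,1)} \rangle = 1$ and $\langle \pi, \chi_{(3,2)} \rangle = 0$ single out $(2^2,1)$ over the only competing five-dimensional representation $(3,2)$.

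Finally I would invoke Theorem~\ref{thm:schreyerisscrollar} in degree $d = 5$ with $i = 2$: here $\lambda_{i+1} = \lambda_3 = (d-i-1, 2, 1^{i-1}) = (2^2,1)$ and $\beta_2 = 5 = \dim V_{(2^2,1)}$, so the multi-set of scrollar invariants of $(2^2,1)$ with respect to $\varphi$ equals the splitting type $\{ b_1^{(2)}, \ldots, b_5^{(2)} \}$ of the second syzygy bundle. Combining this with the decomposition above gives the claim. The main obstacle is entirely the representation-theoretic step of pinning down the decomposition, and the delicate point within it is distinguishing the two five-dimensional irreducibles $(2^2,1)$ and $(3,2)$; this is exactly what the opposite signs of their character values on the classes $2\,1^3$ and $4\,1$ accomplish in the inner-product computation. (The genus $3g+7$ of the resolvent quoted in the surrounding discussion is a separate consequence of Theorem~\ref{thm:genusresolvents}, via $p(\AGL_1(\FF_5)) = 3$, and is not needed for the statement about scrollar invariants.)
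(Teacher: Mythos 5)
Your proposal is correct and follows exactly the paper's route: the paper's proof is the one-line observation that $\Ind_{\AGL_1(\FF_5)}^{S_5} \mathbf{1} \cong V_{(5)} \oplus V_{(2^2,1)}$ combined with Theorems~\ref{thm:schreyerisscrollar} and~\ref{thm:scrollar.invariants.resolvent}, which is precisely your argument. The only difference is that you verify the character-theoretic decomposition in detail (correctly, including the identifications $\lambda_3 = (2^2,1)$ and $\beta_2 = 5$), whereas the paper simply asserts it.
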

\begin{proof}
Use $\Ind_{\AGL_1(\FF_5)}^{S_5} \mathbf{1} \cong V_{(5)} \oplus V_{(2^2,1)}$
in combination with Theorems~\ref{thm:schreyerisscrollar} and~\ref{thm:scrollar.invariants.resolvent}.
\end{proof}

\noindent Cayley's sextic resolvent also appears in Bhargava's work on quintic ring parame\-trizations~\cite{bhargavaquinticrings}, and a proof of Corollary~\ref{cor:cayley} can also be deduced from that work. The details of this approach can be found in the master thesis of the second-listed author~\cite[Thm.\,4.13]{vermeulenthesis}, where this was studied in the context of lifting pentagonal curves from finite fields to characteristic zero, for use in Tuitman's point counting algorithm~\cite[\S5]{castryck_lifting_2020}. As in the case of Recillas' trigonal construction, this approach in fact allows for a relaxation of the simple branching assumption. %\optional{; see again~\ref{ssec:assumptions}--\ref{ssec:assumptionssometimesok} for a discussion}.  

\subsection{} \label{ssec:exotic} From degree $6$ onward, 
there exist resolvents whose scrollar invariants we can no longer relate to known data of $\varphi : C \to \PP^1$. For $d=6$, these scrollar invariants arise from the partitions $(2^3)$ and $(3^2)$; note that these partitions are dual to each other. Since $\dim V_{(2^3)}=5$, the scrollar invariants of $(2^3)$ are certain integers $1\leq a_1\leq \ldots \leq a_5$, which sum up to $3g+15$ by Proposition~\ref{prop:genus.irrep}. By Proposition~\ref{prop:scrollar.invs.duality} the scrollar invariants of $(3^2)$ are $g+5-a_1, \ldots, g+5-a_5$, which sum up to $2g+10$.
We call $a_1, a_2, \ldots, a_5$ the ``exotic invariants" of $C$ with respect to $\varphi$. 
The terminology comes from the exotic embedding $S_5\hookrightarrow S_6$ realizing $S_5$ as a transitive subgroup of $S_6$; we denote this subgroup by $S_5'$. It is unique up to conjugation; one 
realization is $\langle (1\,2\,3\,4), (1\,5\,6\,2) \rangle$.
With respect to $S_5'$, the resolvent of a simply branched degree $6$ cover of $\PP^1$ by a curve of genus $g$ is another degree $6$ cover of $\PP^1$, now by a curve of genus $3g+10$.

\begin{corollary}
Consider a simply branched degree $6$ cover $\varphi : C \to \PP^1$. The scrollar invariants
of the degree $6$ cover $\res_{S_5'} C$ with respect to $\res_{S_5'} \varphi$
are given by the exotic invariants of $C$ with respect to $\varphi$.
\end{corollary}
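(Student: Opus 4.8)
The plan is to follow the template set by the earlier corollaries (Casnati in~\ref{ssec:lagrangecubic}, Cayley in Corollary~\ref{cor:cayley}): reduce everything to a single representation-theoretic computation, namely the decomposition of the permutation representation $\Ind_{S_5'}^{S_6}\mathbf{1}$ into irreducibles, and then feed the result into Theorem~\ref{thm:scrollar.invariants.resolvent}. Concretely, the corollary will follow at once if I can show
\[ \Ind_{S_5'}^{S_6}\mathbf{1} \cong V_{(6)} \oplus V_{(2^3)}, \]
since then the only non-trivial irreducible constituent is $V_{(2^3)}$, occurring with multiplicity $\mult(V_{(2^3)}, \Ind_{S_5'}^{S_6}\mathbf{1}) = 1$, so Theorem~\ref{thm:scrollar.invariants.resolvent} identifies the scrollar invariants of $\res_{S_5'} C$ with respect to $\res_{S_5'}\varphi$ with the scrollar invariants of the partition $(2^3)$ with respect to $\varphi$ — and these are precisely the exotic invariants $a_1, a_2, \ldots, a_5$ by the definition in~\ref{ssec:exotic}.

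To establish the displayed decomposition, the cleanest route is through the outer automorphism of $S_6$. First I would observe that the action of $S_6$ on the six cosets $S_6 / S_5'$ is faithful and transitive, hence it defines an injective homomorphism $S_6 \hookrightarrow \Sym(S_6/S_5') \cong S_6$ which, being a bijection, is an automorphism $\theta$ of $S_6$; since its point stabilizer $S_5'$ is \emph{not} conjugate to a standard point stabilizer $S_5 \subseteq S_6$, the automorphism $\theta$ is outer. Consequently the character of $\Ind_{S_5'}^{S_6}\mathbf{1}$ is the natural fixed-point character $1 + \chi_{(5,1)}$ precomposed with $\theta$, giving $\Ind_{S_5'}^{S_6}\mathbf{1}\cong V_{(6)} \oplus (V_{(5,1)})^{\theta}$. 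It then remains to identify the twist $(V_{(5,1)})^{\theta}$: using the standard description of $\theta$ (it interchanges the class of transpositions $(2,1^4)$ with the class of triple transpositions $(2^3)$ while fixing, for instance, the class of $6$-cycles), one reads off from the character table that $\theta$ carries the partition $(5,1)$ to $(2^3)$, so $(V_{(5,1)})^{\theta}\cong V_{(2^3)}$, as needed.

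The main obstacle is exactly this last identification. Among the irreducibles of $S_6$ there are four of dimension $5$ — those indexed by $(5,1)$, $(3,3)$, $(2^3)$ and $(2,1^4)$ — and $\theta$ permutes them, so I must pin down that the image of the standard representation $V_{(5,1)}$ is $V_{(2^3)}$ and not one of the others. I would address this either by citing the explicit description of the outer automorphism of $S_6$, or, to keep the argument self-contained, by directly computing the inner product of the permutation character of $\Ind_{S_5'}^{S_6}\mathbf{1}$ against the four dimension-$5$ irreducible characters; a short check on the transposition and triple-transposition classes suffices to single out $(2^3)$. Everything else in the proof is formal and immediate from Theorem~\ref{thm:scrollar.invariants.resolvent}.
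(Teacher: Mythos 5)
Your proof is correct and takes essentially the same route as the paper: the paper's entire proof is the observation that $\Ind_{S_5'}^{S_6}\mathbf{1} \cong V_{(6)} \oplus V_{(2^3)}$ combined with Theorem~\ref{thm:scrollar.invariants.resolvent}, which is exactly your reduction; the only difference is that you also supply a proof of this decomposition (via the outer automorphism of $S_6$), which the paper asserts without justification.

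One factual slip in your parenthetical description of $\theta$: the outer automorphism of $S_6$ does \emph{not} fix the class of $6$-cycles — it interchanges it with the class of elements of cycle type $(3,2,1)$ (the conjugacy classes it fixes are $(2^2,1^2)$, $(4,1^2)$, $(4,2)$ and $(5,1)$). This is harmless for your argument: as you note, the identification $(V_{(5,1)})^{\theta}\cong V_{(2^3)}$ already follows from the transposition/triple-transposition swap alone, since the four $5$-dimensional irreducibles take pairwise distinct values $3,-3,1,-1$ on the transposition class, and $\chi_{(5,1)}(\theta((1\,2))) = \chi_{(5,1)}((1\,2)(3\,4)(5\,6)) = -1$ singles out $(2^3)$.
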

\begin{proof}
This follows from $\Ind_{S_5'}^{S_6} \mathbf{1} \cong  V_{(6)} \oplus V_{(2^3)}$
along with Theorem~\ref{thm:scrollar.invariants.resolvent}.
\end{proof}

We wonder whether a deeper understanding of these exotic invariants could be key towards a better understanding of the Hurwitz spaces $\mathcal{H}_{6,g}$, where the most pressing question is whether they are unirational or not~\cite[\S1]{tanturri_schreyer}. Here are two concrete first problems:

\begin{problem}
Find an alternative interpretation for the exotic invariants $a_i$, directly in terms of the 
morphism $\varphi : C \to \PP^1$.
\end{problem}

\begin{problem} \label{prob:different_exotic}
Find simply branched degree $6$ morphisms $C \to \PP^1$ and $C' \to \PP^1$ having the same scrollar invariants, the same Schreyer invariants, but different exotic invariants.
\end{problem}

\subsection{Some further infinite families.} \label{ssec:infinitefamilies}
In arbitrary degree $d \geq 2$, we have already discussed the resolvent with respect to $A_{d-1}$ in
Corollary~\ref{cor:scrollars.Ad-1}, as well as the resolvent with respect to $S_2 \times S_{d-2}$ in Theorem~\ref{thm:S2Sd-2}. Let us extend this list somewhat further:
\begin{corollary} \label{cor:infinitefamilies}
Consider a simply branched cover $\varphi : C \to \PP^1$ of degree $d \geq 4$. Let $\{ e_1, e_2, \ldots, e_{d-1} \}$ be its scrollar invariants and let $\{ b_1, b_2, \ldots, b_{d(d-1)/3} \}$ be the splitting type of the first syzygy bundle of its relative minimal resolution. Then:
\begin{itemize}
  \item the scrollar invariants of $\res_{S_{d-2}} C$ with respect to $\res_{S_{d-2}} \varphi$ are
  \[ \{e_i\}_i \, \cup \, \{ e_i \}_i \, \cup \, \{ e_i + e_j  \}_{i < j} \, \cup \, \{b_i \}_i, \]
  \item the scrollar invariants of $\res_{S_2 \times A_{d-2}} C$ with respect to $\res_{S_2 \times A_{d-2}} \varphi$ are
  \[ \quad \qquad  \{ e_i \}_i \, \cup \, \{ b_i  \}_i \, \cup \, \{ g + d - 1 - e_i\}_i \, \cup \, \{g + d - 1 - e_i - e_j \}_{i < j}, \]
  \item the scrollar invariants of $\res_{A_{d-2}} C$ with respect to $\res_{A_{d-2}} \varphi$ are
  \begin{multline*} 
  \qquad \ \{e_i\}_i \, \cup \, \{e_i\}_i \, \cup \, \{ e_i + e_j  \}_{i < j} \, \cup \, 
  \{b_i \}_i  \\ \, \cup \, \{ g + d - 1 - e_i\}_i \, \cup \, \{ g + d - 1 - e_i\}_i   \\ \cup \, \{ g + d - 1 - e_i - e_j \}_{i < j} \, \cup \,
  \{ g + d - 1 - b_i\}_i \, \cup \, \{ g + d - 1 \},
  \end{multline*}
\end{itemize}
where the unions are as multi-sets and where $g$ denotes the genus of $C$.
\end{corollary}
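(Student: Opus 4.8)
The plan is to reduce all three statements to a single mechanism. By Theorem~\ref{thm:scrollar.invariants.resolvent}, the multi-set of scrollar invariants of $\res_H C$ is the union, over the non-trivial partitions $\lambda \vdash d$, of the multi-set of scrollar invariants of $\lambda$, each counted with multiplicity $\mult(V_\lambda, \Ind^{S_d}_H \mathbf{1})$. Thus everything comes down to (i) decomposing the three permutation representations $\Ind^{S_d}_H \mathbf{1}$ for $H \in \{ S_{d-2}, S_2 \times A_{d-2}, A_{d-2} \}$ into irreducibles, and (ii) recognizing the scrollar invariants of each partition that occurs. For step (ii) I already have everything I need: Proposition~\ref{prop:scrollar.invs.of.hooks} handles the hooks $(d-1,1)$, $(d-2,1^2)$, $(3,1^{d-3})$ and $(2,1^{d-2})$ (the last two producing the complementary sums $g+d-1-e_i$ and $g+d-1-e_i-e_j$ via the complement-of-a-subset description of their scrollar invariants); Theorem~\ref{thm:schreyerisscrollar} identifies the scrollar invariants of $\lambda_2 = (d-2,2)$ with the splitting type $\{b_i\}_i$ of the first syzygy bundle; and Proposition~\ref{prop:scrollar.invs.duality} treats the transpose $(2^2,1^{d-4}) = (d-2,2)^*$, giving $\{g+d-1-b_i\}_i$.

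So the real content is step (i). First I would record the decomposition of the permutation representation on ordered pairs of distinct points,
\[ \Ind^{S_d}_{S_{d-2}} \mathbf{1} \cong V_{(d)} \oplus 2\,V_{(d-1,1)} \oplus V_{(d-2,2)} \oplus V_{(d-2,1^2)}, \]
which is classical and immediately yields the first bullet: dropping the trivial summand, the two copies of $(d-1,1)$ give $\{e_i\}_i \cup \{e_i\}_i$, the summand $(d-2,2)$ gives $\{b_i\}_i$, and $(d-2,1^2)$ gives $\{e_i+e_j\}_{i<j}$. For the remaining two subgroups I would use transitivity of induction together with $\Ind^{S_{d-2}}_{A_{d-2}} \mathbf{1} \cong \mathbf{1} \oplus \sgn$. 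This reduces the $A_{d-2}$-computation to inducing $\sgn$ from $S_{d-2}$, which after one more stage through $S_2 \times S_{d-2}$ becomes the task of computing $\Ind^{S_d}_{S_2 \times S_{d-2}}(\mathbf{1} \boxtimes \sgn)$ and $\Ind^{S_d}_{S_2 \times S_{d-2}}(\sgn \boxtimes \sgn)$. Both are handled by Pieri's rule: inducing a product involving the column $(1^{d-2})$ amounts to adding a horizontal, resp.\ vertical, $2$-strip, giving
\[ \Ind^{S_d}_{S_2 \times S_{d-2}}(\mathbf{1} \boxtimes \sgn) \cong V_{(3,1^{d-3})} \oplus V_{(2,1^{d-2})}, \qquad \Ind^{S_d}_{S_2 \times S_{d-2}}(\sgn \boxtimes \sgn) \cong V_{(1^d)} \oplus V_{(2,1^{d-2})} \oplus V_{(2^2,1^{d-4})}; \]
the second can alternatively be read off by tensoring the $(\mathbf{1} \boxtimes \mathbf{1})$-decomposition used in the proof of Theorem~\ref{thm:S2Sd-2} with the sign representation.

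Assembling these, I expect $\Ind^{S_d}_{S_2 \times A_{d-2}} \mathbf{1} \cong V_{(d)} \oplus V_{(d-1,1)} \oplus V_{(d-2,2)} \oplus V_{(2,1^{d-2})} \oplus V_{(3,1^{d-3})}$ and
\[ \Ind^{S_d}_{A_{d-2}} \mathbf{1} \cong V_{(d)} \oplus 2V_{(d-1,1)} \oplus V_{(d-2,2)} \oplus V_{(d-2,1^2)} \oplus V_{(3,1^{d-3})} \oplus 2V_{(2,1^{d-2})} \oplus V_{(1^d)} \oplus V_{(2^2,1^{d-4})}, \]
after which feeding each summand through the dictionary of step (ii) reproduces the second and third bullets verbatim. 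The point requiring the most care is getting the multiplicities in these two decompositions exactly right — in particular the doubled standard representation and the doubled $(2,1^{d-2})$ in the $A_{d-2}$ case — since an error there would corrupt the final multi-sets. I would guard against this with two consistency checks: a dimension count against $[S_d:H]$ (namely $d(d-1)$, $d(d-1)$ and $2d(d-1)$), and the sign-twist symmetry, which forces the set of partitions appearing for $A_{d-2}$ to be closed under transposition — a property the stated answer visibly enjoys.
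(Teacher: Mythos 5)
Your proposal is correct and follows essentially the same route as the paper: the paper's proof likewise feeds the three decompositions of $\Ind^{S_d}_H \mathbf{1}$ (which match yours exactly, including the multiplicities $V_{(d-1,1)}^2$ and $V_{(2,1^{d-2})}^2$ for $A_{d-2}$) into Theorem~\ref{thm:scrollar.invariants.resolvent}, Proposition~\ref{prop:scrollar.invs.of.hooks}, Theorem~\ref{thm:schreyerisscrollar} and duality for $(2^2,1^{d-4})$. The only difference is that you derive the decompositions via transitivity of induction and Pieri's rule, whereas the paper simply asserts them; your consistency checks and the dictionary of step (ii) are all sound.
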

\begin{proof}
This follows from the decompositions\footnote{Note that, for $d = 4,5$, some terms may coincide. E.g., for $d = 4$ the terms $V_{(d-1,1)}$ and $V_{(3,1^{d-3})}$ are the same, and then so are the corresponding multi-sets of scrollar invariants $\{e_i\}_i$ and $\{ g + d - 1 - e_i - e_j\}_{i < j}$.}
\begin{align*} 
  \Ind^{S_d}_{S_{d-2}} \mathbf{1} & \cong   V_{(d)} \oplus V_{(d-1,1)}^2 \oplus V_{(d-2, 1^2)} \oplus V_{(d-2,2)},  \\
\Ind^{S_d}_{S_2 \times A_{d-2}} \mathbf{1} & \cong   V_{(d)} \oplus V_{(d-1,1)} \oplus V_{(d-2, 2)} \oplus V_{(3,1^{d-3})} \oplus V_{(2, 1^{d-2})}, \\ 
\Ind^{S_d}_{A_{d-2}} \mathbf{1}  & \cong  V_{(d)} \oplus V_{(d-1,1)}^2 \oplus V_{(d-2, 1^2)} \oplus V_{(d-2,2)}  \\
& \quad \ \oplus V_{(3, 1^{d-3})} \oplus V_{(2, 1^{d-2})}^2 \oplus V_{(2^2, 1^{d-4})} \oplus V_{(1^d)} 
\end{align*}
along with Proposition~\ref{prop:scrollar.invs.of.hooks} and Theorems~\ref{thm:schreyerisscrollar} and~\ref{thm:scrollar.invariants.resolvent}; in the case of $A_{d-2}$ we also used duality.
\end{proof}

\subsection{} For the sake of illustration, we also include an infinite family of resolvents whose scrollar invariants
we cannot express purely in terms of $\{e_i\}_i$ and $\{b_i\}_i$ (unless these would turn out to be related to the scrollar invariants of $(d-3,3)$; this is related to Problem~\ref{prob:different_exotic}).
\begin{corollary}
Consider a simply branched cover $\varphi : C \to \PP^1$ of degree $d \geq 6$. Let $\{ e_1, e_2, \ldots, e_{d-1} \}$ be its scrollar invariants, let 
\[ \{ b_1^{(1)}, b_2^{(1)}, \ldots, b_{d(d-1)/3}^{(1)} \}, \qquad \text{resp.} \qquad \{ b_1^{(2)}, b_2^{(2)}, \ldots, b^{(2)}_{d(d-1)/3} \}, \] 
be the splitting types of the first, resp.\ second, syzygy bundle of its relative minimal resolution, and let $\{ c_1, c_2, \ldots, c_{d(d-1)(d-5)/6} \}$
be the scrollar invariants of the partition $(d-3,3)$ with respect to $\varphi$.
The scrollar invariants of $\res_{S_2 \times S_{d-3}} C$ with respect to 
$\res_{S_2 \times S_{d-3}} \varphi$ are given by
\[ 
\{ e_i \}_i \, \cup \, \{e_i\}_i \, \cup \, \{e_i + e_j \}_{i < j} \, \cup \, \{ b_i^{(1)}  \}_i \, \cup
\, \{ b_i^{(1)}  \}_i \, \cup \, \{ b_i^{(2)}  \}_i \, \cup \, \{c_i\}_i, 
\]
as a union of multi-sets.
\end{corollary}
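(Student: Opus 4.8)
The plan is to follow the same recipe as in Corollary~\ref{cor:infinitefamilies}: by Theorem~\ref{thm:scrollar.invariants.resolvent} it suffices to decompose $\Ind_{S_2 \times S_{d-3}}^{S_d} \mathbf{1}$ into irreducibles and then to read off the scrollar invariants attached to each constituent partition, using Proposition~\ref{prop:scrollar.invs.of.hooks}, Theorem~\ref{thm:schreyerisscrollar}, and the definition of the $c_i$. Under the conventions of~\ref{ssec:notation}, the subgroup $S_2 \times S_{d-3}$ is the Young subgroup $S_{(d-3,2,1)}$ (it permutes $\{1,2\}$ and $\{3,\dots,d-1\}$ while fixing $d$), so $\Ind_{S_2\times S_{d-3}}^{S_d} \mathbf{1}$ is the permutation module $M^{(d-3,2,1)}$, whose multiplicities are Kostka numbers: $\mult(V_\lambda, \Ind_{S_2\times S_{d-3}}^{S_d} \mathbf{1}) = K_{\lambda,(d-3,2,1)}$, the number of semistandard Young tableaux of shape $\lambda$ with content $(d-3,2,1)$.

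The core computation is the evaluation of these Kostka numbers. Since the content uses only the entries $1,2,3$, any contributing $\lambda$ has at most three rows, and $K_{\lambda,(d-3,2,1)} > 0$ forces $\lambda$ to dominate $(d-3,2,1)$, leaving only a handful of candidates. A direct enumeration of the tableaux---placing all $d-3$ ones in the first row, then tracking the positions of the two $2$'s and the single $3$ subject to strict increase down columns---is expected to yield $K_{(d),\mu}=1$, $K_{(d-1,1),\mu}=2$, $K_{(d-2,2),\mu}=2$, $K_{(d-2,1^2),\mu}=1$, $K_{(d-3,3),\mu}=1$ and $K_{(d-3,2,1),\mu}=1$, with all other multiplicities zero (here $\mu = (d-3,2,1)$, and $d\geq 6$ guarantees the relevant shapes are genuine partitions whose first row is long enough for the column constraints to be vacuous). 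This gives
\[ \Ind_{S_2\times S_{d-3}}^{S_d}\mathbf{1} \cong V_{(d)} \oplus V_{(d-1,1)}^{2} \oplus V_{(d-2,2)}^{2} \oplus V_{(d-2,1^2)} \oplus V_{(d-3,3)} \oplus V_{(d-3,2,1)}. \]

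It then remains to translate each summand, with its multiplicity, into scrollar invariants via Theorem~\ref{thm:scrollar.invariants.resolvent}. The trivial constituent $V_{(d)}$ contributes only the invariant $0$; the two copies of $V_{(d-1,1)}$ contribute $\{e_i\}_i \cup \{e_i\}_i$; and by Proposition~\ref{prop:scrollar.invs.of.hooks} applied to the hook $(d-2,1^2)$ (the case $i=2$) the constituent $V_{(d-2,1^2)}$ contributes $\{e_i+e_j\}_{i<j}$. Since $(d-2,2)=\lambda_2$ and $(d-3,2,1)=\lambda_3$, Theorem~\ref{thm:schreyerisscrollar} identifies the scrollar invariants of these two partitions with the splitting types of the first and second syzygy bundles, so the two copies of $V_{(d-2,2)}$ contribute $\{b_i^{(1)}\}_i \cup \{b_i^{(1)}\}_i$ and the single $V_{(d-3,2,1)}$ contributes $\{b_i^{(2)}\}_i$; finally $V_{(d-3,3)}$ contributes $\{c_i\}_i$ by the definition of the $c_i$. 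Taking the union of all these multi-sets produces exactly the asserted description. The one genuine point requiring care is the Kostka computation---in particular verifying that $V_{(d-1,1)}$ and $V_{(d-2,2)}$ each occur with multiplicity two while $V_{(d-3,3)}$ and $V_{(d-3,2,1)}$ occur simply---so this is where I would focus the verification; everything else is a routine appeal to the three cited results.
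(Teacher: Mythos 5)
Your proposal is correct and follows essentially the same route as the paper: the paper's proof likewise cites Theorem~\ref{thm:scrollar.invariants.resolvent}, Proposition~\ref{prop:scrollar.invs.of.hooks} and Theorem~\ref{thm:schreyerisscrollar}, together with exactly the decomposition $\Ind^{S_d}_{S_2 \times S_{d-3}} \mathbf{1} \cong V_{(d)} \oplus V_{(d-1,1)}^2 \oplus V_{(d-2,1^2)} \oplus V_{(d-2,2)}^2 \oplus V_{(d-3,2,1)} \oplus V_{(d-3,3)}$ that your Kostka-number computation produces (the paper merely states it, while you verify it via semistandard tableaux). The only detail the paper adds that you omit is the remark that $\dim V_{(d-3,3)} = d(d-1)(d-5)/6$ by the hook length formula, justifying the indexing of the $c_i$'s.
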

\begin{proof}
This follows from
\[ \Ind^{S_d}_{S_2 \times S_{d-3}} \mathbf{1} \cong V_{(d)} \oplus V_{(d-1,1)}^2 \oplus V_{(d-2, 1^2)}
\oplus V_{(d-2,2)}^2 \oplus V_{(d-3,2,1)} \oplus V_{(d-3,3)} \]
together with Proposition~\ref{prop:scrollar.invs.of.hooks}, Theorem~\ref{thm:schreyerisscrollar} and Theorem~\ref{thm:scrollar.invariants.resolvent}. The fact that $(d-3,3)$ comes equipped with $d(d-1)(d-5)/6$ scrollar invariants follows from the hook length formula.
\end{proof}
\noindent Note that the invariants $c_i$ sum up to $(d-2)(d-5)(g+d-1)/2 $
in view of Proposition~\ref{prop:genus.irrep}. If $d = 6$ then these invariants are dual to the exotic invariants discussed in~\ref{ssec:exotic}.

\subsection{Curves on Hirzebruch surfaces}\label{ssec:hirzebruch}
Consider a smooth curve $C$ on the Hirzebruch surface $F_e = \PP(\mathcal{O}_{\PP^1} \oplus \mathcal{O}_{\PP^1}(e))$ of invariant $e \geq 0$, along with the morphism $\varphi : C \to \PP^1$ induced by the bundle map $\pi : F_e \to \PP^1$. Assume that this morphism is dominant of degree $d \geq 2$, and simply branched.
We will give a conjectural description of the scrollar invariants of each partition of $d$ with respect to $\varphi$ in terms of the ``bidegree" of $C$, by which we mean the tuple $(c,d) \in \ZZ^2$ 
such that $C \sim dE + (c + de)F$, with $F$ a fiber of $\pi$ and $E$ a section with self-intersection $-e$. Equivalently, one can view $F_e$ as the projective toric surface polarized by the lattice polygon $\Delta_{c,d,e}$ from Figure~\ref{fig:hirzebruchpolygon}, 
\begin{figure}[ht] 
\begin{center}
 \begin{tikzpicture}
   \draw [->] (-1,0)--(6,0);
   \draw [->] (0,-1)--(0,3);
   \draw [thick] (0,0)--(0,2)--(1,2)--(5,0)--(0,0);
   \draw [fill=black] (0,0) circle (0.08);
   \draw [fill=black] (5,0) circle (0.08);
   \draw [fill=black] (0,2) circle (0.08);
   \draw [fill=black] (1,2) circle (0.08);
   \node at (-0.5,-0.3) {\small $(0,0)$};
   \node at (-0.5,2.2) {\small $(0,d)$};
   \node at (1.5,2.2) {\small $(c,d)$};
   \node at (5.6,0.4) {\small $(c + de,0)$};
   \node at (1.2,0.8) {\small $\Delta_{c,d,e}$};
 \end{tikzpicture}
\end{center}
\caption{\small The lattice polygon corresponding to bidegree $(c,d)$ on $F_e$.}
\label{fig:hirzebruchpolygon}
\end{figure}
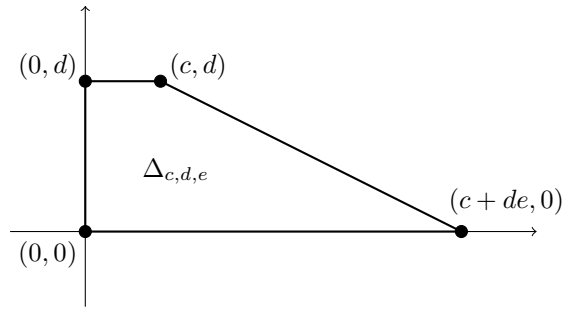
and then $C$ is defined by a sufficiently general bivariate polynomial that is supported on this polygon (with $\varphi$ corresponding to projection on the first coordinate). The most interesting special cases are $e = 0$, in which case we are looking at smooth curves on $\PP^1 \times \PP^1$ of bidegree $(c,d)$ in the traditional sense, and $e = c = 1$, corresponding to smooth plane curves of degree $d + 1$.

The scrollar invariants of $C$ with respect to $\varphi$ are given by
\[ e_i = c + ie, \qquad i = 1, \ldots, d-1. \]
This follows from~\cite[Thm.\,9.1]{linearpencils}, since $e_i$ equals the length of the longest line segment having lattice end points in the interior of $\Delta_{c,d,e}$ at height $d-i$.
Consequently, the genus of $C$ equals $(d-1)(c + de/2 - 1)$. This is the number of lattice points in the interior of 
$\Delta_{c,d,e}$.

\subsection{} 
Let $\lambda$ be a partition of $d$, and consider the following recipe for attaching an integer $e(T)$ to a standard Young tableau $T$ of shape $\lambda$. Start by writing down the ``reading word" of $T$, which is obtained by concatenating its rows, starting from the bottom row. For example, the reading word of the tableau
\vspace{0.1cm}
\begin{center}
    \begin{tikzpicture}[scale=0.5,baseline=(current  bounding  box.center)]
      \draw[thick] (0,-1) rectangle (1,0);
      \draw[thick] (0,0) rectangle (1,1);
      \draw[thick] (0,1) rectangle (1,2);
      \draw[thick] (1,0) rectangle (2,1);
      \draw[thick] (1,1) rectangle (2,2);
      \draw[thick] (2,1) rectangle (3,2);
      \draw[thick] (3,1) rectangle (4,2);
      \node at (0.5,-0.5) {\small $6$};
      \node at (0.5,0.5) {\small $2$};
      \node at (0.5,1.5) {\small $1$};
      \node at (1.5,0.5) {\small $4$};
      \node at (1.5,1.5) {\small $3$};
      \node at (2.5,1.5) {\small $5$};
      \node at (3.5,1.5) {\small $7$};
    \end{tikzpicture} 
\end{center}
\vspace{0.1cm}
of shape $(4,2,1)$
is $6\,2\,4\,1\,3\,5\,7$.
Consider the set $I(T)$ of indices $i \in \{ 1, \ldots, d-1 \}$ for which $i+1$ appears to the left of $i$ in this reading word. Then we let
\[ e(T) = \sum_{i \in I(T)} e_i  \]
which for the above example equals $e_1 + e_3 + e_5 = c + 9e$.
We propose:
\begin{conjecture} \label{conj.hirzebruch}
%With $e, c,d$ as above, let $C$ be a smooth curve on $F_e$ such that $C \sim dE + (c + de)F$. Let $\varphi: C\to \PP^1$ be the map induced by the bundle map $F_e\to \PP^1$. 
Let $e\geq 0$ be an integer and let $C$ be a smooth curve on $F_e$. Let $C \sim dE + (c + de)F$ for integers $d\geq 2$, $c\geq 0$. Assume that the map $\varphi: C\to \PP^1$ induced by the bundle map $F_e\to \PP^1$ is simply branched. Then the multi-set of 
scrollar invariants of any partition $\lambda \vdash d$ with respect to $\varphi$
is given by 
\[ \{ \, e(T) \, | \, T \text{ is a Young tableau of shape } \lambda \, \}. \]
\end{conjecture}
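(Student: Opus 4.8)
The plan is to prove Conjecture~\ref{conj.hirzebruch} by constructing an explicit reduced basis of each isotypic component $W_\lambda \subseteq L$ indexed by standard Young tableaux, and then pinning down the scrollar invariants with Lemma~\ref{lem:reducedbasis.guess}. First note that, since $e_i = c + ie$ and $I(T)$ is exactly the descent set $\operatorname{Des}(T) = \{\, i : i+1 \text{ lies in a strictly lower row than } i \,\}$, the prediction reads
\[ e(T) = c\cdot \operatorname{des}(T) + e \cdot \operatorname{maj}(T), \]
with $\operatorname{des}(T) = |\operatorname{Des}(T)|$ and $\operatorname{maj}(T) = \sum_{i\in \operatorname{Des}(T)} i$. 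Two structural checks guide everything. The number of tableaux is $\dim V_\lambda$, the correct number of scrollar invariants of $\lambda$; and under transposition one has $\operatorname{Des}(T^t) = \{1,\dots,d-1\}\setminus \operatorname{Des}(T)$, whence
\[ e(T^t) = \Big(\textstyle\sum_{i=1}^{d-1} e_i\Big) - e(T) = (g+d-1) - e(T), \]
using $\sum_{i=1}^{d-1} e_i = (d-1)c + e\binom{d}{2} = (d-1)(c+de/2) = g+d-1$. This is precisely the behaviour forced by the duality Proposition~\ref{prop:scrollar.invs.duality} for $\lambda \leftrightarrow \lambda^*$, so the conjecture is internally consistent with the proven results, and with the hook case of Proposition~\ref{prop:scrollar.invs.of.hooks} and the $\lambda_{i+1}$ case of Theorem~\ref{thm:schreyerisscrollar}.

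For the construction I would fix a reduced basis $\alpha_1,\dots,\alpha_{d-1}$ of $V_1 = W_{(d-1,1)}\cap L^{S_{d-1}}$ with $\alpha_i$ of scrollar invariant $e_i = c+ie$, drawn from the explicit toric model of $C\subseteq F_e$, where the $e_i$ are read off from $\Delta_{c,d,e}$ via~\cite[Thm.\,9.1]{linearpencils}; let $D = (\alpha_j^{(m)})$ be the matrix of conjugates used in~\ref{ssec:scrollar.invs.of.hooks}. For a general $\lambda$ with conjugate $\lambda^t=(\mu_1\ge \mu_2\ge \cdots)$ I would realize $V_\lambda$ inside $L$ by bideterminants in the entries of $D$: fixing a column-superstandard left tableau and letting a standard Young tableau $T$ range over the right factor, one attaches to $T$ a product of minors of $D$ in which the $k$-th column of $\lambda$ selects $\mu_k$ rows according to the corresponding entries of $T$. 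This generalizes the single-minor construction of the hook proof, and acting by $S_d$ each such element $\beta_T$ generates a copy of $V_\lambda$. In the toric setting the order of vanishing at infinity is additive in the entries, each occurrence of $\alpha_i$ contributing $e_i$, so the predicted scrollar invariant of $\beta_T$ is the order of $\beta_T$ after normalization, which should reorganize into $\sum_{i\in \operatorname{Des}(T)} e_i = e(T)$.

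To conclude via Lemma~\ref{lem:reducedbasis.guess}, I would take the candidate family $\{\beta_T\}_T$ (of size $\dim V_\lambda$ for a single slice, $(\dim V_\lambda)^2$ for all of $W_\lambda$), verify that each $\beta_T$ is integral over $k[t]$ and that $t^{-e(T)}\beta_T$ is integral over $k[t^{-1}]$, and establish $k(t)$-linear independence by a Sylvester--Franke-type determinant computation exactly as in the hook case, where the Jacobian of the minor-products is a nonzero power of $\det D$. It then remains to check the volume identity $\sum_T e(T) = \vol_K(\lambda)$, which by Proposition~\ref{prop:genus.irrep} equals $p(\lambda)(g+d-1)$. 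Being affine-linear in $(c,e)$ and required for all admissible $(c,e)$, this reduces to the two classical statements $\sum_T \operatorname{des}(T) = p(\lambda)(d-1)$ and $\sum_T \operatorname{maj}(T) = p(\lambda)\binom{d}{2}$, both of which follow from the $q$-analogue of the hook length formula $\sum_T q^{\operatorname{maj}(T)} = q^{b(\lambda)}\,[d]_q!\,/\prod_{\text{cells}}[h]_q$ by evaluating and differentiating at $q=1$. With these inputs Lemma~\ref{lem:reducedbasis.guess} forces the $e(T)$ to be exactly the scrollar invariants.

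The main obstacle is the second step: proving that $\beta_T$ has scrollar invariant \emph{exactly} $e(T)$, i.e.\ that $t^{-e(T)}\beta_T$ is integral at infinity with no loss and that the straightening relations among products of minors introduce no cancellation lowering the leading order. This is where the descent statistic, rather than a cruder content statistic, must be shown to emerge, presumably through a triangular (in a tableau order) change of basis converting naive minor-products into a standard basis whose infinity-orders are governed by $\operatorname{maj}$. A possible alternative, bypassing the explicit basis, is a semicontinuity-plus-degeneration argument: degenerate $C$ within its linear system on $F_e$ to a maximally toric configuration where the $S_d$-cover and its scrollar filtration become monomial and the invariants of every $W_\lambda$ are combinatorial, then use upper-semicontinuity of scrollar invariants together with the volume identity to force equality on the general fibre. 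I expect either route to rest on the classical $q$-hook-length machinery at its core, together with Theorems~\ref{thm:schreyerisscrollar} and~\ref{thm:scrollar.invariants.resolvent} as consistency checks in low degree.
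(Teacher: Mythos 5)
The first thing to say is that the paper does \emph{not} prove this statement: it is Conjecture~\ref{conj.hirzebruch}, and the authors offer only evidence for it --- the hook case (Proposition~\ref{prop:scrollar.invs.of.hooks}), the case $(d-2,2)$ and its dual (Theorem~\ref{thm:schreyerisscrollar} combined with a toric syzygy result), and numerical experiments. So there is no proof of the authors to compare yours against, and your proposal must stand on its own; as written it does not, because its decisive step is left open. What you do establish is correct and is genuine consistency checking: $I(T)$ is indeed the descent set, so $e(T)=c\operatorname{des}(T)+e\operatorname{maj}(T)$; the identities $\sum_T\operatorname{des}(T)=p(\lambda)(d-1)$ and $\sum_T\operatorname{maj}(T)=p(\lambda)\binom{d}{2}$ are true (for the first, Young's rule gives $\#\{T: i\in\operatorname{Des}(T)\}=\dim V_\lambda-\mult(V_\lambda,\Ind_{S_2}^{S_d}\mathbf{1})=\mult(V_\lambda,\Ind_{S_2}^{S_d}V_{(1^2)})=p(\lambda)$, independently of $i$; the second follows from the $q$-hook length formula together with the content formula for $\chi_\lambda((1\,2))$), so the conjectured multi-set has the right cardinality, the right sum $\vol_K(\lambda)$ demanded by Proposition~\ref{prop:genus.irrep}, and the right behaviour under the duality of Proposition~\ref{prop:scrollar.invs.duality}. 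None of this proves the conjecture; it shows compatibility with the paper's theorems, which the authors already knew.

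The genuine gap is the step you yourself call ``the main obstacle'', and it is not a loose end but the entire content of the statement. To apply Lemma~\ref{lem:reducedbasis.guess} you need linearly independent elements $\beta_T$ \emph{of $W_\lambda$} (membership in $W_\lambda$ is part of the hypothesis), integral over $k[t]$, with $t^{-e(T)}\beta_T$ integral over $k[t^{-1}]$; at the places over $t=\infty$ a \emph{smaller} exponent is a \emph{stronger} condition. But the only estimate a product of minors admits is the additive, term-by-term one: the exponent equal to the sum of $e_j$ over all row indices occurring. For hooks this additive bound coincides with the conjectured value and the bounds sum to the volume, which is exactly why the paper's proof of Proposition~\ref{prop:scrollar.invs.of.hooks} closes; for non-hooks it never suffices. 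Take $e=0$, i.e.\ curves of bidegree $(c,d)$ on $\PP^1\times\PP^1$, so that $e_1=\cdots=e_{d-1}=c$. By Lemma~\ref{lem:containment.Wlambda} every element of $W_{(d-2,2)}$ is a sum of products of two conjugates of the $\alpha_j$'s, and term-by-term estimation can only certify exponents $\geq 2c$; yet the conjecture --- here a theorem, by Theorem~\ref{thm:schreyerisscrollar} --- asserts the invariant $c\operatorname{des}(T)=c$ for the tableau with a single descent. So genuine cancellation at infinity is unavoidable, additive bookkeeping cannot detect it, and your proposal supplies no mechanism for it: ``presumably through a triangular change of basis'' is precisely the missing proof, while projecting by the idempotent of Lemma~\ref{lem: projecting to a single W} preserves the additive bound but is not shown to improve it, nor to preserve independence. (A symptom of the same problem: your additive bound depends only on the row data, so with a fixed left tableau all your $\beta_T$ carry one and the same bound, although they must realize distinct values $e(T)$.) Your fallback degeneration argument breaks for a different reason: a monomial/toric member of the linear system is not simply branched and its Galois group is no longer $S_d$, so at the special fibre the isotypic decomposition, Theorem~\ref{thm:scrollar.invariants.resolvent}, and the volume formula (whose proof through Theorem~\ref{thm:genusresolvents} requires simple branching) are all unavailable, and semicontinuity alone yields one-sided inequalities that cannot be closed without a sum known to stay constant along the family.
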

\noindent Note that this multi-set is typically not consecutive, let alone balanced. Of course, this does not
contradict~\cite[Ex.\,1.3.7]{LandesmanLitt}
since smooth curves on Hirzebruch surfaces have a very small locus inside Hurwitz space. 

\subsection{} For the partition $(d-1, 1)$ the conjecture just returns the scrollar invariants 
$\{e_1, e_2, \ldots, e_{d-1} \}$ of $C$ with respect to $\varphi$, as wanted.
By Proposition~\ref{prop:scrollar.invs.of.hooks} the conjecture is also true for hooks.
It also holds for the partition $(d-2,2)$ thanks to Theorem~\ref{thm:schreyerisscrollar} and~\cite[Thm.\,16]{intrinsicness}; by duality, the case $(2^2,1^{d-4})$ is covered as well. 

Thus the first open cases appear in degree $d = 6$.
We have carried out several experiments in \texttt{Magma}~\cite{magma}. Each experiment amounted to computing well-chosen resolvents of some random curve of given bidegree over a large finite field, and recovering the scrollar invariants of these resolvent curves using the command \texttt{ShortBasis()}; our code is available at~\url{https://homes.esat.kuleuven.be/~wcastryc/}.\footnote{Defining equations for these resolvents are found using a fast $t$-adic method that was suggested to us by Frederik Vercauteren and which may be found interesting in its own right.} Each time the output matched with the prediction from Conjecture~\ref{conj.hirzebruch}, when combined with Theorem~\ref{thm:scrollar.invariants.resolvent}. Our choices covered the three remaining partitions
$(2^3)$, $(3,2,1)$, $(3^2)$ of $d = 6$, as well as some new partitions of $d = 7,8$.

Let us emphasize that Conjecture~\ref{conj.hirzebruch} is more than just a guess interpolating between all known cases: these numerics naturally show up when studying scrollar invariants of the $S_d$-closure of monogenic extensions~\cite[\S6]{bhargavasatriano}, and this is how we came up with Conjecture~\ref{conj.hirzebruch} in the first place. In fact, we believe that the direct analogue of our conjecture applies to curves in a much more general class of toric surfaces than Hirzebruch surfaces; however, for arbitrary toric surfaces the combinatorics becomes more subtle and we expect the need for certain correction terms, as is already apparent from~\cite{intrinsicness}.

\section{Applications and concluding remarks} \label{sec:applications}

%We end with several applications of our work, in particular around bounds on Schreyer invariants and their balancedness. We also show how one can construct curves with highly non-balanced scrollar invariants, using resolvent covers. Finally, we give an application towards Gassmann equivalent function fields.

\subsection{Curves with highly non-balanced scrollar invariants.} 

Using our results, many new multi-sets of integers can be shown to be realizable as
the multi-set of scrollar invariants of some $\PP^1$-cover. Here is an example statement:

\begin{proposition} \label{prop:newscollarexample}
Consider integers $d \geq 2$ and $g \geq  d - 1$, and let $k$ be an algebraically closed field with $\charac k = 0$ or $\charac k > d$. Let $e_1\leq \ldots \leq e_{d-1}$ be integers 
summing to $g+d-1$ such that $e_{d-1}-e_1\leq 1$. There exists a genus $g$ curve $C$ over $k$ along with a morphism $\varphi : C \to \PP^1$ whose multi-set of scrollar invariants is given by
\[
\{ e_1, e_2, \ldots, e_{d-1}, g+d-1-e_1, g + d - 1 - e_2, \ldots, g+d-1-e_{d-1}, g+d-1 \}.
\]
\end{proposition}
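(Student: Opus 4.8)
The plan is to realize the prescribed multi-set as the scrollar invariants of the $A_{d-1}$-resolvent of a carefully chosen degree $d$ cover, so that the statement becomes a direct consequence of Corollary~\ref{cor:scrollars.Ad-1}. That corollary asserts that for a simply branched degree $d$ cover $\varphi_0 : C_0 \to \PP^1$ with $C_0$ of genus $g$ and scrollar invariants $e_1, \ldots, e_{d-1}$, the resolvent $\res_{A_{d-1}} \varphi_0$ has scrollar invariants
\[ e_1, \ldots, e_{d-1}, \; g+d-1-e_1, \ldots, g+d-1-e_{d-1}, \; g+d-1, \]
which is exactly the multi-set appearing in the statement. Thus the whole problem reduces to producing, over $k$, a simply branched degree $d$ cover of $\PP^1$ by a genus $g$ curve whose scrollar invariants are precisely the given $e_i$.

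First I would observe that the hypotheses pin the $e_i$ down uniquely: a balanced multi-set (i.e.\ one with $e_{d-1}-e_1\leq 1$) of $d-1$ integers summing to $g+d-1$ necessarily consists of $d-1-r$ copies of $q$ and $r$ copies of $q+1$, where $g+d-1 = q(d-1)+r$ with $0\leq r<d-1$; the hypothesis $g\geq d-1$ forces $q\geq 1$, so that all $e_i$ are genuine positive scrollar invariants. Hence it suffices to exhibit a single simply branched degree $d$ cover by a genus $g$ curve whose scrollar invariants are merely balanced, after which uniqueness identifies them with the prescribed $e_i$.

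This existence is precisely Ballico's theorem~\cite{ballico}: a sufficiently general element of the Hurwitz space $\mathcal{H}_{d,g}$ has balanced scrollar invariants. Since $\mathcal{H}_{d,g}$ is non-empty and irreducible for $g\geq 0$ — the number of simple branch points being $2(g+d-1)>0$, so that suitable products of transpositions generating a transitive subgroup exist by Riemann's existence theorem — and since $k$ is algebraically closed, such a general cover is available over $k$; being simply branched it is connected with full monodromy group $S_d$, which is what permits passage to the $A_{d-1}$-resolvent. Applying Corollary~\ref{cor:scrollars.Ad-1} to it then produces the sought $\PP^1$-cover $\res_{A_{d-1}} \varphi_0$, of degree $[S_d:A_{d-1}]=2d$ and of the genus dictated by Theorem~\ref{thm:genusresolvents} (with $p(A_{d-1})=d$). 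I expect the realizability step to be the only real obstacle: the passage to the resolvent and the read-off of its invariants are pure bookkeeping with formulas already established, whereas the existence of an input cover that is simultaneously simply branched, of genus $g$, and balanced rests squarely on Ballico's genericity result together with the classical irreducibility of the Hurwitz space.
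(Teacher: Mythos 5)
Your proof is correct and coincides with the paper's own argument: take a general element of $\mathcal{H}_{d,g}$, invoke Ballico's result that its scrollar invariants are balanced (hence equal to the prescribed $e_i$, by uniqueness of a balanced multi-set with given sum), and apply Corollary~\ref{cor:scrollars.Ad-1} to its $A_{d-1}$-resolvent. The additional details you supply --- irreducibility/non-emptiness of the Hurwitz space, positivity of the $e_i$ from $g \geq d-1$, and the degree and genus of the resolvent via Theorem~\ref{thm:genusresolvents} with $p(A_{d-1}) = d$ --- are elaborations that the paper's terse proof leaves implicit.
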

\begin{proof}
Let $\varphi' : C'\to \PP^1$ be a general element of $\HH_{d,g}$. It is simply branched, and by a result of Ballico~\cite{ballico} its multi-set of scrollar invariants is balanced, i.e., it is 
given by $\{e_1, e_2, \ldots, e_{d-1}\}$. Then take $\varphi$ to be $\res_{ A_{d-1}} \varphi' : \res_{ A_{d-1}} C' \to \PP^1$ and apply Corollary~\ref{cor:scrollars.Ad-1}.
\end{proof}

\noindent Similar results can be obtained using the resolvents with respect to $A_{d-2}$, $S_{d-2}$, $S_2 \times A_{d-2}$ and $S_2 \times S_{d-2}$, whose scrollar invariants were determined in Corollary~\ref{cor:infinitefamilies} and Theorem~\ref{thm:S2Sd-2}, using generic balancedness of the first syzygy bundle~\cite[Main Thm.]{bujokas_patel} in addition to Ballico's result.

\subsection{} The typical resolvent curve is expected to have highly non-balanced scrollar invariants. Indeed, by Theorem~\ref{thm:scrollar.invariants.resolvent} the multi-set of scrollar invariants of the resolvent with respect to some subgroup $H \subseteq S_d$ is naturally subdivided into subsets, one for every partition $\lambda$ appearing in $\Ind_H^{S_d} \mathbf{1}$. From Landesman--Litt~\cite[Ex.\,1.3.7]{LandesmanLitt} 
we know that, generically, the scrollar invariants corresponding to $\lambda$ are all contained in the interval
\[
\left[\frac{\vol_K(\lambda)}{\dim V_\lambda} - \frac{\dim V_\lambda - 1}{2}, \frac{\vol_K(\lambda)}{\dim V_\lambda}+\frac{\dim V_\lambda - 1}{2} \right].
\]
%balanced, in which case the scrollar invariants can be computed using Proposition~\ref{prop:genus.irrep}. 
These values live in regimes that vary strongly with $\lambda$.
Thus, statements like Proposition~\ref{prop:newscollarexample} contrast with previously known ways of constructing curves with prescribed scrollar invariants, such as~\cite{linearpencils,coppens, deopurkar_patel_bundles}, which produce instances that are close to being balanced.

\subsection{New bounds on Schreyer invariants.}
The existing literature reports on several bounds on scrollar invariants. The most important such bound is the Maroni bound, but see e.g.~\cite[Prop.\,2.6]{deopurkar_patel}, \cite[Cond.\,1--3]{patelphd} and~\cite{vemulapalli} for other examples. These results can be combined with Theorem~\ref{thm:schreyerisscrollar} for obtaining bounds on the Schreyer invariants of a simply branched cover. 

For instance, applying the Maroni bound to a resolvent cover gives the following:

\begin{lemma}\label{lem:maroni.bound.resolvent}
Let $\varphi : C \to \PP^1$ be a simply branched cover of degree $d \geq 2$ over a field $k$ with $\charac k = 0$ or $\charac k > d$. Let $H$ be a proper subgroup of $S_d$ and let $\lambda \vdash d$ be such that $V_\lambda$ appears in $\Ind_H^{S_d} \mathbf{1}$. Then the scrollar invariants $e_{\lambda, j}$ of $\lambda$ with respect to $\varphi$ satisfy
\[
e_{\lambda, j} \leq \frac{|\{\text{transpositions }\sigma\notin H\}|}{\binom{d}{2}}(g+d-1),
\]
where $g$ denotes the genus of $C$.
\end{lemma}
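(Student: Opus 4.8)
The plan is to recognize each $e_{\lambda,j}$ as a genuine scrollar invariant of a resolvent cover and then apply the classical Maroni bound to that cover. Concretely, since $V_\lambda$ appears in $\Ind_H^{S_d}\mathbf{1}$ with multiplicity at least $1$, Theorem~\ref{thm:scrollar.invariants.resolvent} guarantees that every $e_{\lambda,j}$ occurs among the scrollar invariants of $\res_H C$ with respect to $\res_H \varphi$. This cover has degree $N = [S_d:H] \geq 2$, using that $H$ is proper, so the Maroni bound recalled in~\ref{ssec:introdefscrollar} applies: writing $g_H$ for the genus of $\res_H C$, every scrollar invariant of $\res_H \varphi$ — and in particular every $e_{\lambda,j}$ — is at most $(2g_H + 2N - 2)/N$.

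Next I would make this bound explicit using the genus formula from Theorem~\ref{thm:genusresolvents}, which gives $g_H = p(H)(g+d-1) + 1 - N$. Substituting and simplifying,
\[
\frac{2g_H + 2N - 2}{N} = \frac{2p(H)(g+d-1) + 2 - 2N + 2N - 2}{N} = \frac{2p(H)}{N}(g+d-1),
\]
so that the whole argument collapses to rewriting $2p(H)/N$ in the stated form.

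Finally I would plug in the explicit expression $p(H) = (d-2)!\,\lvert\{\text{transpositions }\sigma\notin H\}\rvert/\lvert H\rvert$ from Theorem~\ref{thm:genusresolvents}, together with $N = d!/\lvert H\rvert$, obtaining
\[
\frac{2p(H)}{N} = \frac{2(d-2)!\,\lvert\{\text{transpositions }\sigma\notin H\}\rvert}{d!} = \frac{\lvert\{\text{transpositions }\sigma\notin H\}\rvert}{\binom{d}{2}},
\]
since $d!/(d-2)! = d(d-1) = 2\binom{d}{2}$. This yields exactly the claimed inequality.

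There is no serious obstacle here: the statement is essentially a dictionary translation between Theorems~\ref{thm:scrollar.invariants.resolvent} and~\ref{thm:genusresolvents}, and the only point requiring care is that the Maroni bound controls the \emph{largest} scrollar invariant, so one must note that each individual $e_{\lambda,j}$ is bounded by the maximum and hence by the same quantity. The arithmetic in the last step is the only place where a slip could occur, and it hinges on the clean cancellation $2g_H + 2N - 2 = 2p(H)(g+d-1)$, which is precisely what makes the constants match up so neatly.
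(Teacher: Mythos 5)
Your proposal is correct and follows exactly the paper's own argument: invoke Theorem~\ref{thm:scrollar.invariants.resolvent} to place the $e_{\lambda,j}$ among the scrollar invariants of $\res_H C$, apply the Maroni bound to that cover, and substitute the genus formula from Theorem~\ref{thm:genusresolvents}. Your explicit verification of the cancellation $2g_H + 2N - 2 = 2p(H)(g+d-1)$ and the identity $2p(H)/N = \lvert\{\text{transpositions }\sigma\notin H\}\rvert/\binom{d}{2}$ is exactly the computation the paper leaves implicit.
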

\begin{proof}
By Theorem~\ref{thm:scrollar.invariants.resolvent} the scrollar invariants of $\lambda$ appear among those of $\res_H C$ with respect to $\res_H\varphi$. The Maroni bound for $\res_H C$ yields that
\[
e_{\lambda, j} \leq \frac{2g(\res_H C) + 2[S_d:H] - 2}{[S_d:H]}.
\]
The genus formula from Theorem~\ref{thm:genusresolvents} then gives the stated result.
\end{proof}

\noindent Applying this lemma to a well-chosen subgroup of $S_d$, we get the following bounds on 
the scrollar and Schreyer invariants. This gives some general range in which the scrollar invariants of a partition $\lambda$ can live. However, determining the actual range of possibilities seems very difficult, as this is not even known for the usual scrollar invariants $e_1, \ldots, e_{d-1}$.

\begin{theorem} \label{thm:maroni.bound.Young.subgrp}
Let $\varphi : C \to \PP^1$ be a simply branched cover of degree $d \geq 4$ over a field $k$ with $\charac k = 0$ or $\charac k > d$. Let $\lambda = (d_1, \ldots, d_r)$ be a partition of $d$. Then the scrollar invariants $e_{\lambda, j}$ of $\lambda$ with respect to $\varphi$ satisfy 
\[
e_{\lambda, j} \leq \frac{d^2-\sum_i d_i^2}{d(d-1)} (g+d-1).
\]
\end{theorem}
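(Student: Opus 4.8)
The plan is to deduce this from Lemma~\ref{lem:maroni.bound.resolvent} by making a single well-chosen choice of subgroup, namely the Young subgroup $H = S_\lambda = S_{d_1} \times \cdots \times S_{d_r}$ associated to $\lambda$. Granting that $V_\lambda$ appears in $\Ind_{S_\lambda}^{S_d} \mathbf{1}$, the lemma bounds each $e_{\lambda, j}$ by $|\{\text{transpositions } \sigma \notin S_\lambda\}| / \binom{d}{2}$ times $(g+d-1)$, so the entire problem reduces to counting the transpositions that fall outside $S_\lambda$ and checking that this count produces the stated numerator.

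First I would justify the hypothesis of Lemma~\ref{lem:maroni.bound.resolvent}. The representation $\Ind_{S_\lambda}^{S_d} \mathbf{1}$ is the permutation module $M^\lambda$, whose irreducible decomposition is governed by Young's rule through the Kostka numbers, and in particular it contains $V_\lambda$ with multiplicity $K_{\lambda\lambda} = 1$; this is the general version of Lemma~\ref{lem: multiplicity fixed} and may be cited from~\cite[Cor.\,2.4.7]{sagan}. Hence $V_\lambda$ appears in $\Ind_{S_\lambda}^{S_d} \mathbf{1}$ and the lemma applies, provided $S_\lambda$ is a proper subgroup of $S_d$.

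Next I would carry out the transposition count. The $\binom{d}{2}$ transpositions of $S_d$ that lie inside $S_\lambda$ are precisely those exchanging two indices belonging to a common block of $\lambda$, and there are $\sum_i \binom{d_i}{2}$ of these. Therefore the number of transpositions outside $S_\lambda$ equals $\binom{d}{2} - \sum_i \binom{d_i}{2}$, which, using $\sum_i d_i = d$, simplifies to $\tfrac{1}{2}\bigl(d^2 - \sum_i d_i^2\bigr)$. Substituting into Lemma~\ref{lem:maroni.bound.resolvent} and cancelling against $\binom{d}{2} = \tfrac{1}{2}d(d-1)$ gives exactly $e_{\lambda, j} \leq \tfrac{d^2 - \sum_i d_i^2}{d(d-1)}(g+d-1)$, as desired.

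I expect no real obstacle here; the argument is a clean combinatorial application. The only point requiring a word of care is the edge case $\lambda = (d)$, where $S_\lambda = S_d$ is not proper and the lemma does not apply. But in that case the right-hand side is $d^2 - d^2 = 0$, and the unique scrollar invariant of the trivial partition is $0$ (as recorded in~\ref{ssec:firstexamplespart}), so the inequality holds trivially. For every nontrivial $\lambda$ the subgroup $S_\lambda$ is proper and the argument above goes through verbatim, so the main content is simply the interplay between the multiplicity-one appearance of $V_\lambda$ in $M^\lambda$ and the counting of inter-block transpositions.
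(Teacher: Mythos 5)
Your proof is correct and follows essentially the same route as the paper's: apply Lemma~\ref{lem:maroni.bound.resolvent} with $H = S_\lambda$, cite~\cite[Cor.\,2.4.7]{sagan} for the appearance of $V_\lambda$ in $\Ind_{S_\lambda}^{S_d}\mathbf{1}$, and count the transpositions inside $S_\lambda$ as $\sum_i \binom{d_i}{2}$. Your explicit handling of the degenerate case $\lambda = (d)$, where $S_\lambda$ is not a proper subgroup, is a small point of care the paper leaves implicit, but it does not change the argument.
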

\begin{proof}
By \cite[Cor.\,2.4.7]{sagan}, the representation $V_\lambda$ appears in $\Ind_{S_\lambda}^{S_d}\mathbf{1}$. Hence we may apply Lemma \ref{lem:maroni.bound.resolvent} with $H = S_\lambda$. The result then follows from the fact that the number of transpositions in $S_\lambda$ is equal to
\[
\binom{d_1}{2} + \binom{d_2}{2} + \ldots + \binom{d_r}{2}. \qedhere
\] 
\end{proof}

\begin{corollary}\label{cor:schreyerbound}
For $i \in \{1, 2, \ldots, d-3\}$, the elements $b_j^{(i)}$ of the splitting type of the $i$th syzygy bundle in the relative minimal resolution of $C$ with respect to $\varphi$ are contained in
\[
\left[ \tfrac{i(i+1)+2}{d(d-1)}(g+d-1)  , \tfrac{(i+1)(2d-i-2)-2}{d(d-1)}(g+d-1)  \right].
\]
In particular, all $b_j^{(i)}$ are non-negative.
\end{corollary}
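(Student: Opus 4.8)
The plan is to reduce the entire statement to Theorem~\ref{thm:schreyerisscrollar}, which identifies the splitting type $\{b_j^{(i)}\}_j$ of the $i$th syzygy bundle with the multi-set of scrollar invariants of the partition $\lambda_{i+1} = (d-i-1,2,1^{i-1})$. Granting this identification, both endpoints of the interval become statements about the scrollar invariants $e_{\lambda_{i+1},j}$, and Theorem~\ref{thm:maroni.bound.Young.subgrp} is precisely the tool to control them. The whole argument is then bookkeeping, with no genuine obstacle beyond keeping track of signs.

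For the upper bound I would apply Theorem~\ref{thm:maroni.bound.Young.subgrp} directly to $\lambda = \lambda_{i+1}$, whose parts are $d-i-1$, $2$, and $i-1$ copies of $1$. The only computation needed is
\[
d^2 - \sum_\ell d_\ell^2 = d^2 - (d-i-1)^2 - 4 - (i-1) = (i+1)(2d-i-1) - i - 3 = (i+1)(2d-i-2) - 2,
\]
where I used $d^2 - (d-i-1)^2 = (i+1)(2d-i-1)$. Dividing by $d(d-1)$ and multiplying by $g+d-1$ yields exactly the right-hand endpoint of the claimed interval.

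For the lower bound I would invoke the self-duality of the relative minimal resolution, that is, the identity~\eqref{eq:schreyerduality}, which as multi-sets reads $\{b_j^{(d-2-i)}\}_j = \{\,g+d-1-b_j^{(i)}\,\}_j$. Since $i \in \{1,\dots,d-3\}$ forces the index $i' = d-2-i$ into the same range, the upper bound just established applies to $i'$; subtracting it from $g+d-1$ produces the desired lower bound for $b_j^{(i)}$. The remaining step is a routine simplification: one checks that
\[
(g+d-1)\left(1 - \frac{(i'+1)(2d-i'-2)-2}{d(d-1)}\right) = \frac{i(i+1)+2}{d(d-1)}(g+d-1),
\]
which is exactly the left-hand endpoint.

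Finally, non-negativity is immediate: for $i \geq 1$ the numerator $i(i+1)+2$ is at least $4$, while $g+d-1 > 0$ because $d \geq 4$, so the lower endpoint is strictly positive and a fortiori every $b_j^{(i)}$ is non-negative. I expect no real difficulty here, as the content is carried entirely by Theorems~\ref{thm:schreyerisscrollar} and~\ref{thm:maroni.bound.Young.subgrp} together with duality; the only things to watch are that the dual index $d-2-i$ remains admissible and that the two polynomial identities are executed without sign errors.
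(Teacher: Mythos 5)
Your proof is correct and follows essentially the same route as the paper's: the upper bound comes from applying Theorem~\ref{thm:maroni.bound.Young.subgrp} to the partition $\lambda_{i+1}=(d-i-1,2,1^{i-1})$ via the identification of Theorem~\ref{thm:schreyerisscrollar}, and the lower bound from the duality~\eqref{eq:schreyerduality}. The paper's proof leaves all the arithmetic implicit; your two polynomial identities and the check that the dual index $d-2-i$ stays in the admissible range are exactly the omitted details, and they check out.
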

\begin{proof}
For the upper bound, we apply the previous theorem to $\lambda_i$ for $i=2, \ldots, d-2$ in combination with Theorem \ref{thm:schreyerisscrollar}. For the lower bound, we use the duality of the Schreyer invariants discussed in~\ref{ssec:schreyerduality}.
\end{proof}

\noindent We  believe that this result remains valid under weaker conditions than simple branching.

\subsection{} The proof of Corollary~\ref{cor:schreyerbound} used the resolvent with respect to  $S_2\times S_{d-i-1}$. In general however, it is unclear which resolvent gives the best upper bound. E.g., using the maximal resolvent (i.e., the Galois closure) would only give an upper bound  of the form 
\[ b_j^{(i)} \lesssim g, \] 
which is always worse than the bound given here. To obtain a good bound on the scrollar invariants $e_{\lambda,i}$, one wants a subgroup $H \subseteq S_d$ such that $V_\lambda$ appears in $\Ind_H^{S_d} \mathbf{1}$ and such that $H$ contains as many transpositions as possible, in view of Lemma~\ref{lem:maroni.bound.resolvent}. For the partitions $\lambda_i$, the reader can check that the optimal Young subgroups are indeed $S_2\times S_{d-i}$. One can also apply Lemma~\ref{lem:maroni.bound.resolvent} to non-Young subgroups to obtain bounds on the scrollar invariants. However, it seems that Young subgroups always give the strongest possible bounds.

\subsection{} If $C$ is a general curve in the Hurwitz space $\mathcal{H}_{d,g}$ then we can give stronger upper bounds on the scrollar invariants of $C$.

\begin{theorem}
Let $\varphi:C\to \PP^1$ be a general element in the Hurwitz space $\mathcal{H}_{d,g}$ and let $\lambda$ be a partition of $d$. Let $i$ be the number of boxes outside the first row of the Young diagram of $\lambda$. Then the scrollar invariants $e_{\lambda, j}$ of $\lambda$ with respect to $\varphi$ satisfy
\[
e_{\lambda, j} \leq \frac{i}{d-1}g + 2i.
\]
In particular, if $d \geq 4$ then the splitting types $b^{(i)}_j$ satisfy
\[
b_j^{(i)} \in \left[ \frac{i}{d-1}g +2(i+1)-d-1, \frac{i+1}{d-1}g+2(i+1)\right]
\]
for $i = 1, \ldots, d-3$.
\end{theorem}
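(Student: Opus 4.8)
The plan is to bound the largest scrollar invariant $e_{\lambda,\dim V_\lambda}$ of $\lambda$ by $i$ times the largest ordinary scrollar invariant $e_{d-1}$, and then to use the genericity of $\varphi$ to control $e_{d-1}$. The guiding observation is that the target bound $\tfrac{i}{d-1}g+2i$ is exactly $i$ copies of the balanced value $\tfrac{g}{d-1}+2$, and that for the hook $(d-i,1^i)$ — which also has $i$ boxes outside the first row — Proposition~\ref{prop:scrollar.invs.of.hooks} already realizes every scrollar invariant as a sum of $i$ of the $e_\ell$'s, via the $i\times i$ minors of the conjugate matrix $D$. So the first step is to set up the multiplicative formalism: writing $A=W_{(d)}+W_{(d-1,1)}$, every conjugate $\alpha_j^{(m)}$ lies in $A$ and has scrollar invariant $e_j\le e_{d-1}$, so any product of $i$ of them lies in $\OO_L$ with $t^{-ie_{d-1}}(\cdots)\in\OO_L^\infty$; hence any element of $A^i$ built from such products has scrollar invariant at most $ie_{d-1}$.

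The crucial step is the containment
\[
W_\lambda\subseteq A^{\,i},\qquad i=d-\lambda_1=\#\{\text{boxes outside the first row}\},
\]
generalizing Lemma~\ref{lem:containment.Wlambda} (which covers the chain $\lambda_{i+1}$). I would prove it by exhibiting explicit spanning elements of $W_\lambda$ of $A$-degree $i$: taking the ``outside-first-row'' shape $(\lambda_2,\ldots,\lambda_r)\vdash i$ and forming the products of minors of $D$ whose sizes are its column heights, exactly as in the hook case, where a single $i\times i$ minor appears. These products transform as $V_\lambda$, lie in $A^i$, and — by a bideterminant/straightening argument together with a Sylvester--Franke-type count as in the proof of Proposition~\ref{prop:scrollar.invs.of.hooks} — span all of $W_\lambda$. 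This spanning statement (equivalently, that the $S_d$-equivariant multiplication map hits every copy of $V_\lambda$) is the main obstacle; alternatively one can argue the containment by induction on $i$, choosing $\mu$ with $i-1$ boxes outside the first row and $V_\lambda\subseteq V_\mu\otimes V_{(d-1,1)}$ and proving $W_\lambda\subseteq W_\mu\cdot W_{(d-1,1)}$, but the non-vanishing of this product is precisely the same difficulty and again requires the explicit analysis of $L^{S_\lambda}\cap W_\lambda$.

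Granting the containment, I would finish as follows. Since $W_\lambda$ is spanned over $k(t)$ by elements of scrollar invariant $\le ie_{d-1}$, Lemma~\ref{lem:scrollarsofV} gives $e_{\lambda,j}\le ie_{d-1}$ for all $j$. For a general $\varphi\in\mathcal{H}_{d,g}$, Ballico's theorem~\cite{ballico} makes $\{e_1,\ldots,e_{d-1}\}$ balanced, so $e_{d-1}=\lceil (g+d-1)/(d-1)\rceil\le \tfrac{g}{d-1}+2$, whence $e_{\lambda,j}\le \tfrac{i}{d-1}g+2i$, which is the first inequality. (If one prefers to keep the minor-products inside $A^i$ rather than inside $W_\lambda$, one projects them with the central idempotent $\rho$ of Lemma~\ref{lem: projecting to a single W}, which is defined over $k$ and hence preserves both $\OO_L$ and $\OO_L^\infty$, so it cannot increase scrollar invariants.)

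Finally, the ``in particular'' statement is immediate from Theorem~\ref{thm:schreyerisscrollar}. Applying the inequality to $\lambda_{i+1}=(d-i-1,2,1^{i-1})$, which has $i+1$ boxes outside the first row, yields the upper bound $b^{(i)}_j\le \tfrac{i+1}{d-1}g+2(i+1)$. Applying it instead to the dual partition $\lambda_{d-i-1}$ (which has $d-i-1$ boxes outside the first row) and combining with the self-duality $\{b^{(d-2-i)}_j\}_j=\{g+d-1-b^{(i)}_j\}_j$ from~\eqref{eq:schreyerduality} yields the lower bound $b^{(i)}_j\ge \tfrac{i}{d-1}g+2(i+1)-d-1$, valid throughout the range $1\le i\le d-3$ for which $d-2-i$ is again an admissible syzygy index.
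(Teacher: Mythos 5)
Your overall skeleton is exactly the paper's: establish the containment $W_\lambda \subseteq W_{(d-1,1)}^i$ (your variant $W_\lambda\subseteq A^i$ with $A=W_{(d)}+W_{(d-1,1)}$ is equally good, since factors equal to $1$ only improve the bound), deduce $e_{\lambda,j}\le i\max_\ell e_\ell$ from integrality of the spanning products (projecting into $W_\lambda$ with the central element of Lemma~\ref{lem: projecting to a single W}, as you note), invoke Ballico's balancedness for a general element of $\mathcal{H}_{d,g}$, and obtain the statement on the $b_j^{(i)}$ from Theorem~\ref{thm:schreyerisscrollar} together with the duality of~\ref{ssec:schreyerduality}; your computation of the lower bound $\frac{i}{d-1}g+2(i+1)-d-1$ via the dual partition $\lambda_{d-i-1}$ is exactly what the paper does. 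All of these steps are correct in your proposal.

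The genuine gap is the containment itself, which you explicitly leave open (``the main obstacle''). Your proposed route --- products of minors of $D$ of sizes the column heights of $(\lambda_2,\ldots,\lambda_r)$, plus a straightening/Sylvester--Franke count --- faces real problems: a single such product does not ``transform as $V_\lambda$'' (its $S_d$-span contains several isotypic components; for $\lambda=(d-2,2)$ these products are just the $\alpha_j^{(m)}\alpha_k^{(n)}$), and linear independence of specialized bideterminants is not automatic for a matrix such as $D$ whose entries satisfy algebraic relations; Sylvester--Franke only handles the hook case of Proposition~\ref{prop:scrollar.invs.of.hooks}. The paper fills this gap not with minors but by adapting the proof of Lemma~\ref{lem:containment.Wlambda} directly, via the fixed field of the Young subgroup. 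Namely, since $\mult(V_\lambda,\Ind_{S_\lambda}^{S_d}\mathbf{1})\ge 1$, every irreducible component of $W_\lambda$ has a non-zero $S_\lambda$-fixed vector (Lemma~\ref{lem: induced and fixed space}), so by semisimplicity $W_\lambda$ is the $S_d$-span of $W_\lambda\cap L^{S_\lambda}$; and $L^{S_\lambda}$ is generated over $k(t)$ by the elementary symmetric functions of the conjugates lying in the blocks of sizes $\lambda_2,\ldots,\lambda_r$ (the block of size $\lambda_1$ being redundant), so that after rewriting each power $\alpha^{(m)f}$ as a combination of $1,\alpha_1^{(m)},\ldots,\alpha_{d-1}^{(m)}$, every element of $L^{S_\lambda}$ becomes a combination of products having at most one factor $\alpha_{k}^{(m)}$ for each of the $i$ indices $m$ outside the first block; hence $L^{S_\lambda}\subseteq A^i$, and since $A^i$ is $S_d$-stable, $W_\lambda\subseteq A^i$ follows at once, with no spanning or independence count needed. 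You gesture at this route (``requires the explicit analysis of $L^{S_\lambda}\cap W_\lambda$''), but the point you miss is that this analysis is elementary --- generation of the fixed field plus reduction of powers, exactly as in Lemma~\ref{lem:containment.Wlambda} --- rather than a basis theorem for $W_\lambda$.
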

\begin{proof}
Let $L$ be the Galois closure of the function field $k(C)/k(t)$ and recall that we denote by $W_\lambda$ the isotypic component. An adaptation of the proof of Lemma \ref{lem:containment.Wlambda} shows that $W_\lambda \subset W_{(d-1,1)}^i$. So if $\alpha_1, \ldots, \alpha_{d-1}$ is a reduced basis for $V_1 = W_{(d-1,1)}\cap L^{S_{d-1}}$, then $W_\lambda$ has a basis consisting of elements which are $k[t]$-linear combinations of conjugates of elements of the form
\[
\alpha_{j_1}^{(m_1)}\cdots \alpha_{j_i}^{(m_i)}.
\]
Note that such elements are integral over $k[t]$ and hence the scrollar invariants of $\lambda$ satisfy
\[
e_{\lambda, j}\leq i  \max_\ell e_\ell,
\]
where $e_1, \ldots, e_{d-1}$ are the usual scrollar invariants of $C\to \PP^1$. The condition that $C$ is general implies that the $e_\ell$ are balanced, so $e_\ell\leq \frac{g}{d-1}+2$ for every $\ell$. This gives the desired statement.

The upper bound on the splitting types $b_j^{(i)}$ follows from Theorem \ref{thm:schreyerisscrollar}, while the lower bound follows from duality as in Section \ref{ssec:schreyerduality}.
\end{proof}

\subsection{Gassmann equivalence implies scrollar equivalence.} \label{ssec:gassmannequiv}
 Two subgroups $H_1, H_2$ of a finite group $G$ are called ``Gassmann equivalent"
if for every conjugacy class $\mathcal{C}$ of $G$ it holds that $| H_1 \cap \mathcal{C} | = | H_2 \cap \mathcal{C} |$. For example, the subgroups 
$H_1 = \langle (1\,2)(3\,4), (1\,3)(2\,4) \rangle$
and $H_2 = \langle (1\,2)(3\,4), (1\,2)(5\,6) \rangle$
of $S_6$ are Gassmann equivalent, despite the fact that they are not conjugate. A classical result by Gassmann states that if $L$ is a finite Galois extension of $\QQ$ with Galois group $G$, then $H_1, H_2$ are Gassmann equivalent if and only if $L^{H_1}$ and $L^{H_2}$ are ``arithmetically equivalent", i.e., they have the same Dedekind zeta function~\cite[Thm.\,1.23]{solomatin}. In that case, they necessarily have the same degree and discriminant.

This story partly breaks down in the function field setting, see~\cite[\S3.1.1]{solomatin} for a discussion. 
However, without much effort we can conclude:
\begin{theorem}\label{thm:gassmann.eqv}
Let $\varphi : C \to \PP^1$ be a simply branched degree $d$ cover, and let $H_1, H_2$ be two Gassmann equivalent subgroups of $S_d$. Then the resolvent covers 
$\res_{H_1} \varphi : \res_{H_1} C \to \PP^1$ and
$\res_{H_2} \varphi : \res_{H_2} C \to \PP^1$ 
have the same degree, genus, and scrollar invariants.
\end{theorem}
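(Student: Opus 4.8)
The whole statement rests on a single reformulation of Gassmann equivalence, after which all three equalities follow formally from Section~\ref{sec:scrollar.invs}. Namely, I would first show that $H_1$ and $H_2$ are Gassmann equivalent if and only if $\Ind_{H_1}^{S_d}\mathbf{1}\cong\Ind_{H_2}^{S_d}\mathbf{1}$. Summing the defining equalities $|H_1\cap\mathcal{C}|=|H_2\cap\mathcal{C}|$ over all conjugacy classes $\mathcal{C}$ of $S_d$ gives $|H_1|=|H_2|$. A standard coset count then shows that the permutation character of $S_d$ on $S_d/H$, evaluated at $g$, equals $|C_{S_d}(g)|\cdot|H\cap\mathcal{C}_g|/|H|$, with $\mathcal{C}_g$ the class of $g$; since $|H_1|=|H_2|$ and $|H_1\cap\mathcal{C}_g|=|H_2\cap\mathcal{C}_g|$, the two permutation characters coincide, and as we work in characteristic $0$ or $>d$ this forces the claimed isomorphism of representations.

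The degree is then immediate: $\deg(\res_{H_i}\varphi)=[S_d:H_i]=d!/|H_i|$, and $|H_1|=|H_2|$. For the scrollar invariants I would invoke Theorem~\ref{thm:scrollar.invariants.resolvent}, which expresses the multi-set of scrollar invariants of $\res_{H_i}C$ as the union, over all non-trivial $\lambda\vdash d$, of the scrollar invariants of $\lambda$ with respect to $\varphi$, each taken with multiplicity $\mult(V_\lambda,\Ind_{H_i}^{S_d}\mathbf{1})$. These multiplicities depend only on the isomorphism class of $\Ind_{H_i}^{S_d}\mathbf{1}$, which is the same for $i=1,2$ by the previous paragraph, so the two multi-sets of scrollar invariants coincide verbatim.

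Finally, for the genus I would use Theorem~\ref{thm:genusresolvents}, giving $g(\res_{H_i}C)=p(H_i)(g+d-1)+1-[S_d:H_i]$. Here $p(H_i)=(d-2)!\cdot|\{\text{transpositions }\sigma\notin H_i\}|/|H_i|$; applying the Gassmann condition to the single conjugacy class of transpositions yields equal transposition counts, and together with $|H_1|=|H_2|$ and $[S_d:H_1]=[S_d:H_2]$ this gives $g(\res_{H_1}C)=g(\res_{H_2}C)$. (Alternatively, equality of genera follows directly from the already established equality of scrollar invariants and degrees, via the sum formula for a $\PP^1$-cover.) The only step carrying any content is the character-theoretic reformulation in the first paragraph; everything after it is bookkeeping, so I do not anticipate a genuine obstacle.
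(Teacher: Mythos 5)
Your proposal is correct and takes essentially the same route as the paper: the paper cites \cite[\S 1.4.2]{solomatin} for $\Ind_{H_1}^{S_d}\mathbf{1}\cong\Ind_{H_2}^{S_d}\mathbf{1}$, applies Theorem~\ref{thm:scrollar.invariants.resolvent} to get equality of the scrollar multi-sets, and deduces degree and genus from that equality (your parenthetical alternative). The only differences are cosmetic: you prove the classical Gassmann character identity by hand rather than citing it, and you give a second, redundant derivation of the genus via Theorem~\ref{thm:genusresolvents}.
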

\begin{proof}
$\Ind^{S_d}_{H_1} \mathbf{1} \cong \Ind^{S_d}_{H_2} \mathbf{1}$  by~\cite[\S1.4.2]{solomatin}, so the claim about the scrollar invariants follows from Theorem~\ref{thm:scrollar.invariants.resolvent}. This implies the claims about degree and genus.
\end{proof}
\noindent Equality of degree and genus has a clear counterpart for number fields (degree resp.\ discriminant), while the scrollar invariants of a curve correspond to the successive minima of the Minkowski lattice of a number field. An analogue of Theorem~\ref{thm:gassmann.eqv} in the number field setting was recently proved by the second-listed author~\cite{floris_minima}.  %but we are unaware of statements in the number-theory literature claiming that the Minkowski lattices attached to arithmetically equivalent number fields have the same successive minima, up to some bounded error factor.

\subsection{On the simple branching assumption} \label{ssec:assumptions}
Our first crucial use of the simple branching assumption was in the proof of Theorem~\ref{thm:genusresolvents}, giving a formula for the genus of a resolvent curve.
 We recall that this genus formula was used to prove our volume formula from Proposition~\ref{prop:genus.irrep}, therefore the assumption is an important ingredient in the proof of Theorem~\ref{thm:schreyerisscrollar}. However, by using a more careful notion of ``resolvent cover", we believe that it should be possible to get rid of the simple branching assumption.
On the geometry side, this begs for a vast generalization of Recillas' trigonal construction.
Here, an interesting first problem is to reinterpret some existing generalizations, e.g., as discussed in~\cite[\S4.4]{donagi} and~\cite{hoffmayer}, in terms of Galois theory. On the algebraic side, we seem to require a theory of resolvents on the level of rings, rather than fields, which is reminiscent of Lagrange's classical theory of resolvent polynomials~\cite[\S12.1]{coxgalois}.
   For degrees $d=4$ and $d=5$ it should be possible to extract such a theory from Bhargava's work on ring parametrizations~\cite{bhargavaquarticrings,bhargavaquinticrings}, see also~\cite{woodphd}. In the general case, the key ingredient seems to be Bhargava and Satriano's notion of ``$S_d$-closure"~\cite{bhargavasatriano}.  In particular, we still expect the relevant representation theory to be that of $S_d$, even in the case of a smaller Galois closure. %A full elaboration is work in progress.

 In some more detail, if $\varphi : C \to \PP^1$ is an $S_d$-cover with arbitrary ramification, then for certain subgroups $H \subseteq S_d$ the genus of $\res_HC$ might be smaller than what is predicted by Theorem~\ref{thm:genusresolvents}.
 The proof of Theorem~\ref{thm:schreyerisscrollar} suggests that, in order to fix this, one should allow for the use of non-maximal orders in $L^H$, i.e., singular models of $\res_HC$. This is also apparent from Recillas' trigonal construction applied to arbitrary $S_4$-covers of degree $4$, which produces singular curves in the presence of ramification of type $(2^2)$ or $(4)$. More generally, the trigonal construction suggests that if $\varphi$ is a $G$-cover for some arbitrary $G \subseteq S_d$, then the ``correct" resolvent curve with respect to $H \subseteq S_d$ may be singular and/or reducible. Algebraically, the corresponding resolvent cover is described in terms of orders in an \'etale algebra that may not be a field. 
 However, the notions of reduced bases and scrollar invariants naturally carry over to this setting, and we expect that all our main results continue to hold for this generalized notion of scrollar invariants.

 \subsection{} \label{ssec:assumptionssometimesok} Even in the case of non-simple branching, it might still happen that this more carefully constructed resolvent curve is smooth and geometrically integral,  in which case statements like in Section~\ref{sec:examples} should continue to hold without modification. For example, both Casnati's result~\cite[Def.\,6.3-6.4]{casnati} and Proposition~\ref{prop:schreyerrelaxed} are illustrations of this phenomenon. 
  However, we expect that most subgroups $H \subseteq S_d$ tolerate a small number of ramification patterns only. Furthermore, we expect that this is again determined by representation theory. In more detail, for every partition $\lambda \vdash d$ there should be a list of ``good ramification" patterns, and then the resolvent associated to $H$ will be smooth and geometrically integral if and only if there is good ramification for every $\lambda$ appearing in $\Ind_H^{S_d} \mathbf{1}$. For instance, simple branching is good for all partitions of $d$, and all ramification patterns should be good for the partition $(d-1,1)$. For $\lambda = (d-2,2)$ we expect that also $(3, 1^{d-3})$ is good; this is true for Recillas' trigonal construction in case $d = 4$, while for arbitrary $d \geq 4$ this follows from Lemma~\ref{lem:genus.good.ramification} under the assumption 
  that $\varphi$ is an $S_d$-cover.
 
%For instance, the trigonal construction is tolerant to $(3,1)$-ramification in that sense.  
% , and this we generalized by allowing for ramification of type $(3, 1^{d-3})$ in Theorem~\ref{thm:S2Sd-2relaxed}.
 
\subsection{} If we indeed manage to get rid of the simple branching assumption, then the resulting generalization of Theorem~\ref{thm:schreyerisscrollar} will provide us with an alternative, syzygy-free way of defining
the Schreyer invariants of any cover $\varphi : C \to \PP^1$. We remark that this definition would have a natural counterpart for number fields, in terms of successive minima of sublattices of the $S_d$-closure
of their ring of integers.

\subsection{Counting function fields} We end by noting that this project started with a rediscovery of Casnati's result 
during an investigation of the secondary term in the counting function for quartic extensions of $\FF_q(t)$ having bounded discriminant, where $\FF_q$ denotes a finite field whose cardinality $q$ satisfies $\gcd(q,6)=1$. This study was motivated by the Ph.D.\ work of the third-listed author~\cite{zhaophd} who
determined the secondary term in the cubic case, thereby settling the $\FF_q(t)$-counterpart of Roberts' conjecture~\cite{roberts},\footnote{Now a theorem thanks to independent work of Bhargava--Shankar--Tsimerman~\cite{BST} and Taniguchi--Thorne~\cite{taniguchithorne}.} which reads that
\[ 
N_3(X)   = \underbrace{\frac{1}{3\zeta(3)}}_{{}= \, 0.277\ldots} X + \underbrace{\frac{4(1 + \sqrt{3})\zeta(1/3)}{5 \Gamma(2/3)^3\zeta(5/3)}}_{{}=\, -0.403\ldots} X^{5/6} + o(X^{5/6}), \]
where $X$ is a real parameter tending to infinity and
 $N_3(X)$ denotes the number of non-isomorphic cubic extensions $K \supseteq \QQ$ for which $|\Delta_K| \leq X$. Proving the $\FF_q(t)$-analogue of this statement
 essentially boils down to estimating the number of non-isomorphic degree $3$ covers of $\PP^1$ over $\FF_q$ by curves of a given genus $g$, which can be done in bulks by first enumerating all possibilities for the scrollar invariants $e_1, e_2$. As it turns out, the appearance of a negative term in $X^{5/6}$ is naturally related to the offset in this enumeration coming from the Maroni bound $e_2 \leq (2g+4)/3$.

A heuristic reasoning \`a la Roberts makes it reasonable\footnote{This is based 
on Yukie's analysis~\cite{yukie} of the quartic Shintani zeta function, as was explained to us by
Takashi Taniguchi in personal communication.} to expect a similar negative term of order $X^{5/6}$ in the counting function $N_4(X)$ for quartic number fields $K$ with $|\Delta_K| \leq X$. However, it seems a hard open problem to make this prediction precise, and valuable support in its favour would be lended by
a proof of its $\FF_q(t)$-analogue, which essentially amounts to estimating the number of non-isomorphic degree $4$ covers of $\PP^1$ by curves with a given genus $g$. If this can be done by mimicking the ideas of~\cite{zhaophd}, it will involve several technical sieving steps, dealing with covers that are not necessarily simply branched, so this lies beyond the scope of this article. However, it is possible to make a rough back-of-the-envelope analysis suggesting that the exponent $5/6$ is again naturally related to a bound of Maroni type, but now on Schreyer's invariants $b_1,b_2$ rather than on the scrollar invariants $e_1,e_2,e_3$. And, of course, the best reason for the existence of such a bound is that Schreyer's invariants \emph{are} scrollar invariants, by Theorem~\ref{thm:schreyerisscrollar}.

\bibliographystyle{amsplain}
\bibliography{MyLibrary}

\vspace{0.4cm} 
 
\scriptsize

\noindent \textsc{imec-COSIC, KU Leuven, Belgium}\\
\noindent \textsc{Department of Mathematics: Algebra and Geometry, Ghent University, Belgium}\\
\noindent \texttt{wouter.castryck@esat.kuleuven.be}\\

\vspace{-0.2cm}
\noindent \textsc{Department of Mathematics, KU Leuven, Belgium}\\
\noindent \texttt{floris.vermeulen@kuleuven.be}\\

\vspace{-0.2cm}
\noindent \textsc{School of Science, Westlake University, People’s Republic of
China}\\
\noindent \texttt{zhaoyongqiang@westlake.edu.cn}

\end{document}